\newtheorem{thm}{Theorem}[section]
\newtheorem{lem}[thm]{Lemma}
\newtheorem{cor}[thm]{Corollary}
\newtheorem{prop}[thm]{Proposition}
\newtheorem{probl}{Problem}
\newtheorem{ques}[thm]{Open Question}
\theoremstyle{definition}
\newtheorem{rem}[thm]{Remark}
\newtheorem{dfn}[thm]{Definition}
\numberwithin{equation}{section}
\newcommand{\be}{\begin{equation}}
\newcommand{\ee}{\end{equation}}
\def\CC{\mathbb{C}}
\def\NN{\mathbb{N}}
\def\RR{\mathbb{R}}
\def\PP{\mathbb{P}}
\def\K{{\mathcal K}}
\def\B{{\mathcal B}}
\def\P{{\mathcal P}}
\def\F{{\mathcal F}}
\def\C{{\mathcal C}}
\def\Z{{\mathcal Z}}
\def\T{{\mathcal T}}
\def\X{{\underline{X}}}
\def\R{{\mathcal R}}
\def\dd{{\mathrm{d}}}
\def\subindexset{J}
\def\indexset{I}
\def\superindexset{\Lambda}
\newcommand{\topchar}[1]{{\tau_{\!\scriptscriptstyle{X\!(\!#1\!)}}}}
\newcommand{\Borchar}[1]{{\B_{\!\scriptscriptstyle{X\!(\!#1\!)}}}}
\title[Projective limit techniques for the infinite dimensional MP]{\bf Projective limit techniques for the infinite dimensional moment problem}
\author[Infusino, Kuhlmann, Kuna, Michalski]{Maria Infusino, Salma Kuhlmann, Tobias Kuna, Patrick Michalski}
\address[M. Infusino]{\newline \indent Dipartimento di Matematica e Informatica, Universit\'a degli Studi di Cagliari, \newline \indent Palazzo delle Scienze, Via Ospedale 72, 09124 Cagliari, Italy.}
\email{maria.infusino@unica.it} 
\address[S. Kuhlmann, P. Michalski]{\newline \indent Fachbereich Mathematik und Statistik, Universit\"at Konstanz,\newline \indent
Universit\"atstrasse 10, 
78457 Konstanz, Germany.}
\email{salma.kuhlmann@uni-konstanz.de}
\email{patrick.michalski@uni-konstanz.de}
\address[T. Kuna]{\newline \indent Dipartimento di Ingegneria, Scienze dell'Informazione e Matematica,\newline \indent Universit\'a degli Studi dell'Aquila, Via Vetoio, 67100, L'Aquila, 
Italy}
\email{tobias.kuna@univaq.it}
\subjclass[2020]{Primary 44A60, 46M40, 28C20 Secondary 13J30, 14P99, 28C15, 46T12.}
\keywords{moment problem; infinite dimensional moment problem; projective limit; Prokhorov's condition.}
\date{\today}
\begin{document}
\begin{abstract}
We deal with the following general version of the classical moment problem: when can a linear functional on a unital commutative real algebra $A$ be represented as an integral with respect to a Radon measure on the character space $X(A)$ of $A$ equipped with the Borel $\sigma-$algebra generated by the weak topology? We approach this problem by constructing $X(A)$ as a projective limit of the character spaces of all finitely generated unital subalgebras of $A$. Using some fundamental results for measures on projective limits of measurable spaces, we determine a criterion for the existence of an integral representation of a linear functional on $A$ with respect to a measure on the cylinder $\sigma-$algebra on $X(A)$ (resp. a Radon measure on the Borel $\sigma-$algebra on $X(A)$) provided that for any finitely generated unital subalgebra of $A$ the corresponding moment problem is solvable. We also investigate how to localize the support of representing measures for linear functionals on $A$. These results allow us to establish infinite dimensional analogues of the classical Riesz-Haviland and Nussbaum theorems as well as a representation theorem for linear functionals non-negative on a ``partially Archimedean'' quadratic module of $A$. Our results in particular apply to the case when $A$ is the algebra of polynomials in infinitely many variables or the symmetric tensor algebra of a real infinite dimensional vector space, providing a unified setting which enables comparisons between some recent results for these instances of the moment problem.
\end{abstract}
\maketitle
\vspace{-0.8cm}

\section*{Introduction}

\noindent
The main question of this paper is the following general version of the classical (full) moment problem:
\begin{probl}\label{GenKMP}\ \\
Let $A$ be a unital commutative real algebra and let $\Sigma$ be a $\sigma-$algebra on the character space $X(A)$ of $A$. Given $K\in \Sigma$ and a linear functional $L:A\to\RR$ with $L(1)=1$, does there exist a measure $\nu$ defined on $\Sigma$ and supported in $K$, i.e., $\nu(X(A)\setminus K)=0$, such that 
 $L(a)=\int_{X(A)}\alpha(a)\dd\nu(\alpha)$ for all $a\in A$?
\end{probl}
 \vspace{-0.1cm}
If such a measure $\nu$ does exist, we say that $\nu$ is a \emph{$K-$representing measure} on~$\Sigma$ for $L$ and that $L$ is represented by $\nu$ on $K$.  In the following we assume that $X(A)$ is non-empty and endow it with the weakest topology $\topchar{A}$ such that for all $a \in A$ the function $\hat{a} : X(A)
\rightarrow \mathbb{R}$ defined by $\hat{a}(\alpha):= \alpha(a)$ is continuous. Note that the topology $\topchar{A}$ coincides with the topology induced on $X(A)$ by the embedding 
$X(A)\to \RR^A$, $\alpha \mapsto \left(\alpha(a)\right)_{a\in A}$,
where $\RR^A$ is equipped with the product topology. Hence, $(X(A), \topchar{A})$ is Hausdorff and it is metrizable if and only if $A$ is countably generated. Here, however, we do not assume that $A$ is countably generated. We denote by $\Borchar{A}$ the Borel $\sigma-$algebra on $(X(A), \topchar{A})$.

Problem \ref{GenKMP} for $A=\RR[X_1,\ldots, X_d]=:\RR[\X]$, $d\in\NN$, $\Sigma=\Borchar{A}$ and $\nu$ being a Radon measure coincides with the classical $d-$dimensional $K-$moment problem. Indeed, using that the identity is the unique $\RR$--algebra homomorphism from $\RR$ to $\RR$, it can be easily proved that any $\RR$--algebra homomorphism from $\RR[\X]$ to $\RR$ corresponds to a point evaluation $p\mapsto p(\alpha)$ with $\alpha\in\RR^d$ and so that there exists a topological isomorphism $\RR^d\to X(\RR[\X])$ (see, e.g., \cite[Proposition~5.4.5]{MarshBook}). 

Problem \ref{GenKMP} is general enough to cover several infinite dimensional instances of the moment problem already considered in the literature, e.g., when $A$ is not finitely generated or when the representing measures are supported in an infinite dimensional vector space. For example, the case when~$A$ is the algebra of polynomials in infinitely many variables was considered in \cite{AJK15} and \cite{GKM16}. In particular, Problem \ref{GenKMP} is considered in \cite{GKM16} for the family of constructibly Borel sets and for constructibly Radon measures, while in \cite{Schmu-new} for the family of cylinder sets and (in our terminology, see Remark \ref{prop::cyl-quasi-measure}-(ii)) for cylindrical quasi-measures. In fact, both families of sets generate the same $\sigma-$algebra (see Proposition \ref{prop::constr-Borel-is-cylinder}), which is in general smaller than the Borel $\sigma-$algebra. Constructibly Radon measures and cylindrical quasi-measures will play a key role in this article as well. 

There has been also a lot of activity about Problem \ref{GenKMP} for topological algebras and representing Radon measures (see, e.g., \cite{BCR}, \cite{MGHAS}, \cite{MGES}, \cite{GKM13},  \cite{GMW}, \cite{IK-probl}, \cite{Jac},  \cite{Lass}, \cite{Schmu78}). For example, several works have been devoted to the study of Problem \ref{GenKMP} for $A$ being the symmetric algebra $S(V)$ of a locally convex space $V$ (not necessarily finite dimensional) and $\Sigma=\Borchar{A}$ (see, e.g.,\! \cite[Chapter~5, Section~2]{BK}, \cite{BS}, \cite{Bor-Yng75}, \cite{GIKM}, \cite{Heg75}, \cite{I}, \cite{IKM}, \cite{IK-new}, \cite{IKR}, \cite[Section 12.5]{Schmu90}, \cite{Schmu-new}). However, in this article we do not consider any topology on our algebras when studying Problem~\ref{GenKMP} and, as for the classical case, our final purpose is to find necessary and sufficient conditions for the existence of $K-$representing Radon measures on $X(A)$, i.e., locally finite and inner regular measures on $\Borchar{A}$. 

Our strategy is to construct $X(A)$ as a projective limit of the family of Borel measurable spaces consisting of all $(X(S), \Borchar{S})$ for $S\in I$, where~$I$ is the collection of all finitely generated unital subalgebras of $A$ (Theorem \ref{prop::X(A)-proj-lim}). This construction provides another measurable structure on $X(A)$, namely the cylinder $\sigma-$algebra~$\Sigma_I$, which is in general smaller than $\Borchar{A}$. In Theorem~\ref{cylinder-MP}-(i) we investigate under which conditions a linear functional $L: A\to\RR$ has an $X(A)-$representing measure on~$\Sigma_I$ whenever $L\!\restriction_S$ has an $X(S)-$representing Radon measure on $\Borchar{S}$ for all $S\in I$. The condition in Theorem~\ref{cylinder-MP}-(i) expresses that the support of the representing measure is described in terms of those characters in $X(S)$ which can be extended to all of $A$. Under natural assumptions on $A$, see Remark~\ref{rem::cylinder-MP-cofinal}-(i), no condition is required. We also show a more general version of this  result in Theorem~\ref{cylinder-MP-Support}-(i),\linebreak which deals with the support of the representing measures. Under the same conditions as in Theorem \ref{cylinder-MP-Support}-(i), in Corollary~\ref{main-KMP}-(i) we are able to reduce the solvability of Problem~\ref{GenKMP} for $\Sigma=\Sigma_I$ and $K\subseteq X(A)$ closed to the solvability of a whole family of classical $K_{Q\cap S}-$moment problems, where $Q$ is a quadratic module in $A$ such that $K=\{\alpha\in X(A):\alpha(a)\geq 0,\forall a\in Q\}=K_Q$ and $S\in I$ (classical because each $S\in I$ is finitely generated; however, $Q \cap S$ may be a not finitely generated quadratic module). Hence, exploiting the classical (finite dimensional) moment theory we get existence criteria for representing measures on~$\Sigma_I$.

The problem of extending measures defined on the cylinder $\sigma-$algebra corresponding to a projective limit of measurable spaces to a Borel measure was already considered in the general theory of projective limits (see references in Section~\ref{sec:Prelim}). In particular, in 1956 Prokhorov gave a necessary and sufficient  condition (similar to tightness) for extending a measure on a cylinder $\sigma-$algebra (corresponding to a projective limit of Borel measurable spaces associated to Hausdorff topological spaces) to a Radon measure. 
In Theorem~\ref{cylinder-MP}-(ii) we establish that a linear functional $L: A\to\RR$ has an $X(A)-$representing Radon measure on $\Borchar{A}$ if and only if there exists an $X(S)-$representing Radon measure for $L\!\restriction_S$ on $\Borchar{S}$ for all $S\in I$ and the collection of these representing measures satisfies the Prokhorov condition \eqref{epsilon-K-char}. Note that we do not require the $X(S)-$representing Radon measure on $\Borchar{S}$ to be unique (see Lemma~\ref{lem::ex-ex-proj-sys}). In Theorem \ref{cylinder-MP-Support}-(ii) we deal with the support of the representing measures.
From Theorem \ref{cylinder-MP}, Theorem~\ref{cylinder-MP-Support} and Corollary~\ref{main-KMP} we obtain infinite dimensional analogues of Riesz-Haviland's theorem (Theorem~\ref{cor::Haviland}) and Nussbaum's theorem (Theorem~\ref{thm::general-Nussbaum}). Concerning the localization of the support, we provide an alternative proof of the solution to Problem \ref{GenKMP} for Archimedean quadratic modules (Theorem~\ref{cylinder-KMP}) and we extend to this general setting \cite[Theorem 3.2]{Lass} and \cite[Theorem 5.1]{IKR} about representing Radon measures for linear functionals which are non-negative on a quadratic module and satisfy a Carleman type condition (Theorem~\ref{cylinder-MP2}). We actually determine sufficient conditions for the existence of representing Radon measures for a larger class of linear functionals joining the conditions in Theorem~\ref{cylinder-MP2} and the ones in Theorem~\ref{cylinder-KMP}, namely, for linear functionals non-negative on a ``partially Archimedean'' quadratic module (Theorem~\ref{hybrid-quadraticMod}).

Our main Theorem \ref{cylinder-MP-Support} allows us to systematically derive these infinite dimensional results from the corresponding finite dimensional ones. The projective limit approach provides a unified setting which enables to marshal several recent results (e.g., in \cite{AJK15}, \cite{GKM16}, \cite{Schmu-new}) about infinite dimensional instances of Problem \ref{GenKMP}.

Let us shortly describe the organization of this paper. The core material is contained in Sections \ref{sec:Characters} and~\ref{sec:PLMP}, whereas in Section~\ref{sec:Prelim} we collect some essential results about projective limits (see, e.g., \cite{BouInt}, \cite{S73}, \cite{Y85}). In particular, the proofs in Section~\ref{sec:Prelim} can be skipped by a reader already familiar with these subtle and complex techniques. In Section~\ref{sec:quasi-meas}, we study some properties of a projective limit of a projective system of measurable spaces, define the associated cylinder algebra and cylindrical quasi-measures. Section~\ref{sec:prokh-cond} is dedicated to the so-called Prokhorov condition, which will play a fundamental role throughout the whole paper. In Section~\ref{sec:ext-cyl} we deal with the problem of extending cylindrical quasi-measures to actual measures on the cylinder $\sigma-$algebra (Lemma~\ref{lem::content} and Theorem~\ref{ext-cylinderSigmaAlg}). In Section~\ref{sec:ext-Rad} we discuss the extension to Radon measures on the Borel $\sigma-$algebra on the projective limit space (Theorem~\ref{Prokh} and Corollary~\ref{Prokh-ctbl}). In Section~\ref{sec:constr}, we show how to construct the character space $X(A)$ of a unital commutative real algebra $A$ as projective limit of the projective system $\{(X(S), \Borchar{S}), \pi_S,I\}$, where $X(S), \Borchar{S}, \pi_S, I$ are the ones defined above (Theorem~\ref{prop::X(A)-proj-lim}). In Section~\ref{sec:sigma-alg}, we analyze the relation between the cylinder $\sigma-$algebra~$\Sigma_I$ given by this construction and the $\sigma-$algebra of constructibly Borel sets introduced in \cite{GKM16} (Proposition \ref{prop::constr-Borel-is-cylinder}). Section~\ref{sec:PLMP} is devoted to the main results on Problem~\ref{GenKMP} obtained through our projective limit approach (which have been already described above). Note that projective limit techniques have been previously used to study sparse polynomials, also in relation to the moment problem \cite{K-P07, K-P09}.

Summing up, the projective limit approach seems to be a promising method in the investigation of Problem~\ref{GenKMP}, because it provides a direct bridge to a spectrum of tools and results coming from two very rich theories: on the one hand, the theory of the classical moment problem and, on the other one, the theory of projective limits. 
In \cite{IKKM} we apply this method to Problem \ref{GenKMP} for topological algebras and in future work we intend to use it to investigate \emph{truncated} versions of Problem~\ref{GenKMP}.
Indeed, we are strongly convinced that this method can bring further advances on Problem~\ref{GenKMP} also in this case, opening the way to new applications in other fields.

\newpage
\section{Preliminaries}\label{sec:Prelim}

We collect below the notions and results from the theory of projective limits needed for our purposes, mainly following \cite{S73} and \cite{Y85} (see also \cite{B55}, \cite{BouInt}, \cite{Ch58}, \cite{Prokh}). \textit{Note that the proofs in this section can be skipped by a reader already familiar with this intricate theory.}

\subsection{Cylindrical quasi-measures}\label{sec:quasi-meas}\

Let $(I, \leq)$ be a directed partially ordered set and let $\{(X_i, \Sigma_i), \pi_{i,j}, I\} $ be a \emph{projective system of measurable spaces} $(X_i, \Sigma_i)$ with maps $\pi_{i,j}: X_j \rightarrow X_i $ defined for all $i\leq j$ in $I$. Recall that this means that $\pi_{i,j}$ is measurable for all $i\leq j$ in $I$ and that $\pi_{i,j} \circ \pi_{j,k} = \pi_{i,k}$ for all $ i\leq j\leq k$ (see Figure \ref{Fig1}). As a convention, for all $i\in I$, we set $\pi_{i,i}$ to be the identity on $X_i$.
\begin{figure}[htb!]
\begin{tikzpicture}[scale=1]
\draw (-2,0)  node[centered=0.5pt] (A) {$X_j$};
\draw (2,0) node (B) {$X_i$};
\draw (0,2) node[minimum size = 0.6cm, circle]  (C) {};
\draw (0.06,2.05) node {$X_k$};
\draw[->] (A) -- (B) node[ above, pos =0.5, very thick]{$\pi_{i,j}$};
\draw[->] (C) -- (A)  node[ above=7pt , pos =0.72, very thick]{$\pi_{j,k}$} ;
\draw[->] (C) -- (B) node[ above=7pt , pos =0.72, very thick]{$\pi_{i,k}$}  ;
\end{tikzpicture}
\caption{Compatibility condition among the maps in the projective system $\{(X_i, \Sigma_i), \pi_{i,j}, I\}$ (here $i\leq j\leq k$ in $I$).}
\label{Fig1}
\end{figure}
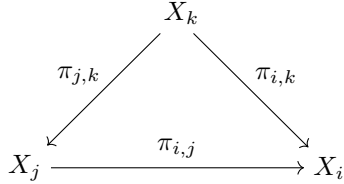

\begin{dfn}[Projective limit of measurable spaces]\label{def:projlim}\ \\
A \emph{projective limit} of the projective system $\{(X_i, \Sigma_i),\pi_{i,j}, I\} $ is a measurable space $(X_I, \Sigma_I)$ together with a family of maps $\pi_i : X_I \rightarrow X_i$ for $i\in I$ such that:\\
$\pi_{i,j}\circ \pi_{j}=\pi_i$ for all $i\leq j$ in $I$, $\Sigma_I$ is the smallest $\sigma-$algebra w.r.t.\! which all $\pi_i$'s are measurable, and the following universal property holds. For any measurable space $(Y, \Sigma_Y)$ and any family of measurable maps $f_i  :Y \rightarrow X_i$ defined for all $i\in I$ and such that $f_i=\pi_{i,j}\circ f_j$ for all $i\leq j$, there exists a unique measurable map $f : Y \rightarrow X_I$ such that $\pi_i \circ f =f_i$ for all $i\in I$, i.e., the diagram in Figure~\ref{Fig2} commutes. Moreover, $\{(X_I, \Sigma_I),\pi_i, I\}$ is unique up to isomorphisms, i.e., given any other projective limit $\{(\tilde{X}_I, \tilde{\Sigma}_I),\tilde{\pi}_i, I\}$ of the same projective system there exists a unique bijective map between $(\tilde{X}_I, \tilde{\Sigma}_I)$ and $(X_I, \Sigma_I)$ such that the map and its inverse are both measurable. 
\end{dfn}
\vspace{-0.3cm}
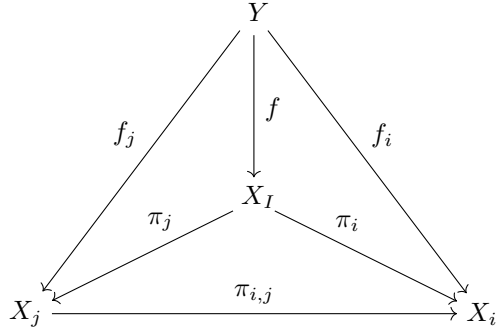
\begin{figure}[htb!]
\begin{tikzpicture}[scale=1]
\draw (-3,0.5)  node[centered=0.5pt] (A) {$X_j$};
\draw (3,0.5) node (B) {$X_i$};
\draw (0,2) node[minimum size = 0.6cm, circle]  (C) {};
\draw (0.06,2.05) node {$X_I$};
\draw (0,4.5) node[minimum size = 0.6cm, circle]  (D) {$$};
\draw (0.06,4.5) node {$Y$};
\draw[->] (A) -- (B) node[ above, pos =0.5, very thick]{$\pi_{i,j}$};
\draw[->] (C) -- (A)  node[ above=2pt , pos =0.4, very thick]{$\pi_{j}$} ;
\draw[->] (C) -- (B) node[ above=2pt , pos =0.4, very thick]{$\pi_{i}$}  ;
\draw[->] (D) -- (C) node[ right=0.2pt , pos =0.52, very thick]{$f$};
\draw[->] (D) -- (A) node[ left=5pt , pos =0.4, very thick]{$f_j$}; 
\draw[->] (D) -- (B) node[ right=5pt , pos =0.4, very thick]{$f_i$}; 
\end{tikzpicture}
\caption{Universal property of a projective limit $\{(X_I, \Sigma_I), \pi_i, I\}$ (here $i\leq j$ in $I$).}
\label{Fig2}
\end{figure}

A projective limit of a projective system of measurable spaces always exists. Indeed, if we take $X_I:=\left\{ (x_i)_{i\in I}\in\prod_{i\in I} X_i : x_i=\pi_{i,j}(x_j),\ \forall\, i\leq j\ \textrm{ in } I\right\}$, $\Sigma_I$ to be the restriction of the product $\sigma-$algebra to $X_I$ and, for any $i\in I$ we define
$\pi_i : X_I \rightarrow X_i$ to be the restriction to $X_I$ of the canonical projection $\prod_{j\in I} X_j\to X_i$, then it is easy to show that $\{(X_I, \Sigma_I), \pi_i, I\}$ fulfills Definition~\ref{def:projlim}. \par\medskip

\begin{rem}\label{rem::cofinal-proj-limit}
We can analogously define a projective system of topological spaces and its projective limit by replacing measurability with continuity in the previous definitions. Therefore, the results in the remainder of this subsection also hold for projective limits of topological spaces.
\end{rem}

It is often more convenient to take the projective limit over a cofinal subset of the index set in a given projective system. This does not change the projective limit as shown in the following proposition (which is a version of \cite[III., \S 7.2, Proposition 3]{BouST} for measurable spaces).

\begin{prop}\label{prop::cofinal-proj-limit}
Let $\{(X_I,\Sigma_I),\pi_i,I\}$ be the projective limit of the projective system $\{(X_i,\Sigma_i),\pi_{i,j},I\}$ and let $J$ be a cofinal subset of $I$, i.e., for every $i\in I$, there exists some $j\in J$ such that $j\geq i$. Then $\{(X_I,\Sigma_I),\pi_i,J\}$ is the projective limit of the projective system $\{(X_i,\Sigma_i),\pi_{i,j},J\}$.
\end{prop}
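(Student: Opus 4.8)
The plan is to verify directly that the triple $\{(X_I,\Sigma_I),\pi_i,J\}$ satisfies the defining properties of a projective limit for the subsystem indexed by $J$, and then invoke the uniqueness clause in Definition \ref{def:projlim}. There are three things to check: first, the compatibility relations $\pi_{i,j}\circ\pi_j=\pi_i$ for all $i\leq j$ in $J$ — but this is immediate, since these are a subfamily of the relations already known to hold for all $i\leq j$ in $I$. Second, that $\Sigma_I$ is the smallest $\sigma$-algebra making all $\pi_j$ for $j\in J$ measurable. Third, the universal property relative to cones over the $J$-indexed system.

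For the second point, let $\Sigma_I'$ denote the smallest $\sigma$-algebra on $X_I$ making $\pi_j$ measurable for all $j\in J$. Clearly $\Sigma_I'\subseteq\Sigma_I$, since each such $\pi_j$ is already $\Sigma_I$-measurable. For the reverse inclusion, it suffices to show that $\pi_i$ is $\Sigma_I'$-measurable for every $i\in I$. Here I would use cofinality: given $i\in I$, pick $j\in J$ with $j\geq i$; then $\pi_i=\pi_{i,j}\circ\pi_j$, which is $\Sigma_I'$-measurable because $\pi_j$ is $\Sigma_I'$-measurable and $\pi_{i,j}$ is measurable (part of the projective system data). Hence $\Sigma_I\subseteq\Sigma_I'$, and equality follows.

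For the universal property, let $(Y,\Sigma_Y)$ be a measurable space with measurable maps $g_j:Y\to X_j$ for $j\in J$ satisfying $g_j=\pi_{j,k}\circ g_k$ whenever $j\leq k$ in $J$. The idea is to extend this cone to a cone over the full system $I$ and then apply the universal property already available for $\{(X_I,\Sigma_I),\pi_i,I\}$. For $i\in I$, choose (using directedness of $I$ and cofinality of $J$) some $j\in J$ with $j\geq i$ and set $g_i:=\pi_{i,j}\circ g_j$. The main point — and the one place where a small argument is genuinely needed — is that $g_i$ is well defined, i.e., independent of the choice of $j$: if $j,j'\in J$ both dominate $i$, use directedness of $I$ to find $k\in I$ with $k\geq j,j'$, then cofinality of $J$ to find $j''\in J$ with $j''\geq k$; since $j\leq j''$ and $j'\leq j''$ in $J$ we get $g_j=\pi_{j,j''}\circ g_{j''}$ and $g_{j'}=\pi_{j',j''}\circ g_{j''}$, whence $\pi_{i,j}\circ g_j=\pi_{i,j''}\circ g_{j''}=\pi_{i,j'}\circ g_{j'}$ by the cocycle identity $\pi_{i,j}\circ\pi_{j,j''}=\pi_{i,j''}$. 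A similar bookkeeping argument shows $g_i=\pi_{i,i'}\circ g_{i'}$ for all $i\leq i'$ in $I$, and that $g_i=g_j$ on the nose for $i=j\in J$. Each $g_i$ is measurable as a composition of measurable maps. Now apply the universal property of $\{(X_I,\Sigma_I),\pi_i,I\}$ to the cone $(g_i)_{i\in I}$: there is a unique measurable $g:Y\to X_I$ with $\pi_i\circ g=g_i$ for all $i\in I$, in particular $\pi_j\circ g=g_j$ for all $j\in J$. Uniqueness of $g$ subject only to the $J$-conditions follows because any such map also satisfies all the $I$-conditions (again by $\pi_i=\pi_{i,j}\circ\pi_j$ for $j\in J$, $j\geq i$), so it must coincide with the $g$ furnished by the $I$-universal property. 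Finally, uniqueness up to isomorphism is inherited verbatim from Definition \ref{def:projlim}.

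I do not expect any serious obstacle here; the only thing requiring care is the well-definedness of the extension $g_i$, which is a routine diagram chase relying on the directedness of $I$ together with the cofinality of $J$ — exactly the hypotheses provided.
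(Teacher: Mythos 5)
Your proposal is correct and follows essentially the same route as the paper's proof: both establish the equality of $\sigma$-algebras via cofinality and the compatibility $\pi_i=\pi_{i,j}\circ\pi_j$, and both deduce the universal property by extending a $J$-indexed cone to an $I$-indexed one via $f_i:=\pi_{i,j}\circ f_j$. Your write-up merely supplies the well-definedness check that the paper leaves implicit.
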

\begin{proof}
Since $J\subseteq I$, clearly $\Sigma_J\subseteq\Sigma_I$. The converse inclusion follows by the cofinality of $J$ in $I$ and the compatibility of the $\pi_i$'s. To show the universal property of $\{(X_I, \Sigma_I), \pi_i, J\}$, for each $j\in J$, we consider a measurable map $f_j:Y\to X_j$ such that $f_j=\pi_{j,k}\circ f_k$ whenever $j\leq k$ in $J$. By the cofinality of $J$ in $I$, for each $i\in I$ there is some $j\in J$ such that $\pi_i=\pi_{i,j}\circ\pi_j$ and so defining $f_i:=\pi_{i,j}\circ f_j$ we can deduce the universal property of $\{(X_I,\Sigma_I), \pi_i, J\}$ from the one of $\{(X_I,\Sigma_I),\pi_i,I\}$.
\end{proof}

The following definitions are well-known. Below we sightly adapt the terminology of \cite[Part I, Chapter I, \S 10, Part II, Chapter II, \S 1]{S73} to our purposes.

\begin{dfn}\label{def-cyl}
Let $\{(X_i, \Sigma_i),\pi_{i,j}, I\}$ be a projective system of measurable spaces and $\{(X_I, \Sigma_I), \pi_i, I\}$ its projective limit.
\begin{enumerate}[(a)]
\item A \emph{cylinder set }in $X_I$ is a set of the form $\pi_i^{-1}(E)$ for some $i\in I$ and $E\in\Sigma_i$. In other words, $\pi_i^{-1}(E)$ belongs to the $\sigma-$algebra $\pi_i^{-1}(\Sigma_i)$ generated by $\pi_i$.
\item The collection $\C_I$ of all the cylinders sets in $X_I$ forms an algebra, which is usually called the \emph{cylinder algebra} on $X_I$. In general, $\C_I$ is not a $\sigma-$algebra.
\item We call \emph{cylinder $\sigma-$algebra} on $X_I$ the smallest $\sigma-$algebra containing all the cylinders sets in $X_I$, which clearly coincides with $\Sigma_I$.
\end{enumerate}
\end{dfn}

We introduce a two-stage construction of the projective limit $\{(X_I,\Sigma_I),\pi_i,I\}$ of a projective system $\P_I:=\{(X_i,\Sigma_i),\pi_{i,j},I\}$ of measurable spaces, which will be particularly useful when $I$ does not contain a countable cofinal subset (see Proposition~\ref{lem::projective-limit}, Lemma \ref{cor::double-proj-lim-to-char-space} and Section \ref{main-results-gen}). This alternative construction can be obtained by adapting \cite[III., Exercises for \S 7, 1.\! (p.\! 251)]{BouST} to projective limits of measurable spaces. As this construction will be used later on, we provide all its steps and a diagram (see Figure~\ref{Fig3}) to facilitate the reading and fix the notation.
Consider 
$$
\Lambda:=\{\lambda\subseteq I: \lambda\text{ contains a countable directed subset cofinal in }\lambda\}
$$
partially ordered by inclusion. Note that $I$ contains a countable cofinal subset if and only if $\Lambda$ has a maximal element.
\begin{prop}\label{prop::ctlb-Lambda}
For $J\subseteq I$ countable, there exists $\lambda\in\Lambda$ such that $J\subseteq\lambda$.
\end{prop}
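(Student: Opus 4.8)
The plan is to build the desired $\lambda$ directly from $J$ by iterating a closure-under-the-poset-structure operation countably many times. Since $J \subseteq I$ is countable and $I$ is directed, for every finite (or countable) subset of $J$ we can find upper bounds in $I$; the idea is to throw in enough such upper bounds to make the result directed, while keeping everything countable so that it lands in $\Lambda$.

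First I would fix a well-ordering, or simply an enumeration, of the countable set of all finite subsets of $J$ — call them $F_1, F_2, \ldots$ — and, using directedness of $(I,\leq)$, pick for each $n$ an element $u_n \in I$ with $u_n \geq i$ for all $i \in F_n$. Set $J^{(1)} := J \cup \{u_n : n \in \NN\}$, which is again countable. Now iterate: having constructed a countable $J^{(k)} \subseteq I$, apply the same procedure to $J^{(k)}$ in place of $J$ to obtain a countable $J^{(k+1)} \supseteq J^{(k)}$ such that every finite subset of $J^{(k)}$ has an upper bound lying in $J^{(k+1)}$. Finally put $\lambda := \bigcup_{k \in \NN} J^{(k)}$. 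This is a countable union of countable sets, hence countable, and $J \subseteq J^{(1)} \subseteq \lambda$.

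Next I would check that $\lambda \in \Lambda$. It suffices to verify that $\lambda$ is directed: then $\lambda$ itself is a countable directed subset cofinal in $\lambda$, so the defining condition for membership in $\Lambda$ holds with the witnessing countable directed cofinal subset being $\lambda$ itself. To see directedness, take $a, b \in \lambda$. By construction there is some $k$ with $a, b \in J^{(k)}$; then $\{a,b\}$ is a finite subset of $J^{(k)}$, so by the choice of $J^{(k+1)}$ there is an upper bound $c \in J^{(k+1)} \subseteq \lambda$ with $c \geq a$ and $c \geq b$. Hence $(\lambda, \leq)$ is directed, and therefore $\lambda \in \Lambda$ with $J \subseteq \lambda$, as required.

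The argument is essentially routine and I do not anticipate a genuine obstacle; the only point requiring a little care is the bookkeeping that keeps each $J^{(k)}$, and hence $\lambda$, countable — this is where one uses that a countable poset has only countably many finite subsets and that a countable union of countable sets is countable (no choice beyond the countably many selections of upper bounds, which is unproblematic). One should also remark that $\lambda$ need not be cofinal in $I$; the proposition only asserts $J \subseteq \lambda$ with $\lambda \in \Lambda$, and the construction delivers exactly that.
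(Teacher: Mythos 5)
Your proof is correct, but it is more elaborate than necessary and differs from the paper's argument in an instructive way. The paper simply enumerates $J=(i_n)_{n\in\NN}$ and uses directedness of $I$ once per step to build a single increasing chain $(l_n)_{n\in\NN}$ with $l_n\geq i_n$ and $l_n\geq l_{n-1}$; then $\lambda:=J\cup(l_n)_{n\in\NN}$ works because the chain $(l_n)$ is already a countable \emph{directed} subset cofinal in $\lambda$ --- the definition of $\Lambda$ only asks for such a cofinal witness, not for $\lambda$ itself to be directed. You instead run an $\omega$-step saturation, adjoining upper bounds for all finite subsets at each stage, so that the resulting $\lambda=\bigcup_k J^{(k)}$ is itself countable and directed and hence serves as its own witness. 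Both arguments are sound (your bookkeeping about countability of the set of finite subsets and of the countable union is fine, and upper bounds for finite sets follow from pairwise directedness by induction), but the chain construction gets away with a single pass precisely because a totally ordered cofinal sequence already satisfies the ``countable directed cofinal subset'' requirement; your version proves the slightly stronger fact that one can take $\lambda$ directed, at the cost of the iteration. Your closing remark that $\lambda$ need not be cofinal in $I$ is correct and consistent with the statement.
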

\begin{proof}
Let $J=(i_n)_{n\in\NN}$ and $l_1:=i_1$. As $I$ is directed, there exists $l_2\in I$ such that $i_2\leq l_2$ and $l_1\leq l_2$. Proceeding in this way, we get $(l_n)_{n\in\NN}\subseteq I$ such that $i_n\leq l_n$ and $l_{n-1}\leq l_n$ for all $n\geq 2$. This shows that $(l_n)_{n\in\NN}$ is a countable directed cofinal subset of $\lambda:=J\cup (l_n)_{n\in\NN} \subseteq I$. Hence, $\lambda\in\Lambda$ and $J\subseteq\lambda$.
\end{proof}

\begin{prop}\label{prop::Lambda-directed}
If $(\lambda_n)_{n\in\NN}\subseteq \Lambda$, then there exists $\lambda\in\Lambda$ such that $\lambda_n\subseteq\lambda$ for all $n\in\NN$. In particular, $\Lambda$ is directed.
\end{prop}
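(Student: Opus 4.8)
The plan is to produce $\lambda$ by amalgamating the $\lambda_n$ and then curing the resulting set's failure to contain a countable \emph{directed} cofinal subset via Proposition~\ref{prop::ctlb-Lambda}. First I would pick, for each $n\in\NN$, a countable directed $D_n\subseteq\lambda_n$ cofinal in $\lambda_n$, which exists since $\lambda_n\in\Lambda$. The set $D:=\bigcup_{n\in\NN}D_n$ is a countable subset of $I$; applying Proposition~\ref{prop::ctlb-Lambda} to it, I obtain $\mu\in\Lambda$ with $D\subseteq\mu$, and I fix a countable directed $E\subseteq\mu$ cofinal in $\mu$.

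Next I would set $\lambda:=\mu\cup\bigcup_{n\in\NN}\lambda_n$ and check that $E$ witnesses $\lambda\in\Lambda$: it is countable, directed, and contained in $\mu\subseteq\lambda$, so the only thing left to verify is that $E$ is cofinal in $\lambda$. For $x\in\lambda$ there are two cases. If $x\in\mu$, cofinality of $E$ in $\mu$ gives $e\in E$ with $x\leq e$. If $x\in\lambda_n$ for some $n$, then cofinality of $D_n$ in $\lambda_n$ gives $d\in D_n\subseteq D\subseteq\mu$ with $x\leq d$, and cofinality of $E$ in $\mu$ gives $e\in E$ with $d\leq e$; transitivity of $\leq$ yields $x\leq e$. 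Hence $\lambda\in\Lambda$, and by construction $\lambda_n\subseteq\lambda$ for all $n\in\NN$.

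For the last assertion, directedness of $(\Lambda,\subseteq)$ follows by applying what was just proved to the sequence $\lambda',\lambda'',\lambda'',\dots$ for arbitrary $\lambda',\lambda''\in\Lambda$ (and $\Lambda\neq\emptyset$ since $\{i\}\in\Lambda$ for any $i\in I$). The one genuinely nontrivial point — and the reason Proposition~\ref{prop::ctlb-Lambda} is invoked — is that $\bigcup_{n}D_n$ need not be directed; passing to $E$ replaces this union by a single countable directed set that dominates every $D_n$, after which the argument is just transitivity bookkeeping.
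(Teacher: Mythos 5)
Your proof is correct and follows essentially the same route as the paper: take countable cofinal subsets of the $\lambda_n$, apply Proposition~\ref{prop::ctlb-Lambda} to their union to get an element of $\Lambda$ dominating them, and adjoin it to $\bigcup_n\lambda_n$. The only difference is that you spell out the cofinality verification that the paper leaves implicit.
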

\begin{proof}
For each $n\in\NN$, let $\theta_n$ be a countable cofinal subset of $\lambda_n$. As $\bigcup_{n\in\NN}\theta_n$ is a countable subset of $I$, by Proposition \ref{prop::ctlb-Lambda} there exists $\theta\in\Lambda$ such that $\bigcup_{n\in\NN}\theta_n\subseteq\theta$. Then $\lambda:=\bigcup_{n\in\NN}\lambda_n\cup\theta\subseteq I$ belongs to $\Lambda$ and clearly contains each $\lambda_n$. 
\end{proof}

For any $\lambda\in\Lambda$, let $\{(X_\lambda,\Sigma_\lambda),\pi_i^\lambda,\lambda\}$ denote the projective limit of the projective system $\P_\lambda:=\{(X_i,\Sigma_i),\pi_{i,j},\lambda\}$. For any $\lambda\subseteq\kappa$ in $\Lambda$ and all $i\leq j$ in $\lambda$, we have
$$
\pi_i^\kappa=\pi_{i,j}\circ\pi_j^\kappa
$$
and so by the universal property of the projective limit $\{(X_\lambda,\Sigma_\lambda),\pi_i^\lambda,\lambda\}$ of $\P_\lambda$, there exists a unique measurable map
\begin{equation}\label{eq::2}
\pi^{\lambda,\kappa}:X_\kappa\to X_\lambda\text{ such that }\pi_i^\lambda\circ\pi^{\lambda,\kappa}=\pi_i^\kappa\text{ for all }i\in\lambda.
\end{equation}
Using the uniqueness of this map, we easily get the following proposition.

\begin{prop}\label{prop::projective-system}
The family $\P_\Lambda:=\{(X_\lambda,\Sigma_\lambda),\pi^{\lambda,\kappa},\Lambda\}$ is a projective system of measurable spaces.
\end{prop}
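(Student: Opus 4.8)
The plan is to verify directly that the family $\P_\Lambda = \{(X_\lambda, \Sigma_\lambda), \pi^{\lambda,\kappa}, \Lambda\}$ satisfies the two defining axioms of a projective system of measurable spaces: namely, that each $\pi^{\lambda,\kappa}$ is measurable for $\lambda \subseteq \kappa$ in $\Lambda$, and that the compatibility (cocycle) condition $\pi^{\lambda,\mu} \circ \pi^{\mu,\kappa} = \pi^{\lambda,\kappa}$ holds whenever $\lambda \subseteq \mu \subseteq \kappa$ in $\Lambda$. Measurability is immediate: each $\pi^{\lambda,\kappa}$ is already produced as a \emph{measurable} map by the universal property in \eqref{eq::2}. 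One should also note that $\Lambda$ is a directed partially ordered set under inclusion, which is exactly the content of Proposition~\ref{prop::Lambda-directed}, so the indexing set is of the required type.

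The substance of the proof is the compatibility condition, and the intended mechanism — signalled by the sentence ``Using the uniqueness of this map'' preceding the statement — is the standard uniqueness argument for maps into a projective limit. First I would fix $\lambda \subseteq \mu \subseteq \kappa$ in $\Lambda$ and observe that both $\pi^{\lambda,\mu} \circ \pi^{\mu,\kappa}$ and $\pi^{\lambda,\kappa}$ are measurable maps from $X_\kappa$ to $X_\lambda$. By the universal property of the projective limit $\{(X_\lambda,\Sigma_\lambda),\pi_i^\lambda,\lambda\}$ of $\P_\lambda$, such a map is uniquely determined by its compositions with the $\pi_i^\lambda$ for $i \in \lambda$; hence it suffices to check that $\pi_i^\lambda \circ \pi^{\lambda,\mu} \circ \pi^{\mu,\kappa} = \pi_i^\lambda \circ \pi^{\lambda,\kappa}$ for every $i \in \lambda$. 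The right-hand side equals $\pi_i^\kappa$ by \eqref{eq::2} applied to the pair $\lambda \subseteq \kappa$. For the left-hand side, apply \eqref{eq::2} to the pair $\lambda \subseteq \mu$ to get $\pi_i^\lambda \circ \pi^{\lambda,\mu} = \pi_i^\mu$ (valid since $i \in \lambda \subseteq \mu$), and then \eqref{eq::2} to the pair $\mu \subseteq \kappa$ to get $\pi_i^\mu \circ \pi^{\mu,\kappa} = \pi_i^\kappa$. Chaining these gives $\pi_i^\lambda \circ \pi^{\lambda,\mu} \circ \pi^{\mu,\kappa} = \pi_i^\kappa$, matching the right-hand side. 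Since this holds for all $i \in \lambda$, uniqueness forces $\pi^{\lambda,\mu} \circ \pi^{\mu,\kappa} = \pi^{\lambda,\kappa}$. One should also record the convention $\pi^{\lambda,\lambda} = \mathrm{id}_{X_\lambda}$, which again follows from uniqueness since $\mathrm{id}_{X_\lambda}$ satisfies $\pi_i^\lambda \circ \mathrm{id}_{X_\lambda} = \pi_i^\lambda = \pi_i^\lambda$ (the defining relation of \eqref{eq::2} for $\lambda \subseteq \lambda$, as $\pi_i^\lambda = \pi_i^\lambda$ trivially).

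There is no serious obstacle here — the proof is a routine diagram chase of the kind that underlies virtually every ``composition of canonical maps is canonical'' statement in category theory. The only point requiring a moment's care is making sure the universal property is invoked for the \emph{correct} projective limit (the target $\{(X_\lambda,\Sigma_\lambda),\pi_i^\lambda,\lambda\}$, whose defining maps are indexed by $i \in \lambda$) and that the relation \eqref{eq::2} is legitimately applicable to each of the pairs $\lambda \subseteq \mu$, $\mu \subseteq \kappa$, $\lambda \subseteq \kappa$ — all of which lie in $\Lambda$ by hypothesis, so this is automatic. I would therefore keep the write-up short: state that $\Lambda$ is directed (citing Proposition~\ref{prop::Lambda-directed}), note measurability of the $\pi^{\lambda,\kappa}$ from \eqref{eq::2}, and give the two-line uniqueness argument for $\pi^{\lambda,\mu}\circ\pi^{\mu,\kappa} = \pi^{\lambda,\kappa}$ as above.
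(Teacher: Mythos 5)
Your proposal is correct and is exactly the argument the paper has in mind — the paper states the proposition immediately after the sentence ``Using the uniqueness of this map, we easily get the following proposition,'' leaving the uniqueness-based diagram chase you wrote out as an exercise. Your verification of measurability, directedness of $\Lambda$, and the cocycle identity via the uniqueness clause of the universal property of $\{(X_\lambda,\Sigma_\lambda),\pi_i^\lambda,\lambda\}$ is precisely the intended (and omitted) proof.
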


For any $\lambda\in\Lambda$ and all $i\leq j$ in $\lambda$, we have $\pi_i=\pi_{i,j}\circ\pi_j$ and so by the universal property of the projective limit $\{(X_\lambda,\Sigma_\lambda),\pi_i^\lambda,\lambda\}$ of $\P_\lambda$, there exists a unique measurable map
\begin{equation}\label{eq::3}
\pi^\lambda: X_I\to X_\lambda\text{ such that }\pi_i^\lambda\circ\pi^\lambda=\pi_i\text{ for all }i\in\lambda.
\end{equation}

\begin{prop}\label{lem::projective-limit}
$\{(X_I,\Sigma_I),\pi^\lambda,\Lambda\}$ is the projective limit of $\P_\Lambda$.
\end{prop}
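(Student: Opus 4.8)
The plan is to verify the three clauses of Definition~\ref{def:projlim} for the candidate $\{(X_I,\Sigma_I),\pi^\lambda,\Lambda\}$: compatibility of the $\pi^\lambda$ with the transition maps $\pi^{\lambda,\kappa}$ of $\P_\Lambda$, the fact that $\Sigma_I$ is the smallest $\sigma$-algebra making all $\pi^\lambda$ measurable, and the universal property; uniqueness up to isomorphism is then a formal consequence of the universal property. Throughout, the only device needed is to pass between the two layers of indices $i\in I$ and $\lambda\in\Lambda$ via Propositions~\ref{prop::ctlb-Lambda} and~\ref{prop::Lambda-directed}, together with the defining identities \eqref{eq::2} and \eqref{eq::3}.

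First, for compatibility, fix $\lambda\subseteq\kappa$ in $\Lambda$ and $i\in\lambda$, and compute
$\pi_i^\lambda\circ(\pi^{\lambda,\kappa}\circ\pi^\kappa)=(\pi_i^\lambda\circ\pi^{\lambda,\kappa})\circ\pi^\kappa=\pi_i^\kappa\circ\pi^\kappa=\pi_i=\pi_i^\lambda\circ\pi^\lambda$
using \eqref{eq::2} and \eqref{eq::3}; since this holds for every $i\in\lambda$, the uniqueness in the universal property of the projective limit $\{(X_\lambda,\Sigma_\lambda),\pi_i^\lambda,\lambda\}$ of $\P_\lambda$ forces $\pi^{\lambda,\kappa}\circ\pi^\kappa=\pi^\lambda$. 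For the $\sigma$-algebra statement, let $\Sigma'$ be the smallest $\sigma$-algebra on $X_I$ making all $\pi^\lambda$ measurable. Each $\pi^\lambda$ is $\Sigma_I$-measurable by \eqref{eq::3}, so $\Sigma'\subseteq\Sigma_I$; conversely, for each $i\in I$ Proposition~\ref{prop::ctlb-Lambda} (applied to $J=\{i\}$) gives $\lambda\in\Lambda$ with $i\in\lambda$, and then $\pi_i=\pi_i^\lambda\circ\pi^\lambda$ is $\Sigma'$-measurable, so $\Sigma_I\subseteq\Sigma'$ because $\Sigma_I$ is generated by the $\pi_i$'s.

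For the universal property, let $(Y,\Sigma_Y)$ be a measurable space and $g_\lambda:Y\to X_\lambda$ ($\lambda\in\Lambda$) a compatible family, i.e.\ $g_\lambda=\pi^{\lambda,\kappa}\circ g_\kappa$ for $\lambda\subseteq\kappa$. Define $f_i:=\pi_i^\lambda\circ g_\lambda$ for any $\lambda\in\Lambda$ with $i\in\lambda$. The key point — and essentially the only step beyond routine diagram chasing — is independence of the choice of $\lambda$: if $i\in\lambda$ and $i\in\kappa$, Proposition~\ref{prop::Lambda-directed} yields $\mu\in\Lambda$ with $\lambda,\kappa\subseteq\mu$, and then \eqref{eq::2} together with the compatibility of the $g$'s gives $\pi_i^\lambda\circ g_\lambda=\pi_i^\lambda\circ\pi^{\lambda,\mu}\circ g_\mu=\pi_i^\mu\circ g_\mu=\pi_i^\kappa\circ\pi^{\kappa,\mu}\circ g_\mu=\pi_i^\kappa\circ g_\kappa$. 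Picking, for $i\le j$ in $I$, a single $\lambda\in\Lambda$ containing both (Proposition~\ref{prop::ctlb-Lambda}) and using $\pi_i^\lambda=\pi_{i,j}\circ\pi_j^\lambda$ shows $f_i=\pi_{i,j}\circ f_j$, so $\{f_i\}_{i\in I}$ is a compatible family for $\P_I$. The universal property of $\{(X_I,\Sigma_I),\pi_i,I\}$ then produces a unique measurable $g:Y\to X_I$ with $\pi_i\circ g=f_i$ for all $i\in I$. To see $\pi^\lambda\circ g=g_\lambda$, note that for $i\in\lambda$ one has $\pi_i^\lambda\circ(\pi^\lambda\circ g)=\pi_i\circ g=f_i=\pi_i^\lambda\circ g_\lambda$, so equality of $\pi^\lambda\circ g$ and $g_\lambda$ follows from the uniqueness clause in the universal property of $\{(X_\lambda,\Sigma_\lambda),\pi_i^\lambda,\lambda\}$. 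Finally, if $g':Y\to X_I$ is measurable with $\pi^\lambda\circ g'=g_\lambda$ for all $\lambda$, then for each $i$ choosing $\lambda\ni i$ gives $\pi_i\circ g'=\pi_i^\lambda\circ\pi^\lambda\circ g'=\pi_i^\lambda\circ g_\lambda=f_i=\pi_i\circ g$, hence $g'=g$ by the uniqueness clause for $\P_I$. This completes the verification; I do not expect any genuine obstacle, only careful bookkeeping of the two index levels.
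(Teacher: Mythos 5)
Your proof is correct and follows essentially the same route as the paper's: identify $\Sigma_I$ with the $\sigma$-algebra generated by the $\pi^\lambda$'s, push a compatible $\Lambda$-indexed family down to a compatible $I$-indexed family $\{f_i\}$, invoke the universal property of $\{(X_I,\Sigma_I),\pi_i,I\}$, and then use the uniqueness clause of each $\{(X_\lambda,\Sigma_\lambda),\pi_i^\lambda,\lambda\}$ to conclude $\pi^\lambda\circ g=g_\lambda$. You merely spell out a few routine verifications (compatibility $\pi^{\lambda,\kappa}\circ\pi^\kappa=\pi^\lambda$, well-definedness of $f_i$, and $f_i=\pi_{i,j}\circ f_j$) that the paper leaves implicit.
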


This result shows that the two-stage construction, in which for each $\lambda\in\Lambda$ we take a projective limit $\{(X_\lambda,\Sigma_\lambda),\pi_i^\lambda,\lambda\}$ of the projective system $\{(X_i,\Sigma_i),\pi_{i,j},\lambda\}$ and then again take a projective limit of the projective limits $\{(X_\lambda,\Sigma_\lambda),\pi^{\lambda,\kappa},\Lambda\}$ obtained in the first stage, leads to the same space as directly taking a projective limit of the projective system $\{(X_i,\Sigma_i),\pi_{i,j}, I\}$.  
\enlargethispage{0.4cm}
\vspace{-0.2cm}
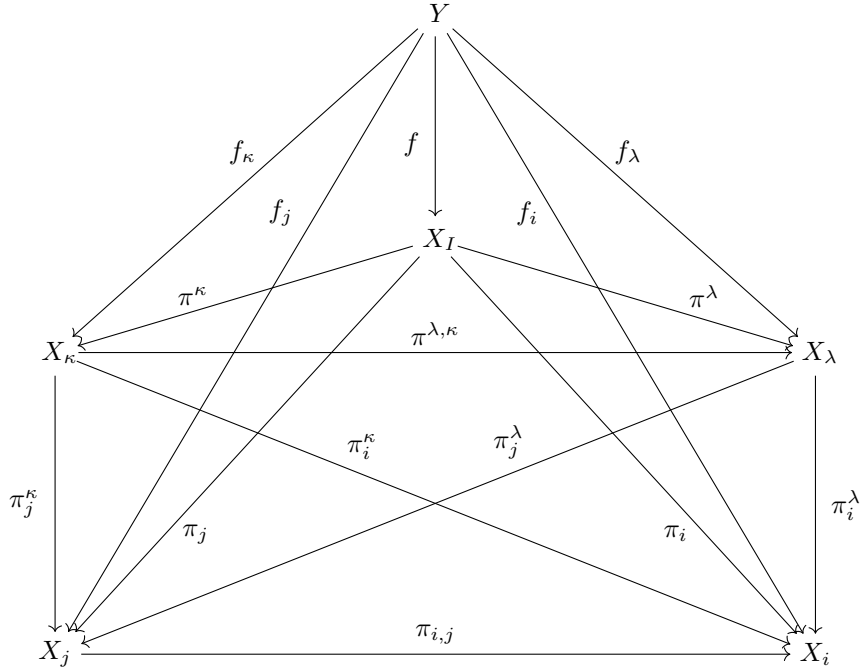
\begin{figure}[h]
\begin{tikzpicture}[scale=1]
\draw (-5,0.5)  node[centered=0.5pt] (A) {$X_j$};
\draw (5,0.5) node (B) {$X_i$};
\draw (0,6) node[minimum size = 0.6cm, circle]  (C) {};
\draw (C)++(0.06,0)  node {$X_I$};
\draw (-5,4.5) node[minimum size = 0.6cm, circle]  (D) {$$};
\draw (D)++(0.06,0) node {$X_\kappa$};
\draw (+5,4.5) node[minimum size = 0.6cm, circle]  (E) {$$};
\draw (E)++(0.06,0) node {$X_\lambda$};
\draw (0,9) node[minimum size = 0.6cm, circle]  (F) {$$};
\draw (F)++(0.06,0) node {$Y$};
\draw[->] (A) -- (B) node[ above, pos =0.5, very thick]{$\pi_{i,j}$};
\draw[->] (C) -- (A)  node[ below=5pt , pos =0.65, very thick]{$\pi_{j}$} ;
\draw[->] (C) -- (B) node[ below=5pt , pos =0.65, very thick]{$\pi_{i}$}  ;
\draw[->] (C) -- (D) node[ left=10pt , pos =0.5, very thick]{$\pi^\kappa$};
\draw[->] (C) -- (E) node[ right=20pt , pos =0.5, very thick]{$\pi^\lambda$};
\draw[->] (D) -- (A) node[ left=2pt , pos =0.5, very thick]{$\pi^\kappa_j$}; 
\draw[->] (E) -- (B) node[ right=2pt , pos =0.5, very thick]{$\pi^\lambda_i$}; 
\draw[->] (D) -- (E) node[ above=-1pt , pos =0.5, very thick]{$\pi^{\lambda,\kappa}$}; 
\draw[->] (D) --(B) node[ above=2pt , pos =0.4, very thick]{$\pi^\kappa_{i}$};
\draw[->] (E) --(A) node[ above=2pt , pos =0.4, very thick]{$\pi^\lambda_{j}$};
\draw[->] (F) --(A) node[ left=5pt , pos =0.3, very thick]{$f_{j}$};
\draw[->] (F) --(B) node[ left=2pt , pos =0.3, very thick]{$f_i$};
\draw[->] (F) --(C) node[ left=2pt , pos =0.6, very thick]{$f$};
\draw[->] (F) --(D) node[ left=5pt , pos =0.4, very thick]{$f_\kappa$};
\draw[->] (F) --(E) node[ right=5pt , pos =0.4, very thick]{$f_\lambda$};
\end{tikzpicture}\vspace{-0.3cm}
\caption{Two-stage construction of $(X_I, \Sigma_I)$ (here $\lambda\subseteq\kappa$ in $\Lambda$ and $i\leq j$ in $\lambda$).}
\label{Fig3}
\end{figure}

\begin{proof}[Proof of Proposition \ref{lem::projective-limit}]\ \\
Denote by $\Sigma_\Lambda$ the cylinder $\sigma-$algebra on $X_\indexset$ w.r.t.\! $\P_\superindexset$, i.e., the smallest $\sigma-$algebra on $X_I$ such that $\pi^\lambda$ is measurable for all $\lambda\in\Lambda$. We first prove that $\Sigma_\Lambda=\Sigma_I$. As $\pi^\lambda:(X_I,\Sigma_I)\to (X_\lambda,\Sigma_\lambda)$ is measurable for all $\lambda\in\Lambda$, we clearly have $\Sigma_\Lambda\subseteq\Sigma_I$. Conversely, for each $i\in I$ there exists $\lambda\in\Lambda$ such that $i\in\lambda$ and so for any $E_i\in\Sigma_i$ we have
$$
\pi_i^{-1}(E_i)=(\pi^\lambda)^{-1}((\pi_i^\lambda)^{-1}(E_i))\in (\pi^\lambda)^{-1}(\Sigma_\lambda)\subseteq\Sigma_\Lambda.
$$
Hence, $\Sigma_I\subseteq\Sigma_\Lambda$ since $\Sigma_I$ is generated by sets of the form $\pi_i^{-1}(E_i)$ for some $i\in I$ and $E_i\in\Sigma_i$.

It remains to show that the universal property holds for $\{(X_\indexset, \Sigma_\indexset), \pi^\lambda, \Lambda\}$. Let $(Y,\Sigma_Y)$ be a measurable space and for any $\lambda\in\superindexset$ let $f^\lambda:Y\to X_\lambda$ be a measurable map such that $f^\lambda=\pi^{\lambda,\kappa}\circ f^\kappa$ for all $\lambda\subseteq\kappa$ in $\Lambda$. Let $i\in I$. Then there is $\lambda\in\Lambda$ such that $i\in\lambda$ and the map $f_i: Y\to X_i$ given by $
f_i:=\pi_i^\lambda\circ f^\lambda
$
is well-defined and measurable.
The universal property of the projective limit $\{(X_I,\Sigma_I),\pi_i,I\}$ of $\P_I$ yields that there exists a unique measurable map
\begin{equation}\label{eq::1}
f:Y\to X_I\text{ such that }\pi_i\circ f=f_i \text{ for all }i\in I.
\end{equation}
For any $\lambda\in\superindexset$ the universal property of the projective limit $\{(X_\lambda,\B_\lambda),\pi_i^\lambda,\lambda\}$ of $\P_\lambda$ implies that there exists a unique measurable map
\begin{equation}\label{eq::4}
h: Y\to X_\lambda\text{ such that }\pi_i^\lambda\circ h=f_i\text{ for all }i\in\lambda.
\end{equation}
Since for all $i\in\lambda$, we have that
$$
\pi_i^\lambda\circ f^\lambda = f_i = \pi_i^\lambda\circ h \quad\text{and}\quad \pi_i^\lambda\circ(\pi^\lambda\circ f)=\pi_i\circ f =f_i = \pi_i^\lambda\circ h,
$$
the uniqueness of $h$ in \eqref{eq::4} yields $f^\lambda=h=\pi^\lambda\circ f$.

The uniqueness of $f$ in \eqref{eq::1} allows to easily show that $f$ is the unique map from $Y$ to $X_I$ such that $f^\lambda=\pi^\lambda\circ f$ holds for all $\lambda\in\Lambda$. 
\end{proof}

\begin{rem}\label{rem::Lambda}
By Proposition \ref{prop::cofinal-proj-limit}, Proposition \ref{lem::projective-limit} also holds if $\Lambda$ is replaced with a cofinal subset of $\Lambda$. 
\end{rem}

\begin{prop}\label{prop::C-Lambda}
The relation $\C_I\subseteq\Sigma_I=\Sigma_\Lambda=\C_\Lambda$ holds, where $\C_\Lambda$ denotes the cylinder algebra on $X_\indexset$ w.r.t.\! the projective system $\P_\Lambda$. Note that the inclusion is strict in general.
\end{prop}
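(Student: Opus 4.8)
The equality $\Sigma_I=\Sigma_\Lambda$ has already been established inside the proof of Proposition~\ref{lem::projective-limit}, and $\C_I\subseteq\Sigma_I$ is immediate from Definition~\ref{def-cyl}. Moreover $\C_\Lambda\subseteq\Sigma_\Lambda$, since each $\pi^\lambda:(X_I,\Sigma_\Lambda)\to(X_\lambda,\Sigma_\lambda)$ is measurable by the very definition of $\Sigma_\Lambda$. So the only substantial point is the reverse inclusion $\Sigma_\Lambda\subseteq\C_\Lambda$: in contrast with $\C_I$, the cylinder \emph{algebra} $\C_\Lambda$ over the enlarged index set $\Lambda$ is already a $\sigma-$algebra, and since it contains every cylinder set $(\pi^\lambda)^{-1}(E)$ (with $\lambda\in\Lambda$, $E\in\Sigma_\lambda$) while $\Sigma_\Lambda$ is the smallest $\sigma-$algebra with this property, $\Sigma_\Lambda\subseteq\C_\Lambda$ follows.

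To prove that $\C_\Lambda$ is closed under countable unions (closure under complements being part of the algebra structure, cf.\ Definition~\ref{def-cyl}(b) applied to $\P_\Lambda$), I would take $(A_n)_{n\in\NN}\subseteq\C_\Lambda$ and write $A_n=(\pi^{\lambda_n})^{-1}(F_n)$ with $\lambda_n\in\Lambda$, $F_n\in\Sigma_{\lambda_n}$. By Proposition~\ref{prop::Lambda-directed} there is $\lambda\in\Lambda$ with $\lambda_n\subseteq\lambda$ for all $n$. The plan is then to factor each $\pi^{\lambda_n}$ through $\pi^\lambda$, i.e.\ to establish
\[
\pi^{\lambda_n}=\pi^{\lambda_n,\lambda}\circ\pi^\lambda\qquad(n\in\NN),
\]
with $\pi^{\lambda_n,\lambda}$ and $\pi^\lambda$ as in \eqref{eq::2} and \eqref{eq::3}. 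Granting this, $A_n=(\pi^\lambda)^{-1}(G_n)$ with $G_n:=(\pi^{\lambda_n,\lambda})^{-1}(F_n)\in\Sigma_\lambda$, whence $\bigcup_{n\in\NN}A_n=(\pi^\lambda)^{-1}\bigl(\bigcup_{n\in\NN}G_n\bigr)\in\C_\Lambda$.

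The one step requiring care is this factorization, a diagram chase in Figure~\ref{Fig3} exploiting the uniqueness clauses of the universal properties. Fixing $n$, for every $i\in\lambda_n$ one computes, using \eqref{eq::2} (with $\lambda_n\subseteq\lambda$) and \eqref{eq::3},
\[
\pi_i^{\lambda_n}\circ(\pi^{\lambda_n,\lambda}\circ\pi^\lambda)=(\pi_i^{\lambda_n}\circ\pi^{\lambda_n,\lambda})\circ\pi^\lambda=\pi_i^{\lambda}\circ\pi^\lambda=\pi_i=\pi_i^{\lambda_n}\circ\pi^{\lambda_n};
\]
since $\pi^{\lambda_n,\lambda}\circ\pi^\lambda$ and $\pi^{\lambda_n}$ are both measurable maps $X_I\to X_{\lambda_n}$ whose composition with $\pi_i^{\lambda_n}$ equals $\pi_i$ for all $i\in\lambda_n$, the uniqueness asserted in \eqref{eq::3} forces them to coincide. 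The remaining bookkeeping is routine.

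Finally, for the remark that the inclusion $\C_I\subseteq\C_\Lambda$ is strict in general, I would exhibit a set in $\Sigma_I=\C_\Lambda$ that depends on countably many but on no finite family of indices. When $I$ has no countable cofinal subset, one chooses an increasing sequence $i_1\le i_2\le\cdots$ in $I$ and sets $E_n\in\Sigma_{i_n}$ so that $B:=\bigcap_{n\in\NN}\pi_{i_n}^{-1}(E_n)$ is not of the form $\pi_i^{-1}(E)$ for any single $i\in I$; then $B\notin\C_I$, whereas $B\in\C_\Lambda$ upon taking, via Proposition~\ref{prop::ctlb-Lambda}, some $\lambda\in\Lambda$ containing $\{i_n:n\in\NN\}$. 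A concrete instance arises for $A=\RR[X_t:t\in T]$ with $T$ uncountable, taking $B=\{\alpha\in X(A):\alpha(X_{t_n})=0\text{ for all }n\}$ for a fixed countable subset $\{t_n\}_{n\in\NN}$ of $T$.
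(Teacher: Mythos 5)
Your proposal is correct and follows essentially the same route as the paper: the whole content is showing that $\C_\Lambda$ is a $\sigma-$algebra by using Proposition~\ref{prop::Lambda-directed} to find a common upper bound $\lambda\in\Lambda$ and pulling all the cylinders back through $\pi^\lambda$ via the factorization $\pi^{\lambda_n}=\pi^{\lambda_n,\lambda}\circ\pi^\lambda$, which the paper uses implicitly and you verify explicitly from the uniqueness clauses of \eqref{eq::2} and \eqref{eq::3}. Your added example witnessing strictness of $\C_I\subseteq\Sigma_I$ (a set constraining countably but not finitely many coordinates) is sound and goes beyond the paper, which states the strictness without proof.
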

\begin{proof}
To show that $\Sigma_\Lambda=\C_\Lambda$, it is enough to prove that the algebra $\C_\Lambda$ is in fact a $\sigma-$algebra as $\C_\Lambda$ generates $\Sigma_\Lambda$. For each $n\in\NN$, let $\lambda_n\in\Lambda$ and $E_n\in\Sigma_{\lambda_n}$. Then Proposition \ref{prop::Lambda-directed} ensures there exists $\kappa\in\Lambda$ such that $\lambda_n\subseteq\kappa$ for all $n\in\NN$. Hence,
$$
\bigcup_{n\in\NN} (\pi^{\lambda_n})^{-1}(E_n) =\!\bigcup_{n\in\NN} (\pi^\kappa)^{-1}((\pi^{\lambda_n,\kappa})^{-1}(E_n))= (\pi^\kappa)^{-1}\!\left(\bigcup_{n\in\NN}(\pi^{\lambda_n,\kappa})^{-1}(E_{n})\right)\!\in\C_\Lambda,
$$
since $\bigcup_{n\in\NN}(\pi^{{\lambda_n},\kappa})^{-1}(E_n)\in\Sigma_\kappa$.
\end{proof}

\begin{dfn} Let $\P:=\{(X_i,\Sigma_i), \pi_{i,j}, I\}$ be a projective system of measurable spaces and $\{(X_I,\Sigma_I), \pi_{i}, I\}$ its projective limit.
\begin{enumerate}[(a)]
\item An \emph{exact projective system of measures on $\P$} is a family $\{\mu_i: i\in I\}$ such that each $\mu_i$ is a measure on $\Sigma_i$ and ${\pi_{i,j}}_{\#}\mu_j=\mu_i$ for all $i\leq j$ in $I$, where ${\pi_{i,j}}_{\#}\mu_j$ denotes the pushforward of $\mu_j$ w.r.t.\! $\pi_{i,j}$, i.e., ${\pi_{i,j}}_{\#}\mu_j(E_i)=\mu_j(\pi_{i,j}^{-1}(E_i))$ for all $E_i\in\Sigma_i$. 
\item An exact projective system $\{\mu_i: i\in I\}$ of measures on $\P$ is called \emph{thick} if for each $i\in I$ the set $\pi_i(X_I)$ is thick w.r.t. $\mu_i$, i.e. $\mu_i(M)=0$ for all $M\in\Sigma_i$ such that $\pi_i(X_I)\cap M=\emptyset$.
\item A \emph{cylindrical quasi-measure $\mu$ w.r.t.\!~$\P$} is a set function $\mu$ on $\C_I$ such that ${\pi_i}_{\#}\mu$ is a measure on $\Sigma_i$ for all $i\in I$. 
\end{enumerate}
\end{dfn}

\begin{rem}\label{prop::cyl-quasi-measure}\
\begin{enumerate}[(i)]
\item If for all $i\in I$ the map $\pi_i:X_I\to X_i$ is surjective, then any exact projective system of measures on $\P$ is thick.
\item Cylindrical quasi-measures are \emph{not} measures, since they are finitely additive set functions which are not necessarily $\sigma-$additive for countable disjoint unions of sets in $\C_I$ belonging to $\C_I$. In other words, a cylindrical quasi-measure is in general not even a pre-measure. Note that cylindrical quasi-measures are often addressed as cylinder measures (see, e.g., \cite{GV64}, \cite{Schmu-new}, \cite{S73}). 
\end{enumerate}
\end{rem}

\begin{lem}\label{lem::corres-cylqm-thickexact}
Let $\P:=\{(X_i,\Sigma_i), \pi_{i,j}, I\}$ be a projective system of measurable spaces. There is a one-to-one correspondence between cylindrical quasi-measures $\mu$ and thick exact projective systems $\{\mu_i:i\in I\}$ of measures w.r.t.\! $\P$, given by $\mu(\pi_i^{-1}(E_i))=\mu_i(E_i)$ for all $i\in I$ and all $E_i\in\Sigma_i$. We call $\mu$ the \emph{cylindrical quasi-measure corresponding to} $\{\mu_i:i\in I\}$.
\end{lem}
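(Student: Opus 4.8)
The plan is to write down the two maps underlying the asserted bijection and check that they are well defined and mutually inverse. In the direction from cylindrical quasi-measures to systems, I would take a cylindrical quasi-measure $\mu$ w.r.t.\ $\P$ and put $\mu_i:={\pi_i}_{\#}\mu$ for each $i\in I$. Each $\mu_i$ is a measure on $\Sigma_i$ by the very definition of a cylindrical quasi-measure. Exactness is a one-line computation using the identity $\pi_{i,j}\circ\pi_j=\pi_i$ from Definition~\ref{def:projlim}: for $i\leq j$ and $E_i\in\Sigma_i$, $({\pi_{i,j}}_{\#}\mu_j)(E_i)=\mu(\pi_j^{-1}(\pi_{i,j}^{-1}(E_i)))=\mu(\pi_i^{-1}(E_i))=\mu_i(E_i)$. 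Thickness is equally immediate: if $M\in\Sigma_i$ satisfies $\pi_i(X_I)\cap M=\emptyset$, then $\pi_i^{-1}(M)=\emptyset$, whence $\mu_i(M)=\mu(\pi_i^{-1}(M))=\mu(\emptyset)=0$ (and $\mu(\emptyset)=0$ because ${\pi_i}_{\#}\mu$ is a measure).

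In the reverse direction, given a thick exact projective system $\{\mu_i:i\in I\}$, I would define $\mu$ on the cylinder algebra $\C_I$ by $\mu(\pi_i^{-1}(E_i)):=\mu_i(E_i)$. The one genuinely non-formal point, and the step I expect to be the main obstacle, is that this does not depend on the chosen representation of a cylinder set; this is precisely where thickness is used. Concretely, if $\pi_i^{-1}(E_i)=\pi_j^{-1}(E_j)$, I would pick $k\geq i,j$ in $I$ (possible since $I$ is directed), put $A:=\pi_{i,k}^{-1}(E_i)$ and $B:=\pi_{j,k}^{-1}(E_j)$ in $\Sigma_k$, and use $\pi_i=\pi_{i,k}\circ\pi_k$ and $\pi_j=\pi_{j,k}\circ\pi_k$ to get $\pi_k^{-1}(A)=\pi_i^{-1}(E_i)=\pi_j^{-1}(E_j)=\pi_k^{-1}(B)$. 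Hence $\pi_k^{-1}(A\setminus B)=\pi_k^{-1}(B\setminus A)=\emptyset$, so both $A\setminus B$ and $B\setminus A$ are disjoint from $\pi_k(X_I)$, and thickness of $\pi_k(X_I)$ w.r.t.\ $\mu_k$ forces $\mu_k(A\setminus B)=\mu_k(B\setminus A)=0$, i.e.\ $\mu_k(A)=\mu_k(A\cap B)=\mu_k(B)$. Exactness then gives $\mu_i(E_i)=\mu_k(A)=\mu_k(B)=\mu_j(E_j)$, so $\mu$ is well defined on $\C_I$. With well-definedness in hand, the identity $({\pi_i}_{\#}\mu)(E_i)=\mu(\pi_i^{-1}(E_i))=\mu_i(E_i)$ shows ${\pi_i}_{\#}\mu=\mu_i$ is a measure on $\Sigma_i$ for every $i\in I$, so $\mu$ is indeed a cylindrical quasi-measure.

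Finally, I would observe that the two assignments are mutually inverse, which is now automatic: starting from a cylindrical quasi-measure $\mu$ and forming the system $\{{\pi_i}_{\#}\mu\}$, the set function reconstructed from it sends $\pi_i^{-1}(E_i)$ to $({\pi_i}_{\#}\mu)(E_i)=\mu(\pi_i^{-1}(E_i))$ and hence equals $\mu$ on $\C_I$; conversely, starting from $\{\mu_i\}$ and forming $\mu$, the computation ${\pi_i}_{\#}\mu=\mu_i$ just made recovers the system. Apart from the well-definedness step, the argument is routine bookkeeping with pushforwards and the compatibility relations $\pi_{i,j}\circ\pi_j=\pi_i$; I would also note in passing (it is not needed for the statement) that the same thickness argument applied to disjoint cylinders shows $\mu$ is finitely additive on $\C_I$, reconciling the definition used here with Remark~\ref{prop::cyl-quasi-measure}(ii).
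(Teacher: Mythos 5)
Your proposal is correct and follows essentially the same route as the paper's proof: pushforwards ${\pi_i}_{\#}\mu$ for the forward direction, and for the converse the well-definedness argument via directedness of $I$, the observation that the symmetric difference $\pi_{i,k}^{-1}(E_i)\,\triangle\,\pi_{j,k}^{-1}(E_j)$ is disjoint from $\pi_k(X_I)$, thickness of $\mu_k$, and exactness. The only (harmless) additions are your explicit check that the two assignments are mutually inverse and the closing remark on finite additivity, both of which the paper leaves implicit.
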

\begin{proof}
Let $\mu$ be a cylindrical quasi-measure w.r.t.\! $\P$. Then $\{{\pi_i}_{\#}\mu:i\in I\}$ is an exact projective system of measures on~$\P$ since ${\pi_{i,j}}_{\#}({\pi_j}_{\#}\mu)={\pi_i}_{\#}\mu$ for all $i\leq j$ in~$I$. Further, for any $i\in I$ and any $E_i\in \Sigma_i$ with $E_i\cap \pi_i(X_I)=\emptyset$, we have $\pi_i^{-1}(E_i)=\emptyset$. Thus, ${\pi_i}_{\#}\mu(E_i)=\mu(\emptyset)={\pi_i}_{\#}\mu(\emptyset)=0$, i.e., $\{ {\pi_i}_{\#}\mu:i\in I\}$ is thick.

Conversely, let $\{\mu_i: i\in I\}$ be a thick exact projective system of measures on $\P$, the map $\mu:\C_I\to\RR$ given by $\mu(\pi_i^{-1}(E_i)):=\mu_i(E_i)$ for all $i\in I$ and $E_i\in\Sigma_i$ is well-defined. Indeed, let $i,j\in I$ and $E_i\in\Sigma_i, E_j\in\Sigma_j$ be such that $\pi_i^{-1}(E_i)=\pi_j^{-1}(E_j)$. Since $I$ is directed, there exists $k\in I$ such that $i\leq k$ and $j\leq k$ and so $\pi_k^{-1}(\pi_{i,k}^{-1}(E_i))=\pi_k^{-1}(\pi_{j,k}^{-1}(E_j))$. This implies that $\pi_{i,k}^{-1}(E_i)\cap \pi_k(X_I)=\pi_{j,k}^{-1}(E_j)\cap \pi_k(X_I)$, i.e., $(\pi_{i,k}^{-1}(E_i) \triangle \pi_{j,k}^{-1}(E_j))\cap\pi_k(X_I)=\emptyset$. Thus, we get that $\mu_k(\pi_{i,k}^{-1}(E_i))=\mu_k(\pi_{j,k}^{-1}(E_j))$ as $\pi_k(X_I)$ is thick w.r.t.\ $\mu_k$. The exactness of the system $\{\mu_i: i\in I\}$ then yields that
$$
\mu_i(E_i)=\mu_k(\pi_{i,k}^{-1}(E_i))=\mu_k(\pi_{j,k}^{-1}(E_j))=\mu_j(E_j).
$$
By definition, $\mu$ satisfies ${\pi_i}_{\#}\mu=\mu_i$ for all $i\in I$.
\end{proof}

In the following, we will discuss when cylindrical quasi-measures can be extended to measures on $\Sigma_I$. For this purpose, we will restrict our attention to projective systems of measurable spaces associated to projective systems of topological spaces.

\subsection{Prokhorov's condition}\label{sec:prokh-cond}\

Let $(I, \leq)$ be a directed partially ordered set and $\T:=\{(X_i, \tau_i), \pi_{i,j}, I\}$ a projective system of Hausdorff topological spaces. Denote by $\{(X_I, \tau_I), \pi_i, I\}$ a projective limit of $\T$. Then $(X_I, \tau_I)$ is Hausdorff and we can either equip it with the Borel $\sigma-$algebra $\B_I$ w.r.t.\! $\tau_I$ or with the cylinder $\sigma-$algebra $\Sigma_I$ obtained by taking the projective limit of the projective system $\P_{\T}:=\{(X_i, \B_i), \pi_{i,j},  I\}$, where $\B_i$ is the Borel $\sigma-$algebra on $X_i$ w.r.t.\!~$\tau_i$. Note that $\Sigma_I\subseteq\B_I$. 

\begin{prop}\label{prop::Tobias}
If $I$ contains a countable cofinal subset $J$, then $\Sigma_I=\B_I$.
\end{prop}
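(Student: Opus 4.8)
The plan is to establish the only non-trivial inclusion, $\B_I\subseteq\Sigma_I$, since $\Sigma_I\subseteq\B_I$ has already been noted. Because $\Sigma_I$ is a $\sigma-$algebra, it is enough to show $\tau_I\subseteq\Sigma_I$: then $\B_I$, being the smallest $\sigma-$algebra containing $\tau_I$, is contained in $\Sigma_I$. The heart of the matter is that a \emph{countable} cofinal $J\subseteq I$ provides a genuine basis of $\tau_I$ consisting of ``tower'' cylinder sets, which allows one to rewrite an arbitrary open subset of $X_I$ as a \emph{countable} union of cylinder sets.

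First I would observe that $J$ inherits directedness from $I$: given $j_1,j_2\in J$, choose $i\in I$ with $i\geq j_1,j_2$ and then $j_3\in J$ with $j_3\geq i$. Using the compatibility $\pi_i=\pi_{i,j}\circ\pi_j$ for $i\leq j$ together with the cofinality of $J$, every subbasic open set $\pi_i^{-1}(V)$ ($i\in I$, $V\in\tau_i$) equals $\pi_j^{-1}(\pi_{i,j}^{-1}(V))$ for a suitable $j\in J$ with $j\geq i$; and for $j_1,j_2\in J$ with common upper bound $j_3\in J$ one has $\pi_{j_1}^{-1}(V_1)\cap\pi_{j_2}^{-1}(V_2)=\pi_{j_3}^{-1}\bigl(\pi_{j_1,j_3}^{-1}(V_1)\cap\pi_{j_2,j_3}^{-1}(V_2)\bigr)$, which is again of the same shape since $\pi_{j_1,j_3}$ and $\pi_{j_2,j_3}$ are continuous. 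Hence $\mathcal{B}_0:=\{\pi_j^{-1}(V):j\in J,\ V\in\tau_j\}$ is stable under finite intersections and generates $\tau_I$, so it is a basis of $\tau_I$.

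Then, for an arbitrary $\tau_I-$open set $U\subseteq X_I$, I would set $W_j:=\bigcup\{V\in\tau_j:\pi_j^{-1}(V)\subseteq U\}$ for each $j\in J$. Each $W_j$ is open in $X_j$ and $\pi_j^{-1}(W_j)\subseteq U$; conversely, since $\mathcal{B}_0$ is a basis, each $x\in U$ lies in some $\pi_j^{-1}(V)\subseteq U$ with $j\in J$ and $V\in\tau_j$, which forces $V\subseteq W_j$ and hence $x\in\pi_j^{-1}(W_j)$. Thus $U=\bigcup_{j\in J}\pi_j^{-1}(W_j)$. As $W_j$ is open we have $W_j\in\B_j$, so $\pi_j^{-1}(W_j)\in\Sigma_I$, and since $J$ is countable $U$ is a countable union of members of $\Sigma_I$, whence $U\in\Sigma_I$. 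This proves $\tau_I\subseteq\Sigma_I$ and therefore $\B_I=\Sigma_I$. (Alternatively, one may first reduce to the case $I=J$ countable via Proposition \ref{prop::cofinal-proj-limit} and Remark \ref{rem::cofinal-proj-limit}, and then run the same argument.)

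I do not expect a serious obstacle: the argument is essentially bookkeeping with bases of projective limit topologies. The only delicate point is to see clearly \emph{where} countability is used — it enters precisely when collapsing the a priori uncountable union expressing $U$ in terms of $\mathcal{B}_0$ into the union indexed by $J$; without countability the sets $W_j$ are still well defined, but $\bigcup_{j\in J}\pi_j^{-1}(W_j)$ need no longer be a countable union of cylinder sets, which is exactly why $\Sigma_I\subsetneq\B_I$ can occur for index sets $I$ with no countable cofinal subset.
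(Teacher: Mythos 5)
Your proposal is correct and follows essentially the same route as the paper: both reduce to showing $\tau_I\subseteq\Sigma_I$ by writing an arbitrary open $U$ as a countable union of open cylinder sets indexed by the countable cofinal set $J$, using that the open cylinders form a basis of $\tau_I$. The only cosmetic difference is that you pass directly to the maximal open sets $W_j\subseteq X_j$ with $\pi_j^{-1}(W_j)\subseteq U$, whereas the paper first collects the basic open pieces over all of $I$ and then regroups them over $J$ via cofinality.
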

\begin{proof}
Since $\B_I$ is generated by $\tau_I$, it suffices to show $\tau_I\subseteq\Sigma_I$. Let $U\in\tau_I$. As the open cylinder sets form a basis of $\tau_I$, for each $i\in I$ there exists $U_i\in\tau_i$ such that $U=\bigcup_{i\in I}\pi_i^{-1}(U_i)$. As $J$ is cofinal in $I$, we have $I=\bigcup_{j\in J}\{i\in I:i\leq j\}$. Therefore,
$$
U=\bigcup_{i\in I}\pi_i^{-1}(U_i)=\bigcup_{j\in J}\bigcup_{\{i\in I:i\leq j\}}\pi_j^{-1}(\pi_{i,j}^{-1}(U_i))=\bigcup_{j\in J}\pi_j^{-1}\left(\bigcup_{\{i\in I:i\leq j\}} \pi_{i,j}^{-1}(U_i)\right)
$$
and so $U$ is a countable union of sets in $\Sigma_I$ as $\bigcup_{\{i\in I:i\leq j\}} \pi_{i,j}^{-1}(U_i)\in\tau_j$ for each $j\in J$. Hence, $U\in\Sigma_I$.
\end{proof}

To introduce Prokhorov's condition (see~\cite{Prokh}), we recall that a \emph{Radon measure} on an arbitrary Hausdorff topological space $(X, \tau)$ is a measure $\mu$ defined on the Borel $\sigma-$algebra $\B_{\tau}$ on $X$ w.r.t.\! $\tau$ such that $\mu$ is locally finite (i.e., every point in $X$ has a neighborhood of finite measure) and inner regular w.r.t.\! compact subsets of $X$ (i.e., for any $M\in\B_{\tau}$, $\mu(M)=\sup\{\mu(K): K\subseteq M \text{ compact}\}$).

\begin{dfn}\ 
An \emph{exact projective system of Radon probability measures w.r.t.\!~$\T$ (or on $\P_\T$)} is an {exact projective system $\{\mu_i: i\in I\}$ of measures on~$\P_\T$} such that $\mu_i$ is Radon measure on $\B_i$ and $\mu_i(X_i)=1$ for all $i\in I$.
\end{dfn}

We say that an exact projective system $\{\mu_i: i\in I\}$ of Radon probability measures w.r.t.\! $\T$ fulfills \textbf{Prokhorov's condition} if the following holds
\be\label{epsilon-K}
\forall \varepsilon>0\ \exists\ C_{\varepsilon}\subset X_I \text{ compact s.t. } \forall i\in I: \ \mu_i(\pi_i(C_{\varepsilon}))\geq 1-\varepsilon.
\ee 

\begin{rem}\label{epsilon-K->thick}
Any exact projective system $\{\mu_i: i\in I\}$ of Radon probability measures fulfilling Prokhorov's condition~\eqref{epsilon-K} is thick. Indeed, for $i\in I$ and $M\in\Sigma_i$ with $M\cap\pi_i(X_I)=\emptyset$, we have that for every $\varepsilon>0$ also $M\cap\pi_i(C_{\varepsilon})=\emptyset$ holds and so $\mu_i(M)\leq 1-\mu_i(\pi_i(C_{\varepsilon}))\leq \varepsilon$, which implies in turn that $\mu_i(M)=0$.
\end{rem}

In the following we provide sufficient conditions for Prokhorov's condition \eqref{epsilon-K} to be fulfilled. To this purpose, let us show a preparatory lemma (which is essentially \cite[I., \S 9.6, Proposition~8]{BouGT}).
\begin{lem}\label{lem::intersection-compact}
For $i\in I$, let $K_i\subseteq X_i$ be compact such that $\pi_{i,j}(K_j)\subseteq K_i$ for all $j\in I$ with ${j\geq i}$. Then $K:=\bigcap_{i\in I}\pi_i^{-1}(K_i)$ is compact and, for each $i\in I$,
$
\pi_i(K)=\bigcap_{j\geq i}\pi_{i,j}(K_j).
$
\end{lem}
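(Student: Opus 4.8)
The plan is to prove the two assertions of Lemma~\ref{lem::intersection-compact} separately: first that $K=\bigcap_{i\in I}\pi_i^{-1}(K_i)$ is compact, and then that $\pi_i(K)=\bigcap_{j\geq i}\pi_{i,j}(K_j)$ for each fixed $i\in I$.

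For the compactness of $K$, I would realize $X_I$ as the concrete model $X_I=\{(x_i)_{i\in I}\in\prod_{i\in I}X_i: x_i=\pi_{i,j}(x_j)\ \forall i\leq j\}$ described after Definition~\ref{def:projlim}, with $\pi_i$ the restriction of the canonical projection. Then $\pi_i^{-1}(K_i)$ is the intersection of $X_I$ with $\left(\prod_{l\neq i}X_l\right)\times K_i$, so $K=X_I\cap\prod_{i\in I}K_i$. The set $P:=\prod_{i\in I}K_i$ is compact by Tychonoff's theorem, so it suffices to show $K$ is closed in $P$; equivalently that $X_I\cap P$ is closed in $P$. This is the standard fact that the projective limit set is closed in the product: for each pair $i\leq j$ the set $\{(x_l)_l: x_i=\pi_{i,j}(x_j)\}$ is closed in $\prod_l X_l$ because it is the preimage of the diagonal of $X_i\times X_i$ (which is closed since $X_i$ is Hausdorff) under the continuous map $(x_l)_l\mapsto(x_i,\pi_{i,j}(x_j))$; intersecting over all $i\leq j$ keeps it closed. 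Hence $K=P\cap X_I$ is a closed subset of the compact set $P$, so $K$ is compact.

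For the identity $\pi_i(K)=\bigcap_{j\geq i}\pi_{i,j}(K_j)$, the inclusion $\subseteq$ is immediate: if $x=(x_l)_l\in K$ then for every $j\geq i$ we have $x_j\in K_j$ and $x_i=\pi_{i,j}(x_j)\in\pi_{i,j}(K_j)$. For $\supseteq$, fix $y\in\bigcap_{j\geq i}\pi_{i,j}(K_j)$; I must produce a point of $K$ whose $i$-th coordinate is $y$. The natural move is to consider, for each $j\geq i$, the nonempty compact set $F_j:=K_j\cap\pi_{i,j}^{-1}(y)$ (nonempty since $y\in\pi_{i,j}(K_j)$, compact since it is a closed subset of the compact $K_j$, using that $X_i$ is Hausdorff so $\{y\}$ is closed). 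These live in different spaces $X_j$, so to apply a finite-intersection-property argument I would push everything into the compact product $P=\prod_{l\in I}K_l$: for each $j\geq i$ let $\widetilde{F}_j:=\{(x_l)_l\in P: x_j\in F_j\text{ and }x_l=\pi_{l,m}(x_m)\text{ for all }l\leq m\text{ with }m\leq j\}$, or more cleanly, intersect $X_I\cap P=K$ with the cylinder condition $x_j\in F_j$. One checks that the family $\{K\cap\{x: x_j\in F_j\}: j\geq i\}$ of closed subsets of the compact set $K$ has the finite intersection property — given finitely many indices $j_1,\dots,j_n\geq i$, pick $m\geq i$ above all of them (directedness of $I$), choose $z_m\in K_m$ with $\pi_{i,m}(z_m)=y$, and set $x_l:=\pi_{l,m}(z_m)$ for $l\leq m$ while extending arbitrarily compatibly for the other indices; the coherence $\pi_{i,j_k}(x_{j_k})=\pi_{i,j_k}(\pi_{j_k,m}(z_m))=\pi_{i,m}(z_m)=y$ together with $x_{j_k}=\pi_{j_k,m}(z_m)\in\pi_{j_k,m}(K_m)\subseteq K_{j_k}$ shows $x_{j_k}\in F_{j_k}$. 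Hence by compactness of $K$ the total intersection $\bigcap_{j\geq i}\bigl(K\cap\{x: x_j\in F_j\}\bigr)$ is nonempty, and any point $x$ in it lies in $K$ with $\pi_i(x)=y$ (since for any $j\geq i$, $\pi_i(x)=\pi_{i,j}(x_j)=y$). This gives $\supseteq$.

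The main obstacle, and the point needing the most care, is the $\supseteq$ inclusion: the sets $F_j$ do not sit in a common space, so one cannot directly intersect them, and the compatibility/coherence bookkeeping when extending a partial coherent family to a full element of $X_I$ is the delicate part. The cleanest route is exactly the one above — transport everything into the single compact space $K$ itself (or the compact product $P$) and run a finite-intersection-property argument there, using directedness of $I$ to handle finitely many constraints simultaneously and the hypothesis $\pi_{i,j}(K_j)\subseteq K_i$ (equivalently $\pi_{j,m}(K_m)\subseteq K_j$) to keep the chosen coordinates inside the $K_j$'s. Everything else is routine: Hausdorffness is used only to guarantee that diagonals and singletons are closed, and Tychonoff supplies the ambient compactness.
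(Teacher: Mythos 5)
Your first half is fine: realizing $K$ as $X_I\cap\prod_{i\in I}K_i$ inside the product, invoking Tychonoff, and checking that $X_I$ is closed in $\prod_{i\in I}X_i$ via preimages of diagonals is a correct and complete argument for compactness. Note that the paper itself does none of this explicitly: it observes that $\{(K_i,\tau_i\cap K_i),\pi_{i,j}\!\restriction_{K_i},I\}$ is a projective system of compact spaces whose limit is $\bigcap_{i\in I}\pi_i^{-1}(K_i)$ and then cites Bourbaki (I, \S 9.6, Proposition 8) for both conclusions, so you are supplying a from-scratch proof of the cited result rather than following the paper.

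In the $\supseteq$ direction, however, there is a genuine gap at the step ``set $x_l:=\pi_{l,m}(z_m)$ for $l\leq m$ while extending arbitrarily compatibly for the other indices.'' You are trying to exhibit an element of $K$ itself, i.e.\ a \emph{fully} coherent thread $(x_l)_{l\in I}$ with $x_l\in K_l$ for all $l$, and the existence of such an extension of a partial thread defined on $\{l:l\leq m\}$ is not ``arbitrary'': it is exactly the surjectivity statement the lemma is asserting (applied one level up), so as written the argument is circular, and for general non-compact fibers such an extension can fail outright. The irony is that you already wrote down the correct objects and then discarded them: the sets $\widetilde{F}_j=\{(x_l)_l\in P:\ x_j\in F_j\ \text{and}\ x_l=\pi_{l,m}(x_m)\ \forall\, l\leq m\leq j\}$ impose coherence only among indices below $j$, so each $\widetilde{F}_j$ is visibly non-empty (take $z_j\in F_j$, put $x_l=\pi_{l,j}(z_j)\in K_l$ for $l\leq j$ and $x_l\in K_l$ arbitrary otherwise --- no coherence is demanded at the remaining indices), closed in the compact $P$, and downward directed, with $\bigcap_{j\geq i}\widetilde{F}_j=K\cap\{x:x_i=y\}$ by cofinality of $\{j:j\geq i\}$. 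Running the finite-intersection-property argument on $\{\widetilde{F}_j\}$ in $P$, rather than on $\{K\cap\{x:x_j\in F_j\}\}$ in $K$, closes the gap; the version you actually executed does not, because its finite intersections cannot be shown non-empty without already knowing the conclusion.
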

\begin{proof}
By assumption $\{(K_i,\tau_i\cap K_i),\pi_{i,j}\!\restriction_{K_i},I\}$ is a projective system of compact spaces and its projective limit (seen as a subset of $\prod_{i\in I}K_i\subseteq\prod_{i\in I}X_i$) is identified with $\bigcap_{i\in I}\pi_i^{-1}(K_i)$. Then the conclusion follows from \cite[I., \S 9.6, Proposition 8]{BouGT}.
\end{proof}

\begin{prop}\label{prop::compact-support-epsilon-K}
An exact projective system $\{\mu_i:i\in I\}$ of Radon probability measures w.r.t.\! $\T$ fulfills Prokhorov's condition~\eqref{epsilon-K} if and only if for any $\varepsilon>0$ and each $i\in I$ there exists $K_i\subseteq X_i$ compact such that $\mu_i(K_i)\geq 1-\varepsilon$ and $\pi_{i,j}(K_j)\subseteq K_i$ for all $j\in I$ with $j\geq i$.
\end{prop}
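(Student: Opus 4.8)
The plan is to prove the two implications separately. The direction ``Prokhorov's condition $\Rightarrow$ compatible compact exhaustions'' is immediate: fix $\varepsilon>0$, take the compact $C_\varepsilon\subseteq X_I$ provided by \eqref{epsilon-K}, and set $K_i:=\pi_i(C_\varepsilon)$ for each $i\in I$. As the continuous image of a compact set, $K_i$ is compact, hence closed (so Borel) in the Hausdorff space $X_i$, and $\mu_i(K_i)\geq 1-\varepsilon$ holds by \eqref{epsilon-K}. Moreover $\pi_{i,j}(K_j)=\pi_{i,j}(\pi_j(C_\varepsilon))=\pi_i(C_\varepsilon)=K_i$ for all $j\geq i$ by the compatibility $\pi_{i,j}\circ\pi_j=\pi_i$, so in particular $\pi_{i,j}(K_j)\subseteq K_i$.

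For the converse, fix $\varepsilon>0$ and a family $\{K_i\}_{i\in I}$ of compact sets with $\mu_i(K_i)\geq 1-\varepsilon$ and $\pi_{i,j}(K_j)\subseteq K_i$ for all $j\geq i$. This family satisfies the hypothesis of Lemma~\ref{lem::intersection-compact}, so $C_\varepsilon:=\bigcap_{i\in I}\pi_i^{-1}(K_i)$ is compact and $\pi_i(C_\varepsilon)=\bigcap_{j\geq i}\pi_{i,j}(K_j)$ for every $i\in I$; this $C_\varepsilon$ will witness \eqref{epsilon-K}, and everything reduces to proving $\mu_i(\pi_i(C_\varepsilon))\geq 1-\varepsilon$. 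I would establish this in three steps. First, for each $j\geq i$ the set $\pi_{i,j}(K_j)$ is compact, hence Borel, and from $K_j\subseteq\pi_{i,j}^{-1}(\pi_{i,j}(K_j))$ together with exactness ${\pi_{i,j}}_{\#}\mu_j=\mu_i$ one gets $\mu_i(\pi_{i,j}(K_j))=\mu_j(\pi_{i,j}^{-1}(\pi_{i,j}(K_j)))\geq\mu_j(K_j)\geq 1-\varepsilon$. Second, $\{\pi_{i,j}(K_j):j\geq i\}$ is a decreasing net of closed subsets of the compact set $K_i$: for $i\leq j\leq k$ one has $\pi_{i,k}(K_k)=\pi_{i,j}(\pi_{j,k}(K_k))\subseteq\pi_{i,j}(K_j)$. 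Hence for any open $U\supseteq\pi_i(C_\varepsilon)$, the closed sets $\pi_{i,j}(K_j)\setminus U$ form a decreasing net with empty intersection, so by compactness of $K_i$ one of them is empty, i.e. $\pi_{i,j}(K_j)\subseteq U$ for some $j\geq i$, whence $\mu_i(U)\geq 1-\varepsilon$. Third, since $\mu_i$ is a finite (probability) Radon measure it is outer regular, so $\mu_i(\pi_i(C_\varepsilon))=\inf\{\mu_i(U):U\supseteq\pi_i(C_\varepsilon)\text{ open}\}\geq 1-\varepsilon$.

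The main obstacle is precisely this last estimate: the inclusion $\pi_i(C_\varepsilon)\subseteq K_i$ only bounds $\mu_i(\pi_i(C_\varepsilon))$ from above, so one genuinely has to combine the finite intersection property for the decreasing net of compact images $\pi_{i,j}(K_j)$ (which lets an arbitrary open neighbourhood of $\pi_i(C_\varepsilon)$ absorb one such image) with the outer regularity of $\mu_i$ (which lets one pass from neighbourhoods back to $\pi_i(C_\varepsilon)$ itself). Outer regularity of a finite Radon measure is a one-line consequence of inner regularity w.r.t.\ compacta by complementation, namely $\mu_i(B)=\mu_i(X_i)-\mu_i(X_i\setminus B)=\inf\{\mu_i(X_i\setminus K):K\subseteq X_i\setminus B\text{ compact}\}$ and $X_i\setminus K$ is open, which I would record as a short remark before the proof. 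Finally, one should note that the degenerate case $\varepsilon\geq 1$ is trivial (any, or even the empty, compact subset of $X_I$ works) and that every set written above is Borel because compact subsets of a Hausdorff space are closed, so all the measures involved are well defined.
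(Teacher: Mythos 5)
Your proof is correct and follows essentially the same route as the paper: the forward direction via $K_i:=\pi_i(C_\varepsilon)$, and the converse via Lemma~\ref{lem::intersection-compact} applied to the candidate compact set $\bigcap_{i\in I}\pi_i^{-1}(K_i)$. The only (harmless) difference is the finishing estimate: you bound $\mu_i(\pi_i(C_\varepsilon))$ from below by combining outer regularity with the finite intersection property of the decreasing net $\{\pi_{i,j}(K_j)\}_{j\geq i}$, obtaining $1-\varepsilon$, whereas the paper dually approximates the complement $X_i\setminus\pi_i(C_\varepsilon)$ from inside by a compact set and extracts a finite subcover, ending at $1-2\varepsilon$; both suffice since $\varepsilon$ is arbitrary.
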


\begin{proof}
Let $\varepsilon>0$. Suppose that there exists $C_\varepsilon\subseteq X_I$ compact such that $\mu_i(\pi_i(C_\varepsilon))\geq 1-\varepsilon$ for all $i\in I$. Then setting $K_i:=\pi_i(C_\varepsilon)$ yields the conclusion.

Conversely, suppose that for each $i\in I$ there exists $K_i\subseteq X_i$ compact such that $\mu_i(K_i)\geq 1-\varepsilon$ and $\pi_{i,j}(K_j)\subseteq K_i$ for all $j\in I$ with $j\geq i$. Then Lemma \ref{lem::intersection-compact} guarantees that $K:=\bigcap_{i\in I}\pi_i^{-1}(K_i)$ is compact. 

Fix $i\in I$. By the inner regularity of $\mu_i$ w.r.t.\! compact subsets of $X_i$, there exists $C\subseteq X_i\setminus\pi_i(K)$ compact such that $\mu_i(X_i\setminus\pi_i(K))\leq \mu_i(C)+\varepsilon$. Using again Lemma \ref{lem::intersection-compact}, we obtain that $\bigcup_{j\in I, j\geq i}\left(X_i\setminus\pi_{i,j}(K_j)\right)$ is an open cover of $C$. Hence, using that $C$ is compact and that $I$ is directed, we get that there exists $k\in I$ such that $k\geq i$ and $C\subseteq X_i\backslash \pi_{i,k}(K_k)$. This implies that
$$
\mu_i(\pi_i(K))\geq \mu_i(\pi_{i,k}(K_k))-\varepsilon =\mu_{k}(\pi_{i,k}^{-1}(\pi_{i,k}(K_k)))-\varepsilon \geq \mu_k(K_k)-\varepsilon \geq 1-2\varepsilon,
$$
which proves that \eqref{epsilon-K} holds with $C_{2\varepsilon}=K$.
\end{proof}

A sketch of the proof of Proposition \ref{prop-eps-K} below can be found in \cite[Corollary, p.\! 81]{S73} (cf.\! \cite[IX., \S 4.3, Theorem 2]{BouInt}), but we include here the details for the convenience of the reader as this result is the backbone of the next section.

\begin{prop}\label{prop-eps-K}
If $I$ contains a countable cofinal subset, then any exact projective system of Radon probability measures $\{\mu_i: i\in I\}$ w.r.t.\!~$\T$ fulfills Prokhorov's condition~\eqref{epsilon-K}.
\end{prop}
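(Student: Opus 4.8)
Let $J=(j_n)_{n\in\NN}$ be a countable cofinal subset of $I$, which without loss of generality (replacing $j_n$ by an upper bound of $j_1,\ldots,j_n$ and using that $I$ is directed) we may assume to be increasing, i.e.\ $j_1\leq j_2\leq\cdots$. By Proposition~\ref{prop::compact-support-epsilon-K}, it suffices to produce, for a given $\varepsilon>0$, a family of compact sets $K_i\subseteq X_i$ ($i\in I$) with $\mu_i(K_i)\geq 1-\varepsilon$ and $\pi_{i,j}(K_j)\subseteq K_i$ whenever $i\leq j$. By cofinality of $J$, it is enough to construct such a compatible family along the chain $J$: once we have compact sets $K_{j_n}\subseteq X_{j_n}$ with $\mu_{j_n}(K_{j_n})\geq 1-\varepsilon$ and $\pi_{j_m,j_n}(K_{j_n})\subseteq K_{j_m}$ for $m\leq n$, we define for arbitrary $i\in I$ the set $K_i:=\pi_{i,j_n}(K_{j_n})$ for some $j_n\geq i$; one checks using the cocycle identity $\pi_{i,j}\circ\pi_{j,k}=\pi_{i,k}$ and the chain compatibility that this is independent of the choice of $n$ and that the resulting family satisfies $\pi_{i,j}(K_j)\subseteq K_i$ for all $i\leq j$ in $I$ (here $K_i$ is compact as the continuous image of a compact set).

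\textbf{Construction along the chain.} Fix $\varepsilon>0$ and set $\varepsilon_n:=\varepsilon/2^n$. By inner regularity of the Radon measure $\mu_{j_n}$, choose compact sets $\widetilde{K}_n\subseteq X_{j_n}$ with $\mu_{j_n}(\widetilde{K}_n)\geq 1-\varepsilon_n$. Now define
$$
K_{j_n}:=\widetilde{K}_n\ \cap\ \bigcap_{m\geq n}\pi_{j_n,j_m}\big(\widetilde{K}_m\big).
$$
(The intersection is over the countably many indices $m\geq n$; each $\pi_{j_n,j_m}(\widetilde{K}_m)$ is compact, hence closed in the Hausdorff space $X_{j_n}$, so $K_{j_n}$ is a closed subset of the compact set $\widetilde{K}_n$ and therefore compact.) The compatibility $\pi_{j_m,j_n}(K_{j_n})\subseteq K_{j_m}$ for $m\leq n$ follows directly from this definition together with the cocycle identity $\pi_{j_m,j_k}\circ\pi_{j_k,j_n}=\pi_{j_m,j_n}$: indeed $\pi_{j_m,j_n}(K_{j_n})\subseteq\pi_{j_m,j_n}(\widetilde{K}_n)$ is one of the sets in the intersection defining $K_{j_m}$, and for each $k\geq m$ one has $\pi_{j_m,j_n}(K_{j_n})\subseteq\pi_{j_m,j_n}\big(\pi_{j_n,j_k}(\widetilde K_k)\big)=\pi_{j_m,j_k}(\widetilde K_k)$ when $k\geq n$, while for $m\leq k\leq n$ one uses $K_{j_n}\subseteq\pi_{j_n,?}$-image arguments together with pushforward invariance — more cleanly, it is enough to observe $\pi_{j_m,j_n}(K_{j_n})\subseteq\bigcap_{k\geq n}\pi_{j_m,j_k}(\widetilde K_k)$ and then note that the missing terms $m\le k< n$ are automatically superseded because $\pi_{j_m,j_k}(\widetilde K_k)\supseteq\pi_{j_m,j_n}(\pi_{j_n,j_k}(\widetilde K_k))\supseteq\pi_{j_m,j_n}(K_{j_n})$ as $K_{j_n}\subseteq\pi_{j_n,j_k}(\widetilde K_k)$?

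\textbf{The measure estimate.} It remains to bound $\mu_{j_n}(X_{j_n}\setminus K_{j_n})$. Using exactness, ${\pi_{j_n,j_m}}_{\#}\mu_{j_m}=\mu_{j_n}$, so
$$
\mu_{j_n}\big(X_{j_n}\setminus\pi_{j_n,j_m}(\widetilde K_m)\big)\leq \mu_{j_n}\big({\pi_{j_n,j_m}}(X_{j_m}\setminus\widetilde K_m)\big)\le\mu_{j_m}(X_{j_m}\setminus\widetilde K_m)\leq\varepsilon_m,
$$
where the first inequality holds because $X_{j_n}\setminus\pi_{j_n,j_m}(\widetilde K_m)\subseteq\pi_{j_n,j_m}(X_{j_m}\setminus\widetilde K_m)$ (any point not in the image of $\widetilde K_m$ lies, by surjectivity issues aside, in the image of the complement — if $\pi_{j_n,j_m}$ is not surjective one instead works with the thick set $\pi_{j_n,j_m}(X_{j_m})$ and notes the complement of the image has measure zero by thickness, which holds by Remark~\ref{epsilon-K->thick} or directly). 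Hence by subadditivity
$$
\mu_{j_n}(X_{j_n}\setminus K_{j_n})\leq\mu_{j_n}(X_{j_n}\setminus\widetilde K_n)+\sum_{m\geq n}\mu_{j_n}\big(X_{j_n}\setminus\pi_{j_n,j_m}(\widetilde K_m)\big)\leq\sum_{m\geq n}\varepsilon_m\leq\varepsilon,
$$
so $\mu_{j_n}(K_{j_n})\geq 1-\varepsilon$ as required. Feeding the family $\{K_i\}_{i\in I}$ built from the $K_{j_n}$ into Proposition~\ref{prop::compact-support-epsilon-K} gives Prokhorov's condition~\eqref{epsilon-K}.

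\textbf{Main obstacle.} The delicate point is the handling of non-surjectivity of the connecting maps $\pi_{i,j}$: the inclusion $X_i\setminus\pi_{i,j}(\widetilde K)\subseteq\pi_{i,j}(X_j\setminus\widetilde K)$ is false in general, and one must instead intersect everything with the images $\pi_{i,j}(X_j)$ and invoke thickness (which is available here precisely because an exact projective system of Radon probability measures satisfying — or, as we are proving, destined to satisfy — Prokhorov's condition is thick, but more safely one argues thickness directly from exactness as in Lemma~\ref{lem::corres-cylqm-thickexact}'s proof). Equivalently, and more robustly, one replaces each $\widetilde K_m$ by $\widetilde K_m\cap\overline{\pi_{j_m}(X_I)}$ or simply notes that the estimate only ever needs $\mu_{j_n}\big(\pi_{j_n,j_m}(\widetilde K_m)\big)=\mu_{j_m}\big(\pi_{j_n,j_m}^{-1}(\pi_{j_n,j_m}(\widetilde K_m))\big)\geq\mu_{j_m}(\widetilde K_m)\geq 1-\varepsilon_m$, which follows from exactness with no surjectivity hypothesis at all — this is the cleanest route and I would organize the write-up around it.
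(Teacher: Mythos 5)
Your overall strategy --- build a compatible family of compact sets along the chain $J$ and feed it into Proposition~\ref{prop::compact-support-epsilon-K} --- is sound, and your ``cleanest route'' for the measure estimate, namely $\mu_{j_n}(\pi_{j_n,j_m}(\widetilde K_m))=\mu_{j_m}\bigl(\pi_{j_n,j_m}^{-1}(\pi_{j_n,j_m}(\widetilde K_m))\bigr)\geq\mu_{j_m}(\widetilde K_m)$, is correct and avoids all surjectivity issues. The genuine gap is in the compatibility step, and you have sensed it yourself: your argument for the ``missing terms $m\le k<n$'' ends in a question mark and invokes maps $\pi_{j_n,j_k}$ with $k<n$, which are not defined. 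Concretely, with $K_{j_n}:=\bigcap_{m\geq n}\pi_{j_n,j_m}(\widetilde K_m)$ one only obtains $\pi_{j_m,j_n}(K_{j_n})\subseteq\bigcap_{k\geq n}\pi_{j_m,j_k}(\widetilde K_k)$, whereas $K_{j_m}=\bigcap_{k\geq m}\pi_{j_m,j_k}(\widetilde K_k)$ also contains the terms with $m\leq k<n$, and nothing forces $\pi_{j_m,j_n}(K_{j_n})\subseteq\pi_{j_m,j_k}(\widetilde K_k)$ for those $k$, because the $\widetilde K_k$ were chosen independently and the sets $\pi_{j_m,j_k}(\widetilde K_k)$ are not nested in $k$. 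Already in the constant system where every $X_i=[0,1]$, every $\pi_{i,j}=\mathrm{id}$ and every $\mu_i$ is Lebesgue measure, taking $\widetilde K_1=[0,1-\varepsilon/2]$ and $\widetilde K_m=[0,1]$ for $m\geq 2$ gives $\pi_{j_1,j_2}(K_{j_2})=[0,1]\not\subseteq[0,1-\varepsilon/2]=K_{j_1}$.

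The repair is to intersect with \emph{preimages from below} rather than images from above: set $L_n:=\widetilde K_n\cap\bigcap_{m<n}\pi_{j_m,j_n}^{-1}(\widetilde K_m)$. This is compact (closed in the compact set $\widetilde K_n$), satisfies $\pi_{j_m,j_n}(L_n)\subseteq L_m$ by the cocycle identity, and by exactness $\mu_{j_n}\bigl(X_{j_n}\setminus\pi_{j_m,j_n}^{-1}(\widetilde K_m)\bigr)=\mu_{j_m}(X_{j_m}\setminus\widetilde K_m)\leq\varepsilon_m$, whence $\mu_{j_n}(L_n)\geq 1-\varepsilon$. This is essentially what the paper does, except that it builds the $K_n$ recursively inside $\pi_{n-1,n}^{-1}(K_{n-1})$ and then assembles $C_\varepsilon=\bigcap_n\pi_n^{-1}(K_n)$ directly via Lemma~\ref{lem::intersection-compact} instead of quoting Proposition~\ref{prop::compact-support-epsilon-K}. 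A second, smaller point: in your reduction from $J$ to $I$, the set $\pi_{i,j_n}(K_{j_n})$ is \emph{not} independent of the choice of $j_n\geq i$ --- it decreases as $n$ grows --- so you should instead take $K_i:=\bigcap_{n:\,j_n\geq i}\pi_{i,j_n}(K_{j_n})$, whose measure is still at least $1-\varepsilon$ by continuity from above together with exactness.
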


\proof
Let us first consider the case when $I=\NN$. 

Since $\mu_1$ is inner regular w.r.t.\! compact subsets of $X_1$, there exists $K_1\subset X_1$ compact such that 
$\mu_1(K_1) \geq 1- \frac{\varepsilon}{2}.$
As $\pi_{1,2}^{-1}(K_1)\in\B_2$, using the inner regularity of $\mu_2$ w.r.t.\! compact subsets of $X_2$, we can choose $K_2\subset \pi_{1,2}^{-1}(K_1)$ compact such that $\mu_2(K_2) \geq \mu_2(\pi_{1,2}^{-1}(K_1))- \frac{\varepsilon}{2^2}=\mu_1(K_1)-\frac{\varepsilon}{2^2}.$
Hence, we can inductively construct a sequence $(K_n)_{n\in\NN}$ such that for each $n\in\NN$: $K_n\subset X_n$ is compact, $K_{n} \subseteq \pi_{n-1,n}^{-1}(K_{n-1})$ and 
$$
\mu_{n+1}(K_{n+1})\geq \mu_{n}(K_{n}) - \frac{\varepsilon}{2^{n+1}}.
$$
By iterating, we get that for all $n\in\NN$
\begin{equation}\label{eq::rec-ind}
\mu_n(K_n)\geq 1-\varepsilon\sum_{k=1}^n2^{-k}\geq 1-\varepsilon.
\end{equation}

Fixed $m\in\NN$, the sequence $(\pi_{m,n}(K_n))_{n\in\NN}$ is a decreasing sequence of compact subsets of $X_m$, as for all $n\geq m$ we get
$\pi_{m, n+1}(K_{n+1})\subseteq \pi_{m, n+1}(\pi_{n,{n+1}}^{-1}(K_{n}))\subseteq\pi_{m, n}(K_n).$
Then, Lemma \ref{lem::intersection-compact} guarantees that $C_\varepsilon:= \bigcap_{n\in\NN} \pi_n^{-1}(K_n)\subset X_I$ is compact and 
\be\label{limite2}
\pi_m(C_\varepsilon)=\bigcap_{n\geq m}\pi_{m,n}(K_n).
\ee
Combining \eqref{limite2} with the continuity of $\mu_m$, we get that
$
\lim\limits_{n\to\infty}\mu_m(\pi_{m,n}(K_n))=\mu_m(\pi_m(C_\varepsilon)).
$
This together with the fact that for any $n\geq m$
$$\mu_m(\pi_{m,n}(K_n))= \mu_n(\pi_{m,n}^{-1}(\pi_{m,n}(K_n)))\geq  \mu_n(K_n)\stackrel{\eqref{eq::rec-ind}}{\geq }1-\varepsilon$$
implies that $\mu_m(\pi_m(C_\varepsilon))\geq 1-\varepsilon$. Hence, \eqref{epsilon-K} holds for $I=\NN$.

Consider a countable cofinal subset $J$ of $I$. Then $X_I=X_J$ as topological spaces (see Proposition \ref{prop::cofinal-proj-limit} and Remark \ref{rem::cofinal-proj-limit}) and the result easily follows.
\endproof

\subsection{Extension to the cylinder $\sigma-$algebra $\Sigma_I$}\label{sec:ext-cyl}\

The problem of extending cylindrical quasi-measures to actual measures has been extensively studied since Kolmogorov's result \cite{K-33} (see, e.g., \cite{B55, M63, F72}). In this subsection we gather in Theorem~\ref{ext-cylinderSigmaAlg} the most important result needed for our purposes. We follow the exposition in \cite[Theorems 8.2~and~9.3]{Y85}.
We will first consider the case of a projective system of Hausdorff topological spaces whose index set contains a countable cofinal subset and then proceed with the general case.

\begin{lem}\label{lem::content}
Let $(I, \leq)$ be a directed partially ordered set, $\T:=\{(X_i, \tau_i), \pi_{i,j}, I\}$ a projective system of Hausdorff topological spaces, and $\{(X_I, \tau_I), \pi_i, I\}$ its projective limit.
If $I$ contains a countable cofinal subset, then any exact projective system $\{\mu_i: i\in I\}$ of Radon probability measures w.r.t.\!~$\T$ is thick and the corresponding cylindrical quasi-measure uniquely extends to a probability measure on~$\B_I$.
\end{lem}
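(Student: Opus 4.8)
The plan is to combine the results already assembled in this subsection and the previous one. First I would note that thickness is immediate: by Proposition~\ref{prop-eps-K}, since $I$ contains a countable cofinal subset, the exact projective system $\{\mu_i:i\in I\}$ of Radon probability measures fulfils Prokhorov's condition~\eqref{epsilon-K}, and by Remark~\ref{epsilon-K->thick} any such system is thick. Hence Lemma~\ref{lem::corres-cylqm-thickexact} applies and there is a well-defined cylindrical quasi-measure $\mu$ on $\C_I$ corresponding to $\{\mu_i:i\in I\}$, characterized by $\mu(\pi_i^{-1}(E_i))=\mu_i(E_i)$.

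Next I would show that $\mu$ is in fact $\sigma$-additive on the algebra $\C_I$, i.e.\ a pre-measure. The standard reduction: finite additivity is already built in, so it suffices to check that $\mu$ is continuous at $\emptyset$ along $\C_I$, that is, if $(A_n)_{n\in\NN}\subseteq\C_I$ is a decreasing sequence with $\inf_n\mu(A_n)=\delta>0$, then $\bigcap_n A_n\neq\emptyset$. Writing $A_n=\pi_{i_n}^{-1}(E_n)$ with $E_n\in\B_{i_n}$, one may by directedness of $I$ (and, if necessary, passing to the countable cofinal subset so that only countably many indices occur) arrange the $A_n$ along a single increasing chain of indices; by inner regularity of the $\mu_{i_n}$ one replaces each $E_n$ by a compact subset, losing less than $\delta/2^{n+1}$ in measure, producing compact cylinders $C_n\subseteq A_n$ with $\mu(C_n)\ge\delta/2$ and $\pi_{i,j}$-compatibility. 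Then Lemma~\ref{lem::intersection-compact} guarantees $\bigcap_n C_n$ is a nonempty compact subset of $X_I$ (nonempty because it is an intersection of a decreasing chain of nonempty compacta), whence $\bigcap_n A_n\neq\emptyset$. This is essentially the argument already carried out in the proof of Proposition~\ref{prop-eps-K}, and indeed one can instead invoke Prokhorov's condition directly: the compact set $C_\varepsilon$ has $\mu$-inner-measure at least $1-\varepsilon$ on every cylinder containing it.

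Once $\mu$ is a pre-measure on the algebra $\C_I$, Carathéodory's extension theorem yields an extension to a measure on $\sigma(\C_I)=\Sigma_I=\B_I$ (the last equality by Proposition~\ref{prop::Tobias}, using the countable cofinal subset), and this extension is a probability measure since $\mu(X_I)=\mu(\pi_i^{-1}(X_i))=\mu_i(X_i)=1$. Uniqueness follows from the uniqueness part of Carathéodory's theorem because $\C_I$ is an algebra generating $\Sigma_I$ on which $\mu$ is finite (indeed a probability).

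The main obstacle is the $\sigma$-additivity (continuity at $\emptyset$) step: everything else is bookkeeping with results already proved. The delicate point there is handling a countable decreasing family of cylinders whose base indices need not be comparable — one must first use directedness (and the countable cofinal subset) to line them up over a common increasing sequence of indices — and then correctly chaining the inner-regular compact approximations so that the compatibility hypothesis $\pi_{i,j}(K_j)\subseteq K_i$ of Lemma~\ref{lem::intersection-compact} is met, so that the intersection of the approximating compacta is both nonempty and contained in $\bigcap_n A_n$. I expect to simply cite Proposition~\ref{prop-eps-K} and Remark~\ref{epsilon-K->thick} to shortcut most of this, reducing the proof to: Prokhorov $\Rightarrow$ thick $\Rightarrow$ cylindrical quasi-measure exists, then Prokhorov-type tightness $\Rightarrow$ $\sigma$-additive $\Rightarrow$ Carathéodory extension, then $\Sigma_I=\B_I$.
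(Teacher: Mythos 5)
Your proposal is correct in outline and matches the paper's proof in its framing: thickness via Proposition~\ref{prop-eps-K} plus Remark~\ref{epsilon-K->thick}, the reduction of $\sigma$-additivity to continuity at $\emptyset$ along decreasing cylinders, inner regularity to produce compact approximations of the bases, and finally the Carath\'eodory extension together with $\Sigma_I=\B_I$ from Proposition~\ref{prop::Tobias}. Where you genuinely diverge is in how you get nonemptiness of $\bigcap_n A_n$. The paper does \emph{not} chain the compacta compatibly: it keeps the sets $\tilde K_n'=\bigcap_{j\le n}\pi_{i_j}^{-1}(K_j)$, which are merely \emph{closed} cylinders of $\mu$-measure $>\varepsilon/2$, and intersects them with one globally chosen compact $C\subseteq X_I$ supplied by Prokhorov's condition with $\mu_{i_n}(\pi_{i_n}(C))>1-\varepsilon/2$; the measure inequality forces $\tilde K_n'\cap C\neq\emptyset$, and these form a decreasing sequence of nonempty compacta. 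This is exactly your fallback remark, and it has the advantage of requiring nothing about the indices $i_n$ beyond being increasing.

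Your primary route (build $K_n\subseteq E_n$ with $\pi_{i_{n-1},i_n}(K_n)\subseteq K_{n-1}$ and apply Lemma~\ref{lem::intersection-compact}) also works, but only after you upgrade ``if necessary, passing to the countable cofinal subset'' to a mandatory step: you must arrange that the increasing sequence $(i_n)_{n\in\NN}$ is \emph{cofinal} in $I$, by merging it with the given countable cofinal subset and replacing $E_n$ by its pullback. Lemma~\ref{lem::intersection-compact} needs compacta indexed by all of $I$ (equivalently, by a cofinal subset, after invoking Proposition~\ref{prop::cofinal-proj-limit}); if $(i_n)$ is not cofinal, the set $\bigcap_n\pi_{i_n}^{-1}(K_n)$ is only the preimage in $X_I$ of a nonempty compact subset of the projective limit over $\{i_n\}$, and that preimage can in principle be empty. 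Relatedly, your parenthetical ``nonempty because it is an intersection of a decreasing chain of nonempty compacta'' is imprecise: the sets $\pi_{i_n}^{-1}(K_n)$ are closed but not compact in $X_I$, and the nonemptiness comes instead from the projection formula $\pi_{i_1}\bigl(\bigcap_n\pi_{i_n}^{-1}(K_n)\bigr)=\bigcap_n\pi_{i_1,i_n}(K_n)$ of Lemma~\ref{lem::intersection-compact}, the right-hand side being a decreasing intersection of nonempty compacta \emph{in $X_{i_1}$}. With the cofinality step made explicit and the measure bookkeeping (the losses $\delta/2^{n+1}$ must absorb both the inner-regular approximation and the restriction of $K_n$ to $\pi_{i_{n-1},i_n}^{-1}(K_{n-1})$, using thickness to discard $E_n\setminus\pi_{i_{n-1},i_n}^{-1}(E_{n-1})$), your argument is complete and is a legitimate, arguably more self-contained, alternative to the paper's.
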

\begin{proof}
Let $\{\mu_i: i\in I\}$ be an exact projective system of Radon probability measures w.r.t.\!~$\T$. By combining Proposition \ref{prop-eps-K} with Remark \ref{epsilon-K->thick}, we easily get that $\{\mu_i: i\in I\}$ is thick. Let $\mu$ be the corresponding cylindrical quasi-measure, which is defined by
$\mu(\pi_i^{-1}(E_i)):=\mu_i(E_i)$ for all $i\in I$ and all $E_i\in\B_I$ and is finite since $\mu(X_I)=\mu(\pi_i^{-1}(X_i))=\mu_i(X_i)=1$.
Showing the $\sigma-$additivity of $\mu$ is equivalent to show that any decreasing sequence $(\tilde{E}_n)_{n\in\NN}$ in $\C_I$ with $\inf_{n\in\NN}\mu( \tilde{E}_n) >0$ satisfies $\bigcap_{n\in\NN} \tilde{E}_n \neq \emptyset$. 

Let $0<\varepsilon<\inf_{n\in\NN} \mu(\tilde{E}_n)$. For any $n\in\NN$, there exists $i_n\in I$ and $E_n\in\B_{i_n}$ such that $\tilde{E}_n=\pi_{i_n}^{-1}(E_n)$. Since $I$ is directed, we can always assume that $(i_n)_{n\in\NN}$ is increasing. Then, by the inner regularity of $\mu_{i_n}$ w.r.t.\! compact subsets of $X_{i_n}$, there exists $K_n\subset E_n$ compact such that 
$\mu_{i_n}(K_n) >\varepsilon$ and $\mu_{i_n}(K_n) > \mu_{i_n}(E_n)-\frac{\varepsilon}{4^n}$. 
For each $n\in\NN$, consider the closed set $\tilde{K}_n := \pi_{i_n}^{-1}(K_n)$ and note that 
\be\label{inters}
\tilde{K}_j\cap \tilde{K}_n =\tilde{K}_n \setminus (\tilde{E}_j\setminus \tilde{K}_j), \quad\forall j\leq n.
\ee
Then, for any $n\in\NN$, we get that
$$
\tilde{K}_n \setminus \bigcup_{j=1}^{n-1} (\tilde{E}_j\setminus \tilde{K}_j)=\bigcap_{j=1}^{n-1}\left( \tilde{K}_n \setminus (\tilde{E}_j\setminus \tilde{K}_j)\right) \stackrel{\eqref{inters}}{=}\bigcap_{j=1}^{n-1} \left(\tilde{K}_j\cap \tilde{K}_n\right)= \bigcap_{j=1}^{n} \tilde{K}_j.
$$
Considering now the closed cylinder set $\tilde{K}'_n:=\bigcap_{j=1}^n \tilde{K}_j\in\C_I$ and setting $F_n:=\bigcap_{j=1}^n \pi_{{i_j},{i_n}}^{-1}(K_j)\in X_{i_n}$ yields the following $$\tilde{K}'_n=\bigcap_{j=1}^n \pi_{i_j}^{-1}(K_j)=\pi_{i_n}^{-1}\left(\bigcap_{j=1}^n \pi_{{i_j},{i_n}}^{-1}(K_j)\right)=\pi_{i_n}^{-1}(F_n).$$ This implies $
\tilde{K}'_n\subseteq \pi_{i_n}^{-1}(\pi_{i_n}(\tilde{K}'_n))=\pi_{i_n}^{-1}(\pi_{i_n}(\pi_{i_n}^{-1}(F_n)))\subseteq \pi_{i_n}^{-1}(F_n)=\tilde{K}'_n
$ and so $\tilde{K}'_n=\pi_{i_n}^{-1}(\pi_{i_n}(\tilde{K}'_n))$. We obtain
\begin{eqnarray*}
\mu(\tilde{K}'_n)&=&\mu\left(\tilde{K}_n \setminus \bigcup_{j=1}^{n-1} (\tilde{E}_j\setminus \tilde{K}_j)\right)
\geq\mu(\tilde{K}_n)- \mu\left( \bigcup_{j=1}^{n-1} (\tilde{E}_j\setminus \tilde{K}_j)\right)\\
&\geq&\mu(\tilde{K}_n)- \sum_{j=1}^{n-1}\left(\mu(\tilde{E}_j)-\mu(\tilde{K}_j)\right)\\
&=&\mu(\pi_{i_n}^{-1}(K_n))- \sum_{j=1}^{n-1}\left(\mu(\pi_{i_j}^{-1}(E_j))-\mu(\pi_{i_j}^{-1}(K_j))\right)\\
&=&\mu_{i_n}(K_n)- \sum_{j=1}^{n-1}\left(\mu_{i_j}(E_j)-\mu_{i_j}(K_j)\right)\\
&>& \varepsilon-\sum_{j=1}^{n-1}\frac{\varepsilon}{4^j}>\varepsilon-\sum_{j=1}^{\infty}\frac{\varepsilon}{4^j}=\frac{2}{3}\varepsilon>\frac{1}{2}\varepsilon
\end{eqnarray*}
and so
\be\label{pre}
\mu_{i_n}(\pi_{i_n}(\tilde{K}'_n))=\mu(\pi_{i_n}^{-1}(\pi_{i_n}(\tilde{K}'_n)))=\mu(\tilde{K}'_n)>\frac{1}{2}\varepsilon.
\ee
As $I$ contains a countable cofinal subset, by Proposition~\ref{prop-eps-K}, there exists a compact subset $C$ of $X_I$ such that $\mu_{i_n}(\pi_{i_n}(C))>1-\tfrac{1}{2}\varepsilon$ for all $n\in\NN$. This together with~\eqref{pre} implies that $\pi_{i_n}(\tilde{K}'_n)\cap \pi_{i_n}(C)\neq\emptyset$. Hence, there exist $x\in \tilde{K}'_n$ and $y\in C$ such that $\pi_{i_n}(x)=\pi_{i_n}(y)$ and so $y\in \pi_{i_n}^{-1}(\pi_{i_n}(\tilde{K}'_n))= \tilde{K}'_n$, i.e., $y\in \tilde{K}'_n\cap C$. As for each $n\in\NN$ the set $\tilde{K}'_n\cap C$ is a non-empty compact subset of $X_I$ and $(\tilde{K}'_{n+1}\cap C)\subset(\tilde{K}'_n\cap C)$, we have that 
$$\emptyset\neq\bigcap_{n\in\NN}\left(\tilde{K}'_n\cap C\right)\subseteq\bigcap_{n\in\NN}\tilde{K}'_n \subseteq\bigcap_{n\in\NN}\tilde{E}_n,$$
which yields $\bigcap_{n\in\NN}\tilde{E}_n\neq\emptyset$. Therefore, $\mu$ is a finite pre-measure on $\C_I$ and so, by \cite[Theorem 5.6]{Bau}, uniquely extends to $\Sigma_I$. Moreover, $\Sigma_I=\B_I$ by Proposition \ref{prop::Tobias}.
\end{proof}

In the general case, when the index set $I$ does not contain a countable cofinal subset, we are going to combine the two-stage construction of $(X_I, \Sigma_I)$ introduced earlier (see Proposition \ref{lem::projective-limit}) with Lemma \ref{lem::content}.

\begin{rem}\label{doub-exact}
Consider
$$
\Lambda:=\{\lambda\subseteq I: \lambda\text{ contains a countable directed subset cofinal in }\lambda\}
$$
ordered by inclusion. For $\lambda\in\Lambda$, let $\{(X_\lambda,\Sigma_\lambda),\pi_i^\lambda,\lambda\}$ denote the projective limit of the projective system $\{(X_i,\Sigma_i),\pi_{i,j},\lambda\}$. 
Also, for any $\lambda\subseteq\kappa$ in $\Lambda$ consider the map $\pi^{\lambda,\kappa}$ defined in \eqref{eq::2}. Then $\{(X_\lambda,\B_\lambda),\pi^{\lambda,\kappa},\Lambda\}$ is a projective system of measurable spaces by Proposition \ref{prop::projective-system} (recall that $\Sigma_\lambda=\B_\lambda$ by Proposition~\ref{prop::Tobias}). Furthermore, given  an exact projective system $\{\mu_i: i\in I\}$ of Radon probability measures w.r.t.\!~$\{(X_i,\Sigma_i),\pi_{i,j},I\}$,
 we get:
\begin{enumerate}[(i)]
\item For any $\lambda\in\Lambda$, the exact projective system $\{\mu_i: i\in\lambda\}$ is thick and the corresponding cylindrical quasi-measure uniquely extends to a probability measure $\mu_\lambda$ on $(X_\lambda,\B_\lambda)$. 
\item The family $\{\mu_\lambda:\lambda\in\Lambda\}$ is an exact projective system of probability measures w.r.t.\! $\{(X_\lambda,\B_\lambda),\pi^{\lambda,\kappa},\Lambda\}$. 
\end{enumerate}
Indeed, (i) simply follows by Lemma \ref{lem::content} as $\lambda$ contains a countable cofinal subset. To prove (ii), let us observe that for all $\lambda\subseteq\kappa$ in $\Lambda$ we have 
$$
{\pi^{\lambda,\kappa}}_{\#}\mu_\kappa((\pi_i^\lambda)^{-1}(E_i))=\mu_\kappa((\pi^{\lambda,\kappa})^{-1}((\pi_i^\lambda)^{-1}(E_i)))=\mu_\kappa((\pi_i^\kappa)^{-1}(E_i))=\mu_i(E_i),
$$
for all $i\in\lambda$ and all $E_i\in\B_i$. This shows that ${\pi^{\lambda,\kappa}}_{\#}\mu_\kappa$ is an extension of the cylindrical quasi-measure corresponding to $\{\mu_i:i\in\lambda\}$ and so must coincide with $\mu_\lambda$ by the uniqueness of $\mu_\lambda$.
\end{rem}

\begin{thm}\label{ext-cylinderSigmaAlg}
Let $(I, \leq)$ be a directed partially ordered set, $\T:=\{(X_i, \tau_i), \pi_{i,j}, I\}$ a projective system of Hausdorff topological spaces, $\{(X_I, \tau_I), \pi_i, I\}$ its projective limit and $\{\mu_i: i\in I\}$ an exact projective system of Radon probability measures w.r.t.\!~$\T$.
For any $\lambda\in \Lambda$, as in Remark \ref{doub-exact}-(i), let $\mu_\lambda$  be the probability measure on $\B_\lambda$ extending the cylindrical quasi-measure associated with $\{\mu_i: i\in \lambda\}$.

Then there exists a unique probability measure $\nu$ on the cylinder $\sigma-$algebra $\Sigma_I$ on $X_I$ such that ${\pi_{i}}_{\#}\nu=\mu_i$ for all $i\in I$ if and only if the exact projective system $\{\mu_\lambda: \lambda\in \Lambda\}$ of Remark \ref{doub-exact}-(ii) is thick.
\end{thm}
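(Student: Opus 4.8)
The plan is to reduce the general case to the countable-cofinal case handled in Lemma~\ref{lem::content}, applied one more time ``at the $\Lambda$-level'', and then to transfer the resulting measure back along the two-stage construction of Proposition~\ref{lem::projective-limit}. By Proposition~\ref{prop::C-Lambda} we know $\Sigma_I=\Sigma_\Lambda=\C_\Lambda$, so a probability measure on $\Sigma_I$ is the same thing as a probability measure on the cylinder $\sigma$-algebra attached to the projective system $\P_\Lambda=\{(X_\lambda,\B_\lambda),\pi^{\lambda,\kappa},\Lambda\}$, and by Lemma~\ref{lem::corres-cylqm-thickexact} such a measure amounts to a \emph{thick} exact projective system of measures on $\P_\Lambda$. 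Thus the statement we must prove is essentially: the family $\{\mu_\lambda:\lambda\in\Lambda\}$ of Remark~\ref{doub-exact}-(ii) extends to a (unique) probability measure $\nu$ on $\Sigma_\Lambda=\Sigma_I$ with ${\pi_i}_\#\nu=\mu_i$ for all $i\in I$ \emph{if and only if} $\{\mu_\lambda:\lambda\in\Lambda\}$ is thick.

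For the ``only if'' direction I would argue directly from Lemma~\ref{lem::corres-cylqm-thickexact} (or Remark~\ref{prop::cyl-quasi-measure}): if $\nu$ exists with ${\pi_i}_\#\nu=\mu_i$ for all $i$, then for each $\lambda\in\Lambda$ one checks ${\pi^\lambda}_\#\nu=\mu_\lambda$ — indeed ${\pi_i^\lambda}_\#({\pi^\lambda}_\#\nu)={\pi_i}_\#\nu=\mu_i$ for all $i\in\lambda$, so ${\pi^\lambda}_\#\nu$ is an extension of the cylindrical quasi-measure of $\{\mu_i:i\in\lambda\}$ and hence equals $\mu_\lambda$ by the uniqueness in Remark~\ref{doub-exact}-(i). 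Then $\{\mu_\lambda:\lambda\in\Lambda\}=\{{\pi^\lambda}_\#\nu:\lambda\in\Lambda\}$ is thick: if $M\in\B_\lambda$ satisfies $M\cap\pi^\lambda(X_I)=\emptyset$ then $(\pi^\lambda)^{-1}(M)=\emptyset$, so $\mu_\lambda(M)=\nu((\pi^\lambda)^{-1}(M))=0$. For the ``if'' direction, suppose $\{\mu_\lambda:\lambda\in\Lambda\}$ is thick. By Lemma~\ref{lem::corres-cylqm-thickexact} applied to $\P_\Lambda$, there is a cylindrical quasi-measure $\mu_\Lambda$ on $\C_\Lambda$ corresponding to it, determined by $\mu_\Lambda((\pi^\lambda)^{-1}(E))=\mu_\lambda(E)$. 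Since $\C_\Lambda=\Sigma_\Lambda=\Sigma_I$ is already a $\sigma$-algebra (Proposition~\ref{prop::C-Lambda}), I must still verify that this finitely additive $\mu_\Lambda$ is in fact $\sigma$-additive. Here is where the directedness of $\Lambda$ with respect to countable subfamilies (Proposition~\ref{prop::Lambda-directed}) does the work: any countable disjoint family $(\pi^{\lambda_n})^{-1}(E_n)$ in $\C_\Lambda$ can be rewritten, via a common upper bound $\kappa\in\Lambda$, as a disjoint family $(\pi^\kappa)^{-1}((\pi^{\lambda_n,\kappa})^{-1}(E_n))$ with all sets pulled back from the \emph{single} measure space $(X_\kappa,\B_\kappa)$, whence $\sigma$-additivity of $\mu_\Lambda$ on this family follows from the $\sigma$-additivity of $\mu_\kappa$ together with the thickness of $\pi^\kappa(X_I)$ w.r.t.\ $\mu_\kappa$ (so that disjointness of the pullbacks transfers to essential disjointness downstairs, as in the well-definedness argument in the proof of Lemma~\ref{lem::corres-cylqm-thickexact}). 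Thus $\mu_\Lambda=:\nu$ is a genuine probability measure on $\Sigma_I$, and by construction ${\pi_i}_\#\nu={\pi_i^\lambda}_\#({\pi^\lambda}_\#\nu)={\pi_i^\lambda}_\#\mu_\lambda=\mu_i$ for any $\lambda\ni i$.

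Uniqueness of $\nu$ follows because $\Sigma_I$ is generated by the algebra $\C_I$ (the cylinder sets $\pi_i^{-1}(E_i)$) on which the constraint ${\pi_i}_\#\nu=\mu_i$ forces $\nu(\pi_i^{-1}(E_i))=\mu_i(E_i)$; since $\nu$ is a finite measure and two finite measures agreeing on a generating algebra agree on the generated $\sigma$-algebra (Dynkin / \cite[Theorem~5.6]{Bau}), $\nu$ is determined. I expect the main obstacle to be the careful bookkeeping in the $\sigma$-additivity verification for $\mu_\Lambda$: one has to be precise that pulling a countable disjoint family up to a common index $\kappa$ preserves (essential) disjointness only modulo a $\mu_\kappa$-null set, which is exactly why thickness of $\{\mu_\lambda\}$ — equivalently thickness of $\pi^\kappa(X_I)$ w.r.t.\ $\mu_\kappa$ for the relevant $\kappa$ — is the right and necessary hypothesis; the rest is the diagram-chasing already packaged in Propositions~\ref{lem::projective-limit}, \ref{prop::Lambda-directed} and~\ref{prop::C-Lambda} and in Remark~\ref{doub-exact}.
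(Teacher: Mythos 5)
Your proposal is correct and follows essentially the same route as the paper's proof: the forward direction via the uniqueness of $\mu_\lambda$ forcing ${\pi^\lambda}_\#\nu=\mu_\lambda$, and the converse by building the cylindrical quasi-measure $\mu_\Lambda$ on $\C_\Lambda=\Sigma_I$ from the thick system, verifying $\sigma$-additivity by lifting a countable disjoint family to a common $\kappa\in\Lambda$ (Proposition~\ref{prop::Lambda-directed}) and discarding the $\mu_\kappa$-null overlaps that thickness controls, with uniqueness from agreement on the $\cap$-stable generator $\C_I$. This matches the paper's argument in all essentials.
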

\begin{proof}
Suppose that the exact projective system {$\{\mu_\lambda: \lambda\in \Lambda\}$} is also thick. Then the cylindrical quasi-measure $\mu_\Lambda$ corresponding to $\{\mu_\lambda:\lambda\in\Lambda\}$ is well-defined on $\C_\Lambda$ by Lemma~\ref{lem::corres-cylqm-thickexact}, and $\C_\Lambda=\Sigma_\Lambda=\Sigma_I$ by Proposition~\ref{prop::C-Lambda}. We claim that $\mu_\Lambda$ is actually itself a probability measure extending the cylindrical quasi-measure $\mu_I$ corresponding to $\{\mu_i:i\in I\}$ on $\C_I$. Indeed, for $i\in I$, $\lambda\in\Lambda$ such that $i\in\lambda$ and $E_i\in\B_i$, we have 
$$
\mu_\Lambda(\pi_i^{-1}(E_i))\!=
\mu_\lambda((\pi_i^\lambda)^{-1}(E_i))\!=\mu_i(E_i)\!=\mu_I(\pi_i^{-1}(E_i)).
$$
Hence, $\mu_\Lambda\!\restriction_{\C_I}=\mu_I$. In particular, $X_\indexset=\pi_i^{-1}(X_i)\in\C_\indexset$ (for any $i\in\indexset$) and so, $\mu_\Lambda(X_I)=\mu_\indexset(\pi_i^{-1}(X_i))=\mu_i(X_i)=1$.

Let us show that $\mu_\Lambda$ is $\sigma-$additive. For any $(\lambda_n)_{n\in\NN}\subseteq\Lambda$, there is $\kappa\in\Lambda$ such that $\lambda_n\subseteq\kappa$ for all $n\in\NN$ by Proposition \ref{prop::Lambda-directed}. For any $n\in\NN$, let $E_n\in\B_{\lambda_n}$ be such that $\{(\pi^{\lambda_n})^{-1}(E_n):n\in\NN\}$ consists of pairwise disjoint cylinder sets in~$\C_\Lambda$. Then
$$
\mu_\Lambda\left(\bigcup_{n\in\NN}(\pi^{\lambda_n})^{-1}(E_n)\right)=\mu_\Lambda\left((\pi^\kappa)^{-1}\left(\bigcup_{n\in\NN}(\pi^{\lambda_n,\kappa})^{-1}(E_n)\right)\right)=\mu_\kappa\left(\bigcup_{n\in\NN}F_n\right),
$$
where for convenience we set $F_n:=(\pi^{\lambda_n,\kappa})^{-1}(E_n)$. For any $m\neq n$ in $\NN$, we have
$$
\emptyset = (\pi^{\lambda_m})^{-1}(E_m)\cap (\pi^{\lambda_n})^{-1}(E_n)=(\pi^\kappa)^{-1}(F_m\cap F_n)
$$
and hence, $\mu_\kappa(F_m\cap F_n)=\mu_\Lambda((\pi^\kappa)^{-1}(F_m\cap F_n))=\mu_\Lambda(\emptyset)=0$. For a fixed $n\in\NN$, define $G_n:=\bigcup_{m\neq n} (F_m\cap F_n)\in\B_\kappa$. Then $\mu_\kappa(G_n)=0$ as $G_n$ is a countable union of null sets and so
$\mu_\kappa\left(\bigcup_{n\in\NN}F_n\right)= \mu_\kappa\left(\bigcup_{n\in\NN}(F_n\backslash G_n)\right)$.
By construction, for any $m\neq n$ in $\NN$ we have $(F_m\backslash G_m)\cap (F_n\backslash G_n)=\emptyset$. 
Thus,
$$
\mu_\kappa\left(\bigcup_{n\in\NN}(F_n\backslash G_n)\right) = \sum_{n\in\NN}\mu_\kappa(F_n\backslash G_n)=\sum_{n\in\NN}\mu_\kappa(F_n)= 
\mu_\Lambda((\pi^{\lambda_n})^{-1}(E_n)),
$$
which combined with the previous steps yields the $\sigma-$additivity of $\mu_\Lambda$.
Hence, $\mu_\Lambda$ is a probability measure on $\Sigma_\Lambda=\Sigma_I$. The uniqueness of $\mu_\Lambda$ follows from \cite[Theorem~5.4]{Bau} as $\mu_\Lambda$ is finite and $\C_I$ is an $\cap-$stable family generating $\Sigma_I$.

Conversely, suppose there exists a unique probability measure $\nu$ on the cylinder $\sigma-$algebra $\Sigma_I=\C_\Lambda$ on $X_I$ such that ${\pi_{i}}_{\#}\nu=\mu_i$ for all $i\in I$. Let $\lambda\in\Lambda$. Then ${\pi_i^\lambda}_{\#}({\pi^\lambda}_{\#}\nu)={\pi_i}_{\#}\nu=\mu_i$ for all $i\in\lambda$, which shows that ${\pi^\lambda}_{\#}\nu$ is an extension of the cylindrical quasi-measure corresponding to $\{\mu_i:i\in\lambda\}$. Thus, ${\pi^\lambda}_{\#}\nu=\mu_\lambda$ by the uniqueness of $\mu_\lambda$, i.e., $\nu$ is the cylindrical quasi-measure on $\C_\Lambda$ corresponding to $\{\mu_\lambda:\lambda\in\Lambda\}$, and so $\{\mu_\lambda:\lambda\in\Lambda\}$ is thick by Lemma~\ref{lem::corres-cylqm-thickexact}.
\end{proof}

Using Remark~\ref{prop::cyl-quasi-measure}-(i) we can easily derive the following corollary. 

\begin{cor}\label{cor::yamasaki-surjective}
Let $(I, \leq)$ be a directed partially ordered set, $\T:=\{(X_i, \tau_i), \pi_{i,j}, I\}$ a projective system of Hausdorff topological spaces, $\{(X_I, \tau_I), \pi_i, I\}$ its projective limit and $\{\mu_i: i\in I\}$ an exact projective system of Radon probability measures w.r.t.\!~$\T$. Assume that for each $\lambda\in\Lambda$ the map $\pi^\lambda$ is surjective (see \eqref{eq::3} and Remark~\ref{doub-exact}).

Then there exists a unique probability measure $\nu$ on the cylinder $\sigma-$algebra $\Sigma_I$ on $X_I$ such that ${\pi_{i}}_{\#}\nu=\mu_i$ for all $i\in I$.
\end{cor}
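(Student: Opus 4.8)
\textbf{Proof plan for Corollary~\ref{cor::yamasaki-surjective}.}
The plan is to deduce this from Theorem~\ref{ext-cylinderSigmaAlg}: it suffices to verify that, under the surjectivity hypothesis on the maps $\pi^\lambda$, the exact projective system $\{\mu_\lambda:\lambda\in\Lambda\}$ of Remark~\ref{doub-exact}-(ii) is thick. Once that is established, Theorem~\ref{ext-cylinderSigmaAlg} yields immediately the desired unique probability measure $\nu$ on $\Sigma_I$ with ${\pi_i}_{\#}\nu=\mu_i$ for all $i\in I$, and there is nothing further to prove.

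So first I would recall that thickness of $\{\mu_\lambda:\lambda\in\Lambda\}$ means: for every $\lambda\in\Lambda$, the set $\pi^\lambda(X_I)\subseteq X_\lambda$ is thick with respect to $\mu_\lambda$, i.e.\ $\mu_\lambda(M)=0$ for every $M\in\B_\lambda$ with $\pi^\lambda(X_I)\cap M=\emptyset$. Now fix $\lambda\in\Lambda$ and such an $M$. Since by hypothesis $\pi^\lambda:X_I\to X_\lambda$ is surjective, we have $\pi^\lambda(X_I)=X_\lambda$, so $M\cap X_\lambda=\emptyset$ forces $M=\emptyset$, and hence $\mu_\lambda(M)=\mu_\lambda(\emptyset)=0$ trivially. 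This is exactly the content of Remark~\ref{prop::cyl-quasi-measure}-(i): surjectivity of all the structural maps makes every exact projective system automatically thick. Thus $\{\mu_\lambda:\lambda\in\Lambda\}$ is thick.

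Applying Theorem~\ref{ext-cylinderSigmaAlg} with this verified thickness condition then gives the existence and uniqueness of $\nu$, completing the proof. The argument is entirely routine; the only point requiring any care is to make sure one is invoking the \emph{right} notion of thickness, namely thickness of the second-stage system $\{\mu_\lambda:\lambda\in\Lambda\}$ on the projective system $\P_\Lambda=\{(X_\lambda,\B_\lambda),\pi^{\lambda,\kappa},\Lambda\}$ — which is governed by the maps $\pi^\lambda:X_I\to X_\lambda$ — rather than thickness of the original system $\{\mu_i:i\in I\}$, which is governed by the $\pi_i$. There is, strictly speaking, no ``main obstacle'': the substance of the corollary lies entirely in Theorem~\ref{ext-cylinderSigmaAlg}, and this statement merely packages the sufficient condition there into a readily checkable hypothesis on the surjectivity of the $\pi^\lambda$.
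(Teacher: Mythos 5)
Your proposal is correct and follows exactly the route the paper intends: the paper derives this corollary from Theorem~\ref{ext-cylinderSigmaAlg} by noting (via Remark~\ref{prop::cyl-quasi-measure}-(i), applied to the second-stage system $\{\mu_\lambda:\lambda\in\Lambda\}$ on $\P_\Lambda$ whose limit projections are the $\pi^\lambda$) that surjectivity of the $\pi^\lambda$ makes that system thick. You have correctly identified the one point needing care, namely that the relevant thickness is that of $\{\mu_\lambda:\lambda\in\Lambda\}$ rather than of $\{\mu_i:i\in I\}$.
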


\begin{rem}\label{ext-cylinderSigmaAlg-rem-support}\
\begin{enumerate}[(i)]
\item If in Theorem \ref{ext-cylinderSigmaAlg} each $\mu_i$ with $i\in\indexset$ is supported in a closed subset $K^{(i)}$ of $X_i$, then $\mu(\pi_i^{-1}(K^{(i)}))=\mu_i(K^{(i)})=1$ and hence, $\mu$ is supported in each $\pi_i^{-1}(K^{(i)})$. In particular, the measure $\mu$ is supported in $\bigcap_{i\in J}\pi_i^{-1}(K^{(i)})$ for each countable subset $J$ of $\indexset$.
\item If $K^{(i)}=\overline{\pi_i(K)}$ for some $K\subseteq X_\indexset$ such that 
\begin{equation}\label{eq::preimage-closed}
\overline{K}=(\pi^\lambda)^{-1}(E)\text{ for some }\lambda\in\superindexset\text{ and }E\subseteq X_\lambda\text{ closed},
\end{equation}
then there exists a countable $J\subseteq\indexset$ such that $\overline{K}=\bigcap_{i\in \subindexset}\pi_i^{-1}(\overline{\pi_i(K)})$ and so $\mu$ is supported in $\overline{K}$ by (i). Note that \eqref{eq::preimage-closed} holds in the following two cases:
\begin{enumerate}[(a)]
\item\label{item-open} $K=(\pi^\lambda)^{-1}(F)$ with $\lambda\in\Lambda$, $F\subseteq X_\lambda$ and $\pi_\lambda$ open,
\item\label{item-closed} $\overline{K}=(\pi^\lambda)^{-1}(F)$ with $\lambda\in\Lambda$, $F\subseteq X_\lambda$ and $\pi_\lambda$ closed,
\end{enumerate}
since taking $E:=\overline{F}$ in (a) and $E:=\overline{\pi^\lambda(K)}$ in (b) yields\ $\overline{K}=(\pi^\lambda)^{-1}(E)$.
\end{enumerate}
\end{rem}

\subsection{Extension to Radon measures}\label{sec:ext-Rad}\

In this subsection we are going to give a detailed proof of a result due to Prokhorov, Theorem~\ref{Prokh}, which provides a necessary and sufficient condition to extend cylindrical quasi-measures to Radon measures (see \cite{Prokh}, \cite[IX., \S 4.2, Theorem~1]{BouInt}, \cite[Part I, Chapter I, \S 10, Theorem 21]{S73}). Before doing so, let us introduce some notation and an extension theorem by Choksi \cite[Theorem 1]{Ch58}, which is fundamental in the proof of Prokhorov's theorem.

\begin{dfn}
Let $\mu$ be a measure on a $\sigma-$ring $\mathcal{R}$ on a set $X$ and $\mathcal{F}\subseteq\mathcal{R}$. 
\begin{enumerate}[(a)]
\item $\mu$ is inner regular w.r.t.\! $\mathcal{F}$ if $\mu(M)=\sup\{\mu(U):U\in \mathcal{F}, U\subseteq M\}, \forall M\in\mathcal{F}$.
\item $\mu$ is outer regular w.r.t.\! $\mathcal{F}$ if $\mu(M)=\inf\{\mu(U):U\in \mathcal{F}, M\subseteq U\}, \forall M\in\mathcal{F}$.
\end{enumerate}
\end{dfn}

We restate the Choksi result in a form which is less general than the original one in \cite[Theorem 1]{Ch58} but sufficient for our purposes.

\begin{thm}\label{choksi-thm}
Let $X$ be a Hausdorff topological space. Denote by $\mathcal{F}$ a collection of compact subsets of $X$ which contains the empty set and is closed under finite unions and countable intersections.
Let $\mu$ be a finite, non-negative and real-valued set function on $\mathcal{F}$ such that
\begin{enumerate}[(a)]
\item $\mu$ is monotone, 
\item $\mu$ is finitely additive and sub-additive,
\item\label{cond3} If $K_1 ,K_2\in\mathcal{F}$ with $K_1\subseteq K_2$, then for any $\varepsilon >0$ there exists $K_3\in\mathcal{F}$ such that $ K_3 \subset K_2 \setminus K_1$ and $\mu(K_1)+ \mu(K_3 ) \geq \mu(K_2) - \varepsilon$.
\end{enumerate}
Then $\mu$ can be uniquely extended to a finite measure on the $\sigma-$ring $S(\F)$ generated by $\mathcal{F}$, which is inner regular w.r.t.\! $\mathcal{F}$.
\end{thm}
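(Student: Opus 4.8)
\textbf{Proof proposal for Theorem~\ref{choksi-thm}.}

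The plan is to build an outer measure from $\mu$ via the usual Carath\'eodory machinery, adapted to the fact that $\mathcal{F}$ consists of compact sets closed only under finite unions and countable intersections (so it is a lattice of sets, not a ring). First I would introduce, for an arbitrary subset $A\subseteq X$, the inner-type set function $\mu_*(A):=\sup\{\mu(K):K\in\mathcal{F},\,K\subseteq A\}$ on the one hand, and on the other an outer set function obtained by approximating from within relatively open sets: for $U$ in the lattice $\mathcal{U}$ of sets of the form $X\setminus K$ with $K\in\mathcal{F}$, set $\lambda(U):=\sup\{\mu(K):K\in\mathcal{F},\,K\subseteq U\}$, and then for arbitrary $A$ put $\mu^*(A):=\inf\{\lambda(U):U\in\mathcal{U},\,A\subseteq U\}$ (restricting attention throughout to subsets of a fixed ambient $K_0\in\mathcal F$, or working with the directed system of such, since $\mu$ is only finite). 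The monotonicity, finite additivity and sub-additivity hypotheses (a)--(b) guarantee $\lambda$ is well-behaved on $\mathcal{U}$, and hypothesis \eqref{cond3} is exactly what is needed to show that $\mu^*$ restricted back to $\mathcal{F}$ recovers $\mu$: given $K\in\mathcal{F}$ and $U\supseteq K$ open with complement $K'\in\mathcal F$, apply \eqref{cond3} to $K'\subseteq K_0$ (or to the relevant pair inside a common compact) to extract a compact $K_3\subseteq K_0\setminus K'\subseteq U$ with $\mu(K_3)$ close to $\mu(K_0)-\mu(K')\geq\mu(K)$; this forces $\mu^*(K)\geq\mu(K)$, while $\mu^*(K)\leq\mu(K)$ is immediate from monotonicity.

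Next I would verify that $\mu^*$ is an outer measure in the Carath\'eodory sense: non-negativity and monotonicity are clear, and countable subadditivity follows from subadditivity of $\mu$ on $\mathcal{F}$ together with the countable-intersection closure of $\mathcal{F}$, which lets one pass compact approximants through countable unions by a standard $\varepsilon/2^n$ argument. Then one checks that every $K\in\mathcal{F}$ is $\mu^*$-measurable (Carath\'eodory-splits every test set); here again hypothesis \eqref{cond3} is the workhorse, used to split the compact approximant of a test set $A$ across $K$ and its complement with negligible loss. By Carath\'eodory's theorem the $\mu^*$-measurable sets form a $\sigma$-algebra on which $\mu^*$ is a measure, and since it contains $\mathcal{F}$ it contains the generated $\sigma$-ring $S(\mathcal{F})$; restricting $\mu^*$ to $S(\mathcal{F})$ gives the desired extension, which is finite because $\mu$ is. Inner regularity w.r.t.\ $\mathcal{F}$ is then read off from the construction: for $M\in S(\mathcal{F})$ one approximates by the open sets $U$ from outside and, inside each such approximation, by compacts from $\mathcal{F}$, and a diagonal choice produces a single compact $K\subseteq M$ with $\mu^*(K)$ close to $\mu^*(M)$.

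Uniqueness is the easiest part: any two finite measures on $S(\mathcal{F})$ that are inner regular w.r.t.\ $\mathcal{F}$ and agree on $\mathcal{F}$ must agree on $S(\mathcal{F})$, because inner regularity reduces equality on a general set to equality on its compact subsets, where the two measures coincide by hypothesis; alternatively one invokes a Dynkin/$\pi$--$\lambda$ argument using that $\mathcal{F}$ is $\cap$-stable and generates $S(\mathcal{F})$, as in \cite[Theorem~5.4]{Bau}.

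The main obstacle I anticipate is \emph{not} the Carath\'eodory bookkeeping but the precise handling of hypothesis \eqref{cond3}: because $\mathcal{F}$ is closed under finite unions and countable intersections but \emph{not} under (relative) complements or differences, one cannot simply write $K_2\setminus K_1$ and stay inside $\mathcal{F}$, and condition \eqref{cond3} is the carefully calibrated substitute that must be threaded through every additivity/measurability step. Getting the $\varepsilon$-bookkeeping right when \eqref{cond3} is applied repeatedly inside nested compact approximations---in particular ensuring the errors sum to something controllable and that the ``inner compact, outer open'' two-sided approximation is genuinely consistent---is where the real work lies. Everything else is the standard Choksi/Marczewski-type extension argument, and since the statement is quoted from \cite[Theorem~1]{Ch58} in a weakened form, I would in practice cite that reference and only sketch the specialization to compact $\mathcal{F}$ in a Hausdorff space.
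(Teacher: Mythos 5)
The paper does not prove this statement at all: it is imported verbatim (in deliberately weakened form) from Choksi, and the text preceding it says exactly that, so your closing remark that in practice you would cite \cite[Theorem 1]{Ch58} is precisely what the paper does. At that level your proposal and the paper coincide, and the Carath\'eodory-from-a-content architecture you sketch is indeed the standard route to Choksi's theorem.

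Since you did offer a sketch, however, I must flag a genuine error at its central step, the identification of $\mu^*$ with $\mu$ on $\mathcal{F}$: you have the two inequalities exactly backwards. The inequality $\mu^*(K)\geq\mu(K)$ is the trivial one --- for any $U\in\mathcal{U}$ with $K\subseteq U$, the set $K$ itself is a competitor in the supremum defining $\lambda(U)$, so $\lambda(U)\geq\mu(K)$ and hence $\mu^*(K)\geq\mu(K)$; your elaborate appeal to (c) there is doing no work. Conversely, $\mu^*(K)\leq\mu(K)$ is \emph{not} ``immediate from monotonicity'': $\lambda(U)$ is a supremum over \emph{all} members of $\mathcal{F}$ contained in $U$, and such a set need not be contained in $K$, so monotonicity of $\mu$ gives nothing. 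This is the step where hypothesis (c) is actually indispensable. A correct argument runs: choose $K_2\in\mathcal{F}$ with $\mu(K_2)\geq\sup_{\mathcal{F}}\mu-\varepsilon$ (the supremum is finite in the paper's application, where $\mu$ comes from a probability quasi-measure), apply (c) to the pair $K\subseteq K\cup K_2$ to get $K_3\in\mathcal{F}$ with $K_3\subseteq(K\cup K_2)\setminus K$ and $\mu(K)+\mu(K_3)\geq\mu(K\cup K_2)-\varepsilon\geq\sup_{\mathcal{F}}\mu-2\varepsilon$, and set $U:=X\setminus K_3\supseteq K$; then any $C\in\mathcal{F}$ with $C\subseteq U$ is disjoint from $K_3$, so finite additivity gives $\mu(C)\leq\sup_{\mathcal{F}}\mu-\mu(K_3)\leq\mu(K)+2\varepsilon$, whence $\lambda(U)\leq\mu(K)+2\varepsilon$. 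Without some such use of (c) the outer measure need not restrict to $\mu$ and the whole extension collapses. A smaller misattribution: countable subadditivity of $\lambda$ on $\mathcal{U}$ rests on compactness (a member of $\mathcal{F}$ covered by countably many cocompact sets $U_n$ is covered by finitely many), not on closure of $\mathcal{F}$ under countable intersections as you suggest. The uniqueness paragraph is fine.
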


The following is the main theorem about extending cylindrical quasi-measures to Radon measures on the Borel $\sigma-$algebra.

\begin{thm}\label{Prokh}
Let $(I, \leq)$ be a directed partially ordered set, $\T:=\{(X_i, \tau_i), \pi_{i,j}, I\}$ a projective system of Hausdorff topological spaces, $\{(X_I, \tau_I), \pi_i, I\}$ its projective limit and $\{\mu_i: i\in I\}$ an exact projective system of Radon probability measures w.r.t.\!~$\T$. Then there exists a unique Radon probability measure $\nu$ on $X_I$ such that ${\pi_{i}}_{\#}\nu=\mu_i$ for all $i\in I$ if and only if Prokhorov's condition~\eqref{epsilon-K} holds.
\end{thm}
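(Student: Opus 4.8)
The plan is to prove the easy implication directly from inner regularity, and to obtain the converse by constructing $\nu$ from below on compact sets via Choksi's extension theorem (Theorem~\ref{choksi-thm}). For the ``only if'' part: if $\nu$ is a Radon probability measure on $X_I$ with ${\pi_i}_{\#}\nu=\mu_i$, then by inner regularity, for every $\varepsilon>0$ there is a compact $C_\varepsilon\subseteq X_I$ with $\nu(C_\varepsilon)\geq 1-\varepsilon$; since $\pi_i(C_\varepsilon)$ is compact, hence Borel, in $X_i$, we get $\mu_i(\pi_i(C_\varepsilon))=\nu(\pi_i^{-1}(\pi_i(C_\varepsilon)))\geq\nu(C_\varepsilon)\geq 1-\varepsilon$, which is exactly~\eqref{epsilon-K}.

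For the ``if'' part, assume~\eqref{epsilon-K}. Let $\mathcal{F}$ be the family of all compact subsets of $X_I$: it contains $\emptyset$ and is closed under finite unions and, since $X_I$ is Hausdorff, under countable intersections. For $K\in\mathcal{F}$ and $i\leq j$ one has $\pi_j(K)\subseteq\pi_{i,j}^{-1}(\pi_i(K))$, hence $\mu_j(\pi_j(K))\leq\mu_i(\pi_i(K))$, so the net $(\mu_i(\pi_i(K)))_{i\in I}$ is nonincreasing and I set $\beta(K):=\inf_{i\in I}\mu_i(\pi_i(K))=\lim_i\mu_i(\pi_i(K))\in[0,1]$. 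I would then verify the hypotheses of Theorem~\ref{choksi-thm} for $(\mathcal{F},\beta)$. Monotonicity is clear; subadditivity follows from $\pi_i(K_1\cup K_2)=\pi_i(K_1)\cup\pi_i(K_2)$ by passing to the limit; for finite additivity of $\beta$ on disjoint $K_1,K_2$, using the same identity it suffices to show $\lim_i\mu_i(\pi_i(K_1)\cap\pi_i(K_2))=0$, and this follows from the fact that a compact $K$ satisfies $K=\bigcap_i\pi_i^{-1}(\pi_i(K))$ (a separation argument with basic cylinder neighbourhoods) together with the non-emptiness of the projective limit of the non-empty compact system $(\pi_i(K_1)\cap\pi_i(K_2))_i$ (as in the proof of Lemma~\ref{lem::intersection-compact}), which forces $\pi_i(K_1)\cap\pi_i(K_2)=\emptyset$ for some, hence all larger, $i$.

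The delicate point is condition~(c) of Theorem~\ref{choksi-thm}: given compact $K_1\subseteq K_2$ and $\varepsilon>0$, produce a compact $K_3\subseteq K_2\setminus K_1$ with $\beta(K_1)+\beta(K_3)\geq\beta(K_2)-\varepsilon$. Here I would first fix $i_0\in I$ with $\mu_{i_0}(\pi_{i_0}(K_1))\leq\beta(K_1)+\varepsilon$ and $\mu_{i_0}(\pi_{i_0}(K_2))\leq\beta(K_2)+\varepsilon$, then use inner regularity of $\mu_{i_0}$ to pick a compact $D\subseteq\pi_{i_0}(K_2)\setminus\pi_{i_0}(K_1)$ in $X_{i_0}$ with $\mu_{i_0}(D)\geq\mu_{i_0}(\pi_{i_0}(K_2))-\mu_{i_0}(\pi_{i_0}(K_1))-\varepsilon$, and set $K_3:=K_2\cap\pi_{i_0}^{-1}(D)$. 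Then $K_3$ is compact and $K_3\subseteq K_2\setminus K_1$ (since $D\cap\pi_{i_0}(K_1)=\emptyset$), and a computation based on $\pi_j(K_3)=\pi_j(K_2)\cap\pi_{i_0,j}^{-1}(D)$ for $j\geq i_0$ (reducing a general $j$ to this case by passing to an upper bound of $j$ and $i_0$) yields $\mu_j(\pi_j(K_3))\geq\beta(K_2)-\beta(K_1)-2\varepsilon$ for all $j$, hence $\beta(K_3)\geq\beta(K_2)-\beta(K_1)-2\varepsilon$. Theorem~\ref{choksi-thm} then provides a finite measure $\tilde\mu$ on the $\sigma-$ring $S(\mathcal{F})$, inner regular w.r.t.\ $\mathcal{F}$, with $\tilde\mu=\beta$ on $\mathcal{F}$.

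Finally I would promote $\tilde\mu$ to a Radon measure on $\B_I$ by setting $\nu(B):=\sup\{\tilde\mu(K):K\in\mathcal{F},\,K\subseteq B\}$ for $B\in\B_I$; this agrees with $\tilde\mu$ on $S(\mathcal{F})$, and Prokhorov's condition enters via $\nu(X_I)\geq\sup_{\varepsilon>0}\beta(C_\varepsilon)\geq 1$, while $\beta\leq 1$ gives $\nu(X_I)=1$. To check ${\pi_i}_{\#}\nu=\mu_i$, for $E_i\in\B_i$ and $\delta>0$ I would choose a compact $D\subseteq E_i$ with $\mu_i(D)\geq\mu_i(E_i)-\delta$ and a Prokhorov compact $C_\delta$, and estimate $\beta(\pi_i^{-1}(D)\cap C_\delta)\geq\mu_i(D)-\delta$ using $\pi_j(\pi_i^{-1}(D)\cap C_\delta)=\pi_{i,j}^{-1}(D)\cap\pi_j(C_\delta)$ for $j\geq i$; this gives $\nu(\pi_i^{-1}(E_i))\geq\mu_i(E_i)$, and applying the same to $X_i\setminus E_i$ together with $\nu(X_I)=1=\mu_i(X_i)$ forces equality. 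Uniqueness follows since any Radon probability solution $\nu'$ satisfies $\nu'(K)\leq\inf_i\mu_i(\pi_i(K))=\beta(K)=\nu(K)$ for compact $K$; applying this bound to compact subsets of the open set $X_I\setminus K$ gives $\nu'(X_I\setminus K)\leq\nu(X_I\setminus K)$, hence $\nu'(K)\geq\nu(K)$, so $\nu'=\nu$ by inner regularity. I expect the main obstacle to be condition~(c) of Choksi's theorem, i.e.\ the construction and estimation of the ``difference'' compact $K_3$; the remainder is bookkeeping with the monotone nets $i\mapsto\mu_i(\pi_i(\cdot))$ and with the fact that a compact subset of a projective limit equals the intersection of its cylindrical pre-images.
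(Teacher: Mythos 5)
Your proof is correct and has the same overall architecture as the paper's: the easy direction from inner regularity, and the converse via Choksi's theorem (Theorem~\ref{choksi-thm}) applied to a set function on the family $\K$ of compact subsets of $X_I$, followed by the supremum-over-compacts construction of $\nu$ on $\B_I$. Where you genuinely diverge is in the choice of that set function and in how its hypotheses are checked. The paper restricts the outer measure $\mu^\ast$ of the cylindrical quasi-measure to $\K$ and verifies (a)--(c) through the outer regularity statement \eqref{eq::outer-regularity} of Lemma~\ref{lem::outer-regularity}; in particular finite additivity on disjoint compacts is obtained by separating them with disjoint open cylinder sets. You work instead with $\beta(K):=\inf_i\mu_i(\pi_i(K))$ --- which in fact coincides with $\mu^\ast\!\restriction_\K$, both being equal to $\inf\{\mu(U):U\in\C_I\text{ open},\,K\subseteq U\}$ --- and you check the hypotheses by set-theoretic identities for images of compacts together with the non-emptiness of projective limits of non-empty compacta; your condition~(c) does close, since $\pi_j(K_2)\setminus\pi_j(K_3)\subseteq\pi_{i_0,j}^{-1}\bigl(\pi_{i_0}(K_2)\setminus D\bigr)$ gives $\mu_j(\pi_j(K_2))-\mu_j(\pi_j(K_3))\leq\mu_{i_0}(\pi_{i_0}(K_2))-\mu_{i_0}(D)\leq\beta(K_1)+2\varepsilon$ for $j\geq i_0$. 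Your verification of ${\pi_i}_{\#}\nu=\mu_i$ (lower bounds on $E_i$ and its complement, then equality from $\nu(X_I)=1$) is also more direct than the paper's two-sided sandwich via \eqref{eq::outer-regularity} and \eqref{eq::inner-regularity-closed}. Two points should be written out in full if you expand this sketch: the identity $K=\bigcap_{i\in I}\pi_i^{-1}(\pi_i(K))$ for $K$ compact (a finite-intersection-property argument using that the $\pi_i$ separate points of $X_I$), on which both your finite-additivity step and your uniqueness argument rely; and the countable additivity of $B\mapsto\sup\{\tilde\mu(K):K\subseteq B,\,K\in\K\}$, which requires $K\cap B\in S(\K)$ for $B\in\B_I$ --- this is exactly the content of Lemma~\ref{lem::measure-candidate}, handled there via $K\cap S(\F)=S(K\cap\F)$.
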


\begin{rem}\label{Prokh-rem-support}
If in Theorem \ref{Prokh} each $\mu_i$ with $i\in\indexset$ is supported in a closed subset $K^{(i)}$ of $X_i$, then $\nu(\pi_i^{-1}(K^{(i)}))=\mu_i(K^{(i)})=1$ and hence, the Radon probability measure $\nu$ is supported in $\bigcap_{i\in \indexset}\pi_i^{-1}(K^{(i)})$ by \cite[Part I, Chapter I, 6.(a)]{S73}.
\end{rem}

Note that since Prokhorov's condition~\eqref{epsilon-K} holds when $I$ contains a countable cofinal subset (see Proposition \ref{prop-eps-K}), we immediately have the following corollary of Theorem \ref{Prokh}.

\begin{cor}\label{Prokh-ctbl}
Let $(I, \leq)$ be a directed partially ordered set, $\T:=\{(X_i, \tau_i), \pi_{i,j}, I\}$ a projective system of Hausdorff topological spaces, $\{(X_I, \tau_I), \pi_i, I\}$ its projective limit and $\{\mu_i: i\in I\}$ an exact projective system of Radon probability measures w.r.t.\!~$\T$. If $I$ contains a countable cofinal subset, then there exists a unique Radon probability measure $\nu$ on $X_I$ such that ${\pi_{i}}_{\#}\nu=\mu_i$ for all $i\in I$.
\end{cor}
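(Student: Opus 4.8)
The plan is to deduce Corollary~\ref{Prokh-ctbl} directly from Theorem~\ref{Prokh} by verifying that Prokhorov's condition~\eqref{epsilon-K} is automatically satisfied under the extra hypothesis that $I$ contains a countable cofinal subset. This is exactly the content of Proposition~\ref{prop-eps-K}, so the argument is essentially a one-line combination once the hypotheses are matched up.

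First I would observe that the hypotheses of Corollary~\ref{Prokh-ctbl} are precisely the hypotheses of Theorem~\ref{Prokh} together with the assumption that $I$ contains a countable cofinal subset. Under that extra assumption, Proposition~\ref{prop-eps-K} guarantees that the exact projective system $\{\mu_i : i\in I\}$ of Radon probability measures w.r.t.~$\T$ fulfills Prokhorov's condition~\eqref{epsilon-K}. Having verified \eqref{epsilon-K}, I would then simply invoke Theorem~\ref{Prokh}: its ``if'' direction yields the existence of a Radon probability measure $\nu$ on $X_I$ with ${\pi_i}_{\#}\nu=\mu_i$ for all $i\in I$, and its uniqueness clause gives uniqueness of such a $\nu$.

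There is essentially no obstacle here; the only thing to be careful about is that the notion of ``countable cofinal subset'' in Corollary~\ref{Prokh-ctbl} matches the hypothesis of Proposition~\ref{prop-eps-K} (it does, verbatim), and that Proposition~\ref{prop-eps-K} applies to exact projective systems of Radon \emph{probability} measures (which is exactly the class appearing in both Theorem~\ref{Prokh} and the corollary). So the proof is purely a matter of citing the two earlier results in the right order.

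\begin{proof}
Since $I$ contains a countable cofinal subset, Proposition~\ref{prop-eps-K} ensures that the exact projective system $\{\mu_i:i\in I\}$ of Radon probability measures w.r.t.~$\T$ fulfills Prokhorov's condition~\eqref{epsilon-K}. The conclusion then follows at once from Theorem~\ref{Prokh}.
\end{proof}
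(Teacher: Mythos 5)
Your proposal is correct and is exactly the paper's own argument: the corollary is introduced in the text with the remark that Prokhorov's condition~\eqref{epsilon-K} holds automatically when $I$ has a countable cofinal subset (Proposition~\ref{prop-eps-K}), so the statement follows immediately from Theorem~\ref{Prokh}. Nothing to add.
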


We now present the proof of Theorem \ref{Prokh} by illustrating the most important steps and postponing some parts to lemmas, so to not interrupt the main proof scheme.

\begin{proof}[Proof of Theorem \ref{Prokh}]\ \\
We show only the sufficiency of \eqref{epsilon-K} for the existence and the uniqueness of a Radon probability measure extending $\mu$, as the converse direction is easy.

Suppose that \eqref{epsilon-K} holds. By Remark \ref{epsilon-K->thick}, $\{\mu_i: i\in I\}$ is thick. Denote by $\mu$ the cylindrical quasi-measure corresponding to $\{\mu_i: i\in I\}$ and by $\mu^\ast$ the outer measure associated to $\mu$. Let $\varepsilon>0$. Then there is $C_\varepsilon\subseteq X_I$ compact such that $\mu_i(\pi_i(C_{\varepsilon}))\geq 1-\varepsilon$ for all $i\in\indexset$. Now for any $U\in\C_I$ open and such that $C_\varepsilon\subseteq U$, we have that $U=\pi_i^{-1}(U_i)$ for some $i\in I$ and $U_i\in \B_i$,  so we have
$
\mu(U)= \mu_i(U_i)\geq \mu_i(\pi_i(C_\varepsilon))\geq 1-\varepsilon.
$ 
This yields
$
\mu^\ast(C_\varepsilon)\geq 1-\varepsilon
$
since $\mu^\ast$ is outer regular w.r.t.\! open sets in $\C_I$ (cf.\! Lemma~\ref{lem::outer-regularity}).

We show in Lemma \ref{lem::Choksi} that Theorem \ref{choksi-thm} can be applied to $\mu^\ast$ restricted to the family $\K$ of all compact subsets of $(X_I, \tau_I)$, which yields the existence of an inner regular measure $\eta$ which extends $\mu^\ast\!\restriction_\K$ to the $\sigma-$ring $S(\K)$ generated by $\K$.

Let us consider the function $$
\nu(B) := \sup\{\eta(B\cap K): K\in\K\},\quad\forall B\in\B_I.
$$
Lemma \ref{lem::measure-candidate} ensures that $\nu$ is a finite Radon measure on $\B_I$. We need to show that ${\pi_{i}}_{\#}\nu=\mu_i$ for all $i\in I$, i.e., $\mu(A)=\nu(A)$ for all $A\in\C_I$, which entails that $\nu$ extends $\mu$ to $\B_I$. 

Let $A\in\C_I$ and $\varepsilon>0$. Using Lemma \ref{lem::outer-regularity}, we get by \eqref{eq::inner-regularity-closed} that there exists $C\in\C_I$ closed such that $C\subseteq A$ and $\mu(A)\leq \mu(C)+\varepsilon$ and by \eqref{eq::outer-regularity} that there exists $U\in\C_I$ open such that $ C\cap C_\varepsilon\subseteq U$ and $ \mu(U)\leq \mu^\ast (C\cap C_\varepsilon)+\varepsilon$. Note that $C_\varepsilon= (C\cap C_\varepsilon)\cup ((X_I\backslash C)\cap C_\varepsilon)\subseteq U\cup (X_I\backslash C)$ and so 
$$1-\varepsilon\leq \mu^\ast(C_\varepsilon)\leq \mu(U\cup (X_I\backslash C))\leq\mu(U)+\mu(X_I\backslash C)\leq \mu(U)+1-\mu(C),$$ 
which implies
$\mu(C)\leq \mu(U)+\varepsilon$.
Hence, we have
\begin{eqnarray*}
\mu(A) & \leq & \mu(C)+\varepsilon \leq \mu(U)+2\varepsilon\leq \mu^\ast(C\cap C_\varepsilon)+ 3\varepsilon \\
&=& \eta(C\cap C_\varepsilon)+ 3\varepsilon\leq \nu(C) +3\varepsilon \leq \nu(A)+3\varepsilon,
\end{eqnarray*}
which yields $\mu(A)\leq \nu(A)$. Conversely, by the inner regularity of $\nu$ w.r.t.\! compact sets, for any $\varepsilon>0$ there exists $K\subseteq A$ compact such that 
$$
\nu(A)\leq \nu(K)+\varepsilon=\eta(K)+\varepsilon=\mu^\ast(K)+\varepsilon\leq \mu(A)+\varepsilon.
$$
Hence, $\nu$ is an extension of $\mu$ to $\B_I$ and in particular, $\nu(X_I)=\mu(X_I)=1$. Since $\C_I$ contains a basis of $\tau_I$, it is easy to show that such an extension is unique using that $\nu$ is inner regular w.r.t. compact sets.
\end{proof}

\begin{lem}\label{lem::outer-regularity}
Let $\K$ be the family of all compact subsets of $X_I$. Then
\begin{equation}\label{eq::outer-regularity}
\mu^\ast(A)=\inf\{\mu(U):U\in \C_I \text{ open}, \ A\subseteq U\}, \quad\forall A\in \K \cup \C_I
\end{equation}
and
\begin{equation}\label{eq::inner-regularity-closed}
\mu^\ast(A)=\sup\{\mu(V):V\in \C_I \text{ closed}, \ V\subseteq A\}, \quad\forall A\in \C_I.
\end{equation}
\end{lem}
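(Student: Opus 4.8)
The plan is to prove the two regularity formulas by exploiting the fact that $\mu$ is a cylindrical quasi-measure whose pushforwards $\mu_i = {\pi_i}_\#\mu$ are Radon probability measures, so that regularity of each $\mu_i$ on $(X_i,\tau_i)$ can be transported up along $\pi_i$ to produce \emph{cylindrical} open/closed approximants. The key point throughout is that a cylinder set $\pi_i^{-1}(U_i)$ with $U_i$ open (resp.\ closed) in $X_i$ is itself open (resp.\ closed) in $X_I$, since each $\pi_i$ is continuous, and it lies in $\C_I$ by definition; and that $\mu(\pi_i^{-1}(E_i)) = \mu_i(E_i)$.

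First I would establish \eqref{eq::outer-regularity}. Fix $A \in \K \cup \C_I$; the inequality $\mu^\ast(A) \le \inf\{\mu(U) : U \in \C_I \text{ open},\ A \subseteq U\}$ is immediate from monotonicity of the outer measure together with $\mu^\ast(U) = \mu(U)$ for $U \in \C_I$. For the reverse inequality one splits into two cases. If $A = \pi_i^{-1}(E_i) \in \C_I$, then by outer regularity of the Radon measure $\mu_i$ with respect to open sets of $X_i$ there is, for any $\varepsilon>0$, an open $U_i \supseteq E_i$ in $X_i$ with $\mu_i(U_i) \le \mu_i(E_i) + \varepsilon$; take $U := \pi_i^{-1}(U_i) \in \C_I$, which is open, contains $A$, and satisfies $\mu(U) = \mu_i(U_i) \le \mu(A) + \varepsilon$. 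If instead $A \in \K$ is compact, I would cover it: since $\C_I$ contains a basis of $\tau_I$ (open cylinders), for each point of $A$ pick a basic open cylinder neighbourhood; by compactness finitely many $\pi_{i_1}^{-1}(V_1),\dots,\pi_{i_n}^{-1}(V_n)$ cover $A$. Using directedness of $I$, choose $k \ge i_1,\dots,i_n$ and rewrite each as $\pi_k^{-1}(\pi_{i_\ell,k}^{-1}(V_\ell))$, so that $A \subseteq \pi_k^{-1}(W)$ with $W := \bigcup_\ell \pi_{i_\ell,k}^{-1}(V_\ell)$ open in $X_k$. Now $\pi_k(A)$ is compact in $X_k$ and contained in $W$, so $\mu^\ast(A) \le \mu(\pi_k^{-1}(W)) = \mu_k(W)$; then apply outer regularity of $\mu_k$ to shrink $W$ down to $\mu_k(\pi_k(A)) + \varepsilon$, and note $\mu_k(\pi_k(A)) \le \mu^\ast(A)$ because $A \subseteq \pi_k^{-1}(\pi_k(A))$. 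Combining, $\mu^\ast(A)$ equals the claimed infimum. (One must be slightly careful about whether $\mu_k(\pi_k(A))$ is defined, i.e.\ whether $\pi_k(A)$ is measurable — compact subsets of a Hausdorff space are closed, hence Borel, so this is fine.)

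For \eqref{eq::inner-regularity-closed}, fix $A = \pi_i^{-1}(E_i) \in \C_I$. The inequality $\mu^\ast(A) \ge \sup\{\mu(V): V \in \C_I \text{ closed},\ V \subseteq A\}$ is again monotonicity. For the reverse: by inner regularity of the Radon measure $\mu_i$ with respect to compact sets, for $\varepsilon>0$ there is a compact $K_i \subseteq E_i$ with $\mu_i(K_i) \ge \mu_i(E_i) - \varepsilon$; since $X_i$ is Hausdorff, $K_i$ is closed, so $V := \pi_i^{-1}(K_i) \in \C_I$ is a closed cylinder set with $V \subseteq A$ and $\mu(V) = \mu_i(K_i) \ge \mu(A) - \varepsilon = \mu^\ast(A) - \varepsilon$. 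This gives the sup formula.

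The main obstacle I anticipate is the compact case of \eqref{eq::outer-regularity}: unlike cylinder sets, an arbitrary compact $A \subseteq X_I$ need not be a cylinder, so one cannot directly write $\mu^\ast(A) = \mu_i(\text{something})$; the argument must first "collapse" $A$ into a single coordinate $X_k$ via the finite subcover and directedness of $I$, and then one has to check that the resulting estimate $\mu^\ast(A) \le \mu_k(\pi_k(A))$ is actually an equality in the limit, which hinges on the (harmless but worth stating) facts that $\pi_k(A)$ is compact and that $A \subseteq \pi_k^{-1}(\pi_k(A))$. Everything else is a routine transfer of the Radon regularity of the finite-stage measures $\mu_i$ through the continuous projections $\pi_i$.
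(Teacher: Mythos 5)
Your treatment of the cylinder-set case of \eqref{eq::outer-regularity} matches the paper's, and your proof of \eqref{eq::inner-regularity-closed} (pulling back a compact $K_i\subseteq E_i$ to a closed cylinder) is a perfectly good, slightly more direct alternative to the paper's device of applying \eqref{eq::outer-regularity} to the complement $X_I\setminus A$. The problem is the compact case of \eqref{eq::outer-regularity}, where your argument has a genuine gap: you need $\mu_k(\pi_k(A))\leq\mu^\ast(A)$, and you justify it by the inclusion $A\subseteq\pi_k^{-1}(\pi_k(A))$ --- but that inclusion gives exactly the \emph{opposite} inequality, $\mu^\ast(A)\leq\mu^\ast(\pi_k^{-1}(\pi_k(A)))=\mu_k(\pi_k(A))$. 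The inequality you want is false in general: take $X_I=\RR^{\NN}$ with $\mu$ the product of standard Gaussians and $A=[0,1]\times\{0\}\times\{0\}\times\cdots$, which is compact. Then $\mu^\ast(A)=0$ (cover $A$ by the single cylinder $\pi_{\{1,2\}}^{-1}([0,1]\times\{0\})$, which has measure $0$), yet if your finite subcover only involves the first coordinate you end up with $k=\{1\}$ and $\mu_k(\pi_k(A))=\mu_1([0,1])>0$. The index $k$ produced by collapsing a finite subcover of one particular open cover carries no guarantee of being ``fine enough'' to see where the mass of $A$ actually concentrates.

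The fix is to start not from an arbitrary open cover but from a near-optimal countable cover in the definition of $\mu^\ast(A)$: choose $(A_n)_{n\in\NN}\subseteq\C_I$ with $A\subseteq\bigcup_n A_n$ and $\sum_n\mu(A_n)\leq\mu^\ast(A)+\varepsilon$, enlarge each $A_n$ to an open cylinder $U_n$ with $\mu(U_n)\leq\mu(A_n)+2^{-n}\varepsilon$ using the cylinder case already proved, extract a finite subcover by compactness, and observe that $U:=\bigcup_{n=1}^N U_n$ is again an open cylinder with $\mu(U)\leq\sum_n\mu(U_n)\leq\mu^\ast(A)+2\varepsilon$. This is the paper's argument; your geometric ``collapse to one coordinate'' step is then unnecessary.
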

\begin{proof}
Let $A\in\C_I$, say $A=\pi_i^{-1}(A_i)$ for some $i\in I$ and $A_i\in\B_i$. As $\mu_i$ is finite and inner regular w.r.t.\! compact subsets of $X_i$, it is outer regular w.r.t.\! to open sets in $X_i$. Hence, for any $\varepsilon>0$, there is some $U_i\in\B_i$ open such that $A_i\subseteq U_i$ and $\mu_i(A_i)\geq \mu_i(U_i)-\varepsilon$. Setting $U:=\pi_i^{-1}(U_i)\in\C_I$, we get that $U$ is open, $A\subseteq U$, and
$
\mu^\ast(A) = \mu(A) \geq \mu(U) - \varepsilon.
$
Hence, $A$ satisfies \eqref{eq::outer-regularity} since $\varepsilon$ was arbitrary.

Now let $K\in\K$ and $\varepsilon>0$. Then, by the definition of $\mu^*$, there exists a sequence $(A_n)_{n\in\NN}$ such that $A_n\in\C_I$ for each $n\in\NN$, $K\subseteq \bigcup_{n\in\NN}A_n$ and $\mu^\ast(K)\geq \sum_{n\in\NN}\mu(A_n)-\varepsilon$. By the first part of this proof, we have that for any $n\in\NN$ there is $U_n\in\C_I$ open such that $A_n\subseteq U_n$ and $\mu(A_n)\geq \mu(U_n)-2^{-n}\varepsilon$. Then $K\subseteq \bigcup_{n\in\NN} A_n \subseteq \bigcup_{n\in\NN} U_n$ and by the compactness of $K$ there exists $N\in \NN$ such that $K\subseteq \bigcup_{n=1}^N U_n=:U\in\C_I$. This yields
$$
\mu^\ast(K) \geq \sum_{n\in\NN} \mu(A_n) - \varepsilon \geq \sum_{n\in\NN} \mu(U_n) - 2\varepsilon\geq \sum_{n=1}^N \mu(U_n) - 2\varepsilon \geq \mu(U) -2\varepsilon,
$$
i.e., $K$ satisfies \eqref{eq::outer-regularity} since $\varepsilon$ was arbitrary. For any $A\in\C_I$, by using \eqref{eq::outer-regularity} applied to $X_I\setminus A\in\C_I$ and the finiteness of $\mu$, we get \eqref{eq::inner-regularity-closed}.
\end{proof}

\begin{lem}\label{lem::Choksi}
Let $\K$ be the family of all compact subsets of $X_I$.
The set function $\mu^\ast\!\restriction_\K$ extends to a finite measure $\eta$ on the $\sigma-$ring $S(\K)$ generated by $\K$ and $\eta$ is inner regular w.r.t.\! $\K$.
\end{lem}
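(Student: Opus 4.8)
The plan is to verify the hypotheses of Choksi's theorem (Theorem \ref{choksi-thm}) for the set function $\mu^\ast\!\restriction_\K$, where $\K$ is the family of all compact subsets of $X_I$. First I would check that $\K$ satisfies the structural requirements: it contains $\emptyset$, is closed under finite unions (a finite union of compacts is compact), and is closed under countable intersections (a countable intersection of compacts in a Hausdorff space is compact, being closed inside any one of them). Next I would record that $\mu^\ast\!\restriction_\K$ is finite and non-negative and real-valued: non-negativity and real-valuedness are immediate from the definition of an outer measure, and finiteness follows since $\mu^\ast(K)\leq\mu^\ast(X_I)=\mu(X_I)=1$ for every $K\in\K$.

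The three substantive conditions to verify are: (a) monotonicity, which is automatic for any outer measure; (b) finite additivity and subadditivity. Subadditivity is again automatic from the outer measure property. For finite additivity on $\K$, given disjoint compact sets $K_1,K_2$, by Hausdorffness there are disjoint open sets separating them; combined with the outer regularity of $\mu^\ast$ w.r.t.\ open cylinder sets (Lemma \ref{lem::outer-regularity}, equation \eqref{eq::outer-regularity}, which applies to elements of $\K$) and the finite additivity of $\mu$ on $\C_I$, one disentangles the covering open cylinders into pieces around $K_1$ and around $K_2$ and obtains $\mu^\ast(K_1\cup K_2)\geq \mu^\ast(K_1)+\mu^\ast(K_2)$; the reverse is subadditivity. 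The delicate point here is that one must cover $K_1\cup K_2$ by a single open cylinder $U\in\C_I$ and then intersect it with the separating open sets, which need not be cylinder sets; I would handle this by first using compactness to reduce $U$ to a finite subcover refinement, or more cleanly by applying \eqref{eq::outer-regularity} to each $K_j$ separately and then choosing the cylinder cover of $K_1\cup K_2$ to lie inside the union of the two cylinder covers.

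Condition \eqref{cond3} is where the real work lies, and I expect it to be the main obstacle. Given $K_1\subseteq K_2$ in $\K$ and $\varepsilon>0$, I need a compact $K_3\subseteq K_2\setminus K_1$ with $\mu^\ast(K_1)+\mu^\ast(K_3)\geq\mu^\ast(K_2)-\varepsilon$. The strategy is to produce $K_3$ at the level of a single coordinate space $X_i$: by \eqref{eq::outer-regularity} pick an open cylinder $U=\pi_i^{-1}(U_i)$ with $K_1\subseteq U$ and $\mu(U)\leq\mu^\ast(K_1)+\varepsilon$, so that $K_2\setminus U$ is compact and disjoint from $K_1$. Then $\pi_i(K_2\setminus U)$ is a compact subset of $X_i\setminus U_i$, and by inner regularity of the Radon measure $\mu_i$ w.r.t.\ compact sets one finds a compact $L\subseteq X_i\setminus U_i$ (or rather inside $\pi_i(K_2)\setminus U_i$) capturing most of the mass of $\pi_i(K_2\setminus U)$; pulling back, $K_3:=(K_2\setminus U)\cap\pi_i^{-1}(L)$ is the desired compact set, after a short mass-accounting using $\mu^\ast(K_2)\leq\mu^\ast(K_2\cap U)+\mu^\ast(K_2\setminus U)\leq\mu(U)+\mu_i(\pi_i(K_2\setminus U))$ and the chosen approximations. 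Assembling the estimates and invoking Theorem \ref{choksi-thm} then yields the extension $\eta$ on $S(\K)$, inner regular w.r.t.\ $\K$, completing the proof. I would keep careful track of which index $i\in I$ is used at each step, choosing a common $i$ (possible since $I$ is directed) so that all the cylinder sets involved are described over the same coordinate.
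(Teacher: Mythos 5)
Your overall plan (verify the hypotheses of Theorem \ref{choksi-thm} for $\mu^\ast\!\restriction_\K$) is exactly the paper's, and your treatment of the structural properties of $\K$, of monotonicity and subadditivity, and of finite additivity in its first variant (outer-approximate $K_1\cup K_2$ by a single open cylinder $U$ via \eqref{eq::outer-regularity}, then separate $K_1$ and $K_2$ by \emph{disjoint open cylinder} sets inside $U$, which exist because $\C_I$ contains a basis of $\tau_I$ and is stable under finite unions and intersections) matches the paper's proof. Your alternative fix for additivity --- applying \eqref{eq::outer-regularity} to each $K_j$ separately and covering $K_1\cup K_2$ by the union of the two covers --- only re-proves subadditivity, not the needed superadditivity, so you should keep the first variant.

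The genuine gap is in condition (c) of Theorem \ref{choksi-thm}, precisely where you anticipated the difficulty. Your accounting $\mu^\ast(K_2)\le\mu^\ast(K_2\cap U)+\mu^\ast(K_2\setminus U)\le\mu(U)+\mu_i(\pi_i(K_2\setminus U))$ is a correct upper bound but cannot be closed: you would need $\mu_i(\pi_i(K_2\setminus U))\le\mu^\ast(K_3)+\varepsilon$ for your $K_3\subseteq K_2\setminus U$, and this inequality goes the wrong way. One only has $\mu^\ast(K_2\setminus U)\le\mu_i(\pi_i(K_2\setminus U))$, and the gap can be arbitrarily large (a compact set in $\RR^2$ can be $\mu$-null while its first-coordinate projection has full $\mu_1$-measure). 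The detour through inner regularity of $\mu_i$ is moreover vacuous, since $\pi_i(K_2\setminus U)$ is already compact, so $L$ gives nothing new. The repair is short and returns you to the paper's argument: take $K_3:=K_2\setminus U$ itself, which is compact (closed in $K_2$) and contained in $K_2\setminus K_1$, and then either, as the paper does, apply \eqref{eq::outer-regularity} once more to $K_3$ to obtain an open cylinder $U_2\supseteq K_3$ with $\mu(U_2)\le\mu^\ast(K_3)+\varepsilon$ and conclude from $K_2\subseteq U\cup U_2$ that $\mu^\ast(K_2)\le\mu(U)+\mu(U_2)\le\mu^\ast(K_1)+\mu^\ast(K_3)+2\varepsilon$; or, equivalently, drop the projection from your own chain and write $\mu^\ast(K_2)\le\mu^\ast(K_2\cap U)+\mu^\ast(K_3)\le\mu(U)+\mu^\ast(K_3)\le\mu^\ast(K_1)+\mu^\ast(K_3)+\varepsilon$, using only the subadditivity and monotonicity of the outer measure. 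With that correction the appeal to Theorem \ref{choksi-thm} goes through as you describe.
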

\begin{proof}
Recall that $\mu^\ast\!\restriction_\K$ is monotone and sub-additive.

Let $K_1,K_2\in\K$ be disjoint and $\varepsilon>0$. Then, by \eqref{eq::outer-regularity}, there exists $U\in\C_I$ open and containing $K_1\cup K_2$ such that $\mu(U)\leq\mu^\ast(K_1\cup K_2)+\varepsilon$. Furthermore, by using that the space $(X_I, \tau_I)$ is Hausdorff and $\C_I$ contains a basis of $\tau_I$, we can easily show that there exist $U_1,U_2\in\C_I$ disjoint and open such that $K_1\subseteq U_1\subseteq U$ and $K_2\subseteq U_2\subseteq U$. Therefore, we get 
$$
\mu^\ast(K_1)+\mu^\ast(K_2)\leq \mu(U_1)+\mu(U_2)=\mu(U_1\cup U_2)\leq \mu(U)\leq \mu^\ast(K_1\cup K_2)+\varepsilon
$$
and so the finite additivity of $\mu^\ast\!\restriction_\K$ as $\varepsilon$ was arbitrary. 

Let $K_1,K_2\in\K$ such that $K_1\subseteq K_2$ and let $\varepsilon>0$. By \eqref{eq::outer-regularity}, there exists $U_1\in\C_I$ open and containing $K_1$ such that $\mu(U_1)\leq\mu^\ast(K_1)+\varepsilon$. Then $K_3:=K_2\backslash U_1$ is compact as a closed subset of a compact and $K_3\subseteq K_2\backslash K_1$. Hence, by \eqref{eq::outer-regularity}, there exists $U_2\in\C_I$ open and containing $K_3$ such that $\mu(U_2)\leq\mu^\ast(K_3)+\varepsilon$. By construction, $K_2\subseteq U_1\cup (K_2\backslash U_1)\subseteq U_1\cup U_2$, so that
$$
\mu^\ast(K_2)\leq \mu(U_1)+\mu(U_2)\leq \mu^\ast(K_1)+\mu^\ast(K_3)+2\varepsilon.
$$
This shows that $\mu^\ast\!\restriction_\K$ satisfies the assumptions of Theorem \ref{choksi-thm}, which yields the assertion.
\end{proof}

\begin{lem}\label{lem::measure-candidate}
Let $\K$ be the family of all compact subsets of $X_I$.
The function 
$$
\nu(B) := \sup\{\eta(B\cap K): K\in\K\},\quad\forall B\in\B_I
$$
is a finite measure on $\B_I$, which extends $\eta$ and is inner regular w.r.t.\! $\K$.
\end{lem}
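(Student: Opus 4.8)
The plan is to verify in turn that $\nu$ is finite, that it is a measure (i.e. countably additive on $\B_I$), that it extends $\eta$, and that it is inner regular with respect to $\K$. First I would note that $\nu$ is well-defined and finite: for any $B\in\B_I$ and any $K\in\K$ we have $\eta(B\cap K)\le\eta(K)=\mu^\ast(K)\le\mu^\ast(X_I)=1$ (here we use that $\eta$ extends $\mu^\ast\!\restriction_\K$ from Lemma \ref{lem::Choksi}, that $X_I\in\K$ since $C_\varepsilon$-type arguments are not even needed — $X_I$ itself is the projective limit which, being closed in a product of compacts? no), so more carefully $\nu(B)=\sup_{K\in\K}\eta(B\cap K)\le 1$; monotonicity $\nu(B_1)\le\nu(B_2)$ for $B_1\subseteq B_2$ is immediate. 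Also $\nu(\emptyset)=0$. That $\nu$ extends $\eta$ is the observation that for $M\in S(\K)$, inner regularity of $\eta$ w.r.t.\ $\K$ gives $\eta(M)=\sup\{\eta(M\cap K):K\in\K, K\subseteq M\}=\sup\{\eta(M\cap K):K\in\K\}=\nu(M)$, using monotonicity of $\eta$ for the second equality; and inner regularity of $\nu$ w.r.t.\ $\K$ is then built into the definition, since for $B\in\B_I$ and $K\in\K$ we have $B\cap K\in S(\K)$, hence $\eta(B\cap K)=\nu(B\cap K)$, so $\nu(B)=\sup\{\nu(B\cap K):K\in\K\}=\sup\{\nu(K'):K'\in\K,\ K'\subseteq B\}$.

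The substantive point is countable additivity of $\nu$ on $\B_I$. For finite additivity on disjoint $B_1,B_2\in\B_I$: $\ge$ follows since for $K_1\subseteq B_1$, $K_2\subseteq B_2$ compact, $K_1\cup K_2\subseteq B_1\cup B_2$ is compact and $\eta(K_1)+\eta(K_2)=\eta(K_1\cup K_2)\le\nu(B_1\cup B_2)$ by finite additivity of $\eta$ on the disjoint sets $K_1,K_2\in S(\K)$; taking suprema gives $\nu(B_1)+\nu(B_2)\le\nu(B_1\cup B_2)$. For $\le$: given $K\subseteq B_1\cup B_2$ compact, $K\cap B_1$ and $K\cap B_2$ are disjoint sets in $S(\K)$ (as $B_i\in\B_I$ and $K\in\K$), so $\eta(K)=\eta(K\cap B_1)+\eta(K\cap B_2)\le\nu(B_1)+\nu(B_2)$; taking the supremum over $K$ gives $\nu(B_1\cup B_2)\le\nu(B_1)+\nu(B_2)$. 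This extends to finite additivity by induction. For countable additivity, let $(B_n)_{n\in\NN}$ be pairwise disjoint in $\B_I$ with union $B$. Finite additivity plus monotonicity gives $\sum_{n=1}^N\nu(B_n)=\nu(\bigcup_{n=1}^N B_n)\le\nu(B)$ for all $N$, hence $\sum_{n\in\NN}\nu(B_n)\le\nu(B)$. For the reverse, take any $K\in\K$ with $K\subseteq B$; then $\eta(K)=\eta(\bigcup_n (K\cap B_n))=\sum_n\eta(K\cap B_n)$ by countable additivity of the measure $\eta$ on $S(\K)$ (each $K\cap B_n\in S(\K)$, and their countable union $K$ lies in $S(\K)$), and $\eta(K\cap B_n)\le\nu(B_n)$, so $\eta(K)\le\sum_n\nu(B_n)$; taking the supremum over such $K$ yields $\nu(B)\le\sum_n\nu(B_n)$.

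The main obstacle — really the only place one must be careful — is the $\le$ direction of (countable) additivity, because it relies on the fact that for $B\in\B_I$ and $K\in\K$ the set $K\cap B$ lies in $S(\K)$ so that $\eta$ is actually defined on it and countably additive there; this is why the proof is organized around intersecting with a fixed compact $K$ before invoking additivity of $\eta$, and why the definition of $\nu$ via $\sup_{K\in\K}\eta(B\cap K)$ is the right one. Finiteness ($\nu(X_I)\le 1$, in fact $=1$) and the Radon property (local finiteness is automatic from finiteness, inner regularity w.r.t.\ compacts is the defining formula) then complete the proof; I would close by remarking that $\nu$ is genuinely a Borel measure on all of $\B_I$, not merely on $S(\K)$, which is the content needed for the main proof of Theorem \ref{Prokh}.
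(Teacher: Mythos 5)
Your proof is correct and follows essentially the same route as the paper's: both hinge on the fact that $B\cap K\in S(\K)$ for $B\in\B_I$ and $K\in\K$, and both verify countable additivity by splitting it into the two inequalities obtained by intersecting with compact sets and invoking the countable additivity and inner regularity of $\eta$ on $S(\K)$. The only cosmetic differences are that you establish the extension property and inner regularity before additivity (the paper does the reverse, deducing the extension from the fact that $\nu$ and $\eta$ are both inner regular w.r.t.\! $\K$ and agree on $\K$) and that you obtain super-additivity from finite additivity plus monotonicity rather than via the paper's $\varepsilon 2^{-n}$ selection of near-optimal compacts $K_n$ for each $B_n$.
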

\begin{proof}
For any $B\in\B_I$ and any $K\in\K$, the set $K\cap B\in S(\K)$ since the Borel $\sigma-$algebra is the smallest $\sigma-$algebra generated by the collection $\F$ of all closed subsets in $(X_I, \tau_I)$ and $K\cap S(\F)=S(K\cap \F)$ (see, e.g.,~\cite[Theorem E, p.\!~25]{Hal}).

Let $(B_n)_{n\in\NN}$ be a sequence of pairwise disjoint subsets in $\B_I$. Then for all $K\in\K$ 
$$
\eta\left(\bigcup_{n\in\NN}B_n\cap K\right)=\sum_{n\in\NN}\eta(B_n\cap K)\leq \sum_{n\in\NN}\nu(B_n),
$$
which shows that $\nu$ is sub-additive. Let $\varepsilon>0$ and $n\in\NN$. Then there exists $K_n\in\K$ such that $\nu(B_n)\leq \eta(B_n\cap K_n)+2^{-n}\varepsilon$. For any fixed $N\in\NN$, set $K:=\bigcup_{n=1}^NK_n\in\K$. Then
\begin{eqnarray*}
\nu\left(\bigcup_{n\in\NN}B_n\right) &\geq & \eta\left(\left(\bigcup_{n\in\NN}B_n\right)\cap K\right)\geq\eta\left(\bigcup_{n=1}^N(B_n\cap K_n)\right)\\
&=&\sum_{n=1}^N\eta(B_n\cap K_n)\geq\sum_{n=1}^N(\nu(B_n)-2^{-n}\varepsilon)\\&\geq& \sum_{n=1}^N\nu(B_n)-\varepsilon,
\end{eqnarray*}
which implies $\nu(\bigcup_{n\in\NN}B_n)\geq \sum_{n\in\NN}\nu(B_n)$. Hence, $\nu$ is a measure on $\B_I$. 

Let us now show the inner regularity of $\nu$ w.r.t.\! $\K$. Let $B\in\B_I$ and $\varepsilon>0$. Then, by definition of $\nu$, there exists $K\in\K$ such that $\nu(B)\leq \eta(B\cap K)+\varepsilon$. Since $\eta$ is inner regular w.r.t.\! $\K$, there exists $K^\prime\subseteq B\cap K$ compact and such that $\eta(B\cap K)\leq \eta(K^\prime)+\varepsilon$ and so
$$
\nu(B)\leq \eta(B\cap K)+\varepsilon \leq \eta(K^\prime) +2\varepsilon\leq \nu(K^\prime)+2\varepsilon,
$$
which shows the inner regularity of $\nu$ w.r.t.\! $\K$.

Since by definition $\nu$ and $\eta$ coincide on $\K$ and since they are both inner regular w.r.t.\! $\K$, we obtain that for any $M\in S(\K)$ the following holds
$$\nu(M)=\sup\{\nu(K): K\subseteq M, K\in\K\}=\sup\{\eta(K): K\subseteq M, K\in\K\}=\eta(M), $$
i.e., $\nu$ extends $\eta$ to $\B_I$.
\end{proof}

\newpage
\section{Character space as projective limit}
\label{sec:Characters}
In the following all algebras in consideration are assumed to be unital commutative real algebras.
\subsection{The construction}\label{sec:constr}\ 

Let $A$ be a unital commutative real algebra and denote by $X(A)$ its character space, i.e., the set of all (unitary) homomorphisms $\alpha: A\to\RR$. In the following we always assume that $X(A)$ is non-empty. The aim of this section is to identify $X(A)$ as a projective limit of an appropriate projective system of measurable (resp.\! topological) spaces. To this purpose let us start with some fundamental definitions.

For any subset $G\subseteq A$, we denote by $\langle G\rangle$ the subalgebra of $A$ generated by $G$. For any subalgebra $S\subseteq A$ and any $a\in S$, we define $\hat{a}_S:X(S)\to\RR$ by $\hat{a}_S(\alpha):=\alpha(a)$, where $X(S)$ is the character space of $S$. When the subalgebra is clear from the context we just write $\hat{a}$ instead of $\hat{a}_S$. For any $S, T$ subalgebras of $A$ such that $S\subseteq T$, we set $\pi_{S,T}:X(T)\to X(S),\alpha\mapsto\alpha\!\restriction_S$ to be the canonical restriction map. Then, for any $S, T, R$ subalgebras of $A$ such that $S\subseteq T\subseteq R$, we clearly have
\be\label{comp-projections}
\pi_{S,R}=\pi_{S,T}\circ\pi_{T,R}
\ee
and also for all $a\in S$
\be\label{comp-Gelfand-transform}
\hat{a}_T=\hat{a}_S\circ\pi_{S,T}.
\ee

For any subalgebra $S\subseteq A$, we endow $X(S)$ with the weakest topology $\topchar{S}$ such that all the functions $\hat{a}_S$ with $a\in S$ are continuous, i.e., the family of all sets 
$$
\{\alpha\in X(S): \hat{a}(\alpha)>0\},\quad \forall \ a\in S
$$
forms a basis of $\topchar{S}$. Then for any $K\subseteq X(S)$ closed, there exists $G\subseteq S$ such that 
\be\label{eq::repr-closed-X(A)}
K=\{\alpha\in X(S): \hat{g}(\alpha)\geq 0\text{ for all }g\in G\}.
\ee
The topology $\topchar{S}$ coincides with the topology induced on $X(S)$ by the embedding 
$$
\begin{array}{llll}
\ & X(S)&\to& \RR^S\\
\ & \alpha &\mapsto& \left(\alpha(a)\right)_{a\in S}
\end{array}
$$
where $\RR^S$ is equipped with the product topology. Hence, $(X(S), \topchar{S})$ is Hausdorff. We consider on $X(S)$ the Borel $\sigma-$algebra $\Borchar{S}$ w.r.t.\!\! $\topchar{S}$. Then, for any $S,T$ subalgebras of $A$ such that $S\subseteq T$, the restriction map $\pi_{S,T}: X(T)\to X(S)$ is both continuous and measurable. Let us consider the following set
\begin{equation*}
\indexset:=\{S\subseteq A: S\text{ finitely generated subalgebra of }A\}.
\end{equation*}
ordered by the inclusion, which makes it into a directed partially ordered set. 

For each $S\in \indexset$, we set $\pi_S:=\pi_{S,A}$. The family $\{(X(S),\Borchar{S}),\pi_{S,T},\indexset\}$ (resp.\! $\{(X(S),\topchar{S}),\pi_{S,T},\indexset\}$) is a projective system of measurable (resp.\! topological) spaces. We denote by $\Sigma_\indexset$ the smallest $\sigma-$algebra on $X(A)$ such that all the maps $\pi_S$ with $S\in \indexset$ are measurable (resp.\! $\tau_\indexset$ the weakest topology on $X(A)$ such that all the maps $\pi_S$ with $S\in \indexset$ are continuous).

\begin{rem}\label{fin-dim-char}
Let $S\in \indexset$ and let $a_1,\ldots, a_d$ be generators of $S$ with $d\in\NN$. Then the homomorphism $\varphi: \RR[X_1,\ldots, X_d]\to S$ given by $\varphi(X_i):=a_i$ for $i=1,\ldots, d$ is surjective, and therefore, $S$ is isomorphic to $\RR[X_1,\ldots, X_d]/\ker(\varphi)$.
Since characters on $\RR[X_1,\ldots, X_d]$ are in one-to-one correspondence with points of $\RR^d$ (see, e.g., \cite[Proposition 5.4.5]{MarshBook}), the map $\psi:\Z(\ker(\varphi))\to X(S)$ given by $\psi(x)(\varphi(p)):=p(x)$ is a topological isomorphism, where $\Z(\ker(\varphi))=\{x\in\RR^d: p(x)=0\text{ for all }p\in\ker(\varphi)\}\subseteq\RR^d$ is equipped with the topology induced by the euclidean topology. Hence, the space $(X(S), \topchar{S})$ is locally compact and Polish.
\end{rem}

\begin{thm}\label{prop::X(A)-proj-lim}
The measurable space $(X(A),\Sigma_\indexset)$ together with the maps $\{\pi_S:S\in \indexset\}$ is the projective limit of the projective system $\{(X(S),\Borchar{S}),\pi_{S,T},\indexset\}$ (resp.\! the topological space $(X(A),\topchar{A})$ is the projective limit of the projective system $\{(X(S),\topchar{S}),\pi_{S,T},\indexset\}$). The $\sigma-$algebra $\Sigma_\indexset$ is the smallest $\sigma-$algebra on $X(A)$ making $\hat{a}$ measurable for all $a\in A$.
\end{thm}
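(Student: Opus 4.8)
The plan is to verify the three defining ingredients of a projective limit (Definition \ref{def:projlim}) for the triple $\big((X(A),\Sigma_\indexset),\{\pi_S\}_{S\in\indexset}\big)$, using the concrete model of the projective limit recalled right after that definition, namely $\widetilde{X}_\indexset:=\{(x_S)_{S\in\indexset}\in\prod_S X(S): x_S=\pi_{S,T}(x_T)\text{ whenever }S\subseteq T\}$. First I would exhibit the natural map $\Phi:X(A)\to\prod_{S\in\indexset}X(S)$, $\alpha\mapsto(\alpha\!\restriction_S)_{S\in\indexset}$. By the compatibility \eqref{comp-projections} its image lies in $\widetilde X_\indexset$, and $\pi_S=\mathrm{pr}_S\circ\Phi$ by construction. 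The crucial point is that $\Phi$ is a bijection onto $\widetilde X_\indexset$: injectivity is immediate since $\indexset$ covers $A$ (every $a\in A$ lies in some finitely generated $S$, e.g.\ $S=\langle a\rangle$), so two characters agreeing on all $S\in\indexset$ agree on $A$; surjectivity is the substantive step — given a compatible family $(x_S)_S$, one defines $\alpha(a):=x_{\langle a\rangle}(a)$ and must check this is a well-defined $\RR$-algebra homomorphism. Well-definedness and the homomorphism property follow from compatibility: for $a,b\in A$ put $T:=\langle a,b\rangle$; then $x_T$ is a character on $T$, $x_{\langle a\rangle}=\pi_{\langle a\rangle,T}(x_T)=x_T\!\restriction_{\langle a\rangle}$ and similarly for $b$, $a+b$, $ab$, and $1$, so $\alpha$ inherits linearity, multiplicativity and unitality from $x_T$ on the finitely generated algebra $T$. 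Thus $\Phi$ identifies $X(A)$ with the set-level projective limit.

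Next I would pin down the $\sigma$-algebra. Transporting the product-$\sigma$-algebra-restricted-to-$\widetilde X_\indexset$ through $\Phi$, one gets exactly the smallest $\sigma$-algebra on $X(A)$ making every $\pi_S$ measurable, which is the definition of $\Sigma_\indexset$; hence $\big((X(A),\Sigma_\indexset),\{\pi_S\}\big)$ is (isomorphic to) the concrete projective limit, and the universal property and uniqueness-up-to-isomorphism come for free from the general construction reviewed after Definition \ref{def:projlim}. Alternatively, and perhaps more transparently for the reader, one can check the universal property directly: given measurable $f_S:Y\to X(S)$ with $f_S=\pi_{S,T}\circ f_T$, the only candidate is $f(y):=\Phi^{-1}\big((f_S(y))_S\big)$, i.e.\ $f(y)(a):=f_{\langle a\rangle}(y)(a)$; compatibility makes this a character for each $y$ (same $T:=\langle a,b\rangle$ argument as above), $\pi_S\circ f=f_S$ holds by construction, measurability of $f$ follows from measurability of all $\pi_S\circ f=f_S$ together with the defining property of $\Sigma_\indexset$, and uniqueness is forced since $\Sigma_\indexset$ is generated by the $\pi_S$.

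For the topological statement the argument is parallel: the same $\Phi$ is a homeomorphism of $(X(A),\topchar{A})$ onto $\widetilde X_\indexset\subseteq\prod_S X(S)$ with the subspace product topology, because $\topchar{A}$ was defined as the initial topology for the maps $\hat a$ ($a\in A$) — equivalently the initial topology for the embedding $X(A)\to\RR^A$ — and one checks this coincides with the initial topology for the family $\{\pi_S\}$, using that each $\topchar{S}$ is itself initial for the $\hat a_S$ ($a\in S$) and the transitivity relation \eqref{comp-Gelfand-transform}; hence $(X(A),\topchar{A})=\tau_\indexset$ and it is the projective limit of $\{(X(S),\topchar{S}),\pi_{S,T},\indexset\}$. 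Finally, for the last sentence: $\hat a=\hat a_{\langle a\rangle}\circ\pi_{\langle a\rangle}$ is $\Sigma_\indexset$-measurable for every $a\in A$ since $\pi_{\langle a\rangle}$ is and $\hat a_{\langle a\rangle}$ is Borel on $X(\langle a\rangle)$; conversely any $\sigma$-algebra making all $\hat a$ measurable makes each $\pi_S$ measurable (because $\Borchar{S}$ is generated by the $\hat a_S$, $a\in S$, and $\hat a_S\circ\pi_S=\hat a$), hence contains $\Sigma_\indexset$ by minimality. The main obstacle is the surjectivity of $\Phi$, i.e.\ checking that a compatible family of finite-dimensional characters glues to a genuine algebra homomorphism on all of $A$; once the reduction to the two-generator subalgebra $T=\langle a,b\rangle$ is made this is routine, but it is the one place where the algebraic structure (as opposed to formal nonsense about projective limits) actually enters.
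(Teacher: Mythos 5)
Your proposal is correct and follows essentially the same route as the paper: the paper also verifies the universal property directly by gluing a compatible family into a character via $f(y)(a):=f_{\langle a\rangle}(y)(a)$, and identifies $\Sigma_\indexset$ as the $\sigma$-algebra generated by the $\hat a$ using $\hat a_A=\hat a_S\circ\pi_S$. Your explicit verification on $T=\langle a,b\rangle$ that the glued map is an algebra homomorphism just fills in the step the paper dismisses as ``easy to show,'' and your detour through the concrete model $\widetilde X_\indexset$ is an equivalent repackaging of the same argument.
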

\begin{proof}
By taking $R=A$ in \eqref{comp-projections}, we get $\pi_{S}=\pi_{S,T}\circ\pi_{T}$ for any $S, T\in \indexset$ such that $S\subseteq T$. Hence, it remains to show that $(X(A),\Sigma_\indexset)$ satisfies the universal property of the projective limit.

Let $(Y,\Sigma_Y)$ be a measurable space and for any $S\in \indexset$ let $f_S:Y\to X(S)$ be a measurable map which satisfies $f_S=\pi_{S,T}\circ f_T$ for all $T\in \indexset$ such that $S\subseteq T$. Define $f:Y\to X(A),y\mapsto f(y)$ by $f(y)(a):=f_{\langle a\rangle}(y)(a)$. 
Then, using the compatibility between the maps $\{f_S: S\in \indexset\}$, it is easy to show that $f$ is the unique measurable map satisfying $\pi_S\circ f=f_S$ for all $S\in \indexset$. 

Since for $S\in I$ the $\sigma-$algebra $\Borchar{S}$ is generated by all $\hat{a}_S$ with $a\in S$, the observation that for any $a\in A$ we have $\hat{a}_A=\hat{a}_S\circ\pi_S$ for all $S\in\indexset$ containing $a$ implies that $\Sigma_\indexset$ coincides with the $\sigma-$algebra generated by all $\hat{a}_A$ with $a\in A$.

This proof shows mutatis mutandis that $\{(X(A),\tau_\indexset), \pi_S, \indexset\}$ is the projective limit of the projective system $\{(X(S),\topchar{S}),\pi_{S,T},\indexset\}$ and that $\topchar{A}=\tau_\indexset$. 
\end{proof}

\subsection{The $\sigma-$algebras}\label{sec:sigma-alg}\

In this subsection we employ the two-stage construction of projective limits introduced in Section~\ref{sec:quasi-meas} (see Proposition~\ref{lem::projective-limit} and Proposition \ref{prop::C-Lambda}) to identify $(X(A),\Sigma_\indexset)$ as the projective limit of a different projective system over an index set $\superindexset$ larger than $\indexset$ (see Lemma~\ref{cor::double-proj-lim-to-char-space} below). This will allow us to relate the cylinder $\sigma-$algebra to the constructibly Borel $\sigma-$algebra introduced in \cite{GKM16} (see Proposition~\ref{prop::constr-Borel-is-cylinder} below) and to compare our results on the moment problem to the corresponding ones in \cite{GKM16} (see Section~\ref{sec:PLMP}).

\begin{lem}\label{cor::double-proj-lim-to-char-space}
Consider the set
\begin{equation}\label{superindexset}
\superindexset:=\{T\subseteq A: T\text{ countably generated subalgebra of }A\}
\end{equation}
ordered by the inclusion and let $\Sigma_\superindexset$ be the smallest $\sigma-$algebra on $X(A)$ such that all the $\pi_T$ with $T\in \superindexset$ are measurable. Then $\{(X(A),\Sigma_\indexset),\pi_T,\superindexset\}$ is the projective limit of the projective system $\{(X(T),\Borchar{T}),\pi_{T,R},\superindexset\}$, where $\indexset$ is as above. Moreover, 
\be\label{eq::sigma-lambda}
\C_\indexset\subseteq \Sigma_\indexset=\Sigma_\superindexset=\C_\superindexset,
\ee
where the inclusion is strict in general.
\end{lem}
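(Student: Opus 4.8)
The plan is to recognize $\superindexset$ as a cofinal subset of the index set $\Lambda$ from the two-stage construction and then invoke Proposition~\ref{lem::projective-limit} together with Remark~\ref{rem::Lambda} and Proposition~\ref{prop::C-Lambda}. Concretely, recall that $\Lambda=\{\lambda\subseteq\indexset:\lambda\text{ contains a countable directed cofinal subset}\}$ where $\indexset$ is the set of finitely generated subalgebras of $A$. The natural bridge is the map that sends a countably generated subalgebra $T\in\superindexset$ to the family $\lambda_T:=\{S\in\indexset: S\subseteq T\}$ of all finitely generated subalgebras contained in $T$. First I would check that $\lambda_T\in\Lambda$: if $T=\langle a_n:n\in\NN\rangle$, then the chain $S_n:=\langle a_1,\dots,a_n\rangle$ is a countable directed subset of $\lambda_T$ which is cofinal in $\lambda_T$ (any finitely generated $S\subseteq T$ has its finitely many generators lying in some $S_n$ up to... more precisely each generator is a polynomial in finitely many of the $a_k$, hence $S\subseteq S_n$ for $n$ large enough). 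Second I would observe that $\{\lambda_T:T\in\superindexset\}$ is cofinal in $\Lambda$: given $\lambda\in\Lambda$ with countable cofinal subset $(S_k)_{k}$, the subalgebra $T:=\langle\bigcup_k S_k\rangle$ is countably generated and $\lambda\subseteq\lambda_T$.

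The second ingredient is to identify, for $T\in\superindexset$, the projective limit $\{(X_{\lambda_T},\Sigma_{\lambda_T}),\pi_i^{\lambda_T},\lambda_T\}$ of the system $\{(X(S),\Borchar{S}),\pi_{S,S'},\lambda_T\}$ with $(X(T),\Borchar{T})$ together with the restriction maps $\pi_{S,T}$. This is exactly Theorem~\ref{prop::X(A)-proj-lim} applied to the algebra $T$ in place of $A$, since $\lambda_T$ is precisely the set of all finitely generated subalgebras of $T$ — note $(X(T),\topchar{T})$ is metrizable here because $T$ is countably generated, but that is not needed for the identification. Under this identification the maps $\pi^{\lambda_T,\lambda_R}$ from \eqref{eq::2} become the restriction maps $\pi_{T,R}:X(R)\to X(T)$ for $T\subseteq R$ in $\superindexset$, and the map $\pi^{\lambda_T}:X(A)\to X(T)$ from \eqref{eq::3} becomes $\pi_T=\pi_{T,A}$. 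Hence $\P_\superindexset$ in the statement is (up to relabeling by this cofinal correspondence) the system $\P_\Lambda$ restricted to a cofinal subset, and $\Sigma_\superindexset$ is the $\sigma$-algebra called $\Sigma_\Lambda$ there.

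With these identifications in hand, Proposition~\ref{lem::projective-limit} combined with Remark~\ref{rem::Lambda} (which permits replacing $\Lambda$ by a cofinal subset) gives that $\{(X(A),\Sigma_\indexset),\pi_T,\superindexset\}$ is the projective limit of $\{(X(T),\Borchar{T}),\pi_{T,R},\superindexset\}$. The chain \eqref{eq::sigma-lambda} then follows directly from Proposition~\ref{prop::C-Lambda}: $\C_\indexset\subseteq\Sigma_\indexset=\Sigma_\superindexset=\C_\superindexset$, where $\C_\superindexset$ is the cylinder algebra of the $\superindexset$-indexed system. The strictness of the first inclusion in general is inherited from the corresponding statement in Proposition~\ref{prop::C-Lambda} (one can exhibit it concretely when $A=\RR[X_i:i\in\NN]$, where a countable union of cylinders over distinct single variables need not be a cylinder over one finitely generated subalgebra).

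The main obstacle I anticipate is purely bookkeeping: verifying carefully that the abstract cofinal correspondence $T\mapsto\lambda_T$ intertwines all the structure maps correctly — that $\pi^{\lambda_T,\lambda_R}$ really is $\pi_{T,R}$ under the canonical identification of $X_{\lambda_T}$ with $X(T)$, and that cofinality of $\{\lambda_T\}$ in $\Lambda$ together with Proposition~\ref{prop::cofinal-proj-limit} genuinely yields the same projective limit rather than merely an isomorphic one. None of this is conceptually hard, but it requires threading the uniqueness clauses in the universal properties through two layers of the construction; the cleanest route is probably to invoke Theorem~\ref{prop::X(A)-proj-lim} for each $T$ to get the first-stage limits "for free" as honest character spaces, and then quote Proposition~\ref{lem::projective-limit} and Remark~\ref{rem::Lambda} verbatim rather than re-deriving anything.
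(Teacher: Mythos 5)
Your proposal is correct and follows essentially the same route as the paper: the paper likewise embeds $\superindexset$ into the two-stage index set via $T\mapsto I_T:=\{S\in\indexset:S\subseteq T\}$, checks cofinality, identifies the first-stage limits with the character spaces $X(T)$ via Theorem~\ref{prop::X(A)-proj-lim}, and then quotes Proposition~\ref{lem::projective-limit}, Remark~\ref{rem::Lambda} and Proposition~\ref{prop::C-Lambda}. One small correction: your parenthetical that countable generation of $T$ ``is not needed for the identification'' is off — Theorem~\ref{prop::X(A)-proj-lim} only identifies the first-stage limit as $(X(T),\Sigma_{\lambda_T})$ with $\Sigma_{\lambda_T}$ the cylinder $\sigma-$algebra, and to replace this by $(X(T),\Borchar{T})$, as the statement requires, you must invoke Proposition~\ref{prop::Tobias}, which uses precisely the countable cofinal subset of $\lambda_T$ supplied by the countable generation of $T$.
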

 
\begin{proof}
For any $T\in\superindexset$, define $I_T:=\{S\subseteq T:S\text{ finitely generated subalgebra } \text{of }T\}$. Then the map $e(T):=I_T$ defines an embedding of $\superindexset$ into the set
\begin{equation}\label{eq::set-delta}
\{\lambda\subseteq \indexset: \lambda\text{ contains a countable directed subset cofinal in }\lambda\}.
\end{equation}
Indeed, let $T$ be a subalgebra of $A$ generated by $(a_n)_{n\in\NN}$ and for each $d\in\NN$ set $S_d:=\langle a_1,\ldots,a_d\rangle\in I_T$, then $(S_d)_{d\in\NN}$ is a countable directed subset of $I_T$ cofinal in $I_T$. Clearly, $e$ is order preserving and injective. Moreover, $e(\superindexset)$ is cofinal in the set defined in \eqref{eq::set-delta}, because if $\lambda\subseteq \indexset$ contains a countable directed subset $\kappa$ cofinal in $\lambda$, then $T:=\langle S: S\in\kappa\rangle$ is an element of $\superindexset$ and $e(T)= I_T\supseteq \lambda$. Indeed, by the cofinality of $\kappa$ in $\lambda$, for any $R\in\lambda$ there exists $S\in\kappa$ such that $R\subseteq S$ but $S\subset T$ and so $R\in I_T$.

Therefore, by Proposition \ref{lem::projective-limit} and Remark \ref{rem::Lambda}, $\{(X_\indexset,\Sigma_\indexset),\pi^{I_T}, e(\superindexset)\}$ is the projective limit of the projective system $\{(X_{I_T},\Sigma_{I_T}),\pi^{I_T,I_R}, e(\superindexset)\}$. Now Theorem \ref{prop::X(A)-proj-lim} guarantees that $X_\indexset=X(A)$ and also that for any $T\in\superindexset$ we have $X_{I_T}=X(T)$ and $\pi_S^{I_T}=\pi_{S,T}$ for all $S\in I_T$, which in turn implies $\Sigma_{I_T}=\Sigma_T$. This together with Proposition \ref{prop::Tobias} gives that $\Sigma_{I_T}=\Borchar{T}$. Therefore, for any $T\subseteq R$ in $\superindexset$, the uniqueness of the map $\pi^{I_T,I_R}$ (cf.\! \eqref{eq::2}) ensures that $ \pi^{I_T,I_R}=\pi_{T,R}$. Similarly, we obtain that $\pi^{I_T}=\pi_T$ for any $T\in\superindexset$ (cf.\! \eqref{eq::3}). Hence, we can replace $e(\superindexset)$ with $\superindexset$ and get that $\{(X(A),\Sigma_\indexset),\pi_T,\superindexset\}$ is the projective limit of $\{(X(T),\Borchar{T}),\pi_{T,R},\superindexset\}$. 

The relation \eqref{eq::sigma-lambda} directly follows by Proposition \ref{prop::C-Lambda} using that $\superindexset$ is embedded in the set in \eqref{eq::set-delta}.
\end{proof}

Let us recall that in \cite[Section 2]{{GKM16}} the $\sigma-$algebra $\B_c$ of constructibly Borel subsets of $X(A)$ is defined as the $\sigma-$algebra generated by all subsets $\{\alpha\in X(A): \alpha(a)>0\}$ with $a\in A$. In \cite[Proposition 2.3]{{GKM16}} the authors provide a characterization of $\B_c$, which in our setting translates in the following result.

\begin{prop}\label{prop::constr-Borel-is-cylinder}
The $\sigma-$algebra $\B_c$ of constructibly Borel subsets of $X(A)$ coincides with the cylinder $\sigma-$algebra $\Sigma_\superindexset$ for $\superindexset$ as in \eqref{superindexset}.
\end{prop}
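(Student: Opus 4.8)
The plan is to show the two $\sigma$-algebras coincide by a double inclusion, leveraging the characterization $\Sigma_\superindexset = \C_\superindexset$ from Lemma~\ref{cor::double-proj-lim-to-char-space} together with the definition of $\B_c$ as the $\sigma$-algebra generated by the sets $\{\alpha\in X(A):\alpha(a)>0\}$, $a\in A$. The key observation is that for $a\in A$ and any $T\in\superindexset$ containing $a$, equation~\eqref{comp-Gelfand-transform} (with $R=A$) gives $\hat a_A = \hat a_T\circ\pi_T$, so that
$$
\{\alpha\in X(A):\alpha(a)>0\} = \hat a_A^{-1}((0,\infty)) = \pi_T^{-1}\bigl(\hat a_T^{-1}((0,\infty))\bigr) = \pi_T^{-1}\bigl(\{\beta\in X(T):\beta(a)>0\}\bigr).
$$
Since $\{\beta\in X(T):\beta(a)>0\}$ is open in $\topchar{T}$, hence lies in $\Borchar{T}$, this set is a cylinder set in $\C_\superindexset$. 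As $a\in A$ was arbitrary (and $\langle a\rangle\in\superindexset$ always provides such a $T$), every generator of $\B_c$ lies in $\C_\superindexset = \Sigma_\superindexset$, and therefore $\B_c\subseteq\Sigma_\superindexset$.

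For the reverse inclusion $\Sigma_\superindexset\subseteq\B_c$, it suffices to show that every cylinder set $\pi_T^{-1}(E)$ with $T\in\superindexset$ and $E\in\Borchar{T}$ lies in $\B_c$, since these generate $\Sigma_\superindexset=\C_\superindexset$. Fix $T\in\superindexset$. The collection $\{E\subseteq X(T):\pi_T^{-1}(E)\in\B_c\}$ is a $\sigma$-algebra on $X(T)$ (preimages commute with complements and countable unions, and $\B_c$ is a $\sigma$-algebra), so it is enough to check that it contains a generating family for $\Borchar{T}$. Because $T$ is countably generated, $(X(T),\topchar{T})$ is second countable, and $\Borchar{T}$ is generated by the subbasic open sets $\{\beta\in X(T):\hat b_T(\beta)>0\}$ with $b\in T$; indeed the sets $\{\hat b_T>0\}$ form a subbasis of $\topchar{T}$, and a countable subfamily already generates the topology, hence the Borel $\sigma$-algebra. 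For each such $b\in T\subseteq A$, the computation above gives $\pi_T^{-1}(\{\hat b_T>0\}) = \{\alpha\in X(A):\alpha(b)>0\}\in\B_c$. Hence every generator of $\Borchar{T}$ pulls back into $\B_c$, so $\pi_T^{-1}(\Borchar{T})\subseteq\B_c$, and letting $T$ range over $\superindexset$ yields $\Sigma_\superindexset=\C_\superindexset\subseteq\B_c$.

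The only slightly delicate point is the reduction to subbasic open sets in the second inclusion: one must use that $(X(T),\topchar{T})$ is second countable (which follows from $T$ being countably generated, cf.\ the discussion of metrizability of $X(A)$ in the Introduction and Remark~\ref{fin-dim-char} for the finitely generated pieces) so that the Borel $\sigma$-algebra is in fact generated by countably many of the sets $\{\hat b_T>0\}$; alternatively one invokes directly that $\Borchar{T}$ is generated by $\{\{\hat b_T>0\}:b\in T\}$ because these form a subbasis and $X(T)$ is second countable. With this in hand the argument is a routine $\sigma$-algebra manipulation; the essential content is entirely in the identity $\hat a_A = \hat a_T\circ\pi_T$ from~\eqref{comp-Gelfand-transform} and the identification $\Sigma_\superindexset=\C_\superindexset$ from Lemma~\ref{cor::double-proj-lim-to-char-space}.
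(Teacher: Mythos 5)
Your proof is correct. Note that the paper itself gives no argument for this proposition --- it simply records it as the translation of \cite[Proposition 2.3]{GKM16} into the projective-limit language --- so you are supplying a proof the paper delegates to a citation. Both inclusions are sound: the first is exactly the identity $\hat a_A=\hat a_T\circ\pi_T$ together with $\C_\superindexset=\Sigma_\superindexset$ from Lemma~\ref{cor::double-proj-lim-to-char-space}, and the second correctly reduces, via the good-sets principle, to showing that $\Borchar{T}$ is generated by the sets $\{\hat b_T>0\}$, $b\in T$, which your second-countability/Lindel\"of argument establishes (the embedding $X(T)\hookrightarrow\RR^{\NN}$ through a countable generating set of $T$ does give second countability, since every $\hat b_T$ is a polynomial in the coordinate functions of the generators). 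One remark: this last generation step can be obtained entirely from results already in the paper, avoiding the topological digression --- by Proposition~\ref{prop::Tobias} one has $\Sigma_{I_T}=\Borchar{T}$ for $T$ countably generated, and by the final assertion of Theorem~\ref{prop::X(A)-proj-lim} (applied to $T$ in place of $A$) the $\sigma-$algebra $\Sigma_{I_T}$ is the smallest one making all $\hat a_T$, $a\in T$, measurable, hence is generated by the sets $\hat a_T^{-1}((c,\infty))=\{\widehat{(a-c)}_T>0\}$. Either route is fine; yours is self-contained, the alternative stays closer to the machinery the paper has already set up.
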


We can then rewrite the definition of constructibly Radon measure \cite[Definition~2.4]{GKM16} as follows.

\begin{dfn}\label{def::constr-Radon}
A \emph{constructibly Radon} measure on $X(A)$ is a measure $\mu$ on the cylinder $\sigma-$algebra $\Sigma_\superindexset$ such that ${\pi_S}_{\#}\mu$ is a Radon measure on $\Borchar{S}$ for all $S\in \superindexset$.
\end{dfn}

\begin{rem}\label{rem::constr-Borel}\
\begin{enumerate}[(i)]
\item The restriction of any Radon measure on $X(A)$ to $\Sigma_\superindexset$ is a constructibly Radon measure, but  a constructibly Radon measure is \emph{not} a Radon measure in general; neither it is defined on the Borel $\sigma-$algebra $\Borchar{A}$ w.r.t.\! $\topchar{A}$ nor it is required to be inner regular. However, it is always a locally finite measure. 
\item If the algebra $A$ is countably generated, then the notions of constructibly Radon measure and Radon measure coincide. Indeed, by Proposition \ref{prop::Tobias} and \eqref{eq::sigma-lambda}, we get $\B_c=\C_\Lambda=\Sigma_\superindexset=\Sigma_\indexset=\Borchar{A}$ (cf.\! \cite[Proposition 2.2]{GKM16}) and so by Theorem~\ref{Prokh} any constructibly Radon measure is a Radon measure.
\item Any constructibly Radon measure on $X(A)$ is also a cylindrical quasi-measure w.r.t.\! the projective system $\{(X(S),\Borchar{S}),\pi_{S,T},\superindexset\}$.
\end{enumerate}
\end{rem}

We now show that  the set of finite constructibly Radon measures on $X(A)$ actually coincides with the set of finite measure on the cylinder $\sigma-$algebra $\Sigma_\indexset$.

\begin{lem}\label{lem::constrRadonIJ}
Any finite measure $\mu$ on the cylinder $\sigma-$algebra $\Sigma_\indexset$ is a finite constructibly Radon measure on $X(A)$.\end{lem}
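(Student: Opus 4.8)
The plan is to reduce the claim to the extension results for cylinder $\sigma$-algebras already developed in Section~\ref{sec:ext-cyl}. Let $\mu$ be a finite measure on $\Sigma_\indexset$. First I would normalise: since $\mu(X(A))<\infty$, we may (after discarding the trivial case $\mu=0$) rescale to assume $\mu$ is a probability measure, prove the statement in that case, and rescale back; being constructibly Radon is clearly preserved under scaling by a positive constant. So assume $\mu(X(A))=1$. For each $S\in\indexset$ put $\mu_S:={\pi_S}_{\#}\mu$, a probability measure on $\Borchar{S}$. By Remark~\ref{fin-dim-char}, $(X(S),\topchar{S})$ is locally compact and Polish for every $S\in\indexset$; a finite Borel measure on a Polish space is automatically inner regular with respect to compact sets, and it is trivially locally finite, hence $\mu_S$ is a Radon probability measure on $\Borchar{S}$. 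The compatibility ${\pi_{S,T}}_{\#}\mu_T=\mu_S$ for $S\subseteq T$ in $\indexset$ is immediate from ${\pi_{S,T}}\circ\pi_T=\pi_S$ (taking $R=A$ in \eqref{comp-projections}), so $\{\mu_S:S\in\indexset\}$ is an exact projective system of Radon probability measures with respect to $\{(X(S),\topchar{S}),\pi_{S,T},\indexset\}$.

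Next I would pass to the two-stage construction. For each $\lambda\in\Lambda$ let $\mu_\lambda$ be the probability measure on $\B_\lambda$ extending the cylindrical quasi-measure associated with $\{\mu_S:S\in\lambda\}$, as provided by Remark~\ref{doub-exact}-(i) (applicable since each $\lambda\in\Lambda$ contains a countable cofinal subset, so Lemma~\ref{lem::content} applies); by Remark~\ref{doub-exact}-(ii), $\{\mu_\lambda:\lambda\in\Lambda\}$ is an exact projective system of probability measures with respect to $\{(X_\lambda,\B_\lambda),\pi^{\lambda,\kappa},\Lambda\}$. In our situation the index set is $\indexset$ and the relevant $\Lambda$ is (via the embedding $e$ of Lemma~\ref{cor::double-proj-lim-to-char-space}) cofinal in the set of subsets of $\indexset$ with a countable cofinal directed subset, and for $T\in\superindexset$ one has $X_{I_T}=X(T)$, $\Borchar{T}=\Sigma_{I_T}$, $\pi^{I_T}=\pi_T$. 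The key observation is that $\mu$ itself is already a measure on $\Sigma_\indexset=\Sigma_\Lambda=\C_\Lambda$ (by Proposition~\ref{prop::C-Lambda} and \eqref{eq::sigma-lambda}) whose pushforwards along the $\pi_S$ are the $\mu_S$; hence by the argument in the converse direction of the proof of Theorem~\ref{ext-cylinderSigmaAlg}, for each $\lambda\in\Lambda$ we get ${\pi^\lambda}_{\#}\mu=\mu_\lambda$, so $\mu$ is precisely the cylindrical quasi-measure on $\C_\Lambda$ corresponding to $\{\mu_\lambda:\lambda\in\Lambda\}$, and in particular this system is thick. Translating back through $e$, this says exactly that for every $T\in\superindexset$ we have ${\pi_T}_{\#}\mu=\mu_T'$, where $\mu_T'$ is the measure on $\Borchar{T}$ extending the cylindrical quasi-measure associated with $\{\mu_S:S\in I_T\}$; in particular ${\pi_T}_{\#}\mu$ is a probability (hence finite) measure on $\Borchar{T}$.

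It remains to check that ${\pi_T}_{\#}\mu$ is in fact a \emph{Radon} measure on $\Borchar{T}$ for each $T\in\superindexset$, not merely a finite Borel measure; this is where the only real content lies. Since $T$ is countably generated, $(X(T),\topchar{T})$ is metrizable and separable (a countable product of the Polish spaces $X(S_d)$ induces the topology), hence Polish, and thus every finite Borel measure on it is automatically inner regular with respect to compact sets and locally finite — i.e.\ Radon. (Alternatively, one may invoke Corollary~\ref{Prokh-ctbl} applied to the projective system $\{(X(S),\topchar{S}),\pi_{S,T},I_T\}$, whose index set $I_T$ has a countable cofinal subset, to see directly that the extending measure $\mu_T'$ is Radon.) Therefore ${\pi_S}_{\#}\mu$ is Radon on $\Borchar{S}$ for all $S\in\superindexset$, so $\mu$ is a constructibly Radon measure by Definition~\ref{def::constr-Radon}, and it is finite by construction. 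The main obstacle is purely bookkeeping: carefully matching the abstract $\Lambda$-indexed machinery of Section~\ref{sec:quasi-meas}–\ref{sec:ext-cyl} with the concrete $\indexset/\superindexset$ setup via the embedding $e$ of Lemma~\ref{cor::double-proj-lim-to-char-space}; once that identification is in place, the Radon property falls out of the Polishness of $X(T)$ for countably generated $T$.
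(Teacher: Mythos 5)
Your proposal is correct and follows the same overall scheme as the paper: normalise to a probability measure, observe that ${\pi_S}_{\#}\mu$ is a finite Borel measure on the Polish space $X(S)$ for $S\in\indexset$ and hence Radon, and then treat the countably generated $T\in\superindexset$ separately. The one genuine (and welcome) difference is in that last step: the paper fixes $R\in\superindexset\setminus\indexset$, forms the projective system over $\indexset_R$, invokes Proposition~\ref{prop-eps-K} to get Prokhorov's condition for free from the countable cofinal subset of $\indexset_R$, and then uses Theorem~\ref{Prokh} to identify ${\pi_R}_{\#}\mu$ with the unique Radon extension; your primary argument instead observes directly that $(X(T),\topchar{T})$ is itself Polish (a closed subspace of the countable product $\prod_d X(S_d)$ --- it is the projective limit, not the full product, but that only helps), so that ${\pi_T}_{\#}\mu$ is Radon by Ulam's theorem without any appeal to the extension machinery. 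This is a legitimate shortcut, and your parenthetical alternative via Corollary~\ref{Prokh-ctbl} recovers the paper's route. On the other hand, the middle portion of your write-up (the two-stage construction, the identification ${\pi^\lambda}_{\#}\mu=\mu_\lambda$, and the thickness of $\{\mu_\lambda\}$) is not needed for this lemma: all you require is that $\pi_T:(X(A),\Sigma_\indexset)\to(X(T),\Borchar{T})$ is measurable, which is exactly what Lemma~\ref{cor::double-proj-lim-to-char-space} (equivalently $\Sigma_\indexset=\Sigma_\superindexset$) already gives, so ${\pi_T}_{\#}\mu$ is a well-defined finite Borel measure on $X(T)$ and the Polishness argument finishes the proof.
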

\begin{proof}
Let $\mu$ be a finite measure on the cylinder $\sigma-$algebra $\Sigma_\indexset$. Then $\mu$ is a finite measure on $\Sigma_\superindexset$ by \eqref{eq::sigma-lambda}. W.l.o.g.\! let us assume that $\mu(X(A))=1$. Then for any $T\in \superindexset$ we have that ${\pi_T}_{\#}\mu$ is a probability measure on $\Borchar{T}$ on $X(T)$, since $\pi_T: (X(A), \Sigma_\superindexset)\to(X(T), \Borchar{T})$ is measurable by Theorem \ref{prop::X(A)-proj-lim}. Hence, for any $S\in\indexset$, we have that ${\pi_S}_{\#}\mu$ is a finite Borel measure on the Polish space $X(S)$ (see Remark \ref{fin-dim-char}) and so a Radon measure. It remains to show that ${\pi_R}_{\#}\mu$ is Radon for any $R\in \superindexset\backslash \indexset$. To this purpose, for any $R\in \superindexset\backslash \indexset$, set $\indexset_R:=\{S\in \indexset:S\subseteq R\}$. Then $\{(X(R),\Borchar{R}), \pi_{S, R}, \indexset_R\}$ is the projective limit of the projective system $\P:=\{(X(S),\Borchar{S}),\pi_{S,T},\indexset_R\}$ and the family $\{{\pi_S}_{\#}\mu: S\in \indexset_R\}$ is an exact projective system of Radon probability measures on~$\P$ since $\indexset_R\subset \indexset$ and $\mu(X(A))=1$. Moreover, for any $S\in \indexset_R$ we have ${\pi_{S,R}}_{\#}({\pi_R}_{\#}\mu)={\pi_S}_{\#}\mu$. By Proposition~\ref{prop-eps-K} the assumptions of Theorem \ref{Prokh} are satisfied since $\indexset_R$ contains a countable cofinal subset. Hence, ${\pi_R}_{\#}\mu$ is in fact a Radon measure, which completes the proof.
\end{proof}

\newpage
\section{The projective limit approach to the moment problem}\label{sec:PLMP}

Let $A$ be a unital commutative real algebra and $X(A)$ its character space endowed with the topology $\topchar{A}$. In this section, we deal with Problem \ref{GenKMP} for $\Sigma$ being the Borel $\sigma-$algebra $\Borchar{A}$ on $(X(A),\topchar{A})$ and more precisely we are interested in finding necessary and sufficient conditions for the existence of a \emph{$K-$representing Radon measure on the Borel $\sigma-$algebra $\Borchar{A}$ on $X(A)$}. Our general strategy is to combine the projective limit structure of $X(A)$ studied in Section \ref{sec:Characters} with the results in Section~\ref{sec:ext-cyl}  to establish a general criterion for the existence of a $K-$representing measure on the cylinder $\sigma-$algebra $\Sigma_\indexset$ on $X(A)$ and then the results in Section \ref{sec:ext-Rad} to extend such a representing measure on $\Sigma_\indexset$ to a $K-$representing Radon measure on $\Borchar{A}$.
As in Section \ref{sec:Characters}, we set 
\be\label{main-indexset}
\indexset:=\{S\subseteq A: S\text{ finitely generated subalgebra of }A\},
\ee
$\pi_{S,T}:X(T)\to X(S)$ to be the canonical restriction and $\pi_S:=\pi_{S,A}$ for any $S\subseteq T$ in $\indexset$. Recall that all algebras in consideration are unital commutative and real.

We immediately notice that when $A$ is countably generated, say by $(a_n)_{n\in\NN}$, the index set $I$ in \eqref{main-indexset} contains the countable cofinal subset $J:=\{\langle a_1, \ldots, a_d \rangle : d\in\NN\}$. Therefore, our strategy for solving Problem \ref{GenKMP} simplifies when $A$ is countably generated and we will analyze this case separately in Section \ref{main-results-ctbl}, while in Section~\ref{main-results-gen} we tackle the general case. We start Section~\ref{main-results} with some general results holding in both cases. In Sections \ref{RH-Thm}, \ref{Nuss-Thm}, \ref{appl-Arch} and \ref{appl-loc} we will focus on applications of the main results in Section \ref{main-results}. In Section \ref{RH-Thm}, respectively in Section \ref{Nuss-Thm}, we will derive a generalized version of the classical Riesz-Haviland theorem, respectively Nussbaum's theorem. In Section \ref{appl-Arch}, we will both retrieve the solution to Problem \ref{GenKMP} for Archimedean quadratic modules and prove a new representation theorem for linear functionals non-negative on a ``partially Archimedean'' quadratic module of $A$. In Section \ref{appl-loc}, we will retrieve some recent results about applications of localization to the moment problem.\vspace{-0.2cm}

\subsection{Main results on Problem \ref{GenKMP}}\label{main-results}\ 

Let $L$ be a linear functional on an algebra $A$. Assume that each restriction $L\!\!\restriction_S$ to each finitely generated subalgebra $S$ of $A$ admits a representing measure. We investigate the question how to build out of them an exact projective system. Only in the case when $L\!\!\restriction_S$ admits a unique representing measure this is straightforward, see Lemma~\ref{lem:uniq-S}. Whereas in Lemma \ref{lem::ex-ex-proj-sys} we show how to build such an exact projective system even when uniqueness is lacking.

 \begin{lem}\label{lem:uniq-S} Let $A$ be an algebra, $L:A \rightarrow \mathbb{R}$ linear, $L(1)=1$ and 
$\indexset$ as in~\eqref{main-indexset}. For each $S\in I$, let $K^{(S)}$ be a closed subset of $X(S)$ such that $\pi_{S,T}\left(K^{(T)}\right)\subseteq K^{(S)}$ for any $T\in \indexset$ containing~$S$. 

If for each $S\in \indexset$ there exists a unique $K^{(S)}-$representing Radon measure $\nu_S$ for~$L\!\!\restriction_S$, then $\{\nu_S:S\in \indexset\}$ is an exact projective system.
 \end{lem}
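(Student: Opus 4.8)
The plan is to verify the two defining properties of an exact projective system directly from the uniqueness hypothesis, using the compatibility of the family $\{K^{(S)}\}_{S\in I}$ and the pushforward of measures along the restriction maps $\pi_{S,T}$. Recall that $\{\nu_S : S\in I\}$ is an exact projective system of measures on $\{(X(S),\Borchar{S}),\pi_{S,T},I\}$ precisely when each $\nu_S$ is a measure on $\Borchar{S}$ (which is given, since each $\nu_S$ is in particular a Radon measure) and ${\pi_{S,T}}_{\#}\nu_T=\nu_S$ for all $S\subseteq T$ in $I$. So the only thing to prove is the compatibility relation ${\pi_{S,T}}_{\#}\nu_T=\nu_S$.

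First I would fix $S\subseteq T$ in $I$ and set $\mu_S:={\pi_{S,T}}_{\#}\nu_T$. The idea is to show that $\mu_S$ is a $K^{(S)}$-representing Radon measure for $L\!\restriction_S$, and then invoke uniqueness to conclude $\mu_S=\nu_S$. I would check the three requirements in turn. \emph{Representing:} for any $a\in S$ we have $\hat a_S\circ\pi_{S,T}=\hat a_T$ by \eqref{comp-Gelfand-transform}, so by the change-of-variables formula for pushforward measures, $\int_{X(S)}\hat a_S\,\dd\mu_S=\int_{X(T)}\hat a_S\circ\pi_{S,T}\,\dd\nu_T=\int_{X(T)}\hat a_T\,\dd\nu_T=L\!\restriction_T(a)=L(a)=L\!\restriction_S(a)$; in particular taking $a=1$ gives $\mu_S(X(S))=1$. \emph{Support in $K^{(S)}$:} since $\nu_T$ is supported in $K^{(T)}$, i.e.\ $\nu_T(X(T)\setminus K^{(T)})=0$, and $\pi_{S,T}(K^{(T)})\subseteq K^{(S)}$, we get $\pi_{S,T}^{-1}(X(S)\setminus K^{(S)})\subseteq X(T)\setminus K^{(T)}$, hence $\mu_S(X(S)\setminus K^{(S)})=\nu_T(\pi_{S,T}^{-1}(X(S)\setminus K^{(S)}))\le\nu_T(X(T)\setminus K^{(T)})=0$. \emph{Radon:} $\mu_S$ is a pushforward of a finite measure along a continuous map between the spaces $X(T)$, $X(S)$, each of which is locally compact and Polish by Remark~\ref{fin-dim-char}; pushing a Radon (equivalently, by Polishness, finite Borel) measure forward along a continuous map between such spaces yields again a finite Borel, hence Radon, measure. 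Having established these three properties, the uniqueness of the $K^{(S)}$-representing Radon measure for $L\!\restriction_S$ forces ${\pi_{S,T}}_{\#}\nu_T=\mu_S=\nu_S$, which is exactly the exactness condition.

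The main point requiring a little care is the Radon property of the pushforward $\mu_S$: one needs that inner regularity with respect to compact sets is preserved under pushforward by a continuous map, or — more conveniently — one exploits Remark~\ref{fin-dim-char}, which says $X(S)$ is Polish (indeed locally compact and Polish), so that every finite Borel measure on $X(S)$ is automatically Radon; then it suffices to note that ${\pi_{S,T}}_{\#}\nu_T$ is a finite Borel measure, which is immediate. Everything else is a routine application of \eqref{comp-Gelfand-transform}, the change-of-variables formula, and monotonicity of measures. No genuinely hard step is involved; the statement is essentially a packaging of the uniqueness hypothesis together with the naturality of the construction.
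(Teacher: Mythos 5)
Your proposal is correct and follows essentially the same route as the paper: form the pushforward ${\pi_{S,T}}_{\#}\nu_T$, verify it is a $K^{(S)}$-representing Radon probability measure for $L\!\restriction_S$ (using \eqref{comp-Gelfand-transform} for the representing property and $\pi_{S,T}(K^{(T)})\subseteq K^{(S)}$ for the support), and invoke uniqueness to conclude exactness. Your explicit verification of the Radon property of the pushforward via the Polishness of $X(S)$ is a detail the paper leaves implicit, but the argument is the same.
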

 \proof
For each $S\in \indexset$, we have $1\in S$ and so $L\restriction_S(1)=L(1)=1$, which ensures $\nu_S(K^{(S)})=1$. 
Moreover, for all $S,T\in  \indexset$ with $S\subseteq T$ and all $a\in S$ we have that
\begin{eqnarray}\label{eq:repr}
L\!\restriction_S\!\!(a)&=&L\!\restriction_T\!(a)=\int_{K^{(T)}}\hat{a}_T(\beta)\dd\nu_T(\beta)\nonumber\\&\stackrel{\eqref{comp-Gelfand-transform}}{=}&\int_{K^{(T)}}\hat{a}_S(\pi_{S, T}(\beta))\dd\nu_T(\beta)=\int_{\pi_{S,T}(K^{(T)})}\hat{a}_S(\alpha)\dd{\pi_{S,T}}_{\#}\nu_T(\alpha)
\end{eqnarray}
and
\be\label{eq:repr2}
{\pi_{S,T}}_{\#}\nu_T(K^{(S)})= \nu_T(\pi_{S,T}^{-1}(K^{(S)}))\geq \nu_T(K^{(T)})=1.
\ee Hence,
 ${\pi_{S,T}}_{\#}\nu_T$ is a $K^{(S)}-$representing Radon probability for~$L\!\!\restriction_S$. Then the uniqueness assumption ensures that
$\nu_S\equiv{\pi_{S,T}}_{\#}\nu_T$ for all $ S\subseteq T$ and so $\{\nu_S, S\in \indexset\}$ is an exact projective system of Radon probabilities on $\{(X_S, \B_{X_S}), \pi_{S,T}, \indexset\}$.
 \endproof
 
In general, the representing measure $\nu_S$ need not to be unique and so the system $\{\mu_S: S \in I \}$ is not necessarily exact. However, we will show in Lemma \ref{lem::ex-ex-proj-sys} that it is always possible to find an exact projective system of representing measures. To this end let us introduce some notations and a general topological result, namely Proposition~\ref{lem::compactness}, which can be seen as a refinement of \cite[Theorem 1.19]{Schm17}.\\

For any $S\in \indexset$, where $\indexset$ is as in~\eqref{main-indexset}, we set $X_S:=X(S)$. We endow the set $\PP(X_S)$ of all Radon probability measures on $X_S$ with the vague topology $\omega_S$, i.e., the coarsest topology on $\PP(X_S)$ such that for all $f\in\C_\mathrm{b}(X_S)$ the map $\PP(X_S)\to\RR, \mu\mapsto \int f\dd\mu$ is continuous, where $\C_\mathrm{b}(X_S)$ is the set of all continuous and bounded functions on $X_S$. Since $(X_S,\tau_{X_S})$ is Polish by Remark \ref{fin-dim-char}, the topology $\omega_S$ is metrizable (see, e.g., \cite[Theorem 6.8]{B99}). For any $K^{(S)}\subseteq X_S$ closed and any $L_S:S\to\RR$ linear with $L_S(1)=1$, we denote by $\R(L_S,K^{(S)})$ the set of all $K^{(S)}-$representing Radon measures for $L_S$ on $\B_{X_S}$. The condition $L_S(1)=1$ ensures that all representing measures for $L_S$ are in fact probability measures, and so $\R(L_S,K^{(S)})\subseteq \PP(X_S)$. Moreover, for any $\varepsilon>0$ and any $C_\varepsilon\subseteq X(S)$ closed, we set $\R(L_S,K^{(S)},C_\varepsilon):=\{\mu\in \R(L_S,K^{(S)}): \mu(C_\varepsilon)\geq 1-\varepsilon\}$. Clearly $\R(L_S,K^{(S)},K^{(S)})=\R(L_S,K^{(S)})$.

\begin{prop}\label{lem::compactness}
For any $S\in I$, $\varepsilon>0$, and $C_\varepsilon\subseteq X(S)$ closed, the set \\$\R(L_S,K^{(S)},C_\varepsilon)$ is compact in $(\PP(X_S), \omega_S)$. In particular, $\R(L_S,K^{(S)})$ is compact in $(\PP(X_S), \omega_S)$.
\end{prop}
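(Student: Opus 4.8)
The plan is to prove compactness of $\R(L_S,K^{(S)},C_\varepsilon)$ by exhibiting it as a closed subset of a vaguely compact set of probability measures. Since $(X_S,\topchar{S})$ is Polish and locally compact by Remark \ref{fin-dim-char}, I would first invoke Prokhorov's theorem: a subset of $\PP(X_S)$ is relatively compact in $(\PP(X_S),\omega_S)$ if and only if it is tight. Thus the strategy reduces to two independent tasks: (1) showing $\R(L_S,K^{(S)},C_\varepsilon)$ is tight, and (2) showing it is closed in $(\PP(X_S),\omega_S)$.

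For tightness, the key observation is that every $\mu\in\R(L_S,K^{(S)},C_\varepsilon)$ represents the \emph{same} functional $L_S$, in particular $\int \hat a_S^2\,\dd\mu = L_S(a^2)$ is a fixed finite number for each generator $a$ of $S$. Writing $S=\langle a_1,\dots,a_d\rangle$ and identifying $X_S$ with the real algebraic set $\Z(\ker\varphi)\subseteq\RR^d$ as in Remark \ref{fin-dim-char}, the coordinate functions are exactly the $\hat a_{i,S}$, so $\int \|x\|^2\,\dd\mu = \sum_{i=1}^d L_S(a_i^2)=:M<\infty$ uniformly over the whole family. A Chebyshev-type estimate then gives, for the compact sets $B_R:=\{x\in X_S:\|x\|\le R\}$, the bound $\mu(X_S\setminus B_R)\le M/R^2$ uniformly in $\mu$, which establishes tightness and hence relative compactness of $\R(L_S,K^{(S)},C_\varepsilon)$.

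For closedness, suppose $(\mu_n)_{n\in\NN}\subseteq\R(L_S,K^{(S)},C_\varepsilon)$ converges vaguely to some $\mu\in\PP(X_S)$ (using metrizability of $\omega_S$, so sequences suffice). I need three things. First, $\mu$ represents $L_S$: for each $a\in S$ the function $\hat a_S$ is a polynomial in the coordinates, hence continuous but not bounded, so vague convergence does not immediately give $\int\hat a_S\,\dd\mu_n\to\int\hat a_S\,\dd\mu$; however, the uniform second-moment bound above provides uniform integrability of $\{\hat a_S\}$ (indeed of every polynomial, after passing to a slightly higher moment bound via $L_S((a^2)^k)$ if needed — but for linear functionals one only needs $\hat a_S$, and uniform integrability follows from the uniform bound on $\int\hat a_S^2\,\dd\mu_n$), which upgrades vague convergence to convergence of these integrals, giving $\int\hat a_S\,\dd\mu=L_S(a)$. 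Second, $\mu$ is supported in the closed set $K^{(S)}$: since each $\mu_n(K^{(S)})=1$ and $K^{(S)}$ is closed, the portmanteau theorem gives $\mu(K^{(S)})\ge\limsup_n\mu_n(K^{(S)})=1$. Third, $\mu(C_\varepsilon)\ge 1-\varepsilon$: again $C_\varepsilon$ is closed, so portmanteau yields $\mu(C_\varepsilon)\ge\limsup_n\mu_n(C_\varepsilon)\ge 1-\varepsilon$. Hence $\mu\in\R(L_S,K^{(S)},C_\varepsilon)$, so the set is closed, and being also relatively compact it is compact. The final sentence of the statement is the special case $C_\varepsilon=K^{(S)}$, $\varepsilon$ arbitrary, noting $\R(L_S,K^{(S)},K^{(S)})=\R(L_S,K^{(S)})$.

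The main obstacle is the second-moment/uniform-integrability argument: one must be careful that the moments controlled by $L_S$ are genuinely the ones needed to dominate the unbounded continuous functions $\hat a_S$ arising from \emph{all} elements $a\in S$, not just the generators. The clean way is to note that for any $a\in S$, $\hat a_S$ is a polynomial $p$ in the coordinate functions, and $\int p\,\dd\mu_n$ is a fixed linear combination of the values $L_S(b)$ for various $b\in S$; to pass to the limit one needs uniform integrability of $p$ with respect to $(\mu_n)$, which follows from a uniform bound on $\int p^2\,\dd\mu_n=L_S((a^2))$-type quantities — and this is automatic since $\int p^2\,\dd\mu_n$ equals the fixed number $L_S(c)$ where $\hat c_S = p^2$. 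So no extra hypothesis is needed; the subtlety is purely bookkeeping, translating between the algebraic description of $S$ and the geometric/measure-theoretic picture on $\Z(\ker\varphi)\subseteq\RR^d$.
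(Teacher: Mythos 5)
Your proposal is correct and follows the same overall strategy as the paper: tightness of the family (hence relative compactness by Prokhorov's theorem, legitimate since $X_S$ is Polish by Remark \ref{fin-dim-char}) plus closedness in $(\PP(X_S),\omega_S)$. The tightness step is literally the paper's argument: your bound $\mu(X_S\setminus B_R)\le M/R^2$ with $M=\sum_i L_S(a_i^2)$ is the paper's Markov estimate $\mu(X_S\setminus K_m)\le\tfrac1m L_S(p)$ for $p=\sum_i a_i^2$, and the portmanteau argument for the support conditions on $K^{(S)}$ and $C_\varepsilon$ is identical. The only genuine difference is how you pass to the limit in $\int\hat a_S\,\dd\mu_n$: you invoke the standard ``weak convergence plus uniform integrability'' theorem, with uniform integrability of $\hat a_S$ supplied by the fixed second moment $L_S(a^2)$, which is correct and slightly more economical in that it needs only second moments of $a$. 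The paper instead first proves $\int\hat a^2\,\dd\nu=L_S(a^2)$ by truncating with Urysohn functions $h_m$ satisfying $\mathbbm{1}_{K_m}\le h_m\le\mathbbm{1}_{K_{m+1}}$ and controlling the tail via Cauchy--Schwarz against the fourth moment $L_S(a^4)$, then recovers $L_S(a)=\int\hat a\,\dd\nu$ from the polarization identity $4a=(a+1)^2-(a-1)^2$; this costs a fourth moment but keeps the argument self-contained by carrying out the truncation explicitly rather than citing the uniform-integrability upgrade as a black box. Either route is fine; your bookkeeping remark at the end is well taken but, as you note, harmless, since $\int\hat a_S^2\,\dd\mu_n=L_S(a^2)$ is a fixed finite number for every $a\in S$, not just for generators.
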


\begin{proof}
Let us first observe that $\R_S:=\R(L_S,K^{(S)},C_\varepsilon)$ is tight, i.e., $$
\forall\delta>0 ,\;\exists\; K\text{ compact  in }(X_S,\tau_{X_S})\text{ s.t. }\forall \mu\in\R_S,\ \mu(K)\geq 1-\delta.
$$
Let $a_1,\ldots,a_d$ be generators of $S$ with $d\in\NN$ and set $p:=\sum_{i=1}^da_i^2$. Then, for any $m\in\NN$, the set $K_m:=\{\alpha\in X_S:\hat{p}(\alpha)\leq m\}$ is compact w.r.t.\! $\tau_{X_S}$ and for all $\mu\in\R_S$ we have
$$
\mu\left(X_S\backslash K_m\right)\leq \tfrac{1}{m}\int_{X_S} \hat{p}(\alpha) \dd\mu(\alpha) = \tfrac{1}{m}L_S(p).
$$
Since $L_S(p)<\infty$, this shows that $\R_S$ is tight. Hence, $\overline{\R_S}$ is compact {in $(\PP(X_S), \omega_S)$} by a well-know theorem due to Prokhorov (see, e.g., \cite[Theorems~5.1,~5.2]{B99}).
The conclusion will follow from the fact that $\R_S$ is closed {in $(\PP(X_S), \omega_S)$.}
To show this, {we consider $\nu\in\overline{\R_S}$} and prove that $\nu$ is a $K^{(S)}-$representing probability measure for $L_S$ such that $\nu(C_\varepsilon)\geq 1-\varepsilon$. {Fix $a\in S$ and ${\delta}>0$. Then there exists $m\in\NN$ such that $L_S(p)L_S(a^4)\leq m\delta^2$. As $X_S$ is a closed subset of $\RR^d$ (cf.\! Remark \ref{fin-dim-char}), $K_m$ is contained in the interior of $K_{m+1}$ in the metric space $(X_S,\tau_S)$. Urysohn's lemma guarantees that there exists $h_m\in\C_\mathrm{b}(X_S)$ such that $\mathbbm{1}_{K_m}\leq h_m\leq\mathbbm{1}_{K_{m+1}}$. Since $\nu\in\overline{\R_S}$, there exists $(\mu_n)_{n\in\NN}\subseteq\R_S$ converging to $\nu$ w.r.t. $\omega_S$ and so, there exists $n\in\NN$ such that $\left|\int h_m\hat{a}^2 \dd\nu - \int h_m\hat{a}^2 \dd\mu_n\right|<\delta$ as $h_m\hat{a}^2\in\C_\mathrm{b}(X_S)$. Moreover, $\left|\int (1-h_m)\hat{a}^2\dd\mu_n\right|^2\leq \mu_n(X_S\backslash K_m)L_S(a^4)\leq {\delta}^2$.} Hence, we obtain
$$
\left|\int h_m\hat{a}^2\dd\nu- L_S(a^2)\right|\leq \left|\int h_m\hat{a}^2 \dd\nu - \int h_m\hat{a}^2 \dd\mu_n\right|+\left|\int (1-h_m)\hat{a}^2\dd\mu_n\right|\leq 2\delta.
$$
As $(h_m\hat{a}^2)_{n\in\NN}$ is monotone increasing and converging to $\hat{a}^2$, the previous inequality and the monotone convergence theorem imply that
$$
\int \hat{a}^2\dd\nu =\lim_{n\to\infty} \int h_n\hat{a}^2 \dd\nu = L_S(a^2).
$$
Then $L_S(a)=\int \hat{a}\dd\nu$ follows immediately from the identity $4a=(a+1)^2-(a-1)^2$. Hence, $\nu$ is an $X_S-$representing measure for $L_S$.

As a direct consequence of the Portmanteau theorem \cite[Theorem 2.1]{B99}, which  can be applied as $K^{(S)}$ is closed, we obtain
$\nu(K^{(S)})\geq \limsup_{n\to\infty}\mu_n(K^{(S)})=1$. Similarly, $\nu(C_\varepsilon)\geq \limsup_{n\to\infty}\mu_n(C_\varepsilon)=1-\varepsilon$ which proves the assertion.
\end{proof}

The following lemma will be crucial in the proof of our main results.

\begin{lem}\label{lem::ex-ex-proj-sys} Let $A$ be an algebra, $L:A \rightarrow \mathbb{R}$ linear, $L(1)=1$ and 
$\indexset$ as in~\eqref{main-indexset}. For each $S\in I$, let $K^{(S)}$ be a closed subset of $X(S)$ such that $\pi_{S,T}\left(K^{(T)}\right)\subseteq K^{(S)}$ for any $T\in \indexset$ containing~$S$. 

\begin{enumerate}[(i)]
\item If for each $S\in \indexset$ there exists $\nu_S\in\R(L\!\!\restriction_S,K^{(S)})$, then there exists an \emph{exact} projective system $\{\mu_S:S\in \indexset\}$ such that each $\mu_S\in \R(L\!\!\restriction_S,K^{(S)})$.
\item If in addition, the family $\{\nu_S :S\in I\} $ fulfills Prokhorov's condition \eqref{epsilon-K-char}, i.e.,
\be\label{epsilon-K-char}
\forall \varepsilon>0\ \exists\ C_\varepsilon\subset X(A) \text{ compact s.t. } \forall S\in \indexset: \ \nu_S(\pi_S(C_\varepsilon))\geq 1-\varepsilon,
\ee
then also the exact projective system $\{\mu_S:S\in \indexset\}$ fulfills Prokhorov's condition~\eqref{epsilon-K-char}. 
\end{enumerate}
\end{lem}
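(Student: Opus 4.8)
The plan is to build the exact projective system by a limiting/compactness argument over the directed set $\indexset$, using Proposition~\ref{lem::compactness} to control the relevant spaces of representing measures. The key point is that although the sets $\R(L\!\restriction_S,K^{(S)})$ need not be singletons, each is a nonempty compact (hence, in particular, nonempty) subset of the Polish space $(\PP(X_S),\omega_S)$, and the push-forward maps ${\pi_{S,T}}_\#:\PP(X_T)\to\PP(X_S)$ are continuous with respect to the vague topologies. An exact projective system of representing measures is precisely an element of the projective limit $\varprojlim_{S\in\indexset}\R(L\!\restriction_S,K^{(S)})$, so the task reduces to showing this projective limit is nonempty.

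\emph{Step 1.} First I would record that for each $S\in\indexset$ and each $T\in\indexset$ with $S\subseteq T$, the push-forward ${\pi_{S,T}}_\#\nu$ of any $\nu\in\R(L\!\restriction_T,K^{(T)})$ lies in $\R(L\!\restriction_S,K^{(S)})$: this is exactly the computation \eqref{eq:repr}--\eqref{eq:repr2} already carried out in the proof of Lemma~\ref{lem:uniq-S}, and it uses $\pi_{S,T}(K^{(T)})\subseteq K^{(S)}$ together with $L\!\restriction_S=L\!\restriction_T\!\circ\,\iota$. Moreover ${\pi_{S,T}}_\#$ is continuous from $(\PP(X_T),\omega_T)$ to $(\PP(X_S),\omega_S)$, since for $f\in\C_\mathrm{b}(X_S)$ one has $f\circ\pi_{S,T}\in\C_\mathrm{b}(X_T)$. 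Hence, defining $R_S:=\R(L\!\restriction_S,K^{(S)})$, the family $\{R_S,\,{\pi_{S,T}}_\#\!\restriction_{R_T},\indexset\}$ is a projective system of nonempty compact Hausdorff (indeed metrizable) spaces with continuous connecting maps.

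\emph{Step 2.} The projective limit of a projective system of nonempty compact Hausdorff spaces with continuous bonding maps over a directed index set is nonempty — this is the classical Bourbaki/Steenrod result (\cite[I., \S 9.6]{BouGT}; compare Lemma~\ref{lem::intersection-compact}). Applying it here yields a family $\{\mu_S:S\in\indexset\}$ with $\mu_S\in R_S=\R(L\!\restriction_S,K^{(S)})$ and ${\pi_{S,T}}_\#\mu_T=\mu_S$ for all $S\subseteq T$ in $\indexset$; that is exactly an exact projective system of representing measures, proving (i).

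\emph{Step 3.} For (ii), I would run the same argument but with the \emph{smaller} spaces. Suppose $\{\nu_S:S\in\indexset\}$ fulfils \eqref{epsilon-K-char}; fix $\varepsilon>0$ and let $C_\varepsilon\subseteq X(A)$ be the associated compact set. For each $S$ put $C_\varepsilon^{(S)}:=\pi_S(C_\varepsilon)$, which is compact (hence closed) in $X_S$, and consider $R_S^\varepsilon:=\R(L\!\restriction_S,K^{(S)},C_\varepsilon^{(S)})$, which is nonempty (it contains $\nu_S$) and compact by Proposition~\ref{lem::compactness}. Since $\pi_{S,T}(C_\varepsilon^{(T)})=\pi_{S,T}(\pi_T(C_\varepsilon))=\pi_S(C_\varepsilon)=C_\varepsilon^{(S)}$ by the compatibility $\pi_S=\pi_{S,T}\circ\pi_T$, the push-forward maps restrict to maps $R_T^\varepsilon\to R_S^\varepsilon$, so $\{R_S^\varepsilon,\,{\pi_{S,T}}_\#,\indexset\}$ is again a projective system of nonempty compact spaces. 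Its limit is nonempty, producing an exact system $\{\mu_S^\varepsilon:S\in\indexset\}$ with $\mu_S^\varepsilon(\pi_S(C_\varepsilon))\geq 1-\varepsilon$ for all $S$. Finally, to obtain a \emph{single} exact system satisfying \eqref{epsilon-K-char} for \emph{all} $\varepsilon$ simultaneously, one intersects: for each $n\in\NN$ pick a compact $C_{1/n}\subseteq X(A)$ as above and consider the projective system whose $S$-component is $\bigcap_{n\in\NN}\R(L\!\restriction_S,K^{(S)},\pi_S(C_{1/n}))$, again nonempty (contains $\nu_S$) and compact (an intersection of compacta in the Hausdorff space $\PP(X_S)$, hence compact, and the intersection is over a decreasing-compatible family so nonemptiness is inherited from the $\nu_S$); its limit is nonempty and any element $\{\mu_S:S\in\indexset\}$ is an exact projective system in $\R(L\!\restriction_S,K^{(S)})$ fulfilling \eqref{epsilon-K-char} via $C_{1/n}$.

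\emph{Main obstacle.} The substantive input is Proposition~\ref{lem::compactness} — that $\R(L_S,K^{(S)},C_\varepsilon)$ is compact in the vague topology — together with the fact that vague-topology push-forwards are continuous; once these are in hand, the rest is an application of the nonemptiness of projective limits of nonempty compacta. The only mild subtlety is the bookkeeping in Step~3: one must be careful that $\pi_S=\pi_{S,T}\circ\pi_T$ forces the compatibility $\pi_{S,T}(\pi_T(C_\varepsilon))=\pi_S(C_\varepsilon)$, which is what makes the restricted push-forward maps land in the smaller spaces, and that intersecting over a countable family of $\varepsilon$'s preserves both compactness and nonemptiness of the fibres (the latter because $\nu_S$ sits in every fibre).
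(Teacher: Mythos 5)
Your proposal is correct and follows essentially the same route as the paper: both parts rest on Proposition~\ref{lem::compactness}, the continuity of the push-forward maps in the vague topology, and the non-emptiness of a projective limit of non-empty compacta, with part (ii) obtained by shrinking each fibre to an intersection of sets $\R(L\!\restriction_S,K^{(S)},\pi_S(C_\varepsilon))$ (the paper intersects over all $\varepsilon>0$ where you use $\varepsilon=1/n$, an immaterial difference). No gaps.
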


\begin{proof}\
\begin{enumerate}[(i)]
\item For any $S\in\indexset$, we set $\R_S:=\R(L\!\!\restriction_S,K^{(S)})$, which is non-empty by as\-sumption and compact by Proposition \ref{lem::compactness}. For any $S\subseteq T$ in $\indexset$, the map ${\pi_{S,T}}_{\#}: \R_T\to\R_S,\nu_T\mapsto \nu_T\circ \pi_{S,T}^{-1}$ is well-defined. Indeed, for any $\nu_T\in\R_T$ and any $a\in S$, 
\eqref{eq:repr} and \eqref{eq:repr2} both hold, ensuring that ${\pi_{S,T}}_{\#}\nu_T\in\R_S$. Furthermore, it is easy to show that ${\pi_{S,T}}_{\#}$ is continuous using the definition of the vague topology.
Altogether, this shows that $\{(\R_S,\omega_S\cap \R_S),{\pi_{S,T}}_{\#},\indexset\}$ is a projective system of non-empty compact topological spaces and so, by \cite[I., \S 9.6, Proposition 8]{BouGT}, its projective limit $\R$ is non-empty and compact. Considering the standard model of the projective limit in the product space, we identify $\R$ as a subset of $\prod_{S\in \indexset}\R_S$.  Let $(\mu_S)_{S\in \indexset}\in\R$. Then $\{\mu_S:S\in \indexset\}$ is an exact projective system of Radon probability measures on $\{(X_S, \B_{X_S}), \pi_{S,T}, I\}$, as ${\pi_{S,T}}_{\#}\mu_T=\mu_S$ for all $S\subseteq T$ in $\indexset$ and $\mu_S\in\R_S$ for all $S\in \indexset$.
\item By replacing $\R_S$ with $\widetilde{\R_S}:=\bigcap_{\varepsilon>0}\R(L\!\!\restriction_S,K^{(S)},\pi_S(C_\varepsilon))$ in the proof of (i) above, 
we easily get that there exists an exact projective system $\{\mu_S:S\in \indexset\}$ such that $\mu_S\in\widetilde{\R_S}$ for all $S\in \indexset$. In particular, $\mu_S(\pi_S(C_\varepsilon))\geq 1-\varepsilon$ for all $S\in\indexset$ and all $\varepsilon>0$, i.e., $\{\mu_S:S\in \indexset\}$ fulfils Prokhorov's condition \eqref{epsilon-K-char}.\end{enumerate}\vspace{-0.3cm}
\end{proof}

Recall that a quadratic module $Q$ in a unital commutative real algebra $A$ is a subset $Q$ of $A$ such that $1 \in Q, \ Q+Q \subseteq Q$ and $a^{2}Q\subseteq Q$ for each $a \in A$. We say that $Q$ is generated by a subset $G$ of $A$ if $Q$ is the smallest quadratic module in $A$ containing $G$. For a quadratic module $Q$ in $A$, we have
\be\label{semi-alg-Q}
K_Q=\{\alpha\in X(A): \hat{a}(\alpha)\geq 0, \forall a\in Q\}.
\ee
If $Q$ is generated by $G$, then $K_Q=\{\alpha\in X(A): \hat{g}(\alpha)\geq 0 , \forall g\in G\}$.

\enlargethispage{0.5cm}
\begin{rem}\label{rem-count}\
\begin{enumerate}[(i)]
\item Let $K\subseteq X(A)$ be closed. Then there exists $G$ subset of $A$ such that $K=\{\alpha\in X(A): \hat{g}(\alpha)\geq 0\text{ for all }g\in G\}$ (cf.\! \eqref{eq::repr-closed-X(A)}) and thus, $K=K_Q$ by \eqref{semi-alg-Q}, where $Q$ is the quadratic module in $A$ generated by $G$. In particular,
$$
K=K_Q = \bigcap_{S\in\indexset}\pi_S^{-1}(K_{Q\cap S}),
$$
where 
\be\label{bcsas-S}
K_{Q\cap S}:=\{\alpha\in X(S):\hat{a}(\alpha)\geq 0 \text{ for all } a\in Q\cap S\}.
\ee
Tough $S$ is finitely generated, $K_{Q\cap S}$ may be not finitely generated.
\item If a quadratic module $Q$ in $A$ is generated by a countable set $G$, then
$$
K_Q = \bigcap_{S\in J}\pi_S^{-1}(K_{Q\cap S}),
$$
where $J=\{\langle g\rangle:g\in G\}$ is a countable subset of $\indexset$.
\item A quadratic module $Q$ in $A$ is generated by a countable set $G$ if and only if $K_Q=\pi_R^{-1}(E)\in\C_\Lambda=\Sigma_\indexset$ for some $R\in\Lambda$ and $E$ closed in $X(R)$ (e.g., take
$R:=\langle G\rangle\in\Lambda$ and $E:=\{\alpha\in X(R): \hat{g}(\alpha)\geq 0, \forall \ g\in G\}$).
\item When $A$ is countably generated, every closed $K\subseteq X(A)$ can be written as $K=K_Q$ with $Q$ countably generated, since $K\in\B_{\tau_{X(A)}}=\Sigma_\Lambda$ by \eqref{eq::sigma-lambda} and so we can apply Remark 3.5-(iii).
\end{enumerate}
\end{rem}

\subsubsection{\bf $A$ countably generated}\label{main-results-ctbl}\ 

Let us first consider Problem \ref{GenKMP} for $K=X(A)$. 
\begin{thm}\label{cylinder-MP-ctbl}
Let $A$ be a countably generated algebra, $L:A \to \mathbb{R}$ linear, $L(1)=1$ and 
$\indexset$ as in~\eqref{main-indexset}. Then there exists an $X(A)-$representing Radon measure $\nu$ for $L$ if and only if for every $S\in \indexset$ there exists an $X(S)-$representing Radon measure for $L\!\restriction_S$.
\end{thm}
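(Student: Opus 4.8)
This is a matter of assembling pieces already in hand: the identification of $(X(A),\topchar{A})$ as the projective limit of $\{(X(S),\topchar{S}),\pi_{S,T},\indexset\}$ (Theorem~\ref{prop::X(A)-proj-lim}), the passage from a family of (possibly non-unique) representing measures to an \emph{exact} projective system of them (Lemma~\ref{lem::ex-ex-proj-sys}), and the fact that Prokhorov's condition is automatic once the index set has a countable cofinal subset (Corollary~\ref{Prokh-ctbl}). Countable generation of $A$ enters only to supply such a countable cofinal subset of $\indexset$.

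\textbf{Necessity.} Suppose $\nu$ is an $X(A)$-representing Radon measure for $L$. Since $\hat{1}_A\equiv 1$ and $L(1)=1$, we have $\nu(X(A))=1$, so $\nu$ is a probability measure and for each $S\in\indexset$ the pushforward ${\pi_S}_{\#}\nu$ is a finite Borel measure on $X(S)$. As $(X(S),\topchar{S})$ is Polish by Remark~\ref{fin-dim-char}, every finite Borel measure on it is inner regular with respect to compact sets and (being finite) locally finite, hence Radon; thus ${\pi_S}_{\#}\nu$ is a Radon probability measure. Moreover, by \eqref{comp-Gelfand-transform} (with $T=A$) we have $\hat{a}_S\circ\pi_S=\hat{a}_A$ for all $a\in S$, so
$$
\int_{X(S)}\hat{a}_S\,\dd{\pi_S}_{\#}\nu=\int_{X(A)}\hat{a}_S\circ\pi_S\,\dd\nu=\int_{X(A)}\hat{a}_A\,\dd\nu=L(a),
$$
which shows that ${\pi_S}_{\#}\nu$ is an $X(S)$-representing Radon measure for $L\!\restriction_S$.

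\textbf{Sufficiency.} Assume that for each $S\in\indexset$ there is an $X(S)$-representing Radon measure for $L\!\restriction_S$. Apply Lemma~\ref{lem::ex-ex-proj-sys}-(i) with $K^{(S)}:=X(S)$: this is a closed subset of $X(S)$ and trivially $\pi_{S,T}(K^{(T)})\subseteq K^{(S)}$, so we obtain an exact projective system $\{\mu_S:S\in\indexset\}$ with each $\mu_S\in\R(L\!\restriction_S,X(S))$, that is, an exact projective system of Radon probability measures with respect to $\{(X(S),\topchar{S}),\pi_{S,T},\indexset\}$, whose projective limit is $(X(A),\topchar{A})$ by Theorem~\ref{prop::X(A)-proj-lim}. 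Since $A$ is countably generated, say by $(a_n)_{n\in\NN}$, the set $J:=\{\langle a_1,\dots,a_d\rangle:d\in\NN\}$ is a countable cofinal subset of $\indexset$, so Corollary~\ref{Prokh-ctbl} yields a (unique) Radon probability measure $\nu$ on $X(A)$ with ${\pi_S}_{\#}\nu=\mu_S$ for all $S\in\indexset$. Finally, for $a\in A$ take $S:=\langle a\rangle\in\indexset$; using \eqref{comp-Gelfand-transform} once more,
$$
\int_{X(A)}\hat{a}_A\,\dd\nu=\int_{X(S)}\hat{a}_S\,\dd{\pi_S}_{\#}\nu=\int_{X(S)}\hat{a}_S\,\dd\mu_S=L\!\restriction_S(a)=L(a),
$$
so $\nu$ is an $X(A)$-representing Radon measure for $L$.

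\textbf{Main obstacle.} There is essentially one subtle point, and it is the reason Lemma~\ref{lem::ex-ex-proj-sys} is invoked rather than used directly: the representing measures for the various $L\!\restriction_S$ are in general not unique, hence the family one starts with need not be compatible under the restriction maps $\pi_{S,T}$, so it is not a projective system. The repair is to take a projective limit of the (nonempty, compact in the vague topology) sets $\R(L\!\restriction_S,X(S))$ to select a genuinely compatible family. Everything else is routine: Polishness of the finite-dimensional character spaces turns pushforwards of finite Borel measures into Radon measures, and countable generation of $A$ makes Prokhorov's condition automatic via Corollary~\ref{Prokh-ctbl}.
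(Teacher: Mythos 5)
Your proof is correct and follows essentially the same route as the paper: the paper derives Theorem~\ref{cylinder-MP-ctbl} from Theorem~\ref{cylinder-MP-Support-ctbl} with $K^{(S)}=X(S)$, whose proof is exactly your combination of Theorem~\ref{prop::X(A)-proj-lim}, Lemma~\ref{lem::ex-ex-proj-sys}-(i), and Corollary~\ref{Prokh-ctbl} via the countable cofinal subset $J$. Your explicit justification (via Polishness of $X(S)$) that the pushforwards ${\pi_S}_{\#}\nu$ are Radon in the necessity direction is a small detail the paper leaves implicit, and is welcome.
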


The following more general result Theorem~\ref{cylinder-MP-Support-ctbl} also deals with the support of the representing measure. Theorem \ref{cylinder-MP-ctbl} can be derived from Theorem~\ref{cylinder-MP-Support-ctbl} by taking $K^{(S)}=X(S)$ for each $S\in \indexset$. 

\begin{thm}\label{cylinder-MP-Support-ctbl}
Let $A$ be a countably generated algebra, $L:A \to \mathbb{R}$ linear, $L(1)=1$ and 
$\indexset$ as in~\eqref{main-indexset}. For any $S\in \indexset$, let $K^{(S)}$ be a closed subset of $X(S)$ such that $\pi_{S,T}\left(K^{(T)}\right)\subseteq K^{(S)}$ for any $T\in \indexset$ containing~$S$.\\ Then there exists a $\left(\bigcap_{S\in \indexset}\pi_S^{-1}\!\left(K^{(S)}\right)\!\right)$--repre\-senting Radon measure $\nu$ for $L$ if and only if, for every $S\in \indexset$ there exists a $K^{(S)}-$representing Radon measure for $L\!\restriction_S$.
\end{thm}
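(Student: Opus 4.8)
The plan is to reduce everything to the countable-cofinal machinery of Sections~\ref{sec:constr} and~\ref{sec:ext-Rad}. Since $A$ is countably generated, say by $(a_n)_{n\in\NN}$, the chain $J:=\{\langle a_1,\ldots,a_d\rangle:d\in\NN\}$ is a countable cofinal subset of $\indexset$, so by Proposition~\ref{prop::cofinal-proj-limit} (and Remark~\ref{rem::cofinal-proj-limit}) together with Theorem~\ref{prop::X(A)-proj-lim}, $(X(A),\topchar{A})$ with the maps $\{\pi_S:S\in J\}$ is the projective limit of $\{(X(S),\topchar{S}),\pi_{S,T},J\}$, and moreover $\Sigma_\indexset=\Borchar{A}$ by Proposition~\ref{prop::Tobias} (so ``Radon measure on $\Borchar{A}$'' and the projective-limit picture are interchangeable here).

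For the non-trivial (``if'') direction I would first apply Lemma~\ref{lem::ex-ex-proj-sys}-(i) to the hypothesis that $\R(L\!\restriction_S,K^{(S)})\neq\emptyset$ for each $S\in\indexset$: this produces an \emph{exact} projective system $\{\mu_S:S\in\indexset\}$ with $\mu_S\in\R(L\!\restriction_S,K^{(S)})$, and each $\mu_S$ is automatically a Radon \emph{probability} measure because $L\!\restriction_S(1)=1$. Restricting the index set to $J$, the family $\{\mu_S:S\in J\}$ is an exact projective system of Radon probability measures with respect to the projective system over $J$, and $J$ (being countable) trivially contains a countable cofinal subset. Hence Corollary~\ref{Prokh-ctbl} yields a unique Radon probability measure $\nu$ on $X(A)$ with ${\pi_S}_{\#}\nu=\mu_S$ for all $S\in J$; cofinality of $J$ together with exactness of $\{\mu_S:S\in\indexset\}$ then upgrades this to ${\pi_S}_{\#}\nu=\mu_S$ for \emph{all} $S\in\indexset$, since for $S\subseteq T\in J$ one has ${\pi_S}_{\#}\nu={\pi_{S,T}}_{\#}({\pi_T}_{\#}\nu)={\pi_{S,T}}_{\#}\mu_T=\mu_S$. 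That $\nu$ represents $L$ follows by testing on $\hat a_A=\hat a_S\circ\pi_S$ (eq.~\eqref{comp-Gelfand-transform} with $T=A$) for $S=\langle a\rangle$, giving $\int\hat a_A\,\dd\nu=\int\hat a_S\,\dd\mu_S=L\!\restriction_S(a)=L(a)$.

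It remains to localize the support. From $\mu_S$ supported in $K^{(S)}$ we get $\nu(\pi_S^{-1}(K^{(S)}))={\pi_S}_{\#}\nu(K^{(S)})=\mu_S(K^{(S)})=1$ for every $S\in\indexset$. The one point needing care — the analogue in the countably generated setting of Remark~\ref{ext-cylinderSigmaAlg-rem-support}-(i) — is that $\nu$ is then supported in the possibly uncountable intersection $\bigcap_{S\in\indexset}\pi_S^{-1}(K^{(S)})$. Here I would invoke the compatibility hypothesis: $\pi_{S,T}(K^{(T)})\subseteq K^{(S)}$ for $S\subseteq T$ gives $\pi_T^{-1}(K^{(T)})\subseteq\pi_S^{-1}(K^{(S)})$, so by cofinality of $J$ every $S\in\indexset$ admits some $T\in J$ with $\pi_T^{-1}(K^{(T)})\subseteq\pi_S^{-1}(K^{(S)})$; consequently $\bigcap_{S\in\indexset}\pi_S^{-1}(K^{(S)})=\bigcap_{S\in J}\pi_S^{-1}(K^{(S)})$, a countable intersection of closed $\nu$-full sets, hence closed, Borel and of full $\nu$-measure. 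The ``only if'' direction is routine: given a representing Radon measure $\nu$ supported in $\bigcap_S\pi_S^{-1}(K^{(S)})$, it is a probability measure ($\nu(X(A))=L(1)=1$), each ${\pi_S}_{\#}\nu$ is a finite Borel — hence Radon — measure on the Polish space $X(S)$ (Remark~\ref{fin-dim-char}), it represents $L\!\restriction_S$ via~\eqref{comp-Gelfand-transform}, and it is supported in $K^{(S)}$ because $\nu(X(A)\setminus\pi_S^{-1}(K^{(S)}))\le\nu\big(X(A)\setminus\bigcap_T\pi_T^{-1}(K^{(T)})\big)=0$.

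I do not expect a serious obstacle: the statement is essentially an assembly of Lemma~\ref{lem::ex-ex-proj-sys} and Corollary~\ref{Prokh-ctbl}. The only step with genuine content beyond bookkeeping is the passage from ``$\nu(\pi_S^{-1}(K^{(S)}))=1$ for each $S\in\indexset$'' to ``$\nu$ supported in $\bigcap_{S\in\indexset}\pi_S^{-1}(K^{(S)})$'' when $\indexset$ is uncountable, which is exactly what the monotonicity-plus-cofinality observation above resolves; once the countable-cofinal reduction is in place, the construction of $\nu$ is immediate and Theorem~\ref{cylinder-MP-ctbl} then follows as the special case $K^{(S)}=X(S)$.
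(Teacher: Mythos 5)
Your proposal is correct and follows essentially the same route as the paper: produce an exact projective system via Lemma~\ref{lem::ex-ex-proj-sys}-(i), invoke Corollary~\ref{Prokh-ctbl} (using that $\indexset$ has a countable cofinal subset), and check representation by testing on $\hat a_A=\hat a_S\circ\pi_S$. The only cosmetic difference is in localizing the support: the paper cites Remark~\ref{Prokh-rem-support} (i.e.\ the fact that a Radon measure giving full mass to each member of an arbitrary family of closed sets gives full mass to their intersection), whereas you reduce $\bigcap_{S\in\indexset}\pi_S^{-1}(K^{(S)})$ to the countable intersection over the cofinal chain $J$ via the compatibility condition $\pi_{S,T}(K^{(T)})\subseteq K^{(S)}$ — a valid and slightly more self-contained alternative.
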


\begin{proof}\ \\
As observed in Theorem \ref{prop::X(A)-proj-lim}, $\{(X(A),\Sigma_\indexset), \pi_S, \indexset\}$ is the projective limit of the projective system $\P:=\{(X(S),\Borchar{S}),\pi_{S,T},\indexset\}$, where $\Borchar{S}$ denotes the Borel $\sigma-$algebra on $X(S)$ w.r.t.\! $\topchar{S}$. Suppose that for any $S\in I$ there exists a $K^{(S)}-$representing Radon measure for $L\!\restriction_S$. By Lemma \ref{lem::ex-ex-proj-sys}-(i), there exists an exact projective system $\{\mu_S:S\in \indexset\}$ of Radon probability measures on $\P$ such that $\mu_S$ is a $K^{(S)}-$representing Radon measure for $L\!\restriction_S$ for all $S\in \indexset$. Since $A$ is countably generated, we have that $I$ contains a countable cofinal subset and so Corollary \ref{Prokh-ctbl} and Remark~\ref{Prokh-rem-support} ensure that there exists a Radon measure $\nu$ such that 
\be\label{compatib}
{\pi_{S}}_{\#}\nu=\mu_S,\quad\forall S\in \indexset.
\ee 
and $\nu$ is supported in $(\bigcap_{S\in \indexset}\pi_S^{-1}(K^{(S)}))$. 
 For any $a\in A$, there exists  $S\in \indexset$ such that $a\in S$ and so
$$
L(a)=L\!\restriction_S(a)=\int_{X(S)}\!\!\hat{a}(\beta)\dd\mu_S(\beta)\stackrel{\eqref{compatib}}{=}\int_{X(A)}\!\!\hat{a}(\pi_{S}(\beta))\dd\nu(\beta)\stackrel{\eqref{comp-Gelfand-transform}}{=}\int_{X(A)}\!\!\hat{a}(\alpha) \dd\nu(\alpha). 
$$
Hence, $\nu$ is a $(\bigcap_{S\in \indexset}\pi_S^{-1}(K^{(S)}))-$representing Radon measure for $L$. 

Conversely, if there exists a $(\bigcap_{S\in \indexset}\pi_S^{-1}(K^{(S)}))-$representing Radon measure $\nu$ for $L$ then for every $S\in I$ the measure ${\pi_S}_{\#}\nu$ is a $K^{(S)}-$representing Radon measure for $L\restriction_S$, since $1=\nu(\bigcap_{T\in \indexset}\pi_T^{-1}(K^{(T)}))\leq \mu(\pi_S^{-1}(K^{(S)}))={\pi_S}_{\#}\nu( K^{(S)})\leq 1$.
\end{proof}

\begin{cor}\label{main-KMP-ctbl}
Let $A$ be a countably generated algebra, $L:A \to \mathbb{R}$ linear, $L(1)=1$, 
$\indexset$ as in~\eqref{main-indexset}, and $K\subseteq X(A)$ closed, i.e., $K=K_Q$ for some quadratic module $Q$ in $A$ (cf.\! Re\-mark~\ref{rem-count}-(i)). Then there exists a $K-$repre\-senting Radon measure $\nu$ for $L$ if and only if for every $S\in \indexset$ there exists a $K_{Q\cap S}-$representing Radon measure for $L\!\restriction_S$, where $K_{Q\cap S}$ is as in \eqref{bcsas-S} \end{cor}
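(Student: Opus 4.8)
The plan is to deduce this corollary directly from Theorem \ref{cylinder-MP-Support-ctbl} by choosing the right family of closed sets $K^{(S)}$. Given the closed set $K\subseteq X(A)$, by Remark \ref{rem-count}-(i) we may write $K=K_Q$ for some quadratic module $Q$ in $A$, and moreover
\[
K = K_Q = \bigcap_{S\in\indexset}\pi_S^{-1}\bigl(K_{Q\cap S}\bigr),
\]
where $K_{Q\cap S}=\{\alpha\in X(S):\hat a(\alpha)\geq 0\ \text{for all}\ a\in Q\cap S\}$ is as in \eqref{bcsas-S}. So the natural candidate is $K^{(S)}:=K_{Q\cap S}$ for each $S\in\indexset$.

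The first step is to check that this family satisfies the hypotheses of Theorem \ref{cylinder-MP-Support-ctbl}. Each $K^{(S)}=K_{Q\cap S}$ is closed in $X(S)$ by definition, being an intersection of sets of the form $\{\hat a\geq 0\}$ (cf.\ \eqref{eq::repr-closed-X(A)}). For the compatibility condition, let $S\subseteq T$ in $\indexset$ and $\beta\in K^{(T)}=K_{Q\cap T}$; then for any $a\in Q\cap S$ we have $a\in Q\cap T$, so $\hat a_T(\beta)=\hat a_S(\pi_{S,T}(\beta))\geq 0$ by \eqref{comp-Gelfand-transform}, whence $\pi_{S,T}(\beta)\in K_{Q\cap S}=K^{(S)}$. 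Thus $\pi_{S,T}(K^{(T)})\subseteq K^{(S)}$, as required.

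The second step is to apply Theorem \ref{cylinder-MP-Support-ctbl} verbatim: it yields that there exists a $\bigl(\bigcap_{S\in\indexset}\pi_S^{-1}(K^{(S)})\bigr)$--representing Radon measure $\nu$ for $L$ if and only if for every $S\in\indexset$ there exists a $K^{(S)}$--representing Radon measure for $L\!\restriction_S$. Substituting $K^{(S)}=K_{Q\cap S}$ and using the identity $K=\bigcap_{S\in\indexset}\pi_S^{-1}(K_{Q\cap S})$ displayed above gives exactly the statement of the corollary. There is no real obstacle here; the only point requiring a moment's care is the verification that $K$ genuinely equals the intersection $\bigcap_{S}\pi_S^{-1}(K_{Q\cap S})$, which is precisely the content of Remark \ref{rem-count}-(i) and hence available to us. Thus the corollary is immediate once the correct choice of $K^{(S)}$ is made.
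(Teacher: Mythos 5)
Your proposal is correct and follows essentially the same route as the paper: both set $K^{(S)}:=K_{Q\cap S}$, verify the compatibility $\pi_{S,T}(K_{Q\cap T})\subseteq K_{Q\cap S}$, invoke the identity $K=K_Q=\bigcap_{S\in I}\pi_S^{-1}(K_{Q\cap S})$ from Remark~\ref{rem-count}-(i), and apply Theorem~\ref{cylinder-MP-Support-ctbl}. Your extra detail on closedness and compatibility is accurate but not a departure from the paper's argument.
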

\begin{proof}
Let us observe that $\pi_{S,T}(K_{Q\cap T})\subseteq K_{Q\cap S}$ for any $S\subseteq T$ in $\indexset$ and, by Remark~\ref{rem-count}-(i), we have that $K=K_Q = \bigcap_{S\in\indexset}\pi_S^{-1}(K_{Q\cap S})$. Thus, the main conclusion follows by applying Theorem \ref{cylinder-MP-Support-ctbl} with $K^{(S)}=K_{Q\cap S}$ for $S\in\indexset$.
\end{proof}

\subsubsection{\bf $A$ not necessarily countably generated}\label{main-results-gen}\ \\
Since in this case the index set $I$ in~\eqref{main-indexset} may not contain a countable cofinal subset, to get Theorem \ref{cylinder-MP}, resp. Theorem \ref{cylinder-MP-Support}, below (the analogue of Theorem \ref{cylinder-MP-ctbl}, resp. Theorem \ref{cylinder-MP-Support-ctbl}, in this general case) we will apply Theorem \ref{ext-cylinderSigmaAlg} and Theorem \ref{Prokh}. To this end, we need to exploit the two-stage construction of $(X(A), \Sigma_I)$ introduced in Lemma \ref{cor::double-proj-lim-to-char-space}. Beside $I$ as in~\eqref{main-indexset} we need to consider the set
\begin{equation}\label{main-superindexset}
\superindexset:=\{T\subseteq A: T\text{ countably generated subalgebra of }A\}
\end{equation}
ordered by the inclusion. Recall that $\{(X(A),\Sigma_\indexset),\pi_T,\superindexset\}$ is also the projective limit of the projective system $\{(X(T),\Borchar{T}),\pi_{T,R},\superindexset\}$.

By combining Lemma~\ref{cor::double-proj-lim-to-char-space} with Remark \ref{doub-exact}, we easily obtain:

\begin{lem}\label{doub-exact-char}
Let $\{\mu_S: S\in I\}$ be an exact projective system of Radon probability measures w.r.t.\ $\{(X(S),\Borchar{S}),\pi_{S,T},\indexset\}$. Let $\Lambda$ be as in \eqref{main-superindexset}. Then we get:
\begin{enumerate}[(i)]
\item For any $T\in\Lambda$, there exists a unique Radon probability measure $\mu_T$ on $(X(T),\Borchar{T})$ such that ${\pi_{S, T}}_{\#}\mu_T=\mu_S$ for all $S\in I$ with $S\subseteq T$. 
\item The family $\{\mu_T:T\in\Lambda\}$ is an exact projective system of Radon probability measures w.r.t.\! $\{(X(T),\Borchar{T}),\pi_{T,R},\superindexset\}$.
\end{enumerate}
\end{lem}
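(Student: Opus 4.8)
The plan is to obtain both statements by transporting the two-stage construction of Section~\ref{sec:quasi-meas} and its measure-theoretic counterpart Remark~\ref{doub-exact} along the identifications already established in the proof of Lemma~\ref{cor::double-proj-lim-to-char-space}. First I would recall from that proof that $T\mapsto I_T:=\{S\subseteq T: S\text{ finitely generated subalgebra of }T\}$ embeds $\superindexset$ order preservingly, injectively and cofinally into the index set in \eqref{eq::set-delta} (built over $\indexset$), and that, by Theorem~\ref{prop::X(A)-proj-lim} and Proposition~\ref{prop::Tobias} (the latter since $I_T$ is countably cofinal), under this embedding the abstract objects $X_{I_T}$, $\Sigma_{I_T}$, $\pi^{I_T}_S$ (for $S\in I_T$), $\pi^{I_T,I_R}$ and $\pi^{I_T}$ coincide with $X(T)$, $\Borchar{T}$, $\pi_{S,T}$, $\pi_{T,R}$ and $\pi_T$, respectively. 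By Remark~\ref{rem::Lambda} the construction may be run over the cofinal subset $\{I_T:T\in\superindexset\}$, i.e.\! over $\superindexset$, so that Remark~\ref{doub-exact} applies verbatim with ``$I$'' $=\indexset$.

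For (i) I would fix $T\in\superindexset$ and note that, since $I_T\subseteq\indexset$, the subfamily $\{\mu_S:S\in I_T\}$ is an exact projective system of Radon probability measures for $\{(X(S),\Borchar{S}),\pi_{S,R},I_T\}$, whose projective limit is $(X(T),\Borchar{T})$ with the maps $\pi_{S,T}$. Remark~\ref{doub-exact}-(i), transported as above, then provides a unique probability measure $\mu_T$ on $\Borchar{T}$ with ${\pi_{S,T}}_{\#}\mu_T=\mu_S$ for all $S\in I_T$; and $\mu_T$ is Radon because, $I_T$ having a countable cofinal subset (namely $(\langle a_1,\dots,a_d\rangle)_{d\in\NN}$ if $(a_n)_{n\in\NN}$ generates $T$), Corollary~\ref{Prokh-ctbl} also yields such a Radon extension, which coincides with $\mu_T$ by uniqueness. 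Since $S\in\indexset$ with $S\subseteq T$ means precisely $S\in I_T$, this is the claim; uniqueness in fact holds among all probability measures with this property, as any two agree on the $\cap$-stable generating algebra $\C_{I_T}$ of $\Borchar{T}$ and are finite.

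For (ii) I would observe that each $\mu_T$ ($T\in\superindexset$) is a Radon probability measure by (i), so only exactness, ${\pi_{T,R}}_{\#}\mu_R=\mu_T$ for $T\subseteq R$ in $\superindexset$, remains. Fixing such $T\subseteq R$, one has $I_T\subseteq I_R$ and $\pi_{S,R}=\pi_{S,T}\circ\pi_{T,R}$ for each $S\in I_T$ (cf.\! \eqref{comp-projections}), hence for every $E\in\Borchar{S}$
\begin{equation*}
{\pi_{T,R}}_{\#}\mu_R\bigl(\pi_{S,T}^{-1}(E)\bigr)=\mu_R\bigl(\pi_{S,R}^{-1}(E)\bigr)=\mu_S(E)=\mu_T\bigl(\pi_{S,T}^{-1}(E)\bigr),
\end{equation*}
by the characterizing property of $\mu_R$ and $\mu_T$ from (i). Thus ${\pi_{T,R}}_{\#}\mu_R$ and $\mu_T$ agree on the $\cap$-stable generator $\C_{I_T}$ of $\Borchar{T}$ and, being finite, coincide --- equivalently, ${\pi_{T,R}}_{\#}\mu_R$ is a Radon probability on $X(T)$ with the defining property of $\mu_T$, hence equals it by the uniqueness in (i). This is precisely Remark~\ref{doub-exact}-(ii) transported to the character-space setting.

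I do not expect a genuine obstacle: the substantive content --- extension of the cylindrical quasi-measure over a countably cofinal index set and exactness of the resulting family --- is already encapsulated in Remark~\ref{doub-exact} and Corollary~\ref{Prokh-ctbl}, and the rest is bookkeeping. The only point requiring care is the legitimacy of the transport, i.e.\! that $\{I_T:T\in\superindexset\}$ is cofinal in the set of \eqref{eq::set-delta} and that $\Sigma_{I_T}$ \emph{equals} (not merely contains) $\Borchar{T}$; both were settled in the proof of Lemma~\ref{cor::double-proj-lim-to-char-space}, the latter via Proposition~\ref{prop::Tobias} because $I_T$ has a countable cofinal subset.
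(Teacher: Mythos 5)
Your proof is correct and follows essentially the same route as the paper, which simply observes that the lemma is obtained by combining Lemma~\ref{cor::double-proj-lim-to-char-space} with Remark~\ref{doub-exact}. You in fact supply one detail the paper leaves implicit: that $\mu_T$ is \emph{Radon} (via Corollary~\ref{Prokh-ctbl} together with the uniqueness of the extension), which is needed because Remark~\ref{doub-exact}-(i) only produces a probability measure on $\B_\lambda$.
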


Since Theorem \ref{cylinder-MP} is a special case of Theorem \ref{cylinder-MP-Support} (taking $K^{(S)}=X(S)$ for each $S\in \indexset$), we shall focus on Theorem \ref{cylinder-MP-Support}. To apply Theorem \ref{Prokh} in establishing Theorem \ref{cylinder-MP-Support}-(ii), we will make use of Lemma \ref{lem::ex-ex-proj-sys}-(ii).  However, to apply Theorem~\ref{ext-cylinderSigmaAlg} in establishing Theorem \ref{cylinder-MP-Support}-(i), Lemma \ref{lem::ex-ex-proj-sys}-(i) will not be sufficient. Indeed, Lemma \ref{lem::ex-ex-proj-sys}-(i) ensures the existence of an exact projective system $\{\nu_S : S\in I \}$ of representing Radon measures for $L\!\restriction_S$, but the corresponding exact projective system $\{\nu_T: T\in\Lambda\}$ given in Lemma \ref{doub-exact-char}-(ii) need not be thick (though we are not aware of an example).

\begin{thm}\label{cylinder-MP}
Let $A$ be an algebra, $L:A \to \RR$ linear, $L(1)=1$ and 
$\indexset$ as in~\eqref{main-indexset}.

\begin{enumerate}[(i)]
\item There exists an $X(A)-$representing measure $\mu$ for $L$ on the cylinder $\sigma-$algebra $\Sigma_\indexset$ if and only if for every $S\in \indexset$ there exists an $X(S)-$representing Radon measure $\mu_S$ for $L\!\restriction_S$ such that $\{\mu_S: S\in I\}$ is an exact projective system and the corresponding exact projective system $\{\mu_T:  T\in \Lambda\}$ is thick (here $\superindexset$ is as in \eqref{main-superindexset} and $\mu_T$ as in Lemma~\ref{doub-exact-char}).

\item There exists an $X(A)-$representing Radon measure $\nu$ for $L$ if and only if for every $S\in \indexset$ there exists an $X(S)-$representing Radon measure $\mu_S$ for $L\!\restriction_S$ such that $\{\mu_S: S\in I\}$ fulfills Prokhorov's condition \eqref{epsilon-K-char}.
\end{enumerate}
 \end{thm}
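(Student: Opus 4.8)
The plan is to derive both equivalences directly from the projective-limit description of $(X(A),\Sigma_\indexset)$ — Theorem~\ref{prop::X(A)-proj-lim} for the $\indexset$-indexed system and Lemma~\ref{cor::double-proj-lim-to-char-space} for the $\Lambda$-indexed one — by feeding the chosen finite-dimensional representing measures into the two extension theorems of Section~\ref{sec:ext-cyl} and Section~\ref{sec:ext-Rad}. The common bookkeeping step is the following translation: every $a\in A$ lies in some $S\in\indexset$ and $\hat{a}_A=\hat{a}_S\circ\pi_S$ by \eqref{comp-Gelfand-transform}, so a finite measure $\mu$ on $\Sigma_\indexset$ (resp.\ a Radon measure on $\Borchar{A}$) represents $L$ if and only if ${\pi_S}_{\#}\mu$ represents $L\!\restriction_S$ for every $S\in\indexset$; the normalisation $L(1)=1$ makes all these measures probability measures. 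It is also convenient to recall from Remark~\ref{fin-dim-char} that every $X(S)$ with $S\in\indexset$ (and, similarly, every $X(T)$ with $T\in\Lambda$) is Polish, hence every finite Borel measure on it — in particular every pushforward ${\pi_S}_{\#}\mu$ — is automatically Radon.

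For part~(i), in the ``if'' direction I would take the given exact projective system $\{\mu_S:S\in\indexset\}$ together with the exact system $\{\mu_T:T\in\Lambda\}$ of Lemma~\ref{doub-exact-char}, which is thick by hypothesis, and apply Theorem~\ref{ext-cylinderSigmaAlg} with $\Lambda$ as in \eqref{main-superindexset}: this produces a probability measure $\mu$ on $\Sigma_\indexset$ with ${\pi_S}_{\#}\mu=\mu_S$ for all $S$, and the translation above shows that $\mu$ represents $L$. Conversely, starting from an $X(A)$-representing measure $\mu$ on $\Sigma_\indexset$, set $\mu_S:={\pi_S}_{\#}\mu$; these are Radon probability representing measures for the $L\!\restriction_S$ forming an exact system, the measure ${\pi_T}_{\#}\mu$ agrees with the $\mu_T$ of Lemma~\ref{doub-exact-char}-(i) by its uniqueness, and since $\mu$ is the unique finite measure on $\Sigma_\indexset$ with these pushforwards ($\C_\indexset$ is $\cap$-stable and generates $\Sigma_\indexset$), the converse part of Theorem~\ref{ext-cylinderSigmaAlg} yields that $\{\mu_T:T\in\Lambda\}$ is thick.

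For part~(ii), in the ``if'' direction I would first apply Lemma~\ref{lem::ex-ex-proj-sys}-(ii) with $K^{(S)}=X(S)$ to upgrade the given representing Radon measures — which are only assumed to satisfy \eqref{epsilon-K-char} — to an \emph{exact} projective system of representing Radon measures still satisfying \eqref{epsilon-K-char}; then Theorem~\ref{Prokh} supplies a Radon probability measure $\nu$ on $X(A)$ with the prescribed pushforwards, and the translation above shows it represents $L$. In the converse direction, given a Radon representing measure $\nu$ for $L$, put $\mu_S:={\pi_S}_{\#}\nu$: each is a Radon probability representing measure for $L\!\restriction_S$, the family is exact, and for $\varepsilon>0$ the inner regularity of $\nu$ yields a compact $C_\varepsilon\subseteq X(A)$ with $\nu(C_\varepsilon)\geq 1-\varepsilon$, whence $\mu_S(\pi_S(C_\varepsilon))=\nu(\pi_S^{-1}(\pi_S(C_\varepsilon)))\geq\nu(C_\varepsilon)\geq 1-\varepsilon$, i.e.\ \eqref{epsilon-K-char} holds.

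The main obstacle — and the reason part~(i) is stated with an explicit thickness hypothesis, not merely ``every $L\!\restriction_S$ has a representing Radon measure'' — is the passage from the $\indexset$-level to the $\Lambda$-level system: Lemma~\ref{lem::ex-ex-proj-sys}-(i) delivers an exact system $\{\mu_S:S\in\indexset\}$ of representing measures, but there is no mechanism forcing the associated $\{\mu_T:T\in\Lambda\}$ of Lemma~\ref{doub-exact-char} to be thick, and thickness of that system is exactly what Theorem~\ref{ext-cylinderSigmaAlg} requires. Thus for~(i) the thickness assumption cannot be dropped in general, whereas in~(ii) Prokhorov's condition is robust enough — through Lemma~\ref{lem::ex-ex-proj-sys}-(ii) — to be imposed only on the initially chosen measures; the remaining care is merely to observe, as noted above, that all pushforwards occurring are Radon because the character spaces $X(S)$ are Polish.
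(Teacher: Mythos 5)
Your proposal is correct and follows essentially the same route as the paper: the paper obtains Theorem~\ref{cylinder-MP} as the special case $K^{(S)}=X(S)$ of Theorem~\ref{cylinder-MP-Support}, whose proof is exactly your combination of Theorem~\ref{prop::X(A)-proj-lim}, Theorem~\ref{ext-cylinderSigmaAlg} (with its converse, via uniqueness from the $\cap$-stable generator $\C_\indexset$) for part~(i), and Lemma~\ref{lem::ex-ex-proj-sys}-(ii) followed by Theorem~\ref{Prokh} for part~(ii). Your closing remarks on why thickness must be hypothesized in~(i) while Prokhorov's condition suffices in~(ii) match the paper's own discussion preceding the theorem.
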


More generally:
\begin{thm}\label{cylinder-MP-Support}
Let $A$ be an algebra, $L:A \to\RR$ linear, $L(1)=1$ and 
$\indexset$ as in~\eqref{main-indexset}.  For any $S\in \indexset$, let $K^{(S)}$ be a closed subset of $X(S)$ such that $\pi_{S,T}\left(K^{(T)}\right)\subseteq K^{(S)}$ for any $T\in \indexset$ containing~$S$.
\begin{enumerate}[(i)]
\item 
There exists an $X(A)-$representing measure $\mu$ for $L$ on the cylinder $\sigma-$algebra~$\Sigma_\indexset$ supported in each $\pi_S^{-1}\left(K^{(S)}\right)$ with $S\in \indexset$ if and only if for every $S\in \indexset$ there exists a $K^{(S)}-$representing Radon measure $\mu_S$ for $L\!\restriction_S$ such that $\{\mu_S: S\in I\}$ is an exact projective system and the corresponding exact projective system $\{\mu_T:  T\in \Lambda\}$ is thick (here $\superindexset$ is as in \eqref{main-superindexset} and $\mu_T$ as in Lemma~\ref{doub-exact-char}).
\item
There exists a $\left(\bigcap_{S\in \indexset}\pi_S^{-1}\!\left(K^{(S)}\right)\!\right)$--repre\-senting Radon measure $\nu$ for $L$ if and only if for every $S\in \indexset$ there exists a $K^{(S)}-$representing Radon measure $\mu_S$ for $L\!\restriction_S$ such that $\{\mu_S: S\in I\}$ fulfills Prokhorov's  condition \eqref{epsilon-K-char}.
\end{enumerate}
\end{thm}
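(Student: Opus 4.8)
The plan is to reduce both parts to the abstract extension results of Section~\ref{sec:Prelim}, exploiting the identification of $X(A)$ as a projective limit. By Theorem~\ref{prop::X(A)-proj-lim} the space $(X(A),\topchar{A})$ is the projective limit of the projective system $\T:=\{(X(S),\topchar{S}),\pi_{S,T},\indexset\}$ of Hausdorff topological spaces, $(X(A),\Sigma_\indexset)$ is the projective limit of the associated system of Borel measurable spaces, and $\Borchar{A}$ is the Borel $\sigma-$algebra of $\topchar{A}$. The only genuinely moment-theoretic ingredient is the dictionary: a measure $\rho$ on $\Sigma_\indexset$ (or on $\Borchar{A}$) with ${\pi_S}_{\#}\rho=\mu_S$ for all $S\in\indexset$ represents $L$ whenever each $\mu_S$ represents $L\!\restriction_S$ --- this follows at once from \eqref{comp-Gelfand-transform} by picking, for a given $a\in A$, some $S\in\indexset$ with $a\in S$ --- while $\rho$ is supported in $\pi_S^{-1}(K^{(S)})$ precisely when $\mu_S$ is supported in the closed set $K^{(S)}$. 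Since $L(1)=1$ and $\hat{1}\equiv 1$, every representing measure is automatically a probability measure, so all the ``Radon probability'' hypotheses of the abstract theorems are met for free.

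For part~(i), the ``if'' direction is Theorem~\ref{ext-cylinderSigmaAlg}: under the stated hypotheses $\{\mu_S:S\in\indexset\}$ is an exact projective system of Radon probability measures w.r.t.~$\T$ whose associated $\superindexset$-indexed system $\{\mu_T:T\in\superindexset\}$ of Lemma~\ref{doub-exact-char} --- which, after re-indexing $\Lambda$ by the cofinal family of countably generated subalgebras via Lemma~\ref{cor::double-proj-lim-to-char-space} and Remark~\ref{rem::Lambda}, is the one appearing in Theorem~\ref{ext-cylinderSigmaAlg} --- is thick; hence Theorem~\ref{ext-cylinderSigmaAlg} yields the desired $\mu$ on $\Sigma_\indexset$, and the dictionary gives representation and support. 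For the ``only if'' direction I would take $\mu_S:={\pi_S}_{\#}\mu$: it is Radon by Lemma~\ref{lem::constrRadonIJ}, exact since ${\pi_{S,T}}_{\#}\mu_T={\pi_S}_{\#}\mu=\mu_S$, $K^{(S)}$-representing for $L\!\restriction_S$ because $\mu_S(K^{(S)})=\mu(\pi_S^{-1}(K^{(S)}))=1$, and $\mu_T={\pi_T}_{\#}\mu$ by the uniqueness in Lemma~\ref{doub-exact-char}-(i); thickness of $\{\mu_T:T\in\superindexset\}$ then follows from the converse direction of Theorem~\ref{ext-cylinderSigmaAlg}, once one notes that $\mu$ is the \emph{unique} measure on $\Sigma_\indexset=\C_\superindexset$ with those pushforwards (it is finite and $\C_\indexset$ is a $\cap$-stable generator of $\Sigma_\indexset$).

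For part~(ii), the ``if'' direction combines Lemma~\ref{lem::ex-ex-proj-sys}-(ii) with Theorem~\ref{Prokh}: from the given $K^{(S)}$-representing Radon measures satisfying Prokhorov's condition~\eqref{epsilon-K-char} one passes, by Lemma~\ref{lem::ex-ex-proj-sys}-(ii), to an \emph{exact} projective system $\{\mu_S:S\in\indexset\}$ of $K^{(S)}$-representing Radon probability measures still satisfying~\eqref{epsilon-K-char}, i.e.\ Prokhorov's condition~\eqref{epsilon-K} for the limit $X(A)$; then Theorem~\ref{Prokh} produces a Radon probability measure $\nu$ on $(X(A),\topchar{A})$ with ${\pi_S}_{\#}\nu=\mu_S$, which represents $L$ by the dictionary and is supported in $\bigcap_{S\in\indexset}\pi_S^{-1}(K^{(S)})$ by Remark~\ref{Prokh-rem-support}. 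For the ``only if'' direction I would push forward, $\mu_S:={\pi_S}_{\#}\nu$: since $\pi_S$ is continuous and $X(S)$ is Polish (Remark~\ref{fin-dim-char}), $\mu_S$ is a Radon probability measure; $\mu_S(K^{(S)})\geq\nu(\bigcap_{T\in\indexset}\pi_T^{-1}(K^{(T)}))=1$ gives the support; and Prokhorov's condition~\eqref{epsilon-K-char} falls out of the inner regularity of the Radon measure $\nu$, taking $C_\varepsilon\subseteq X(A)$ compact with $\nu(C_\varepsilon)\geq 1-\varepsilon$ so that $\mu_S(\pi_S(C_\varepsilon))\geq\nu(C_\varepsilon)\geq 1-\varepsilon$ for every $S\in\indexset$.

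The individual steps are short; the one point requiring genuine care --- and the reason part~(i) carries a stronger hypothesis than part~(ii) --- is the interplay between the index sets $\indexset$ and $\superindexset$. For~(i) one cannot synthesize the required data from mere solvability of the finitely generated problems via Lemma~\ref{lem::ex-ex-proj-sys}-(i), because the exact system it produces has a $\superindexset$-lift that need not be thick; this is exactly why the hypothesis of~(i) must postulate both exactness of $\{\mu_S\}$ and thickness of $\{\mu_T\}$, whereas for~(ii) Prokhorov's condition, through Lemma~\ref{lem::ex-ex-proj-sys}-(ii) and Remark~\ref{epsilon-K->thick}, is strong enough to bypass the issue. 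Besides this, the only thing to verify is that the $\superindexset$-indexed system of Lemma~\ref{doub-exact-char} is literally the one in Theorem~\ref{ext-cylinderSigmaAlg}, which is a re-indexing along the cofinal embedding $T\mapsto I_T$ of Lemma~\ref{cor::double-proj-lim-to-char-space}.
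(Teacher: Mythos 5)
Your proposal is correct and follows essentially the same route as the paper: part~(i) via Theorem~\ref{ext-cylinderSigmaAlg} together with Remark~\ref{ext-cylinderSigmaAlg-rem-support} and the pushforward construction for the converse, and part~(ii) via Lemma~\ref{lem::ex-ex-proj-sys}-(ii), Theorem~\ref{Prokh} and Remark~\ref{Prokh-rem-support}, with the same dictionary between representing measures for $L$ and for the restrictions $L\!\restriction_S$. Your explicit remarks on the re-indexing of $\Lambda$ by $\superindexset$ and on the uniqueness needed to invoke the converse of Theorem~\ref{ext-cylinderSigmaAlg} are points the paper leaves implicit, but they do not change the argument.
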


\newpage
\begin{proof}\
\begin{enumerate}[(i)] 
\item Suppose that for each $S\in \indexset$ there exists a $K^{(S)}-$representing Radon measure for $L\!\restriction_S$ such that $\{\mu_S:S\in \indexset\}$ is an exact projective system and the corresponding exact projective system $\{\mu_T:  T\in \Lambda\}$ is thick. Then Theorem~\ref{ext-cylinderSigmaAlg} ensures that there exists a unique measure $\mu$ on the cylinder $\sigma-$algebra $\Sigma_\indexset$ on $X(A)$ such that 
\be\label{compatib2}
{\pi_{S}}_{\#}\mu=\mu_S,\quad\forall S\in \indexset.
\ee 
By Remark \ref{ext-cylinderSigmaAlg-rem-support}, $\mu$ is supported in each $\pi_S^{-1}(K^{(S)})$ with $S\in \indexset$. For any $a\in A$, there exists  $S\in \indexset$ such that $a\in S$ and so
$$
L(a)=\int_{X(S)}\!\!\hat{a}(\beta)\dd\mu_S(\beta)\stackrel{\eqref{compatib2}}{=}\int_{X(A)}\!\!\hat{a}(\pi_{S}(\beta))\dd\mu(\beta)\stackrel{\eqref{comp-Gelfand-transform}}{=}\int_{X(A)}\!\!\hat{a}(\alpha) \dd\mu(\alpha). 
$$

Conversely, if there exists an $X(A)-$representing measure $\mu$ for $L$ on the cylinder $\sigma-$algebra~$\Sigma_\indexset$ supported in each $\pi_S^{-1}\left(K^{(S)}\right)$ with $S\in \indexset$, then for every $S\in \indexset$ the measure ${\pi_S}_{\#}\mu$ is a $K^{(S)}-$representing Radon measure for $L\restriction_S$ and Theorem~\ref{ext-cylinderSigmaAlg} ensures that the exact projective system $\{{\pi_T}_{\#}\mu: T\in\Lambda\}$, which corresponds to $\{{\pi_S}_{\#}\mu: S\in\indexset\}$, is thick.

\item Suppose that for each $S\in \indexset$ there exists a $K^{(S)}-$representing Radon measure for $L\!\restriction_S$ such that $\{\mu_S:S\in \indexset\}$ is an exact projective system which
fulfills~\eqref{epsilon-K-char}. Then, replacing Theorem~\ref{ext-cylinderSigmaAlg} by Theorem~\ref{Prokh} and Remark~\ref{ext-cylinderSigmaAlg-rem-support} by Remark~\ref{Prokh-rem-support} in the proof of (i), we get that there exists a $(\bigcap_{S\in \indexset}\pi_S^{-1}(K^{(S)}))-$re\-presenting Radon measure $\nu$ for $L$.

Conversely, if there exists a $\left(\bigcap_{S\in \indexset}\pi_S^{-1}\!\left(K^{(S)}\right)\!\right)$--repre\-senting Radon measure $\nu$ for $L$, then for every $S\in \indexset$ the measure ${\pi_S}_{\#}\mu$ is a $K^{(S)}-$representing Radon measure for $L\restriction_S$ as $\bigcap_{T\in \indexset}\pi_T^{-1}\!\left(K^{(T)}\right)\subseteq \pi_S^{-1}(K^{(S)})$ and Theorem~\ref{Prokh} ensures that exact projective system $\{{\pi_S}_{\#}\mu: S\in\indexset\}$ fulfills \eqref{epsilon-K-char}.
\end{enumerate}
\vspace{-0.3cm}
\end{proof}

\begin{cor}\label{main-KMP}
Let $A$ be an algebra, $L:A \to\RR$ linear, $L(1)=1$,
$\indexset$ as in~\eqref{main-indexset}, and $K\subseteq X(A)$ closed. Write $K=K_Q$ for some quadratic module $Q$ in $A$ (cf.\! Re\-mark~\ref{rem-count}-(i)). 
\begin{enumerate}[(i)]
\item There exists an $X(A)-$representing measure $\mu$ for $L$ on the cylinder $\sigma-$algebra $\Sigma_\indexset$ supported on each $\pi_S^{-1}(K_{Q\cap S})$ with $S\in \indexset$, where $K_{Q\cap S}$ is as in \eqref{bcsas-S}, if and only if for every $S\in \indexset$ there exists a $K_{Q\cap S}-$representing Radon measure $\mu_S$ for $L\!\restriction_S$ such that $\{\mu_S: S\in I\}$ is an exact projective system and the corresponding exact projective system $\{\mu_T:  T\in \Lambda\}$ is thick (here $\superindexset$ is as in \eqref{main-superindexset} and $\mu_T$ as in Lemma~\ref{doub-exact-char}). Moreover, if $Q$ is countably generated, then $\mu$ is a $K-$representing measure for $L$ on~$\Sigma_\indexset$.
\item There exists a $K-$repre\-senting Radon measure $\nu$ for $L$  if and only if for every $S\in \indexset$ there exists a $K_{Q\cap S}-$representing Radon measure $\mu_S$ for $L\!\restriction_S$ such that $\{\mu_S: S\in I\}$ fulfills Prokhrov's condition~\ref{epsilon-K-char}.
\end{enumerate}
\end{cor}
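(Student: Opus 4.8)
The plan is to derive both statements as an application of Theorem~\ref{cylinder-MP-Support} with the particular choice $K^{(S)}:=K_{Q\cap S}$ for every $S\in\indexset$, so the first thing to do is to check that this family of sets satisfies the hypotheses of that theorem. Each set $K_{Q\cap S}=\{\alpha\in X(S):\hat{a}(\alpha)\geq 0\text{ for all }a\in Q\cap S\}$ is closed in $(X(S),\topchar{S})$, being an intersection of sets of the form $\{\hat{a}\geq 0\}$ (cf.\ \eqref{eq::repr-closed-X(A)}). If $S\subseteq T$ in $\indexset$ and $\alpha\in K_{Q\cap T}$, then for every $a\in Q\cap S\subseteq Q\cap T$ we get, using \eqref{comp-Gelfand-transform}, that $\hat{a}_S(\pi_{S,T}(\alpha))=\hat{a}_T(\alpha)\geq 0$, whence $\pi_{S,T}(\alpha)\in K_{Q\cap S}$; thus $\pi_{S,T}(K_{Q\cap T})\subseteq K_{Q\cap S}$. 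Finally, by Remark~\ref{rem-count}-(i) one has $K=K_Q=\bigcap_{S\in\indexset}\pi_S^{-1}(K_{Q\cap S})$.

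With these checks in place, part~(ii) is immediate: Theorem~\ref{cylinder-MP-Support}-(ii) applied to $K^{(S)}=K_{Q\cap S}$ says precisely that a $\bigl(\bigcap_{S\in\indexset}\pi_S^{-1}(K_{Q\cap S})\bigr)$-representing Radon measure $\nu$ for $L$ exists if and only if for every $S\in\indexset$ there is a $K_{Q\cap S}$-representing Radon measure $\mu_S$ for $L\!\restriction_S$ such that $\{\mu_S:S\in\indexset\}$ fulfils Prokhorov's condition~\eqref{epsilon-K-char}, and the intersection on the left equals $K$. Likewise, the main equivalence in part~(i) is just Theorem~\ref{cylinder-MP-Support}-(i) for this choice of $K^{(S)}$, the resulting measure $\mu$ on $\Sigma_\indexset$ being supported in each $\pi_S^{-1}(K_{Q\cap S})$.

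It then remains to treat the ``moreover'' clause of~(i): when $Q$ is countably generated, say by a countable $G\subseteq A$, one must upgrade ``$\mu$ is supported in each $\pi_S^{-1}(K_{Q\cap S})$ with $S\in\indexset$'' to ``$\mu(K)=1$''. For this I would invoke Remark~\ref{rem-count}-(ii),(iii): setting $\subindexset:=\{\langle g\rangle:g\in G\}$, which is a countable subset of $\indexset$, one has $K=K_Q=\bigcap_{S\in\subindexset}\pi_S^{-1}(K_{Q\cap S})$, and moreover $K\in\C_\Lambda=\Sigma_\indexset$. Since $\mu\bigl(\pi_S^{-1}(K_{Q\cap S})\bigr)=1$ for every $S\in\indexset$, in particular for every $S\in\subindexset$, and a countable intersection of $\mu$-conull sets is $\mu$-conull, we conclude $\mu(K)=1$, i.e.\ $\mu$ is a $K$-representing measure for $L$ on $\Sigma_\indexset$.

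I do not expect any genuine obstacle here; the substantive content lies entirely in Theorem~\ref{cylinder-MP-Support}. The only point requiring attention is that ``supported in each $\pi_S^{-1}(K_{Q\cap S})$'' over the possibly uncountable index set $\indexset$ does not by itself force $\mu$ to be supported in the intersection $K$ — a $\sigma$-additive measure need not respect uncountable intersections — which is exactly why the countable generation of $Q$ is assumed in the ``moreover'' part, making the reduction to the countable subfamily $\subindexset$ legitimate and ensuring $K\in\Sigma_\indexset$ so that $\mu(K)$ is even defined.
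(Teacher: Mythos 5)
Your proposal is correct and follows essentially the same route as the paper: both reduce to Theorem~\ref{cylinder-MP-Support} with $K^{(S)}=K_{Q\cap S}$ after checking the compatibility $\pi_{S,T}(K_{Q\cap T})\subseteq K_{Q\cap S}$ and the identity $K=\bigcap_{S\in\indexset}\pi_S^{-1}(K_{Q\cap S})$ from Remark~\ref{rem-count}-(i), and both handle the ``moreover'' clause by passing to the countable subfamily $\{\langle g\rangle:g\in G\}$ and using that a countable intersection of conull sets is conull (the paper cites Remark~\ref{ext-cylinderSigmaAlg-rem-support}-(i) for exactly this). Your closing observation about why uncountable intersections would fail, and that $K\in\Sigma_\indexset$ makes $\mu(K)$ well defined, is a correct and welcome clarification of a point the paper leaves implicit.
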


\begin{proof}
Let us observe that $\pi_{S,T}(K_{Q\cap T})\subseteq K_{Q\cap S}$ for any $S\subseteq T$ in $\indexset$ and, by Remark~\ref{rem-count}-(i), we have that $K=K_Q = \bigcap_{S\in\indexset}\pi_S^{-1}(K_{Q\cap S})$. Thus, (i) and (ii) follow by applying Theorem \ref{cylinder-MP-Support} with $K^{(S)}=K_{Q\cap S}$ for $S\in\indexset$.

In particular in (i), when $Q$ is countably generated, then by Remark \ref{rem-count}-(ii) there exists a countable subset $J$ of $\indexset$ such that $K=K_Q = \bigcap_{S\in J}\pi_S^{-1}(K_{Q\cap S})$ and so $\mu$ is supported in $K$ by Remark \ref{ext-cylinderSigmaAlg-rem-support}-(i).
\end{proof}
\newpage
\begin{rem}\label{rem::cylinder-MP-cofinal}\
\begin{enumerate}[(i)]
\item If we assume that the algebra $A$ is such that for each $T\in\Lambda$ the map $\pi_T$ is surjective, then the thickness assumption in Theorem \ref{cylinder-MP-Support} and Corollary \ref{main-KMP} is always verified (cf.\! Remark~\ref{prop::cyl-quasi-measure}-(i)), e.g., for $A=\RR[X_i: i\in \Omega]$ where $\Omega$ is an arbitrary index set.
\item If in Theorem \ref{cylinder-MP-Support} we also assume the \emph{uniqueness} of the $K^{(S)}-$representing Radon measure for $L\!\restriction_S$ for each $S\!\in\! \indexset$, then there exists at most one $X\!(A)-$re\-presenting measure $\mu$ for $L$ on $\Sigma_\indexset$ supported in each $\pi_S^{-1}\left(K^{(S)}\right)$ with $S\in \indexset$ (the same holds for Corollary \ref{main-KMP} replacing $K^{(S)}$ by $K_{Q\cap S}$). Indeed, assume there exists a unique $K^{(S)}-$representing Radon measure for $L\!\restriction_S$ for each $S\in \indexset$. If $\mu^\prime$ is another $X(A)-$representing measure of $L$ on $\Sigma_\indexset$ supported in each $\pi_S^{-1}\left(K^{(S)}\right)$ with $S\in \indexset$, then ${\pi_S}_{\#}\mu^\prime={\pi_S}_{\#}\mu$ as both pushforward measures are $K^{(S)}-$representing for $L\!\restriction_S$ for all $S\in \indexset$ and so $\mu^\prime=\mu$ by Theorem~\ref{ext-cylinderSigmaAlg}. Replacing Theorem~\ref{ext-cylinderSigmaAlg} with Theorem~\ref{Prokh}, we can show the uniqueness of the $(\bigcap_{S\in \indexset} \pi_S^{-1}(K^{(S)}))-$representing measure $\nu$ for $L$ on $\Borchar{A}$. 
\item Theorem \ref{cylinder-MP-Support} remains true if the existence of a $K^{(S)}-$representing measure for $L\!\restriction_S$ is assumed only for all $S$ in a \emph{cofinal} subset $\subindexset$ of $\indexset$ (the same holds for Corollary \ref{main-KMP} replacing $K^{(S)}$ by $K_{Q\cap S}$). In fact, by Proposition~\ref{prop::cofinal-proj-limit}, the projective limits of the corresponding projective systems give rise to the same measurable space $(X(A),\Sigma_\indexset)$. Furthermore, for $S\in \indexset$ there exists $T\in \subindexset$ such that $S\subseteq T$ and so $\pi_{S,T}(K^{(T)})\subseteq K^{(S)}$ implies $\pi_T^{-1}(K^{(T)})\subseteq \pi_S^{-1}(K^{(S)})$. Therefore, the representing measure $\mu$ with $\indexset$ replaced by $\subindexset$ is supported in each $\pi_S^{-1}(K^{(S)})$ with $S\in \indexset$ and $\nu$ on $\bigcap_{S\in \indexset} \pi_S^{-1}(K^{(S)})=\bigcap_{S\in \subindexset} \pi_S^{-1}(K^{(S)})$.

\item  Theorem \ref{cylinder-MP-Support} and Corollary \ref{main-KMP} remain true if we fix a set of generators $G$ of $A$ and replace $I$ with its cofinal subset $J:=\{\langle F \rangle : F\subseteq G \text{ finite}\}$.
\end{enumerate}
\end{rem}

Theorem \ref{cylinder-MP-Support} and Corollary \ref{main-KMP}  build a bridge between the moment theory in finite and infinite dimensions. Indeed, they allow us to lift many of the results for the finite dimensional moment problem up to the general setting of Problem~\ref{GenKMP}.

\subsection{Riesz-Haviland theorem}\label{RH-Thm}\ 

In this subsection we are going to generalize the classical Riesz-Haviland theorem \cite{Riesz}, \cite{Hav} for the polynomial algebra to any unital commutative real algebra. To this purpose, we state in Theorem \ref{thm::fin-dim-Haviland} a version of the Riesz-Haviland theorem for finitely generated algebras which is an easy generalization of the classical result (for a proof see \cite[Theorem 1.14]{Schm17}).

Given a unital commutative real algebra $A$ and $K\subseteq X(A)$, we define
$$
\mathrm{Pos}_A(K):=\{a\in A: \hat{a}_A(\alpha)\geq 0\text{ for all }\alpha\in K\}.
$$

\begin{thm}\label{thm::fin-dim-Haviland}
Let $A$ be a finitely generated algebra, $L: A \rightarrow \mathbb{R}$ linear, $L(1)=1$ and $K$ a closed subset of $X(A)$. If $L(\mathrm{Pos}_A(K))\subseteq [0,\infty)$, then there exists a $K-$representing Radon measure for $L$ on $\Borchar{A}$.
\end{thm}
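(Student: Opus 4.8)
The plan is to reduce the statement to the classical Riesz--Haviland (Haviland) theorem for the polynomial algebra by exploiting a finite presentation of $A$. Pick generators $a_1,\ldots,a_d$ of $A$ and let $\varphi\colon\RR[X_1,\ldots,X_d]\to A$ be the surjective homomorphism with $\varphi(X_i):=a_i$, so that $A\cong\RR[X_1,\ldots,X_d]/\ker(\varphi)$. By Remark~\ref{fin-dim-char}, the map $\psi\colon\Z(\ker(\varphi))\to X(A)$, $\psi(x)(\varphi(p)):=p(x)$, is a topological isomorphism, where $\Z(\ker(\varphi))\subseteq\RR^d$ is closed and carries the Euclidean subspace topology; in particular $(X(A),\topchar{A})$ is locally compact and Polish.

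First I would transport the data to $\RR^d$. Set $\tilde L:=L\circ\varphi\colon\RR[X_1,\ldots,X_d]\to\RR$, which is linear with $\tilde L(1)=L(1)=1$, and set $\tilde K:=\psi^{-1}(K)$; since $\psi$ is a homeomorphism onto $X(A)$ and $\Z(\ker(\varphi))$ is closed in $\RR^d$, the set $\tilde K$ is closed in $\RR^d$. The crucial point is that $\tilde L$ is non-negative on $\mathrm{Pos}_{\RR[\X]}(\tilde K)$: if $p\in\RR[X_1,\ldots,X_d]$ satisfies $p\geq 0$ on $\tilde K$, then for each $\alpha\in K$, writing $\alpha=\psi(x)$ with $x\in\tilde K$, one has $\widehat{\varphi(p)}_A(\alpha)=\alpha(\varphi(p))=\psi(x)(\varphi(p))=p(x)\geq 0$, hence $\varphi(p)\in\mathrm{Pos}_A(K)$ and $\tilde L(p)=L(\varphi(p))\geq 0$ by hypothesis. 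Applying the classical Haviland theorem \cite{Riesz,Hav} to $\tilde L$ and the closed set $\tilde K\subseteq\RR^d$ then produces a Radon measure $\tilde\nu$ on $\RR^d$, supported in $\tilde K$, with $\tilde L(p)=\int_{\RR^d}p\,\dd\tilde\nu$ for all $p$; it is a probability measure because $\tilde L(1)=1$.

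It remains to push $\tilde\nu$ back to $X(A)$ and verify the representation. Since $\tilde\nu$ is concentrated on $\tilde K\subseteq\Z(\ker(\varphi))$, it restricts to a finite Borel measure on $\Z(\ker(\varphi))$, and its pushforward $\nu:=\psi_{\#}\tilde\nu$ is a finite Borel measure on $X(A)$ supported in $K$; as $X(A)$ is Polish, $\nu$ is automatically Radon. Finally, given $a\in A$, choose $p$ with $\varphi(p)=a$; then $p(x)=\psi(x)(\varphi(p))=\hat a_A(\psi(x))$ for all $x\in\Z(\ker(\varphi))$, so the change-of-variables formula gives
$$
L(a)=\tilde L(p)=\int_{\tilde K}p(x)\,\dd\tilde\nu(x)=\int_K\hat a_A(\alpha)\,\dd\nu(\alpha),
$$
i.e.\ $\nu$ is a $K$-representing Radon measure for $L$ on $\Borchar{A}$. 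I do not expect a genuine obstacle: the mathematical content is the finite-dimensional Haviland theorem, and the only delicate points are the bookkeeping of the identifications $A\cong\RR[X_1,\ldots,X_d]/\ker(\varphi)$ and $X(A)\cong\Z(\ker(\varphi))$ together with checking that positivity transfers in the correct direction (from $\tilde K$ to $K$), which is immediate from the defining formula for $\psi$.
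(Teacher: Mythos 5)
Your proposal is correct and follows essentially the same route as the paper: present $A$ as $\RR[X_1,\ldots,X_d]/\ker(\varphi)$, identify $X(A)$ with $\Z(\ker(\varphi))$ via $\psi$, check that $\widetilde{L}:=L\circ\varphi$ is non-negative on $\mathrm{Pos}_{\RR[\X]}(\psi^{-1}(K))$, apply the classical Riesz--Haviland theorem, and push the resulting measure forward by $\psi$. The only difference is that you spell out the positivity transfer and the change-of-variables computation that the paper leaves as ``easy to show.''
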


As a direct consequence of Theorem \ref{cylinder-MP-Support} and Corollary \ref{main-KMP}, we get the following generalized version of the Riesz-Haviland theorem.

\begin{thm}\label{cor::Haviland}
Let $A$ be an algebra, $L:A \rightarrow \mathbb{R}$ linear, $L(1)=1$, 
$\indexset$ as in~\eqref{main-indexset}, and $K\subseteq X(A)$ closed, i.e., $K=K_Q$ for some quadratic module $Q$ in $A$ (cf.\! Re\-mark~\ref{bcsas-S}-(i)).

If $L(\mathrm{Pos}_A(K))\subseteq [0,\infty)$, then 
\begin{enumerate}[(i)] 
\item  For each $S\in I$ there exists a  $\overline{\pi_S(K)}-$representing Radon measure~$\mu_S$ for~$L\restriction_S$.
 \item There exists an $X(A)-$representing measure $\mu$ for $L$ on the cylinder $\sigma-$algebra $\Sigma_\indexset$ supported on each $\pi_S^{-1}(\overline{\pi_S(K)})$ with $S\in \indexset$ if and only if $\{\mu_S: S\in I\}$ is an exact projective system and the corresponding exact projective system $\{\mu_T:  T\in \Lambda\}$ is thick. Moreover, if $Q$ is countably generated, then $\mu$ is a $K-$representing measure for $L$ on~$\Sigma_\indexset$.
\item There exists a $K-$repre\-senting Radon measure $\nu$ for $L$  if and only if $\{\mu_S: S\in I\}$ fulfills Prokhorov's condition \eqref{epsilon-K-char}.
\end{enumerate}
\end{thm}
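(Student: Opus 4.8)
The plan is to deduce all three parts from Theorem~\ref{cylinder-MP-Support} (equivalently Corollary~\ref{main-KMP}) applied to the family of closed sets $K^{(S)}:=\overline{\pi_S(K)}\subseteq X(S)$, $S\in\indexset$; the only genuinely new ingredient is the finite-dimensional Riesz--Haviland theorem, Theorem~\ref{thm::fin-dim-Haviland}, which is used to check the hypotheses of Theorem~\ref{cylinder-MP-Support} for this choice of $K^{(S)}$.

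First I would verify that $\{K^{(S)}\}_{S\in\indexset}$ satisfies the compatibility condition required in Theorem~\ref{cylinder-MP-Support}, namely $\pi_{S,T}(K^{(T)})\subseteq K^{(S)}$ for $S\subseteq T$ in $\indexset$. This is immediate from the continuity of $\pi_{S,T}$ together with $\pi_S=\pi_{S,T}\circ\pi_T$ (take $R=A$ in \eqref{comp-projections}): $\pi_{S,T}(\overline{\pi_T(K)})\subseteq\overline{\pi_{S,T}(\pi_T(K))}=\overline{\pi_S(K)}$. Next, for part (i), I would fix $S\in\indexset$ and show $L\!\restriction_S(\mathrm{Pos}_S(K^{(S)}))\subseteq[0,\infty)$: if $a\in S$ satisfies $\hat a_S\geq 0$ on $\overline{\pi_S(K)}$, then in particular $\hat a_S\geq 0$ on $\pi_S(K)$, and for every $\beta\in K$ we get $\hat a_A(\beta)=\beta(a)=\pi_S(\beta)(a)=\hat a_S(\pi_S(\beta))\geq 0$ by \eqref{comp-Gelfand-transform}, so $a\in\mathrm{Pos}_A(K)$ and hence $L(a)=L\!\restriction_S(a)\geq 0$ by hypothesis. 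Since $K^{(S)}$ is closed in $X(S)$ and $S$ is finitely generated, Theorem~\ref{thm::fin-dim-Haviland} then produces a $K^{(S)}$-representing Radon measure $\mu_S$ for $L\!\restriction_S$, which is exactly part (i).

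Parts (ii) and (iii) then follow by feeding part (i) into Theorem~\ref{cylinder-MP-Support}-(i) and -(ii) with $K^{(S)}=\overline{\pi_S(K)}$: part (i) guarantees that for every $S$ at least one $K^{(S)}$-representing Radon measure for $L\!\restriction_S$ exists, so the existence criteria of Theorem~\ref{cylinder-MP-Support} reduce precisely to the stated conditions — that the measures $\{\mu_S\}$ can be chosen so as to form an exact projective system whose associated system $\{\mu_T:T\in\superindexset\}$ (from Lemma~\ref{doub-exact-char}) is thick, respectively so as to satisfy Prokhorov's condition \eqref{epsilon-K-char}. It remains to identify the support set. Since $\pi_S(K)\subseteq K_{Q\cap S}$ and $K_{Q\cap S}$ is closed we have $\overline{\pi_S(K)}\subseteq K_{Q\cap S}$, while $K\subseteq\pi_S^{-1}(\overline{\pi_S(K)})$ trivially; combined with $\bigcap_{S\in\indexset}\pi_S^{-1}(K_{Q\cap S})=K_Q=K$ from Remark~\ref{rem-count}-(i) this gives $\bigcap_{S\in\indexset}\pi_S^{-1}(\overline{\pi_S(K)})=K$, so the Radon measure $\nu$ of (iii) is automatically $K$-representing via Remark~\ref{Prokh-rem-support}. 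When $Q$ is countably generated, Remark~\ref{rem-count}-(ii) gives a countable $J\subseteq\indexset$ with $K=\bigcap_{S\in J}\pi_S^{-1}(K_{Q\cap S})$, and since the cylinder-$\sigma$-algebra measure $\mu$ of (ii) is supported in each $\pi_S^{-1}(\overline{\pi_S(K)})\subseteq\pi_S^{-1}(K_{Q\cap S})$, Remark~\ref{ext-cylinderSigmaAlg-rem-support}-(i) makes $\mu$ $K$-representing.

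I do not expect a serious obstacle: the content of the theorem is exactly that the reduction to the finite-dimensional case goes through. The only delicate point is the bookkeeping distinguishing (ii) from (iii): a measure on the cylinder $\sigma$-algebra supported in each member of the uncountable family $\{\pi_S^{-1}(\overline{\pi_S(K)})\}_{S\in\indexset}$ need not be supported in their intersection (countable additivity does not suffice), which is why $K$-representability in (ii) is claimed only when $Q$ — hence $K$ — is cut out by countably many of the $\pi_S^{-1}(K_{Q\cap S})$, whereas in (iii) the Radon measure is supported in the full intersection, which equals $K$.
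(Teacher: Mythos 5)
Your proposal is correct and follows essentially the same route as the paper: part (i) via the observation $\mathrm{Pos}_S(\overline{\pi_S(K)})\subseteq\mathrm{Pos}_A(K)\cap S$ and the finite-dimensional Riesz--Haviland theorem (Theorem~\ref{thm::fin-dim-Haviland}), and parts (ii)--(iii) by feeding these measures into the general projective-limit machinery. The only cosmetic difference is that you apply Theorem~\ref{cylinder-MP-Support} directly with $K^{(S)}=\overline{\pi_S(K)}$ (checking compatibility of the closures yourself), whereas the paper notes $\overline{\pi_S(K)}\subseteq K_{Q\cap S}$ and invokes Corollary~\ref{main-KMP}; these amount to the same argument.
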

\proof
Let $S\in \indexset$ and suppose $L(\mathrm{Pos}_A(K))\subseteq [0,\infty)$. Since $\mathrm{Pos}_S(\overline{\pi_S(K)})\subseteq\mathrm{Pos}_A(K)\cap S$, we have $L\!\restriction_S(\mathrm{Pos}_S(\overline{\pi_S(K)}))\subseteq [0,\infty)$. Hence, by Theorem \ref{thm::fin-dim-Haviland}, there exists a $\overline{\pi_S(K)}-$representing Radon measure $\mu_S$ for $L\!\restriction_S$, i.e. (i) holds. Each $\mu_S$ is also $K_{Q\cap S}-$representing for $L\!\restriction_S$ as $\overline{\pi_S(K_Q)}\subseteq K_{Q\cap S}$, where $K_{Q\cap S}$ is as in \eqref{bcsas-S}. Then, by applying  Corollary~\ref{main-KMP}, we get the conclusions (ii) and (iii). 
\endproof

When $A=\RR[X_i : i\in\Omega]$ with $\Omega$ arbitrary index set, the map $\pi_T$ is surjective for any $T\in \Lambda$. Hence, by combining Remark \ref{rem::cylinder-MP-cofinal}-(i), Theorem \ref{cor::Haviland} and Lemma \ref{lem::constrRadonIJ} for $A=\RR[X_i : i\in\Omega]$ with $\Omega$ arbitrary index set, we retrieve \cite[Theorem 5.1]{GKM16}. If $\Omega$ is countable, we similarly derive \cite[Theorem 5.1]{AJK15} from Theorem~\ref{cor::Haviland}. Another special case of Theorem \ref{cor::Haviland} is the following result \cite[Theorem 3.2.2]{MarshBook}.

\begin{thm}
Let $A$ be an algebra and $K\subseteq X(A)$ closed. Assume there exists $p\in A$ such that $\hat{p}\geq 0$ on $K$ and $K_n:=\{\alpha\in K: \hat{p}(\alpha) \leq n\}$ is compact for each $n\in\NN$. Then for each $L:A\to\RR$ linear, $L(1)=1$ and $L(\mathrm{Pos}_A(K))\subseteq [0,\infty)$ there exists a $K-$representing Radon measure $\nu$ for $L$.
\end{thm}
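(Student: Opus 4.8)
The plan is to derive the statement from the generalized Riesz--Haviland theorem (Theorem~\ref{cor::Haviland}); the only substantial step is the verification of Prokhorov's condition~\eqref{epsilon-K-char}, and this is where the hypothesis on $p$ enters: the open sublevel sets of $\hat p$ will furnish the required compact subsets of $X(A)$.

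Since $L(\mathrm{Pos}_A(K))\subseteq[0,\infty)$, Theorem~\ref{cor::Haviland}-(i) provides, for every $S\in I$ (with $I$ as in~\eqref{main-indexset}), a $\overline{\pi_S(K)}$--representing Radon measure for $L\!\restriction_S$. Writing $K=K_Q$ for a quadratic module $Q$ in $A$ (Remark~\ref{rem-count}-(i)) and invoking Remark~\ref{rem::cylinder-MP-cofinal}-(iii), we may work throughout over the cofinal subset $J:=\{S\in I:p\in S\}$ of $I$ (every $S\in I$ is contained in $\langle S\cup\{p\}\rangle\in J$). Hence it suffices to choose, for each $S\in J$, a $\overline{\pi_S(K)}$--representing Radon measure $\mu_S$ for $L\!\restriction_S$ such that the family $\{\mu_S:S\in J\}$ fulfils~\eqref{epsilon-K-char}; by Theorem~\ref{cor::Haviland}-(iii) (equivalently Corollary~\ref{main-KMP}-(ii) used over $J$, each $\mu_S$ being automatically $K_{Q\cap S}$--representing as $\overline{\pi_S(K)}\subseteq K_{Q\cap S}$, cf.~\eqref{bcsas-S}) this will yield the desired $K$--representing Radon measure $\nu$ for $L$.

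So fix $S\in J$ and let $\mu_S$ be any $\overline{\pi_S(K)}$--representing Radon measure for $L\!\restriction_S$. Since $p\in S$ and $\hat p\geq0$ on $K$, we have $\hat p_S\geq0$ on $\pi_S(K)$, hence on $\overline{\pi_S(K)}$ by continuity; as $\mu_S$ is supported in $\overline{\pi_S(K)}$, it follows that $\hat p_S\geq0$ $\mu_S$-almost everywhere and $\int_{X(S)}\hat p_S\,\dd\mu_S=L\!\restriction_S(p)=L(p)$, so Markov's inequality gives $\mu_S(\{\hat p_S\geq n\})\leq L(p)/n$ for all $n\in\NN$. The key point is the inclusion
$$
\overline{\pi_S(K)}\cap\{\hat p_S<n\}\ \subseteq\ \pi_S(K_n),\qquad n\in\NN .
$$
Indeed, let $\beta\in\overline{\pi_S(K)}$ with $\hat p_S(\beta)<n$. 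As $\{\hat p_S<n\}$ is an open neighbourhood of $\beta$, the point $\beta$ lies in the closure of $\pi_S(K)\cap\{\hat p_S<n\}$; and since $p\in S$, so that $\hat p=\hat p_S\circ\pi_S$ (cf.~\eqref{comp-Gelfand-transform}), one has $\pi_S(K)\cap\{\hat p_S\leq n\}=\pi_S(K_n)$, whence $\pi_S(K)\cap\{\hat p_S<n\}\subseteq\pi_S(K_n)$, and $\pi_S(K_n)$ is closed, being the continuous image of the compact set $K_n$; therefore $\beta\in\pi_S(K_n)$. Combining this inclusion with the Markov bound and with the fact that $\mu_S$ is supported in $\overline{\pi_S(K)}$ yields
$$
\mu_S(\pi_S(K_n))\ \geq\ \mu_S(\{\hat p_S<n\})\ =\ 1-\mu_S(\{\hat p_S\geq n\})\ \geq\ 1-\frac{L(p)}{n}\, .
$$

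It remains to check~\eqref{epsilon-K-char}. Given $\varepsilon>0$, pick $n\in\NN$ with $L(p)/n<\varepsilon$ and set $C_\varepsilon:=K_n$, which is compact in $X(A)$ by hypothesis. Then $\pi_S(C_\varepsilon)=\pi_S(K_n)$, so $\mu_S(\pi_S(C_\varepsilon))\geq1-\varepsilon$ for every $S\in J$, which is exactly Prokhorov's condition~\eqref{epsilon-K-char} for $\{\mu_S:S\in J\}$; the proof is then concluded by Theorem~\ref{cor::Haviland}-(iii) as explained above. The compactness hypothesis on the sets $K_n$ is used only through the displayed inclusion --- precisely, to ensure that $\pi_S(K_n)$ is closed --- so that is the one step requiring care; all the remaining verifications are routine.
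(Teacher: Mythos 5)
Your proof is correct, and while it follows the same overall strategy as the paper (reduce to Prokhorov's condition \eqref{epsilon-K-char} over the cofinal subset $J=\{S\in I:p\in S\}$ and use the compact sets $K_n$ to build the sets $C_\varepsilon$), the verification of \eqref{epsilon-K-char} is genuinely different. The paper first proves the set identity $\overline{\pi_S(K)}=\bigcup_{n\in\NN}\pi_S(K_n)=\pi_S(K)$ for all $S\in J$, applies continuity from below of $\mu_S$ for the single algebra $S=\langle p\rangle$ to find one $n$ with $\mu_S(\pi_S(K_n))\geq 1-\varepsilon$, and then transfers this bound to every $T\in J$ via the identity $\pi_T(K_n)=\pi_{S,T}^{-1}(\pi_S(K_n))\cap\pi_T(K)$ together with the pushforward relation ${\pi_{S,T}}_{\#}\mu_T=\mu_S$ --- a step that tacitly uses exactness of the system $\{\mu_S\}$, which Theorem~\ref{cor::Haviland}-(i) alone does not provide. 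You instead apply Markov's inequality to $\hat p_S\geq 0$ ($\mu_S$-a.e.) with $\int\hat p_S\,\dd\mu_S=L(p)$, obtaining the quantitative bound $\mu_S(\pi_S(K_n))\geq 1-L(p)/n$ uniformly over all $S\in J$ and over \emph{every} choice of $\overline{\pi_S(K)}$-representing measure; this makes the choice of $C_\varepsilon=K_n$ with $n>L(p)/\varepsilon$ immediate and removes any dependence on exactness or on a distinguished base algebra. The topological core is the same in both arguments --- the neighbourhood argument showing that limit points of $\pi_S(K)$ with $\hat p_S$-value below $n$ land in the closed set $\pi_S(K_n)$, which is where the compactness of $K_n$ enters --- but your quantitative version is cleaner and slightly more robust. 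Both proofs then conclude identically via Theorem~\ref{cor::Haviland}-(iii) combined with Remark~\ref{rem::cylinder-MP-cofinal}-(iii).
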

\begin{proof}
Define $J:=\{S\in I: p\in S\}\subseteq \indexset$, where $\indexset$ is as in \eqref{main-indexset}. By Theorem \ref{cor::Haviland}-(i), for each $S\in J$ there exists a  $\overline{\pi_S(K)}-$representing Radon measure $\mu_S$ for $L\restriction_S$. As $J$ is cofinal in $\indexset$, by Theorem \ref{cor::Haviland}-(iii) and Remark \ref{rem::cylinder-MP-cofinal}-(iii), we just need to show that $\{\mu_S:S\in J\}$ fulfills Prokhorov's condition \eqref{epsilon-K-char}. 

Let us first prove that  
\begin{equation}\label{eq::closure}
\overline{\pi_S(K)}=\bigcup_{n\in\NN}\pi_S(K_n)=\pi_S(K), \quad\forall S\in J.
\end{equation}
Let $S\in J$ and w.l.o.g.\! assume that $p$ is one of the generators $a_1,\ldots,a_d$ of $S$. Let $\alpha\in\overline{\pi_S(K)}$ and note that the sets 
$$
\{\beta\in X(S):\left|\hat{a}_i(\beta)-\hat{a}_i(\alpha) \right|\leq r\text{ for all }i=1,\ldots,d \},\quad\text{ where }0<r\leq 1,
$$ 
form a basis of neighborhoods of $\alpha$ in $(X(S),\topchar{S})$. For any such basic neighborhood $U$ of $\alpha$ there exists some $\beta\in \pi_S(K)\cap U$ as $\alpha\in\overline{\pi_S(K)}$. Thus, $\left|\hat{p}(\beta)-\hat{p}(\alpha)  \right|\leq 1$ and hence, there exists $n\in\NN$ (independent of $\beta$) such that $\hat{p}(\beta)\leq \hat{p}(\alpha)+1\leq n$. Since $\beta\in\pi_S(K)$, this shows $\beta\in \pi_S(K_n)\cap U$. As $U$ was an arbitrary basic neighborhood of $\alpha$, we conclude $\alpha\in\overline{\pi_S(K_n)}=\pi_S(K_n)$ and so $\alpha\in\bigcup_{n\in\NN}\pi_S(K_n)$. The other inclusion in \eqref{eq::closure} easily follows from $K=\bigcup_{n\in\NN}K_n$.

Now let $\varepsilon>0$ and set $S:=\langle p\rangle\in J$. Then \eqref{eq::closure} and the continuity from below of $\mu_S$ imply that $\lim_{n\to\infty} \mu_S(\pi_S(K_n))= \mu_S(\overline{\pi_S(K)})=1$
and so there exists $n\in\NN$ such that $\mu_S(\pi_S(K_n))\geq 1-\varepsilon$. Moreover, for any $T\in J$ we clearly have $S\subseteq T$ and it is easy to show that 
\begin{equation}\label{eq::repr2}
\pi_T(K_n)=\pi_{S,T}^{-1}(\pi_S(K_n))\cap\pi_T(K).
\end{equation}
Thus, using \eqref{eq::closure} and $\mu_T(\pi_T(K))=1$, we get
\begin{eqnarray*}
\mu_T(\pi_T(K_n)) &\stackrel{\eqref{eq::repr2}}{=}&\mu_T(\pi_{S,T}^{-1}(\pi_S(K_n))\cap\pi_T(K))\\
&=&\mu_T(\pi_{S,T}^{-1}(\pi_S(K_n)))\\
&=&\mu_S(\pi_S(K_n))\geq 1-\varepsilon,
\end{eqnarray*}
which proves that $\{\mu_T: T\in J\}$ fulfills \eqref{epsilon-K-char}.

It remains to show \eqref{eq::repr2}. Clearly, $\pi_T(K_n)\subseteq\pi_T(K)$ and 
$$
\pi_T(K_n)\subseteq\pi_{S,T}^{-1}(\pi_{S,T}(\pi_T(K_n)))=\pi_{S,T}^{-1}(\pi_S(K_n)).
$$
Conversely, let $\alpha\in\pi_{S,T}^{-1}(\pi_S(K_n))\cap\pi_T(K)$, then there exists $\beta\in K_n$ such that $\pi_{S,T}(\alpha)=\pi_S(\beta)$ and $\gamma\in K$ such that $\alpha=\pi_T(\gamma)$. Since $p\in S\subseteq T$, we get $\gamma(p)=\alpha(p)=\beta(p)\leq n$. Hence, $\gamma\in K_n$ and so $\alpha=\pi_T(\gamma)\in\pi_T(K_n)$.
\end{proof}

\subsection{Nussbaum theorem}\label{Nuss-Thm}\

In this subsection we are going to extend the classical Nussbaum theorem \cite[Theorem~10]{N65} (see also \cite[Theorem 14.19]{Schm17}) to any unital commutative real algebra. Note that the classical Nussbaum theorem has been recently generalized to the case of representing Radon measures supported in subsets of $\RR^d$ determined by polynomial inequalities (see \cite[Theorem 3.2]{Lass} and \cite[Theorem 14.25]{Schm17} for the case of finitely many polynomial inequalities, and \cite[Theorem 5.1]{IKR} for the case of arbitrarily many polynomial inequalities). Therefore, we will apply a quite standard procedure to first generalize this result from the polynomial algebra in finitely many variables to any finitely generated algebra.

\begin{thm}\label{Nussbaum-support}
Let $A$ be finitely generated by $a_1,\ldots,a_d\in A$ with $d\in\NN$, $L:A\to\RR$ linear and $L(1)=1$. If 
\begin{enumerate}[(a)]
\item $L(Q)\subseteq [0,\infty)$ for some quadratic module $Q$ in $A$,   
\item for each $i\in\{1,\ldots, d\}$, $\sum_{n=1}^\infty\frac{1}{\sqrt[2n]{L(a_i^{2n})}}=\infty$ holds, 
\end{enumerate}  
then there exists a unique $K_Q-$representing Radon measure $\nu$ for $L$, where $K_Q$ is as in \eqref{semi-alg-Q}.
\end{thm}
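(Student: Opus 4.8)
The plan is to reduce Theorem~\ref{Nussbaum-support} to the finite dimensional moment problem on a closed subset of $\RR^d$ cut out by polynomial inequalities, and then to invoke the quasi-analytic version of Nussbaum's theorem from \cite[Theorem~3.2]{Lass} and \cite[Theorem~5.1]{IKR}, using the same reduction scheme as in the proof of Theorem~\ref{thm::fin-dim-Haviland}. Write $\RR[\underline{X}]:=\RR[X_1,\ldots,X_d]$ and let $\varphi:\RR[\underline{X}]\to A$ be the surjective homomorphism with $\varphi(X_i)=a_i$ for $i=1,\ldots,d$. By Remark~\ref{fin-dim-char}, the map $\psi:\Z(\ker(\varphi))\to X(A)$ given by $\psi(x)(\varphi(p)):=p(x)$ is a topological isomorphism, where $\Z(\ker(\varphi))\subseteq\RR^d$ carries the Euclidean subspace topology. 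I would then put $\widetilde{L}:=L\circ\varphi:\RR[\underline{X}]\to\RR$ and $\widetilde{Q}:=\varphi^{-1}(Q)$.

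The first step is to transfer the hypotheses to $\widetilde{L}$. Since $\varphi$ is a unital homomorphism, $\widetilde{Q}$ is a quadratic module in $\RR[\underline{X}]$ containing $\ker(\varphi)$, and surjectivity of $\varphi$ gives $\varphi(\widetilde{Q})=Q$, so that $\widetilde{L}(\widetilde{Q})=L(Q)\subseteq[0,\infty)$; moreover $\widetilde{L}(1)=L(1)=1$ and $\widetilde{L}(X_i^{2n})=L(a_i^{2n})$ for all $i$ and $n$, whence each class $\C\{\sqrt{\widetilde{L}(X_i^{2n})}\}=\C\{\sqrt{L(a_i^{2n})}\}$ is quasi-analytic. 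One also checks directly from the definition of $\psi$ that $\psi^{-1}(K_Q)=\{x\in\RR^d: p(x)\ge 0\text{ for all }p\in\widetilde{Q}\}=:K_{\widetilde{Q}}$, the inclusion $K_{\widetilde{Q}}\subseteq\Z(\ker(\varphi))$ being automatic because $\ker(\varphi)\subseteq\widetilde{Q}$, and that $\widehat{\varphi(p)}_A(\psi(x))=p(x)$ for all $p\in\RR[\underline{X}]$ and all $x\in\Z(\ker(\varphi))$.

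Next I would apply the finite dimensional result: by \cite[Theorem~3.2]{Lass} and \cite[Theorem~5.1]{IKR}, the transferred conditions (a) and (b) for $\widetilde{L}$ and $\widetilde{Q}$ produce a unique $K_{\widetilde{Q}}-$representing Radon measure $\widetilde{\nu}$ for $\widetilde{L}$ on $\RR^d$. Then $\nu:=\psi_{\#}\widetilde{\nu}$ is a Radon measure on $(X(A),\topchar{A})$ supported in $\psi(K_{\widetilde{Q}})=K_Q$, since $\widetilde{\nu}$ is carried by the closed set $\Z(\ker(\varphi))$ and $\psi$ is a homeomorphism; and for every $a=\varphi(p)\in A$ one computes
\[
\int_{X(A)}\hat{a}_A\,\dd\nu=\int_{\Z(\ker(\varphi))}\widehat{\varphi(p)}_A(\psi(x))\,\dd\widetilde{\nu}(x)=\int_{\RR^d}p\,\dd\widetilde{\nu}=\widetilde{L}(p)=L(a),
\]
so $\nu$ is a $K_Q-$representing Radon measure for $L$. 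Uniqueness follows by reversing the construction: any $K_Q-$representing Radon measure $\mu$ for $L$ pushes forward under $\psi^{-1}$ (extended by zero to all of $\RR^d$) to a $K_{\widetilde{Q}}-$representing Radon measure for $\widetilde{L}$, which must equal $\widetilde{\nu}$, whence $\mu=\psi_{\#}\widetilde{\nu}=\nu$.

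The substantive ingredient here is the cited finite dimensional theorem, so the point requiring most care is that it is applied to the quadratic module $\widetilde{Q}=\varphi^{-1}(Q)$, which need not be finitely generated even when $Q$ is; hence one must use the formulation of \cite[Theorem~3.2]{Lass}/\cite[Theorem~5.1]{IKR} that holds for an \emph{arbitrary} quadratic module of $\RR[\underline{X}]$. Everything else --- that $\widetilde{Q}$ is a quadratic module, the transfer of the quasi-analytic bounds, the identification $\psi^{-1}(K_Q)=K_{\widetilde{Q}}$, and the Radonness, support and uniqueness of the push-forward measure --- is routine bookkeeping along the isomorphism $\psi$, entirely parallel to the proof of Theorem~\ref{thm::fin-dim-Haviland}.
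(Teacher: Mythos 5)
Your proposal is correct and follows essentially the same route as the paper: transfer $L$ and $Q$ along the surjection $\varphi:\RR[\underline{X}]\to A$ to $\widetilde{L}=L\circ\varphi$ and $\widetilde{Q}=\varphi^{-1}(Q)$, apply the quasi-analytic finite dimensional result of \cite[Theorem 3.2]{Lass}/\cite[Theorem 5.1]{IKR} on $\Z(\ker(\varphi))\subseteq\RR^d$, and push the resulting measure forward along the topological isomorphism $\psi$. Your explicit remarks on the identification $\psi^{-1}(K_Q)=K_{\widetilde{Q}}$, the uniqueness transfer, and the need for the cited theorem to hold for an arbitrary (not necessarily finitely generated) quadratic module are exactly the points the paper leaves implicit.
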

\begin{proof} Set $\RR[\underline{X}]:=\RR[X_1,\ldots,X_d]$. By Remark \ref{fin-dim-char}, the map $\varphi:\RR[\underline{X}]\to A$ given by $\varphi(X_i):=a_i$ is surjective and the map $\psi:\Z(\ker(\varphi))\to X(A)$ given by $\psi(x)(\varphi(p)):=p(x)$ is a topological isomorphism. If $\widetilde{Q}:=\varphi^{-1}(Q)$, then $K_{\widetilde{Q}}=\psi^{-1}(K_Q)$, where $K_{\widetilde{Q}}:=\{x\in \Z(\ker(\varphi)): {q}(x)\geq 0\text{ for all }q\in \widetilde{Q}\}$. The linear functional $\widetilde{L}:=L\circ\varphi$ on $\RR[\underline{X}]$ satisfies $\widetilde{L}(\widetilde{Q})\subseteq [0,\infty)$ as $L(Q)\subseteq [0,\infty)$ by assumption~(a). Moreover, (b) ensures that $\widetilde{L}$ fulfills the classical Carleman condition (see \cite{Carl26, Denj21}), i.e. $\sum_{n=1}^\infty\frac{1}{\sqrt[2n]{\widetilde{L}(X_i^{2n})}}=\infty$, as $\widetilde{L}(X_i^{2n})=L(a_i^{2n})$ for all $n \in\NN$ and all $i\in\{1,\ldots,d\}$. Hence, there exists a unique $K_{\widetilde{Q}}-$representing Radon measure $\tilde{\mu}$ for $\widetilde{L}$ on $\RR^d$ (see \cite[Theorem~10]{N65} and \cite[Theorem 3.2]{Lass} or \cite[Theorems~14.19 and~14.25]{Schm17} when $\widetilde{Q}$ is finitely generated, while see \cite[Theorem 5.1]{IKR} when $\widetilde{Q}$ is not finitely generated). It is then easy to show that $\mu:={\psi}_{\#}\tilde{\mu}$ is the unique $K_Q-$representing Radon measure for $L$ on $X(A)$.
\end{proof}

As a direct consequence of Corollary \ref{main-KMP} and Theorem~\ref{Nussbaum-support}, we get the following result, which can be seen as a version of the classical Nussbaum theorem for any unital commutative real algebra. 

\begin{thm}\label{cylinder-MP2}
Let $A$ be generated by $G\subseteq A$, $L:A \rightarrow \mathbb{R}$ linear, $L(1)=1$, and $I$ as in \eqref{main-indexset}. If
\begin{enumerate}[(a)]
\item $L(Q)\subseteq [0,\infty)$ for some quadratic module $Q$ in $A$,
\item for each $g\in G$,  $\sum_{n=1}^\infty\frac{1}{\sqrt[2n]{L(g^{2n})}}=\infty$ holds,
\end{enumerate}
then 
\begin{enumerate}[(i)]
\item There exists an exact projective system $\{\mu_S: S\in I\}$, where each $\mu_S$ is the unique $K_{Q\cap S}-$representing Radon measure $\mu_S$ for $L\!\restriction_S$ and $K_{Q\cap S}$ is as in \eqref{bcsas-S}.
\item There exists a unique $X(A)-$representing measure $\mu$ for $L$ on the cylinder $\sigma-$algebra~$\Sigma_\indexset$ supported on each $\pi_S^{-1}(K_{Q\cap S})$ with $S\in \indexset$ if and only if the exact projective system $\{\mu_T:  T\in \Lambda\}$ corresponding to $\{\mu_S: S\in I\}$ is thick. Moreover, if $Q$ is countably generated, then $\mu$ is a $K_Q-$representing measure for $L$ on~$\Sigma_\indexset$.
\item There exists a unique $K_Q-$repre\-senting Radon measure $\nu$ for $L$ if and only if $\{\mu_S: S\in I\}$ fulfills Prokhorov's condition \eqref{epsilon-K-char}.
\end{enumerate}
\end{thm}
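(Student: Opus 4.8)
\textbf{Proof proposal for Theorem~\ref{cylinder-MP2}.}
The plan is to reduce this statement to the combination of the finite-dimensional Nussbaum-type result (Theorem~\ref{Nussbaum-support}) and the projective limit machinery already established (mainly Corollary~\ref{main-KMP}), treating uniqueness as the only genuinely new ingredient. First I would fix $S\in I$ and note that $S$ is finitely generated; however its generators need not all lie in $G$, so Theorem~\ref{Nussbaum-support} cannot be applied to $S$ with an arbitrary generating set. To circumvent this, I would invoke Remark~\ref{rem::cylinder-MP-cofinal}-(iv): replace $I$ by the cofinal subset $J:=\{\langle F\rangle:F\subseteq G\text{ finite}\}$, so that every $S\in J$ has a generating set contained in $G$. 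For such an $S=\langle g_1,\dots,g_d\rangle$ with $g_i\in G$, hypothesis (a) gives $L\!\restriction_S(Q\cap S)\subseteq[0,\infty)$ and $Q\cap S$ is a quadratic module in $S$ with $K_{Q\cap S}$ as in~\eqref{bcsas-S}, while hypothesis (b) provides that $\C\{\sqrt{L\!\restriction_S(g_i^{2n})}\}=\C\{\sqrt{L(g_i^{2n})}\}$ is quasi-analytic for each $i$. Hence Theorem~\ref{Nussbaum-support} applies and yields a \emph{unique} $K_{Q\cap S}$-representing Radon measure $\mu_S$ for $L\!\restriction_S$.

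Next, because each $\mu_S$ is the unique $K_{Q\cap S}$-representing Radon measure for $L\!\restriction_S$ and $\pi_{S,T}(K_{Q\cap T})\subseteq K_{Q\cap S}$ for $S\subseteq T$ in $J$, Lemma~\ref{lem:uniq-S} (applied with $K^{(S)}=K_{Q\cap S}$) shows directly that $\{\mu_S:S\in J\}$ is an exact projective system; this gives (i) (phrased over $J$, which by cofinality is equivalent to the statement over $I$). For (ii) and (iii) I would then simply quote Corollary~\ref{main-KMP} with $K:=K_Q$: part~(i) of that corollary gives the existence of an $X(A)$-representing measure $\mu$ on $\Sigma_I$ supported on each $\pi_S^{-1}(K_{Q\cap S})$ iff $\{\mu_S\}$ is exact (which we have just shown) and the associated $\{\mu_T:T\in\Lambda\}$ is thick, together with the refinement that $\mu$ is $K_Q$-representing when $Q$ is countably generated; part~(ii) of that corollary gives the existence of a $K_Q$-representing Radon measure $\nu$ iff $\{\mu_S\}$ satisfies Prokhorov's condition~\eqref{epsilon-K-char}. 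The only thing Corollary~\ref{main-KMP} does not state is \emph{uniqueness} in (ii) and (iii); this follows from Remark~\ref{rem::cylinder-MP-cofinal}-(ii), whose hypothesis (uniqueness of the $K_{Q\cap S}$-representing Radon measure for each $S$) is exactly what Theorem~\ref{Nussbaum-support} delivered above. So uniqueness of $\mu$ on $\Sigma_I$ and of $\nu$ on $\Borchar{A}$ is automatic.

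The main subtlety, and the step I expect to need the most care, is the bookkeeping around the index sets: Theorem~\ref{Nussbaum-support} requires a generating tuple drawn from $G$, so one must genuinely pass to the cofinal subset $J$ and then check, via Remark~\ref{rem::cylinder-MP-cofinal}-(iii)--(iv), that exactness, thickness, and Prokhorov's condition over $J$ transfer back to the statements over $I$ (the supports $\bigcap_{S\in J}\pi_S^{-1}(K_{Q\cap S})$ and $\bigcap_{S\in I}\pi_S^{-1}(K_{Q\cap S})$ coincide, and the cylinder $\sigma$-algebra is unchanged). A second small point is verifying that $Q\cap S$ is indeed a quadratic module in $S$ and that $K_{Q\cap S}$ computed inside $X(S)$ is the semialgebraic set to which Theorem~\ref{Nussbaum-support} refers; both are immediate from the definitions and from~\eqref{bcsas-S}. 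Everything else is a direct citation of the quoted results, so beyond these index-set matters the proof is short.
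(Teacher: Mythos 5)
Your proposal is correct and follows essentially the same route as the paper: pass to the cofinal subset $J=\{\langle F\rangle:F\subseteq G\text{ finite}\}$, apply Theorem~\ref{Nussbaum-support} to each $S\in J$ to get a unique $K_{Q\cap S}$-representing Radon measure, deduce exactness from Lemma~\ref{lem:uniq-S}, and then invoke Corollary~\ref{main-KMP} together with Remark~\ref{rem::cylinder-MP-cofinal}. Your explicit appeal to Remark~\ref{rem::cylinder-MP-cofinal}-(ii) for the uniqueness of $\mu$ and $\nu$ is in fact slightly more careful than the paper's own write-up, which leaves that point implicit.
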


\begin{proof} 
Let $F\subseteq G$ finite and set $S:=\langle F \rangle\in \subindexset$. Then (a) and (b) respectively provide that $L\!\restriction_S(Q\cap S)\subseteq [0,\infty)$ and $\sum_{n=1}^\infty\frac{1}{\sqrt[2n]{L(g^{2n})}}=\infty$ holds for each $g\in F$. Thus, by Theorem~\ref{Nussbaum-support}, there exists a unique $K_{Q\cap S}-$representing Radon measure $\mu_S$ for $L\!\restriction_S$ and so Lemma~\ref{lem:uniq-S} guarantees that $\{\mu_S: S\in I\}$ is exact, i.e., (i) holds. By Since $
\subindexset:=\{\langle F \rangle :F\subseteq G\text{ finite}\}.
$ is cofinal in $I$, Remark \ref{rem::cylinder-MP-cofinal}-(iv) combined with Corollary~\ref{main-KMP} yield conclusions (ii) and (iii).
\end{proof}

For the convenience of the reader, we state here a special case of Theorem \ref{cylinder-MP2} without prescribing a support, i.e., taking $Q=\sum A^2$.

\begin{thm}\label{thm::general-Nussbaum}
Let $A$ be generated by $G\subseteq A$, $L:A \rightarrow \mathbb{R}$ linear, $L(1)=1$, and $I$ as in \eqref{main-indexset}. If
\begin{enumerate}[(a)]
\item $L(a^2) \geq 0$ for all $a \in A$,   
\item for each $g \in G$, $\sum_{n=1}^\infty\frac{1}{\sqrt[2n]{L(g^{2n})}}=\infty$ holds, \end{enumerate}  
then 
\begin{enumerate}[(i)]
\item There exists an exact projective system $\{\mu_S: S\in I\}$, where each $\mu_S$ is the unique $X(S)-$representing Radon measure $\mu_S$ for $L\!\restriction_S$.
\item There exists a unique $X(A)-$representing measure $\mu$ for $L$ on the cylinder $\sigma-$algebra~$\Sigma_\indexset$ if and only if the exact projective system $\{\mu_T:  T\in \Lambda\}$ corresponding to $\{\mu_S: S\in I\}$ is thick. 
\item There exists a unique $X(A)-$repre\-senting Radon measure $\nu$ for $L$ if and only if $\{\mu_S: S\in I\}$ fulfills Prokhorov's condition \eqref{epsilon-K-char}.
\end{enumerate}
\end{thm}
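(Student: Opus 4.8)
The plan is to derive Theorem~\ref{thm::general-Nussbaum} as the special case of Theorem~\ref{cylinder-MP2} obtained by taking the quadratic module $Q=\sum A^2$. First I would verify that with this choice of $Q$ the hypotheses of Theorem~\ref{cylinder-MP2} are exactly the ones assumed here: condition~(a) of Theorem~\ref{cylinder-MP2}, namely $L(Q)\subseteq[0,\infty)$, becomes $L\!\left(\sum A^2\right)\subseteq[0,\infty)$, which is equivalent to $L(a^2)\geq 0$ for all $a\in A$ since $L$ is linear and every element of $\sum A^2$ is a finite sum of squares; condition~(b) is verbatim the same in both statements. So the translation of hypotheses is immediate.

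Next I would identify the set $K_Q$ and the local sets $K_{Q\cap S}$ for this choice. Since $\hat a(\alpha)=\alpha(a)$ and $\alpha(a^2)=\alpha(a)^2\geq 0$ for every character $\alpha$ and every $a\in A$, we have $K_{\sum A^2}=\{\alpha\in X(A):\hat a(\alpha)\geq 0\ \forall a\in\sum A^2\}=X(A)$; likewise $K_{Q\cap S}=X(S)$ for each $S\in I$, because $Q\cap S\supseteq\sum S^2$ and every character of $S$ is non-negative on squares. Thus in Theorem~\ref{cylinder-MP2} all the sets $K_{Q\cap S}$ and $K_Q$ collapse to the full character spaces $X(S)$ and $X(A)$, and the phrase ``supported on each $\pi_S^{-1}(K_{Q\cap S})$'' in part~(ii) becomes vacuous since $\pi_S^{-1}(X(S))=X(A)$. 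Substituting these identifications into the three conclusions of Theorem~\ref{cylinder-MP2} yields precisely conclusions~(i), (ii) and~(iii) of Theorem~\ref{thm::general-Nussbaum}: the exact projective system $\{\mu_S:S\in I\}$ of unique $X(S)$-representing Radon measures, the existence of a unique $X(A)$-representing measure on $\Sigma_I$ iff $\{\mu_T:T\in\Lambda\}$ is thick, and the existence of a unique $X(A)$-representing Radon measure iff $\{\mu_S:S\in I\}$ satisfies Prokhorov's condition~\eqref{epsilon-K-char}.

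There is essentially no obstacle here: the whole content has already been carried out in Theorem~\ref{cylinder-MP2}, and this statement is a corollary whose proof amounts to the one-line specialization $Q=\sum A^2$ together with the elementary observation that characters are non-negative on squares. The only point worth spelling out is that the ``countably generated'' clause of Theorem~\ref{cylinder-MP2}-(ii) disappears: when $Q=\sum A^2$ one has $K_Q=X(A)=\bigcap_{S\in I}\pi_S^{-1}(X(S))$ trivially for any index set, so no countability of the generating set of $Q$ is needed to conclude that $\mu$ represents $L$ on all of $X(A)$ with respect to $\Sigma_I$. Hence the proof is simply: apply Theorem~\ref{cylinder-MP2} with $Q=\sum A^2$, note $K_{Q\cap S}=X(S)$ for all $S\in I$ and $K_Q=X(A)$, and read off (i)--(iii).
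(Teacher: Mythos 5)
Your proposal is correct and coincides with the paper's own treatment: the paper explicitly introduces Theorem~\ref{thm::general-Nussbaum} as the special case of Theorem~\ref{cylinder-MP2} obtained by taking $Q=\sum A^2$, with the identification $K_{Q\cap S}=X(S)$ and $K_Q=X(A)$ following from the fact that characters are non-negative on squares. Your additional remark that the countability clause becomes vacuous here is a correct and sensible observation.
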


From Theorem \ref{thm::general-Nussbaum} we can derive the following result \cite[Corollary 4.8]{GKM16}.

\begin{cor}\label{Cor-MP-poly}
Let $\RR[X_i: i\in \Omega]$ be the algebra of real polynomials in the variables $\{X_i: i\in \Omega\}$ with $\Omega$ arbitrary index set, $L:\RR[X_i: i\in \Omega] \rightarrow \mathbb{R}$ linear, $L(1)=1$. If
\begin{enumerate}[(a)]
\item\label{Psdness} $L(p^2) \geq 0$ for all $p \in \RR[X_i: i\in \Omega]$,  
\item\label{Carleman-Coordinates} for all $i\in\Omega$, Carleman's condition $\sum_{n=1}^\infty\frac{1}{\sqrt[2n]{L(X_i^{2n})}}=\infty$ is satisfied,
\end{enumerate}  
then there exists a unique $\RR^\Omega-$representing constructibly Radon measure $\mu$ for $L$.
\end{cor}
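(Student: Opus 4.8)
The plan is to deduce Corollary~\ref{Cor-MP-poly} from Theorem~\ref{thm::general-Nussbaum} applied to the algebra $A=\RR[X_i:i\in\Omega]$ with the distinguished set of generators $G=\{X_i:i\in\Omega\}$. First I would check hypothesis~(a) of Theorem~\ref{thm::general-Nussbaum}: this is literally hypothesis~\eqref{Psdness} of the corollary, so nothing is needed. Next I would verify hypothesis~(b), namely that for each $i\in\Omega$ the class $\C\{\sqrt{L(X_i^{2n})}\}$ is quasi-analytic. This is the standard equivalence between the Denjoy--Carleman quasi-analyticity criterion and Carleman's condition: writing $m_n:=\sqrt{L(X_i^{2n})}$, the Denjoy--Carleman theorem says $\C\{m_n\}$ is quasi-analytic if and only if $\sum_{n\ge1} \inf_{k\ge n} m_k^{-1/k}=\infty$ (or, equivalently for the log-convex regularization, $\sum_{n\ge1} m_n/m_{n+1}=\infty$), and a classical computation shows this is implied by $\sum_{n\ge1} L(X_i^{2n})^{-1/(2n)}=\infty$, which is exactly hypothesis~\eqref{Carleman-Coordinates}. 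I would cite the relevant classical references here (e.g.\ the treatment in \cite{IKR} or \cite{Lass}, where precisely this passage from Carleman's condition to quasi-analyticity is used) rather than reprove it.

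Having secured both hypotheses, Theorem~\ref{thm::general-Nussbaum} yields: (i) an exact projective system $\{\mu_S:S\in I\}$ of (unique) $X(S)$-representing Radon measures for $L\!\restriction_S$; and, crucially, I need the measure on the cylinder $\sigma$-algebra to exist \emph{unconditionally}, i.e.\ I must discharge the thickness hypothesis in Theorem~\ref{thm::general-Nussbaum}-(ii). This is where Remark~\ref{rem::cylinder-MP-cofinal}-(i) enters: for $A=\RR[X_i:i\in\Omega]$ the restriction maps $\pi_T:X(A)\to X(T)$ are surjective for every $T\in\Lambda$ (any character of a countably generated subalgebra $T$ of $\RR[X_i:i\in\Omega]$ extends to a character of the whole polynomial algebra, since $X(T)$ is identified with a subset of $\RR^{\,d}$ or $\RR^{\NN}$ and one simply assigns arbitrary real values to the remaining variables), and hence by Remark~\ref{prop::cyl-quasi-measure}-(i) the corresponding exact projective system $\{\mu_T:T\in\Lambda\}$ is automatically thick. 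Therefore Theorem~\ref{thm::general-Nussbaum}-(ii) produces a unique measure $\mu$ on the cylinder $\sigma$-algebra $\Sigma_I$ with ${\pi_S}_{\#}\mu=\mu_S$ for all $S\in I$ and $L(p)=\int \hat p\,\dd\mu$ for all $p\in A$.

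It remains to identify $\mu$ as a constructibly Radon measure on $\RR^\Omega$. By Proposition~\ref{prop::constr-Borel-is-cylinder} and \eqref{eq::sigma-lambda}, the $\sigma$-algebra $\B_c$ of constructibly Borel sets equals $\Sigma_\Lambda=\Sigma_I$, so $\mu$ is defined on the right $\sigma$-algebra. By Lemma~\ref{lem::constrRadonIJ}, every finite measure on $\Sigma_I$ is a finite constructibly Radon measure on $X(A)$, and since $L(1)=1$ forces $\mu(X(A))=1$, the measure $\mu$ is indeed constructibly Radon; finally the topological identification $\RR^\Omega\cong X(\RR[X_i:i\in\Omega])$ (the infinitary analogue of \cite[Proposition~5.4.5]{MarshBook}, obtained by evaluating a character coordinatewise) lets us regard $\mu$ as a measure on $\RR^\Omega$. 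Uniqueness follows from the uniqueness clause in Theorem~\ref{thm::general-Nussbaum}-(ii), or directly from Remark~\ref{rem::cylinder-MP-cofinal}-(ii) together with the uniqueness of each $\mu_S$ (Theorem~\ref{Nussbaum-support} gives unique $X(S)$-representing measures under the quasi-analytic bounds).

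The only genuinely non-routine point is the surjectivity of $\pi_T$ for $A=\RR[X_i:i\in\Omega]$, which underlies the thickness and hence the unconditional existence statement; everything else is either a direct invocation of Theorem~\ref{thm::general-Nussbaum} or the classical Denjoy--Carleman/Carleman equivalence. I would therefore spend most of the write-up making the extension-of-characters argument explicit (a character of $T=\langle X_{i_1},X_{i_2},\dots\rangle$ is a point of $\RR^{\{i_1,i_2,\dots\}}$, extend it by $0$ on the other coordinates), and treat the passage to the constructibly Radon formulation as bookkeeping via Proposition~\ref{prop::constr-Borel-is-cylinder} and Lemma~\ref{lem::constrRadonIJ}.
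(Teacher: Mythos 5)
Your proposal follows essentially the same route as the paper's own proof: Denjoy--Carleman converts hypothesis (b) into the quasi-analyticity required by Theorem~\ref{thm::general-Nussbaum}, surjectivity of the restriction maps discharges the thickness hypothesis via Remark~\ref{rem::cylinder-MP-cofinal}-(i), and Lemma~\ref{lem::constrRadonIJ} handles the passage to a constructibly Radon measure. The argument is correct.

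One caution on the step you single out as non-routine: your blanket claim that \emph{any} character of \emph{any} countably generated subalgebra $T$ of $\RR[X_i:i\in\Omega]$ extends to a character of the whole polynomial algebra is false as stated --- for $T=\RR[X_1^2]$ the character sending $X_1^2\mapsto -1$ does not extend, so $\pi_T$ is not surjective for such $T$. What is true, and what your explicit write-up plan actually uses, is surjectivity for the subalgebras generated by \emph{subsets of the variables}; these form a cofinal family in $\indexset$ (resp.\ in $\superindexset$), and by Remark~\ref{rem::cylinder-MP-cofinal}-(iii),(iv) it suffices to run the whole argument over that cofinal family. The paper's Remark~\ref{rem::cylinder-MP-cofinal}-(i) glosses over the same point, so this is a shared imprecision rather than a defect of your proof, but you should phrase the surjectivity claim only for variable-generated subalgebras and invoke cofinality explicitly.
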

\proof 
First, we note that the assumptions (a) and (b) in Theorem \ref{thm::general-Nussbaum} are fulfilled for $A=\RR[X_i:i\in\Omega]$. Moreover, in this case, the map $\pi_T$ is surjective for any $T\in \Lambda$ and so the thickness assumption in Theorem \ref{thm::general-Nussbaum}-(ii) is guaranteed (see Remark \ref{rem::cylinder-MP-cofinal}-(i)). Therefore, by applying  Theorem \ref{thm::general-Nussbaum}-(i) and Theorem \ref{thm::general-Nussbaum}-(ii), we get that there exists a unique $\RR^\Omega-$representing measure $\mu$ for $L$ on the cylinder $\sigma-$algebra on $\RR^\Omega$, which is in fact constructibly Radon by Lemma \ref{lem::constrRadonIJ}.
\endproof

If $\Omega$ is countable, then $\RR[X_i: i\in \Omega]$ is a countably generated algebra and so by Remark \ref{rem::constr-Borel}-(ii) the representing measure $\mu$ given by Corollary \ref{Cor-MP-poly} is Radon. When $\Omega$ is finite, Corollary \ref{Cor-MP-poly} reduces to Nussbaum's theorem \cite[Theorem~10]{N65}.\par\medskip

If $A=S(V)$ is the symmetric algebra of a real vector space $V$, then the map $\pi_T$ is surjective for any $T\in\Lambda$ and so we obtain the following result by applying Theorem \ref{cor::Haviland}-(ii) under the hypothesis (a) below and Theorem \ref{cylinder-MP2}-(ii) under the hypothesis (b).

\begin{cor}\label{cor::symmetric-algebra}
Let $S(V)$ be the symmetric algebra of a real vector space $V$, $L:S(V)\to\RR$ linear and $L(1)=1$. Assume that either
\begin{enumerate}[(a)]
\item $L(\mathrm{Pos}(X(S(V))))\subseteq [0,\infty)$, or
\item $L(a^2)\geq 0$ for all $a\in S(V)$ and $\sum_{n=1}^\infty\tfrac{1}{\sqrt[2n]{L(a^{2n})}}=\infty$ for all $a\in V$.
\end{enumerate}
Then there exists an $X(S(V))-$representing measure $\mu$ for $L$ on $\Sigma_\indexset$.
\end{cor}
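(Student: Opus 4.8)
The plan is to deduce the two cases from the corresponding finite-dimensional theorems --- Riesz--Haviland, via Theorem~\ref{cor::Haviland}, for (a) and Nussbaum, via Theorem~\ref{cylinder-MP2}, for (b) --- using the structure of $S(V)$ only to make the thickness hypothesis automatic. First I would take $G:=V$ as a generating set of $S(V)$ and, by Remark~\ref{rem::cylinder-MP-cofinal}-(iv), replace $\indexset$ throughout by its cofinal subset $\subindexset:=\{\langle F\rangle: F\subseteq V\text{ finite}\}$. The point is that $\langle F\rangle=S(\mathrm{span}\,F)$ for finite $F\subseteq V$, so every member of $\subindexset$ has the form $S(W)$ with $W\subseteq V$ finite-dimensional, and hence every countably generated subalgebra $T$ in the associated index set $\superindexset$ built from $\subindexset$ has the form $S(U)$ with $U\subseteq V$ a subspace. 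It is crucial to restrict to $\subindexset$ rather than to work with all of $\indexset$: a subalgebra such as $\RR[v^2]\subseteq S(V)$ with $v\neq0$ is finitely generated, but the restriction map $X(S(V))\to X(\RR[v^2])$ --- which in coordinates is $\ell\mapsto\ell(v)^2$ --- is \emph{not} surjective. I expect this reduction to be the main subtlety.

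Next I would check that the thickness condition is then free. By the universal property of the symmetric algebra, a character of $S(U)$ is determined by its restriction to $U$, which can be an arbitrary linear functional, so there is a natural identification $X(S(U))\cong U^*$ under which $\pi_{T,A}\colon X(S(V))\to X(S(U))$ becomes the restriction map $V^*\to U^*$; the latter is surjective because every linear functional on $U$ extends to $V$ (split off a linear complement of $U$). Thus $\pi_T$ is surjective for every $T\in\superindexset$ in the $\subindexset$-realization, and so by Remark~\ref{rem::cylinder-MP-cofinal}-(i) (cf.\ Remark~\ref{prop::cyl-quasi-measure}-(i)) every exact projective system of Radon probability measures over $\superindexset$ is thick.

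Finally I would feed in the finite-dimensional inputs. Under hypothesis (a), $L$ is non-negative on $\mathrm{Pos}_{S(V)}(X(S(V)))$ and $K:=X(S(V))$ is closed, so Theorem~\ref{cor::Haviland}-(i) gives, for each $S\in\subindexset$, an $\overline{\pi_S(K)}$-representing Radon measure for $L\!\restriction_S$ (and $\overline{\pi_S(K)}=X(S)$ since $\pi_S$ is onto); by Lemma~\ref{lem::ex-ex-proj-sys}-(i) these may be chosen to form an exact projective system, whose associated system over $\superindexset$ is thick by the previous step, so Theorem~\ref{cor::Haviland}-(ii) produces an $X(S(V))$-representing measure $\mu$ for $L$ on $\Sigma_\indexset$. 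Under hypothesis (b), $L$ is non-negative on $\sum S(V)^2$ and $\C\{\sqrt{L(a^{2n})}\}$ is quasi-analytic for every $a\in G=V$, so Theorem~\ref{cylinder-MP2}-(i) with $Q=\sum S(V)^2$ (equivalently Theorem~\ref{thm::general-Nussbaum}-(i)) gives, for each $S\in\subindexset$, the \emph{unique} $X(S)$-representing Radon measure for $L\!\restriction_S$; by Lemma~\ref{lem:uniq-S} these form an exact projective system, which is thick by the previous step, so Theorem~\ref{cylinder-MP2}-(ii) again yields the desired $\mu$ on $\Sigma_\indexset$.
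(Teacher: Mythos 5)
Your proof is correct and follows essentially the same route as the paper, which simply observes that for $A=S(V)$ the maps $\pi_T$ are surjective (so the thickness hypothesis is automatic) and then cites Theorem~\ref{cor::Haviland}-(ii) for case (a) and Theorem~\ref{cylinder-MP2}-(ii) for case (b). Your extra step of first passing to the cofinal family $\{S(W): W\subseteq V \text{ finite-dimensional}\}$ via Remark~\ref{rem::cylinder-MP-cofinal}-(iv) is a genuine improvement in rigour: the paper's literal claim that $\pi_T$ is surjective for \emph{every} countably generated subalgebra $T$ fails (e.g.\ for $T=\RR[v^2]$, exactly as you note), and restricting to the subalgebras of the form $S(U)$, for which the identification $X(S(U))\cong U^*$ makes surjectivity clear, is the right repair.
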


\begin{rem}
Let us provide an alternative proof of \cite[Theorem~16, equation~(21)]{Schmu-new} by means of Corollary~\ref{cor::symmetric-algebra}. 

Denote by $V^\ast$ the algebraic dual of $V$ and for any finite dimensional subspace $F$ of $V$ let $\xi_{F}: V^*\to F^*$ be the restriction map. Then $\psi_V: (X(S(V)), \Sigma_I) \to V^\ast$, $\alpha\mapsto\alpha\!\restriction_V$ is bi-measurable when $V^*$ is equipped with the cylinder~$\sigma-$algebra $\Sigma_{V^*}$, i.e., the smallest $\sigma-$algebra on $V^\ast$ such that $\xi_{F}$ is measurable for all finite dimensional subspaces $F$ of $V$ and when $F^\ast$ equipped with the Borel $\sigma-$algebra $\B_{F^\ast}$. Thus, by Corollary \ref{cor::symmetric-algebra} we get that for any $a\in S(V)$
$$
L(a)=\int_{X(S(V))}\hat{a}_{S(V)}\dd\mu = \int_{X(S(V))}\hat{a}_{S(V)}\circ \psi_V^{-1}\circ \psi_V\dd\mu =\int_{V^\ast}\hat{a}_V\dd{\psi_V}_{\#}\mu, 
$$
where $\hat{a}_V:=\hat{a}_{S(V)}\circ \psi_V^{-1}$.
Hence, ${\psi_V}_{\#}\mu$ is a $V^\ast-$representing measure on $\Sigma_{V^*}$. In \cite[Theorem 16, equation~(21)]{Schmu-new} the author obtains what we here call a representing cylindrical quasi-measure, namely the measure ${\psi_V}_{\#}\mu$ restricted to the cylinder algebra $\{\xi_F^{-1}(E): \text{$F$ finite dimensional subspace of $V$ and } E\in\B_{F^*}\}$.

Note that \cite[Theorem 16, equation (23)]{Schmu-new} exploits a certain topological structure on~$V$  and so it goes beyond the scope of this paper.
\end{rem}

Condition (b) in Theorem \ref{cylinder-MP2} can be relaxed as shown by the following result, obtained by adapting the proofs of \cite[Corollaries 4.7 and 4.8]{M} to our general setting. The new condition so obtained is strictly weaker than (b) in Theorem \ref{cylinder-MP2} (argue as in \cite[Remark~4.7]{GKM16}).

\begin{thm}\label{thm::weaker-Carleman}
Let $A$ be generated by $G\subseteq A$, $L:A \rightarrow \mathbb{R}$ linear, $L(1)=1$, and $\indexset$ as in \eqref{main-indexset}. If  
\begin{enumerate}[(a)]
\item $L(Q)\subseteq [0,\infty)$ for some quadratic module $Q$ in $A$,
\item $\forall g\in G, \,\exists\, (q_{g,k})_{k\in\NN}\subseteq A\otimes\CC: \lim_{k\to\infty}L\left(\left|1-(1+g^2)q_{g,k}\overline{q_{g,k}}\right|^2\right)=0$,
\end{enumerate}
then 
\begin{enumerate}[(i)]
\item There exists an exact projective system $\{\mu_S: S\in I\}$, where each $\mu_S$ is the unique $X(S)-$representing Radon measure $\mu_S$ for $L\!\restriction_S$.
\item There exists a unique $X(A)-$representing measure $\mu$ for $L$ on the cylinder $\sigma-$algebra~$\Sigma_\indexset$ if and only if the exact projective system $\{\mu_T:  T\in \Lambda\}$ corresponding to $\{\mu_S: S\in I\}$ is thick. 
\item There exists a unique $X(A)-$repre\-senting Radon measure $\nu$ for $L$ if and only if $\{\mu_S: S\in I\}$ fulfills Prokhorov's condition \eqref{epsilon-K-char}.
\end{enumerate}
\end{thm}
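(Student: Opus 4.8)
The plan is to run the same reduction-to-finite-dimension argument by which Theorem~\ref{cylinder-MP2} was deduced from Theorem~\ref{Nussbaum-support}, with the classical (polynomial) input replaced by \cite[Corollaries 4.7 and 4.8]{M}. So the first step is to establish the finitely generated version: if $B$ is a finitely generated algebra with generators $b_1,\dots,b_d$, $L\colon B\to\RR$ is linear with $L(1)=1$, (a) $L(Q)\subseteq[0,\infty)$ for a quadratic module $Q$ in $B$, and (b) for each $i$ there is $(q_{i,k})_{k\in\NN}\subseteq B\otimes\CC$ with $\lim_{k\to\infty}L(|1-(1+b_i^2)q_{i,k}\overline{q_{i,k}}|^2)=0$, then $L$ has a unique $K_Q$-representing Radon measure (hence also a unique $X(B)$-representing one). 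I would prove this exactly as in Theorem~\ref{Nussbaum-support}: with $\varphi\colon\RR[\underline X]\to B$, $\varphi(X_i)=b_i$, the canonical surjection and $\psi\colon\Z(\ker\varphi)\to X(B)$ the topological isomorphism of Remark~\ref{fin-dim-char}, use that $\varphi\otimes\mathrm{id}\colon\RR[\underline X]\otimes\CC\to B\otimes\CC$ is onto to pull each $q_{i,k}$ back to $\widetilde q_{i,k}$; then $\widetilde L:=L\circ\varphi$ and $\widetilde Q:=\varphi^{-1}(Q)$ satisfy $\widetilde L(\widetilde Q)\subseteq[0,\infty)$ and $\lim_{k\to\infty}\widetilde L(|1-(1+X_i^2)\widetilde q_{i,k}\overline{\widetilde q_{i,k}}|^2)=\lim_{k\to\infty}L(|1-(1+b_i^2)q_{i,k}\overline{q_{i,k}}|^2)=0$, so \cite[Corollaries 4.7 and 4.8]{M} apply to $\widetilde L$ and yield a unique $K_{\widetilde Q}$-representing Radon measure $\widetilde\mu$ on $\RR^d$; the pushforward $\mu:=\psi_{\#}\widetilde\mu$ is then the unique $K_Q$-representing Radon measure for $L$.

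For the general case I would then argue verbatim as in the proof of Theorem~\ref{cylinder-MP2}. Put $J:=\{\langle F\rangle : F\subseteq G\text{ finite}\}$, a cofinal subset of $\indexset$. For $S\in J$ the hypotheses restrict to $L\!\restriction_S$, so the finitely generated version produces the unique $K_{Q\cap S}$-representing (equivalently $X(S)$-representing) Radon measure $\mu_S$ for $L\!\restriction_S$, which is the content of (i) (working, as in the proof of Theorem~\ref{cylinder-MP2}, with the cofinal $J$ in place of $\indexset$; for $S\in\indexset\setminus J$ one sets $\mu_S:={\pi_{S,T}}_{\#}\mu_T$ for any $T\in J$ with $S\subseteq T$, consistently since $J$ is directed). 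Since $\pi_{S,T}(K_{Q\cap T})\subseteq K_{Q\cap S}$ whenever $S\subseteq T$, Lemma~\ref{lem:uniq-S} shows $\{\mu_S:S\in J\}$ is an exact projective system. By Remark~\ref{rem-count}-(i) one has $K_Q=\bigcap_{S\in J}\pi_S^{-1}(K_{Q\cap S})$, so Corollary~\ref{main-KMP}, applied with $\indexset$ replaced by the cofinal subset $J$ (permissible by Remark~\ref{rem::cylinder-MP-cofinal}-(iv)), gives at once: (ii) a measure $\mu$ on $\Sigma_\indexset$ representing $L$ exists iff the associated system $\{\mu_T:T\in\Lambda\}$ of Lemma~\ref{doub-exact-char} is thick; and (iii) a $K_Q$-representing, hence $X(A)$-representing, Radon measure $\nu$ exists iff $\{\mu_S:S\in J\}$ fulfills \eqref{epsilon-K-char}. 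Uniqueness in (ii) and (iii) follows from the uniqueness of each $\mu_S$ together with the uniqueness clauses of Theorem~\ref{ext-cylinderSigmaAlg} and Theorem~\ref{Prokh}, as any further representing measure must pushforward to $\mu_S$ over each $S\in J$; and, although not asserted in the statement, Corollary~\ref{main-KMP} in fact also localizes the support ($\mu$ is supported in every $\pi_S^{-1}(K_{Q\cap S})$, and if $Q$ is countably generated then $\mu,\nu$ are $K_Q$-representing).

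The part that carries all the weight is the finitely generated version, i.e.\ the adaptation of \cite[Corollaries 4.7 and 4.8]{M}; and within the reduction above the point I expect to be the main obstacle is matching the approximating data. Condition (b) of the theorem supplies, for each generator $g\in G$, a sequence $q_{g,k}$ in the \emph{full} algebra $A\otimes\CC$, whereas the reduction of $L\!\restriction_S$ to $\RR[\underline X]$ needs, for each generator of $S$, an approximating sequence inside $S\otimes\CC$; one must therefore check that the approximation condition survives projection onto the relevant finitely generated subalgebra (using that $(1+g^2)q\overline q$ lies in the domain of the multiplication operator $M_g$ on the GNS space of $L$ and that conditional expectation preserves nonnegativity) and that the operator-theoretic heart of Marshall's argument, namely essential self-adjointness and strong commutativity of the $M_{b_i}$, still goes through. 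Once this is secured, everything else is a routine transcription of the proof of Theorem~\ref{cylinder-MP2} combined with the already established extension Theorems~\ref{ext-cylinderSigmaAlg} and~\ref{Prokh}.
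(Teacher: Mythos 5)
Your overall architecture matches the paper's: reduce to the finitely generated subalgebras $S=\langle F\rangle$ with $F\subseteq G$ finite, get for each a unique representing Radon measure via Marshall's localization machinery, and then transcribe the proof of Theorem~\ref{cylinder-MP2} using Lemma~\ref{lem:uniq-S}, Corollary~\ref{main-KMP} and Remark~\ref{rem::cylinder-MP-cofinal}. That second half is fine. The problem is in the first half, and it is exactly the obstacle you flag but do not overcome: hypothesis (b) supplies, for each $g\in G$, approximants $q_{g,k}\in A\otimes\CC$, i.e.\ in the \emph{full} algebra, whereas your reduction of $L\!\restriction_S$ to $\RR[\underline X]$ via the surjection $\varphi\colon\RR[\underline X]\to S$ can only pull back elements of $S\otimes\CC$. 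So $L\!\restriction_S$ is not known to satisfy the ``internal'' version of (b), your finitely generated lemma cannot be invoked for it, and the suggested repair (orthogonal projection/conditional expectation on the GNS space) is neither carried out nor obviously workable: projecting $q_{g,k}$ onto the closure of $S\otimes\CC$ does not preserve the identity $L(|1-(1+g^2)q\overline q|^2)\to 0$ in any evident way. As written, the proof of part (i) — which, as you say, carries all the weight — is incomplete.

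The paper's proof circumvents this entirely by never transporting condition (b) into $S$. It fixes $p:=\prod_{g\in F}(1+g^2)\in S$ and the localized quadratic module $(Q\cap S)_p:=\{a\in S:\ p^{2m}a\in Q\cap S\ \text{for some}\ m\in\NN_0\}$, and verifies the three hypotheses of \cite[Corollary 3.4]{M03} for $(S,(Q\cap S)_p,p)$. The only nontrivial one is $L\!\restriction_S((Q\cap S)_p)\subseteq[0,\infty)$, and this is proved by a Cauchy--Schwarz estimate for $L$ \emph{on all of} $A$: from (b) one gets $L(a)=\lim_{k\to\infty}L((1+g^2)q_{g,k}\overline{q_{g,k}}\,a)$ for every $a\in A$, and then for $a$ with $p^{2m}a\in Q\cap S$ one peels off the factors $(1+g^2)$ one at a time, using that $q_{g,k}\overline{q_{g,k}}\,p^{2m}a\in Q$ because $Q$ is a quadratic module in $A$ and hence absorbs $q\overline q$ for $q\in A\otimes\CC$. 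This is precisely where it is essential that the $q_{g,k}$ are allowed to live in $A\otimes\CC$ and that positivity of $L$ is available on the full module $Q$. Marshall's result then gives directly the unique $K_{Q\cap S}$-representing Radon measure for $L\!\restriction_S$, with no pullback to $\RR[\underline X]$ and no approximants inside $S$ needed. To repair your argument you would have to replace your finitely generated lemma by this direct verification of nonnegativity on the localized module (or find a genuine proof that the approximants can be pushed into $S\otimes\CC$, which the paper does not attempt).
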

\begin{proof}
As in the proof of Theorem \ref{cylinder-MP2}, let
$
\subindexset:=\{\langle F\rangle :F\subseteq G\text{ finite}\}.
$ Let $F\subseteq G$ be finite and $S:=\langle F\rangle\in \subindexset$. Let us verify that three assumptions of \cite[Corollary~3.4]{M03} are fulfilled by the algebra $S\in J$, the element $p:=\prod_{g\in F}(1+g^2)\in S$ and the quadratic module $
(Q\cap S)_p:=\{a\in S: p^{2m}a\in Q\cap S\text{ for some }m\in\NN_0\}
$
in~$S$.
\begin{enumerate}[(1)]
\item Using the  the definition of $p$, we have that $p-1\in (Q\cap S)_p$.
\item Since $p-\sum_{g\in F}b^2\in Q\cap S$, \cite[Proposition 4.2]{M03} guarantees that for any $a\in A$ there exist $n,l\in\NN$ such that $lp^n-a\in (Q\cap S)_p$.
\item Let us show that $L\!\restriction_S((Q\cap S)_p)\subseteq [0,\infty)$. Since for each $a\in S,g\in F$ and $k\in\NN$ the following holds by the Cauchy-Schwarz inequality
$$
L(a-(1+g^2)q_{g,k}\overline{q_{g,k}}a)^2\leq L(a^2)L\left(\left|1-(1+g^2)q_{g,k}\overline{q_{g,k}}\right|^2\right),
$$
we obtain by (b) that
\begin{equation}\label{eq::limit-1}
L(a)=\lim_{k\to\infty}L((1+g^2)q_{g,k}\overline{q_{g,k}}a).
\end{equation}
Now let $a\in (Q\cap S)_p$, i.e., $p^{2m}a\in Q\cap S$ for some $m\in\NN_0$.  Then, by \eqref{eq::limit-1}, for any $g\in F$ we get
$$
\qquad L\left(\tfrac{p^{2m}}{(1+g^2)}a\right) = \lim_{k\to\infty}L\left((1+g^2)q_{g,k}\overline{q_{g,k}}\tfrac{p^{2m}}{(1+g^2)}a\right)=\lim_{k\to\infty}L\left(q_{g,k}\overline{q_{g,k}}p^{2m}a\right)\geq 0
$$
as $q_{g,k}\overline{q_{g,k}}p^{2m}a\in Q$. Iterating this procedure yields $L\!\restriction_S(a)=L(a)\geq 0$.
\end{enumerate}
Thus, by using \cite[Corollary 3.4, Remark 3.5]{M03} and that $K_{Q\cap S}=K_{(Q\cap S)_p}$, we obtain that there exists a unique $K_{Q\cap S}-$representing Radon measure $\mu_S$ for $L\!\restriction_S$. Hence, we can proceed as in the proof of Theorem \ref{cylinder-MP2} and get the conclusions.
\end{proof}

Applying Theorem \ref{thm::weaker-Carleman} to $A=\RR[X_i:i\in\Omega]$ with $\Omega$ an arbitrary index set, we retrieve \cite[Theorems 4.6 and 5.2]{GKM16}.

\subsection{The Archimedean condition}\label{appl-Arch}\

The second parts of Theorem \ref{cylinder-MP-Support} and Corollary \ref{main-KMP} highlight the importance of  the additional assumption \eqref{epsilon-K-char} in getting a representing Radon measure for the starting functional $L$. This leads us to the fundamental problem of understanding under which assumptions on $L$ and $A$ the condition \eqref{epsilon-K-char} holds. We first demonstrate that the Archimedeanity of the quadratic module in Theorem \ref{cylinder-MP2} directly implies that \eqref{epsilon-K-char} holds, by giving an alternative proof of the solution to Problem~\ref{GenKMP} for Archimedean quadratic modules in Theorem~\ref{cylinder-KMP}. Our proof is more involved than the ones obtained via the Archimedean Positivstellensatz \cite[Theorem~4]{Jac} (see \cite[Corollary 3.3]{M03}) or via the GNS construction (along the lines of \cite[Theorem 12.35]{Schm17}). However, our proof of Theorem~\ref{cylinder-KMP} is instructive for our new result in Theorem~\ref{hybrid-quadraticMod}.

Recall that a quadratic module $M$ of $A$ is \emph{Archimedean} if for any $a\in A$ there exists $N\in\NN$ such that $N\pm a\in M$.

\begin{thm}\label{cylinder-KMP}
Let $A$ be an algebra, $L:A \rightarrow \mathbb{R}$ linear and $L(1)=1$.  If $L(Q)\subseteq[0,\infty)$ for some Archimedean quadratic module $Q$ in $A$, 
then there exists a unique $K_Q-$representing Radon measure $\nu$ for $L$, where $K_Q$ is as in \eqref{semi-alg-Q}.
\end{thm}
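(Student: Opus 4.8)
The strategy is to reduce to the finite-dimensional case (as in all the proofs of this section) and then invoke Corollary~\ref{main-KMP}-(ii), so the whole task is to verify that Prokhorov's condition \eqref{epsilon-K-char} holds. First I would observe that $Q$ being Archimedean forces $K_Q$ to be a compact subset of $X(A)$: indeed, for each $a\in A$ there is $N_a\in\NN$ with $N_a\pm a\in Q$, hence $|\hat a(\alpha)|\le N_a$ for every $\alpha\in K_Q$, so under the embedding $X(A)\hookrightarrow\RR^A$ the set $K_Q$ lands inside the compact product $\prod_{a\in A}[-N_a,N_a]$; since $K_Q$ is closed (it is $\{\alpha: \hat a(\alpha)\ge 0\ \forall a\in Q\}$), it is compact. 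This is the one genuinely new ingredient compared with Theorem~\ref{cylinder-MP2}.

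Next I would set up the finite-dimensional pieces. For each $S\in\indexset$ (or, more economically, for $S$ ranging over the cofinal family $J:=\{\langle F\rangle: F\subseteq G\text{ finite}\}$ for a chosen generating set $G$ of $A$), the quadratic module $Q\cap S$ is Archimedean in $S$ — for $a\in S$ the same $N_a$ works and $N_a\pm a\in Q\cap S$. Restricting $L$, we have $L\!\restriction_S(Q\cap S)\subseteq[0,\infty)$ with $L\!\restriction_S(1)=1$, and $S$ is finitely generated. By the classical Archimedean Positivstellensatz / representation theorem (the finite-dimensional Jacobi-Prestel result, which for a finitely generated algebra follows from the polynomial case via the isomorphism $S\cong\RR[\underline X]/\ker\varphi$ and the topological identification $X(S)\cong\mathcal Z(\ker\varphi)$ of Remark~\ref{fin-dim-char}) there exists a unique $K_{Q\cap S}$-representing Radon measure $\mu_S$ for $L\!\restriction_S$, and its support $K_{Q\cap S}$ is compact. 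Uniqueness plus Lemma~\ref{lem:uniq-S} (using $\pi_{S,T}(K_{Q\cap T})\subseteq K_{Q\cap S}$) gives that $\{\mu_S:S\in\indexset\}$ is an exact projective system of Radon probability measures.

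It remains to check \eqref{epsilon-K-char}. Here I would use $C_\varepsilon:=K_Q$ for every $\varepsilon>0$: by the first paragraph $K_Q$ is a compact subset of $X(A)$, and for each $S$ we have $\pi_S(K_Q)\subseteq K_{Q\cap S}$, hence $\mu_S(\pi_S(K_Q))\le\mu_S(K_{Q\cap S})=1$; but we need the lower bound $\mu_S(\pi_S(K_Q))\ge 1$, i.e. $\mu_S\big(\overline{\pi_S(K_Q)}\big)=1$. Since $\mu_S$ is supported in $K_{Q\cap S}$, the cleanest route is to take $K^{(S)}:=\overline{\pi_S(K_Q)}$ from the start: $K^{(S)}$ is a closed (indeed compact) subset of $X(S)$ with $\pi_{S,T}(K^{(T)})\subseteq K^{(S)}$, and $\mathrm{Pos}_S(\overline{\pi_S(K_Q)})\subseteq\mathrm{Pos}_A(K_Q)\cap S$, so $L\!\restriction_S$ is non-negative on $\mathrm{Pos}_S(K^{(S)})$; Theorem~\ref{thm::fin-dim-Haviland} then yields a $K^{(S)}$-representing Radon measure $\mu_S$ for $L\!\restriction_S$ (uniqueness still coming from the Archimedean/determinacy property). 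With this choice $\mu_S(\pi_S(K_Q))\ge\mu_S(K^{(S)})=1$ trivially, so \eqref{epsilon-K-char} holds with $C_\varepsilon=K_Q$. Then Corollary~\ref{main-KMP}-(ii) (with $Q$ as given) produces a $K_Q$-representing Radon measure $\nu$ for $L$, and uniqueness follows from Remark~\ref{rem::cylinder-MP-cofinal}-(ii) together with the uniqueness of each $\mu_S$.

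**Main obstacle.** The only delicate point is the compactness of $K_Q$ and the matching support bookkeeping: one must be careful that "$\mu_S$ supported in $K_{Q\cap S}$" is upgraded to "$\mu_S$ supported in $\overline{\pi_S(K_Q)}$", which is why I would feed $\overline{\pi_S(K_Q)}$ rather than $K_{Q\cap S}$ into the finite-dimensional Riesz-Haviland theorem. The finite-dimensional determinacy needed for uniqueness (an Archimedean quadratic module yields compact $K$, hence every representing measure is compactly supported, hence the moment problem is determinate) is classical and I would simply cite it. Everything else is a routine assembly of the machinery already developed in Sections~\ref{main-results} and~\ref{RH-Thm}.
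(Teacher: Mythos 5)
Your overall skeleton matches the paper's: reduce to the finitely generated subalgebras $S\in I$, obtain for each a unique compactly supported $K_{Q\cap S}$-representing Radon measure $\mu_S$, verify Prokhorov's condition \eqref{epsilon-K-char}, and conclude via Corollary~\ref{main-KMP}-(ii) (the paper uses Theorem~\ref{cylinder-MP2}, whose parts (i) and (iii) package exactly these steps). Your route to the finite-dimensional pieces differs harmlessly from the paper's: you invoke the finite-dimensional Jacobi--Prestel representation theorem directly, whereas the paper observes that Archimedeanity forces $L(a^{2n})\le N^{2n}$, hence quasi-analyticity of every class $\C\{\sqrt{L(a^{2n})}\}$, and then applies Theorem~\ref{cylinder-MP2}-(i); both give the same unique $\mu_S$.

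There is, however, a genuine gap in your verification of \eqref{epsilon-K-char}. To take $C_\varepsilon=K_Q$ you replace $K_{Q\cap S}$ by $K^{(S)}=\overline{\pi_S(K_Q)}$ and claim $L\!\restriction_S\bigl(\mathrm{Pos}_S(\overline{\pi_S(K_Q)})\bigr)\subseteq[0,\infty)$ from the inclusion $\mathrm{Pos}_S(\overline{\pi_S(K_Q)})\subseteq\mathrm{Pos}_A(K_Q)\cap S$. But the hypothesis is only $L(Q)\subseteq[0,\infty)$, and $Q$ may be much smaller than $\mathrm{Pos}_A(K_Q)$; the step needs $L(\mathrm{Pos}_A(K_Q))\subseteq[0,\infty)$, which requires the Jacobi Positivstellensatz for the (not finitely generated) algebra $A$ itself -- precisely the kind of infinite-dimensional statement you have not established and are not entitled to assume. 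Note also that the finite-dimensional Jacobi theorem applied to $S$ only gives $L\!\restriction_S\ge 0$ on $\mathrm{Pos}_S(K_{Q\cap S})$, which is the \emph{smaller} of the two $\mathrm{Pos}$ sets since $\overline{\pi_S(K_Q)}\subseteq K_{Q\cap S}$ and $\mathrm{Pos}$ is inclusion-reversing. The detour is in any case unnecessary: you already observed that each $K_{Q\cap S}$ is compact, that $\mu_S(K_{Q\cap S})=1$, and that $\pi_{S,T}(K_{Q\cap T})\subseteq K_{Q\cap S}$; Proposition~\ref{prop::compact-support-epsilon-K} then yields \eqref{epsilon-K-char} directly, with no reference to $K_Q$ or to $\mathrm{Pos}_A(K_Q)$. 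This is exactly how the paper closes the argument, and the resulting $\nu$ is supported in $\bigcap_{S\in I}\pi_S^{-1}(K_{Q\cap S})=K_Q$ by Remark~\ref{rem-count}-(i). (Your observation that $K_Q$ is compact in $X(A)$ is correct, but it is not needed.)
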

\begin{proof}
Since $Q$ is Archimedean, for each $a\in A$ there exists a non-negative integer $N$ such that $N^2- a^2\in Q$. Then the non-negativity of $L$ on $Q$ implies that $L(a^{2n})\leq N^{2n}$ for all $n\in\NN$ and so $\sum_{n=1}^\infty\tfrac{1}{\sqrt[2n]{L(a^{2n})}}=\infty$. Hence, by Theorem~\ref{cylinder-MP2}-(i), there exists an exact projective system $\{\mu_S: S\in I\}$ such that each $\mu_S$ is a $K_{Q\cap S}-$representing Radon measure $\mu_S$ for $L\!\restriction_S$.

Let $S\in \indexset$. Since $Q$ is an Archimedean quadratic module of $A$, we have that $Q\cap S$ is an Archimedean quadratic module of $S$. Hence, $K_{Q\cap S}$ is a compact subset of $X(S)$ (see \cite[Theorem 6.1.1]{MarshBook}). As $\mu_S(K_{Q\cap S})=1$ and $\pi_{T,R}(K_{Q\cap R})\subseteq K_{Q\cap T}$ for any $T, R\in \indexset$ such that $S\subseteq T\subseteq R$, the family $\{\mu_S:S\in \indexset\}$ fulfills \eqref{epsilon-K-char} by Proposition \ref{prop::compact-support-epsilon-K}. Theorem \ref{cylinder-MP2}-(iii) ensures that there exists a unique $K_Q-$representing Radon measure $\nu$ for $L$.
\end{proof}

Theorem~\ref{cylinder-KMP} can be applied to the algebra $C:=\RR\left[\frac{1}{1+X_i^2}, \frac{X_i}{1+X_i^2} : i\in\Omega\right]$ considered in \cite{GKM16}. Indeed, every quadratic module in $C$ is Archimedean (cf.\! \cite[Theorem~3.1-(2)]{GKM16}) and so Theorem~\ref{cylinder-KMP} ensures the existence of a $K_Q-$represent\-ing Radon measure for any linear functional on $C$ non-negative on any quadratic module $Q$.

\ \\
The following proposition is of independent interest and can be used to retrieve \cite[Theorem 5.5]{AJK15} from Theorem \ref{cylinder-KMP} applied to $\RR[X_i:i\in\Omega]$ with $\Omega$ countable. Recall that a \emph{preordering} $P$ in $A$ is a quadratic module in $A$ such that $P\cdot P\subseteq P$.

\begin{prop}\label{prop::compact-Archimedean}
Let $P$ be a preordering in an algebra $A$. If for all $S\in \indexset$, with $\indexset$ as in \eqref{main-indexset}, the preordering $P\cap S$ in $S$ is finitely generated and $K_{P\cap S}$ is compact, then $P$ is Archimedean.
\end{prop}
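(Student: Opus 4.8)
The plan is to show that every element $a\in A$ is ``bounded by an integer modulo $P$'', i.e., there exists $N\in\NN$ with $N\pm a\in P$. The natural reduction is to work inside a single finitely generated subalgebra: given $a\in A$, set $S:=\langle a\rangle\in\indexset$ (or more generously any $S\in\indexset$ containing $a$). By hypothesis the preordering $P\cap S$ in $S$ is finitely generated and $K_{P\cap S}$ is compact. The key classical input is that for a finitely generated algebra $S$, a finitely generated preordering $T$ in $S$ is Archimedean if and only if $K_T$ is compact; this is a standard Positivstellensatz-type result (see, e.g., \cite[Theorem 6.1.1]{MarshBook}, which is already invoked in the proof of Theorem~\ref{cylinder-KMP}). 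Applying it to $T:=P\cap S$, we conclude that $P\cap S$ is an Archimedean preordering in $S$.

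Once $P\cap S$ is Archimedean in $S$, for the given $a\in S$ there is $N\in\NN$ with $N\pm a\in P\cap S\subseteq P$. Since $a\in A$ was arbitrary and $S$ can be chosen to contain $a$, this establishes that $P$ is Archimedean in $A$. So the proof is essentially a two-line argument: localize to $S=\langle a\rangle$, invoke the finite-dimensional characterization of Archimedeanity via compactness of $K_{P\cap S}$, and transport the bound back up to $A$.

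The only subtlety — and the step I would be most careful about — is the application of \cite[Theorem 6.1.1]{MarshBook}: that theorem requires the preordering to be finitely generated in a finitely generated algebra, and asserts the equivalence of compactness of the associated semialgebraic set with the Archimedean property. Both finite-generation hypotheses are precisely what is assumed here ($S\in\indexset$ is finitely generated, and $P\cap S$ is assumed finitely generated), so the citation applies verbatim; I just want to make sure the statement being cited is really the ``compactness $\Rightarrow$ Archimedean'' direction and not only the converse. (The converse direction — Archimedean implies $K$ compact — is the one used in Theorem~\ref{cylinder-KMP}; here we need the reverse implication, which is the genuinely nontrivial direction and is exactly the content of the finite-dimensional Archimedean Positivstellensatz.) A proof sketch would therefore read:

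\begin{proof}
Let $a\in A$ and set $S:=\langle a\rangle\in\indexset$. By assumption $P\cap S$ is a finitely generated preordering in the finitely generated algebra $S$ and $K_{P\cap S}$ is compact. Hence, by \cite[Theorem 6.1.1]{MarshBook}, $P\cap S$ is Archimedean in $S$. In particular, there exists $N\in\NN$ such that $N\pm a\in P\cap S\subseteq P$. Since $a\in A$ was arbitrary, $P$ is Archimedean.
\end{proof}
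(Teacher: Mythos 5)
Your proof is correct and follows exactly the same route as the paper: localize to a finitely generated $S\in\indexset$ containing $a$, apply the finite-dimensional equivalence (compactness of $K_{P\cap S}$ for a finitely generated preordering in a finitely generated algebra implies Archimedeanity, via \cite[Theorem 6.1.1]{MarshBook}), and transport the bound $N\pm a\in P\cap S\subseteq P$ back to $A$. Your caution about which direction of the cited theorem is needed is well placed, and it is indeed the ``$K$ compact $\Rightarrow$ Archimedean'' direction that applies here.
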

\begin{proof}
Let $S\in \indexset$. Since $S$ is finitely generated, $P\cap S$ is Archimedean by \cite[Theorem 6.1.1]{MarshBook} (which holds for any finitely generated algebra). Thus, if $a\in A$, then $a\in S$ for some $S\in \indexset$ and so there exists $N\in\NN$ such that $N\pm a\in P\cap S \subseteq P$.
\end{proof}

We will now show that it is possible to relax the assumptions in Theorem \ref{cylinder-KMP} and so to solve Problem \ref{GenKMP} for a larger class of linear functionals having a not necessarily compactly supported representing measure.

\begin{thm}\label{hybrid-quadraticMod}
Let $A$ be an algebra, $Q$ a quadratic module in $A$, $L:A\to\RR$ linear and $L(1)=1$. If $L(Q)\subseteq [0,\infty)$ and there exist subalgebras  $G_\mathrm{a},G_\mathrm{c}$ of $A$ such that 
\begin{enumerate}[(a)]
\item $G_\mathrm{a}\cup G_\mathrm{c}$ generates $A$ as a real algebra,
\item $Q\cap G_\mathrm{a}$ is Archimedean in $G_\mathrm{a}$,
\item $G_\mathrm{c}$ is countably generated and $\sum_{n=1}^\infty\tfrac{1}{\sqrt[2n]{L(g^{2n})}}=\infty$ for each $g\in G_\mathrm{c}$,
\end{enumerate}
then there exists a unique $K_Q-$representing Radon measure $\nu$ for $L$, where $K_Q$ is as in \eqref{semi-alg-Q}.\end{thm}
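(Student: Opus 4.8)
The plan is to combine the finitely-generated machinery of Section~\ref{main-results} with the two-stage construction via $\Lambda$, exactly as in the proofs of Theorem~\ref{cylinder-KMP} and Theorem~\ref{cylinder-MP2}, but now carefully choosing which finitely generated subalgebras to work with so that both the Archimedean part and the quasi-analytic part of the hypotheses can be brought to bear. First I would set up a convenient cofinal subset of $I$: let $\G_{\mathrm a}$ have generating set $H_{\mathrm a}$ (which we may take to witness Archimedeanity appropriately) and let $\G_{\mathrm c}$ be generated by a countable set $H_{\mathrm c}=(g_n)_{n\in\NN}$; by (a) the set $H_{\mathrm a}\cup H_{\mathrm c}$ generates $A$, so $J:=\{\langle F\rangle : F\subseteq H_{\mathrm a}\cup H_{\mathrm c}\text{ finite}\}$ is cofinal in $I$ and, by Remark~\ref{rem::cylinder-MP-cofinal}-(iv), it suffices to work with $J$. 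For each $S=\langle F\rangle\in J$ write $F=F_{\mathrm a}\sqcup F_{\mathrm c}$ with $F_{\mathrm a}\subseteq H_{\mathrm a}$, $F_{\mathrm c}\subseteq H_{\mathrm c}$.

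The next step is to produce, for each $S\in J$, a unique $K_{Q\cap S}$-representing Radon measure $\mu_S$ for $L\!\restriction_S$. Here is where the hybrid hypothesis is used: on the finitely generated algebra $S$ the quadratic module $Q\cap S$ contains $Q\cap\G_{\mathrm a}\cap S$, and one checks that the elements of $F_{\mathrm a}$ are bounded by $Q\cap S$ (since $Q\cap\G_{\mathrm a}$ is Archimedean in $\G_{\mathrm a}$, we have $N\pm g\in Q\cap\G_{\mathrm a}\subseteq Q$ for each $g\in F_{\mathrm a}$, hence $N^2-g^2\in Q\cap S$ and so $L\!\restriction_S(g^{2n})\leq N^{2n}$, giving quasi-analyticity of $\C\{\sqrt{L\!\restriction_S(g^{2n})}\}$ for $g\in F_{\mathrm a}$ as in the proof of Theorem~\ref{cylinder-KMP}); for $g\in F_{\mathrm c}$ the class $\C\{\sqrt{L\!\restriction_S(g^{2n})}\}$ is quasi-analytic by hypothesis~(c). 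Thus $\C\{\sqrt{L\!\restriction_S(g^{2n})}\}$ is quasi-analytic for \emph{every} generator $g$ of $S$, and $L\!\restriction_S(Q\cap S)\subseteq[0,\infty)$, so Theorem~\ref{Nussbaum-support} applies and yields a unique $K_{Q\cap S}$-representing Radon measure $\mu_S$ for $L\!\restriction_S$. By uniqueness and Lemma~\ref{lem:uniq-S}, $\{\mu_S:S\in J\}$ is an exact projective system (note $\pi_{S,T}(K_{Q\cap T})\subseteq K_{Q\cap S}$ always holds).

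It remains to verify Prokhorov's condition~\eqref{epsilon-K-char} for $\{\mu_S:S\in J\}$, so that Theorem~\ref{cylinder-MP2}-(iii) (via Corollary~\ref{main-KMP}-(ii) and Remark~\ref{rem::cylinder-MP-cofinal}-(iv)) delivers the desired unique $K_Q$-representing Radon measure $\nu$. Unlike in Theorem~\ref{cylinder-KMP}, the sets $K_{Q\cap S}$ need not be compact, because $\G_{\mathrm c}$ contributes unbounded directions. The idea is to use Proposition~\ref{prop::compact-support-epsilon-K}: fix $\varepsilon>0$ and build, for each $S\in J$, a compact $C_S\subseteq X(S)$ with $\mu_S(C_S)\geq 1-\varepsilon$ and $\pi_{S,T}(C_T)\subseteq C_S$ for $S\subseteq T$. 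For the countably generated ``direction'' one runs a Carleman/tightness estimate along the countable cofinal chain $S_n:=\langle g_1,\dots,g_n\rangle$ inside $\G_{\mathrm c}$ — essentially the argument of Proposition~\ref{prop-eps-K}/Lemma~\ref{lem::content} restricted to $\G_{\mathrm c}$, using that $\sum_n L(g_n^2)$-type sums are controlled — to get compacts $D_n\subseteq X(S_n)$ with $\mu_{S_n}(D_n)\geq 1-\varepsilon$ compatible under restriction; for the Archimedean directions one uses compactness of $K_{Q\cap\langle F_{\mathrm a}\rangle}$ directly. One then defines $C_S$ as the intersection of the preimages in $X(S)$ of these two kinds of compacts under the relevant restriction maps and checks the nesting property; since $\mu_S(K_{Q\cap S})=1$ and each piece has measure at least $1-\varepsilon$, a union bound (refining $\varepsilon$) gives $\mu_S(C_S)\geq 1-C\varepsilon$. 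The main obstacle is precisely this last step: making the compact exhaustion in the $\G_{\mathrm c}$-part genuinely compatible across \emph{all} of $J$ (not just a fixed chain) and fusing it correctly with the Archimedean compacts, i.e.\ verifying the hypotheses of Proposition~\ref{prop::compact-support-epsilon-K} uniformly in $S\in J$. Once \eqref{epsilon-K-char} is established, Theorem~\ref{cylinder-MP2}-(iii) gives existence and uniqueness of $\nu$ supported in $K_Q$, completing the proof.
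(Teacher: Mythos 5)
Your overall architecture matches the paper's: deduce quasi-analyticity of $\C\{\sqrt{L(g^{2n})}\}$ for the Archimedean generators from a boundedness estimate $L(g^{2n})\leq N^{2n}$, combine this with hypothesis (c) so that Theorem~\ref{cylinder-MP2}-(i) (equivalently Theorem~\ref{Nussbaum-support} plus Lemma~\ref{lem:uniq-S}) yields the exact projective system $\{\mu_S\}$ of unique $K_{Q\cap S}$-representing Radon measures, and then reduce everything to verifying Prokhorov's condition \eqref{epsilon-K-char} via Proposition~\ref{prop::compact-support-epsilon-K}. Up to a small imprecision in the first half (from $N\pm g\in Q$ you cannot conclude $N^2-g^2\in Q$ for a mere quadratic module, since products of elements of $Q$ need not lie in $Q$; apply Archimedeanity to $g^2$ instead and iterate $g^{2n-2}(N-g^2)\in Q$ to get $L(g^{2n})\leq N^{n}$), that part is sound and is what the paper does.

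The genuine gap is exactly where you flag it: you never construct the compatible family of compacts required by Proposition~\ref{prop::compact-support-epsilon-K}, and you explicitly label this ``the main obstacle''. The paper's resolution is to treat the countably generated subalgebra $G_\mathrm{c}$ as a single object rather than working along a chain $\langle g_1,\dots,g_n\rangle$: by Lemma~\ref{doub-exact-char}-(i) there is a unique Radon probability measure $\mu_{G_\mathrm{c}}$ on $X(G_\mathrm{c})$ compatible with all $\mu_S$ for $S\subseteq G_\mathrm{c}$; it gives full mass to $K_{Q\cap G_\mathrm{c}}=\bigcup_{m}K_m^{(G_\mathrm{c})}$, where $K_m^{(G_\mathrm{c})}:=\{\alpha:0\leq\hat{g}(\alpha)\leq m\ \forall g\in Q\cap G_\mathrm{c}\}$, so continuity from below produces a \emph{single} threshold $m$ with $\mu_{G_\mathrm{c}}(K_m^{(G_\mathrm{c})})\geq 1-\varepsilon$. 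This one number is then used uniformly for every $T=\langle F\rangle$ in the cofinal set $J$: one sets $K^{(T)}:=K_m^{(T,\mathrm{c})}\cap K^{(T,\mathrm{a})}$, where the first set bounds all of $Q\cap(G_\mathrm{c}\cap T)$ by $m$ and the second imposes nonnegativity on $Q\cap(G_\mathrm{a}\cap T)$. The nesting $\pi_{T,R}(K^{(R)})\subseteq K^{(T)}$ is then immediate from the definitions, the bound $\mu_T(K^{(T)})\geq 1-\varepsilon$ follows by pushing the measure on $\langle G_\mathrm{c}\cup T\rangle$ down to $T$, and compactness of $K^{(T)}$ holds because each generator in $F\cap G_\mathrm{a}$ is bounded by Archimedeanity while each $g\in F\cap G_\mathrm{c}$ satisfies $g^2\in Q\cap G_\mathrm{c}\cap T$ and hence $\hat{g}^2\leq m$ on $K_m^{(T,\mathrm{c})}$. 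Your sketch of compacts $D_n$ along a fixed chain, glued by a union bound, does not by itself deliver this uniformity across all of $J$; moreover the control of ``$\sum_n L(g_n^2)$-type sums'' you invoke is not a hypothesis of the theorem and is not what drives the tightness --- what drives it is the Radon extension on $X(G_\mathrm{c})$ furnished by Lemma~\ref{doub-exact-char} together with the fact that the cut-off sets are defined by the quadratic module $Q\cap G_\mathrm{c}$ itself.
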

\begin{proof}
By assumption (b) we know that for each $g\in G_\mathrm{a}$ there exists $N\in\NN$ such that $N^2\pm b^2\in Q$. Then the non-negativity of $L$ on $Q$ implies that $L(g^{2n})\leq N^{2n}$ for all $n\in\NN$,  which implies in turn that  $\sum_{n=1}^\infty\tfrac{1}{\sqrt[2n]{L(g^{2n})}}=\infty$. The latter together with~(c) implies that $\sum_{n=1}^\infty\tfrac{1}{\sqrt[2n]{L(g^{2n})}}=\infty$ holds for all $g\in G_\mathrm{a}\cup G_\mathrm{c}$. Since $G_\mathrm{a}\cup G_\mathrm{c}$ generates $A$ by (a) and $L$ is non-negative on $Q$, we can apply Theorem~\ref{cylinder-MP2}-(i). This guarantees the existence of an exact projective system $\{\mu_S: S\in I\}$ such that each $\mu_S$ is a $K_{Q\cap S}-$representing measure for $L\!\restriction_S$. 

By Remark \ref{rem::cylinder-MP-cofinal}-(iv) and Theorem~\ref{cylinder-MP2}-(iii), it remains to show that $\{\mu_S:S\in J\}$ fulfills Prokhorov's condition \eqref{epsilon-K-char} for 
$$
J:=\{\langle F\rangle : F\subseteq G_\mathrm{a}\cup G_\mathrm{c}\text{ finite}\},
$$
as $G_\mathrm{a}\cup G_\mathrm{c}$ generates $A$ by (a). To this aim, we are going to use the characterization given in Proposition \ref{prop::compact-support-epsilon-K}.

Let $\varepsilon>0$. 
By Lemma~\ref{doub-exact-char}-(i), there exists a unique Radon probability measure $\mu_{G_\mathrm{c}}$ such that ${\pi_{S, G_\mathrm{c}}}_\#\mu_{G_\mathrm{c}}=\mu_S$ for all $S\in J$ with $S\subseteq {G_\mathrm{c}}$. Since $K_{Q\cap G_\mathrm{c}}=\bigcap_{S\in J, S\subseteq G_\mathrm{c}} \pi_{S,G_\mathrm{c}}^{-1}(K_{Q\cap S})$ and $\mu_{G_\mathrm{c}}(\pi_{S,G_\mathrm{c}}^{-1}(K_{Q\cap S}))=\mu_S(K_{Q\cap S})=1$ for all $S\in J$ such that $S\subseteq G_\mathrm{c}$, we get $\mu_{G_\mathrm{c}}(K_{Q\cap G_\mathrm{c}})=1$ (see \cite[Part I, Chapter I, 6.(a)]{S73}). As
$
K_{Q\cap G_\mathrm{c}}=\bigcup_{n\in\NN}K_n^{(G_\mathrm{c})}$, where $ K_n^{(G_\mathrm{c})}:=\{\alpha\in X(G_\mathrm{c}): 0\leq \hat{g}(\alpha)\leq n\ \forall g\in Q\cap G_\mathrm{c}\},$
the continuity from below of $\mu_{G_\mathrm{c}}$ implies that there exists $m\in\NN$ such that $\mu_{G_\mathrm{c}}(K_m^{G_\mathrm{c}})\geq 1-\varepsilon$. Let $T=\langle F\rangle\in J$. Define 
$$
K_m^{(T,\mathrm{c})}:=\{\alpha\in X(T):0\leq\hat{g}(\alpha)\leq m\text{ for all } g\in Q\cap(G_\mathrm{c}\cap T)\}
$$
and set $R:=\langle G_\mathrm{c}\cup T\rangle\in\Lambda$. Then we have
$
\pi_{G_\mathrm{c},R}^{-1}(K_m^{(G_\mathrm{c})})\subseteq \pi_{T,R}^{-1}(K_m^{(T,\mathrm{c})})
$
and so 
$$
\mu_T(K_m^{(T,\mathrm{c})})= \mu_R(\pi_{T,R}^{-1}(K_m^{(T,\mathrm{c})}))
\geq  \mu_{R}(\pi_{G_\mathrm{c},R}^{-1}(K_m^{(G_\mathrm{c})}))= \mu_{G_\mathrm{c}}(K_m^{(G_\mathrm{c})}) \geq 1-\varepsilon.
$$

Let us define now
$$
K^{(T,\mathrm{a})}:=\{\alpha\in X(T): \hat{g}(\alpha)\geq 0\text{ for all }g\in Q\cap(G_\mathrm{a}\cap T)\}=\!\!\bigcap_{S\in J, S\subseteq G_\mathrm{a}\cap T} \pi_{S,T}^{-1}(K_{Q\cap S}).
$$
Then $\mu_T(K^{(T,\mathrm{a})})=1$ as the finite measure $\mu_T$ on $X(T)$ is inner regular and $\mu_T(\pi_{S,T}^{-1}(K_{Q\cap S}))=\mu_S(K_{Q\cap S})=1$ for all $S\in J$ such that $S\subseteq G_\mathrm{a}\cap T$.

Taking $K^{(T)}:= K_m^{(T,\mathrm{c})}\cap K^{(T,\mathrm{a})}$ we obtain that
$$
\mu_T(K^{(T)})=\mu_T(K_m^{(T,\mathrm{c})}\cap K^{(T,\mathrm{a})})=\mu_T(K_m^{(T,\mathrm{c})})\geq 1-\varepsilon
$$
and that, for all $R\in J$ with $T\subseteq R$,
$$
\pi_{T,R}(K^{(R)})\subseteq \pi_{T,R}(K_m^{(R,\mathrm{c})})\cap \pi_{T,R}(K^{(R,\mathrm{a})})\subseteq K_m^{(T,\mathrm{c})} \cap K^{(T,\mathrm{a})}= K^{(T)}.
$$

Let us finally prove that $K^{(T)}$ is compact. Let $\alpha\in K^{(T)}$. If $g\in F\cap G_\mathrm{a}$, then there exists $N\in\NN$ such that $N^2-g^2\in Q\cap G_\mathrm{a}$ by (b) and so $\alpha(g)\in[-N,N]$. If $g\in F\cap G_\mathrm{c}$, then $g^2\in Q\cap G_\mathrm{c}$ and so $0\leq \hat{g}^2(\alpha)\leq m \leq m^2$, i.e., $\alpha(g)\in [-m,m]$. This shows that  $K^{(T)}$ is compact since $F$ generates $T$ and so $X(T)$ embeds into~$\RR^F$.
Hence, Proposition \ref{prop::compact-support-epsilon-K} ensures that $\{\mu_T:T\in J\}$ fulfills \eqref{epsilon-K-char}.
\end{proof}

\begin{ques}
Does \eqref{epsilon-K-char} imply (ii) in Theorem~\ref{hybrid-quadraticMod}?
\end{ques}

Theorem \ref{hybrid-quadraticMod} is a generalization of \cite[Theorem 5.4]{GKM16} which we restate here for the convenience of the reader.
\begin{cor} 
Let $\RR[X_i: i\in \Omega]$ be the algebra of real polynomials in the variables $\{X_i: i\in \Omega\}$ with $\Omega$ arbitrary index set, $Q$ a quadratic module in $\RR[X_i: i\in \Omega]$, $L:\RR[X_i: i\in \Omega] \rightarrow \mathbb{R}$ linear and $L(1)=1$. If $L(Q) \subseteq [0,\infty)$ and there exists $\Omega_c\subseteq \Omega$ countable such that
\begin{enumerate}[(a)]
\item\label{PartialArch}  for each $i\in \Omega\setminus\Omega_c$ the quadratic module $Q\cap\RR[X_i]$ is Archimedean,
\item\label{Carleman-RestCoordinates} for each $i \in \Omega_c$, Carleman's condition $\sum_{n=1}^\infty\frac{1}{\sqrt[2n]{L(X_i^{2n})}}=\infty$ is satisfied;
\end{enumerate}  
then there exists a unique $K_Q-$representing Radon measure $\mu$ for $L$, where $K_Q$ is as in \eqref{semi-alg-Q}.\end{cor}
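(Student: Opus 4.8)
The plan is to derive this corollary directly from Theorem~\ref{hybrid-quadraticMod}, applied with the subalgebras
$$
G_\mathrm{a}:=\RR[X_i:i\in\Omega\setminus\Omega_c]\qquad\text{and}\qquad G_\mathrm{c}:=\RR[X_i:i\in\Omega_c]
$$
of $A=\RR[X_i:i\in\Omega]$. Two of the required hypotheses are immediate: $G_\mathrm{a}\cup G_\mathrm{c}$ contains every variable $X_i$ and hence generates $A$ (this is (a) of Theorem~\ref{hybrid-quadraticMod}), and $G_\mathrm{c}$ is countably generated since $\Omega_c$ is countable (the first half of (c)). The standing hypothesis $L(Q)\subseteq[0,\infty)$ is assumed, so the work lies in verifying the Archimedeanity condition (b) and the quasi-analyticity condition (c) of Theorem~\ref{hybrid-quadraticMod}.

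For (b) I would use the standard fact (cf.\ \cite{MarshBook}) that, for any quadratic module $M$ in an algebra $B$, the set $B(M):=\{a\in B:\exists\,N\in\NN,\ N-a^2\in M\}$ of $M$-bounded elements is a subalgebra of $B$ containing $\RR$ (closure under sums, scalars and squaring follows from adding suitable squares and multiples of $N-a^2$ to one another, and then closure under products follows from $4ab=(a+b)^2-(a-b)^2$ since $B(M)$ is a linear subspace), and that $B(M)\subseteq\{a\in B:\exists\,N\in\NN,\ N\pm a\in M\}$ (add $(1\mp a)^2\in M$ to $N-a^2$). By~\ref{PartialArch}, $Q\cap\RR[X_i]$ is Archimedean for each $i\in\Omega\setminus\Omega_c$; applying the definition of Archimedeanity to the element $X_i^2$ gives $N_i-X_i^2\in Q\cap\RR[X_i]\subseteq Q\cap G_\mathrm{a}$, so $X_i\in B(Q\cap G_\mathrm{a})$ for all such $i$. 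As $B(Q\cap G_\mathrm{a})$ is a subalgebra of $G_\mathrm{a}$ containing $\RR$ and all the generators $X_i$ of $G_\mathrm{a}$, it equals $G_\mathrm{a}$; hence $Q\cap G_\mathrm{a}$ is Archimedean in $G_\mathrm{a}$, i.e.\ (b) holds.

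For (c) one must upgrade~\ref{Carleman-RestCoordinates} to the quasi-analyticity of $\C\{\sqrt{L(g^{2n})}\}$ for \emph{every} $g\in G_\mathrm{c}$. By the Denjoy--Carleman theorem,~\ref{Carleman-RestCoordinates} is exactly the quasi-analyticity of $\C\{\sqrt{L(X_i^{2n})}\}$ for each $i\in\Omega_c$. Since $\sum A^2\subseteq Q$, the functional $L$ is positive semidefinite, and one then propagates the Carleman condition from the coordinate variables to all polynomials in them, using the Cauchy--Schwarz inequality attached to $L$ together with the log-convexity of $n\mapsto L(X_i^{2n})$. This propagation step is the analytic heart of the argument and I expect it to be the main obstacle; granting it, Theorem~\ref{hybrid-quadraticMod} applies verbatim and produces the asserted unique $K_Q$-representing Radon measure.

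Should the propagation in (c) be awkward to phrase, an equivalent route is to re-run the proof of Theorem~\ref{hybrid-quadraticMod} rather than cite it. One applies Theorem~\ref{cylinder-MP2}-(i) with the generating set $G:=\{X_i:i\in\Omega\}$ --- here only the quasi-analyticity of each single $\C\{\sqrt{L(X_i^{2n})}\}$ is needed, which holds by~\ref{Carleman-RestCoordinates} for $i\in\Omega_c$ and, for $i\in\Omega\setminus\Omega_c$, follows from $N_i^2-X_i^2\in Q$ (a consequence of~\ref{PartialArch}), which telescopes --- using $\sum A^2\subseteq Q$ --- to $N_i^{2n}-X_i^{2n}\in Q$ and hence $L(X_i^{2n})\le N_i^{2n}$. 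This yields the exact projective system $\{\mu_S:S\in I\}$ of unique $K_{Q\cap S}$-representing Radon measures, and one then verifies Prokhorov's condition~\eqref{epsilon-K-char} for the cofinal family $\{\RR[X_i:i\in F]:F\subseteq\Omega\text{ finite}\}$ exactly as in the proof of Theorem~\ref{hybrid-quadraticMod}: compactness in the $(\Omega\setminus\Omega_c)$-coordinates comes from the bounds $N_i^2-X_i^2\in Q$, and the needed tightness in the $\Omega_c$-coordinates from continuity from below of the Radon measure on $X(G_\mathrm{c})$ furnished by Lemma~\ref{doub-exact-char}-(i); Theorem~\ref{cylinder-MP2}-(iii) then gives the conclusion. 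On this route, the Prokhorov verification is the main point.
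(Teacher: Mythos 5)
The paper gives no separate proof of this corollary---it is presented as the specialization of Theorem~\ref{hybrid-quadraticMod} to $G_\mathrm{a}=\RR[X_i:i\in\Omega\setminus\Omega_c]$ and $G_\mathrm{c}=\RR[X_i:i\in\Omega_c]$---so your first route is the intended one, and your verifications of hypotheses (a) and (b) (the latter via the subring of bounded elements) are correct. However, the ``propagation'' you hope for in (c) is not merely awkward: it is false, already in one variable. Take $L(p)=\tfrac12\int p(x)e^{-|x|}\,\dd x$, so $L(X^{2n})=(2n)!$ and $\sum_n L(X^{2n})^{-1/2n}\sim\sum_n \tfrac{e}{2n}=\infty$, i.e.\ $X$ satisfies Carleman's condition; but for $g=X^2$ one has $\sum_n L(g^{2n})^{-1/2n}=\sum_n((4n)!)^{-1/2n}\sim\sum_n(\tfrac{e}{4n})^2<\infty$, so by Denjoy--Carleman the class $\C\{\sqrt{L(g^{2n})}\}$ is \emph{not} quasi-analytic. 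Hence hypothesis (c) of Theorem~\ref{hybrid-quadraticMod}, read literally (quasi-analyticity for \emph{every} element of the subalgebra $G_\mathrm{c}$), cannot be derived from Carleman's condition on the variables alone, and your primary route cannot be completed. (This also exposes a genuine mismatch between the literal statement of Theorem~\ref{hybrid-quadraticMod} and the corollary as the paper presents it.)

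Your fallback route is the correct one and should be promoted to the main argument. The quasi-analyticity hypothesis enters the proof of Theorem~\ref{hybrid-quadraticMod} only through Theorem~\ref{cylinder-MP2}, whose condition (b) concerns a generating \emph{set} of $A$, not a whole subalgebra. Taking $G=\{X_i:i\in\Omega\}$, each $\C\{\sqrt{L(X_i^{2n})}\}$ is quasi-analytic: for $i\in\Omega_c$ by hypothesis~\ref{Carleman-RestCoordinates} and Denjoy--Carleman, and for $i\in\Omega\setminus\Omega_c$ from $N_i^2-X_i^2\in Q$, which telescopes to $N_i^{2n}-X_i^{2n}\in Q$ and hence $L(X_i^{2n})\leq N_i^{2n}$. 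Theorem~\ref{cylinder-MP2}-(i) then yields the exact projective system of unique $K_{Q\cap S}$-representing measures, and the Prokhorov verification from the proof of Theorem~\ref{hybrid-quadraticMod} goes through verbatim over the cofinal family $\{\RR[X_i:i\in F]:F\subseteq\Omega\text{ finite}\}$, since the compactness of the sets $K^{(T)}$ there only uses the bounds $N_i^2-X_i^2\in Q$ and $0\leq\alpha(X_i^2)\leq m$ on the generators. So commit to this second route; it proves the corollary by re-running the proof of Theorem~\ref{hybrid-quadraticMod} with the coordinate generating set rather than by citing its statement.
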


\subsection{Localizations}\label{appl-loc} \ 

In this subsection we deal with the approach to Problem~\ref{GenKMP} through localizations introduced in \cite{M, M-II} and exploited in \cite{GKM16} for $A=\RR[X_i: i\in \Omega]$ with $\Omega$ arbitrary index set. In \cite[Corollary 4.4]{GKM16} it is explained how Problem~\ref{GenKMP} for $A$ reduces to the understanding of Problem~\ref{GenKMP} for the localized algebra $B=\RR\left[X_i, \tfrac{1}{1+X^2_i}: i\in\Omega\right]$. Here we show how our projective limit techniques allow to solve Problem~\ref{GenKMP} for the algebra $B$ using the corresponding results in finite dimensions in \cite{M03}.

\begin{thm}\label{thm::localization}
Let $\indexset:=\{S\subseteq B:S \text{ finitely generated subalgebra of }B\}$, $L:B\to\RR$ linear and $L(1)=1$. If $L(Q)\subseteq [0,\infty)$ for some quadratic module $Q$ in $B$, then: 
\begin{enumerate}[(i)]
\item There exists a unique $X(B)-$representing measure $\mu$ for $L$ on the cylinder $\sigma-$algebra~$\Sigma_\indexset$ supported on each $\pi_S^{-1}(K_{Q\cap S})$ with $S\in \indexset$ and  $K_{Q\cap S}$ is as in~\eqref{bcsas-S}. Moreover, if $Q$ is countably generated, then $\mu$ is a $K_Q-$representing measure for $L$ on~$\Sigma_\indexset$.
\item There exists a unique $K_Q-$repre\-senting Radon measure $\nu$ for $L$ if and only if $\{{\pi_S}_{\#}\mu: S\in I\}$ fulfills Prokhorov's condition~\eqref{epsilon-K-char}.
\end{enumerate}
\end{thm}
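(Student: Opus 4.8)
The plan is to recognize that the statement is essentially an application of Theorem~\ref{cylinder-MP2} (resp.\ Corollary~\ref{main-KMP}) to the algebra $B=\RR\left[X_i,\tfrac{1}{1+X_i^2}:i\in\Omega\right]$, with the one extra ingredient that the finite-dimensional building blocks — the moment problems for $L\!\restriction_S$ with $S$ finitely generated — are now governed not by Nussbaum's theorem but by the localization results of Marshall in \cite{M03}, exactly as we already did in the proof of Theorem~\ref{thm::weaker-Carleman}. So the first step is to fix, for each finite subset $F$ of a generating set of $B$, the finitely generated subalgebra $S:=\langle F\rangle$ and to produce a \emph{unique} $K_{Q\cap S}$-representing Radon measure $\mu_S$ for $L\!\restriction_S$. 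This is where I would invoke \cite[Corollary~3.4]{M03}: its hypotheses (the existence of an element $p\in S$ with $p-1$ in the localized quadratic module, the boundedness/Archimedean-type condition after localizing at $p$, and non-negativity of $L\!\restriction_S$ on that localized module) hold here because every generator $X_i$ appearing in $F$ comes paired with $\tfrac{1}{1+X_i^2}$, so that $(1+X_i^2)$ is a unit in $S$ and the standard localization argument of \cite[Corollary~3.4, Remark~3.5]{M03} applies verbatim. Since $K_{Q\cap S}=K_{(Q\cap S)_p}$, this yields the required unique $\mu_S$.

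The second step is purely the projective-limit machinery already set up. Uniqueness of each $\mu_S$ lets me apply Lemma~\ref{lem:uniq-S} to conclude that $\{\mu_S:S\in I\}$ is an \emph{exact} projective system of Radon probability measures on $\{(X(S),\Borchar{S}),\pi_{S,T},I\}$; note $\pi_{S,T}(K_{Q\cap T})\subseteq K_{Q\cap S}$ for $S\subseteq T$, and $K_Q=\bigcap_{S\in I}\pi_S^{-1}(K_{Q\cap S})$ by Remark~\ref{rem-count}-(i). Because the generating set $\{X_i,\tfrac{1}{1+X_i^2}:i\in\Omega\}$ of $B$ has the property that the restriction maps $\pi_T$ are surjective for every $T\in\Lambda$ — each character on a countably generated subalgebra extends to all of $B$, one variable at a time — Remark~\ref{rem::cylinder-MP-cofinal}-(i) (via Remark~\ref{prop::cyl-quasi-measure}-(i)) guarantees that the corresponding exact system $\{\mu_T:T\in\Lambda\}$ of Lemma~\ref{doub-exact-char} is automatically thick. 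Hence Theorem~\ref{ext-cylinderSigmaAlg} (or directly Theorem~\ref{cylinder-MP-Support}-(i) with $K^{(S)}=K_{Q\cap S}$, equivalently Corollary~\ref{main-KMP}-(i)) produces a unique measure $\mu$ on $\Sigma_I$ with ${\pi_S}_{\#}\mu=\mu_S$ for all $S$, supported in each $\pi_S^{-1}(K_{Q\cap S})$, and representing $L$; the uniqueness across representing measures on $\Sigma_I$ supported in each $\pi_S^{-1}(K_{Q\cap S})$ follows from Remark~\ref{rem::cylinder-MP-cofinal}-(ii). If $Q$ is countably generated, Remark~\ref{rem-count}-(ii) gives a countable $J\subseteq I$ with $K_Q=\bigcap_{S\in J}\pi_S^{-1}(K_{Q\cap S})$, so Remark~\ref{ext-cylinderSigmaAlg-rem-support}-(i) upgrades the support statement to $\mu(K_Q)=1$. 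This proves (i).

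For (ii), I would argue exactly as in Theorem~\ref{cylinder-MP-Support}-(ii) / Corollary~\ref{main-KMP}-(ii): if $\{{\pi_S}_{\#}\mu:S\in I\}=\{\mu_S:S\in I\}$ satisfies Prokhorov's condition~\eqref{epsilon-K-char}, then Theorem~\ref{Prokh} applied to the exact projective system $\{\mu_S:S\in I\}$ yields a Radon measure $\nu$ on $X(B)$ with ${\pi_S}_{\#}\nu=\mu_S$, which represents $L$ and is supported in $\bigcap_{S\in I}\pi_S^{-1}(K_{Q\cap S})=K_Q$ by Remark~\ref{Prokh-rem-support}; its uniqueness comes from the uniqueness of each $\mu_S$ together with the last part of Remark~\ref{rem::cylinder-MP-cofinal}-(ii). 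Conversely, if a $K_Q$-representing Radon measure $\nu$ exists, each ${\pi_S}_{\#}\nu$ is a $K_{Q\cap S}$-representing Radon measure for $L\!\restriction_S$, hence equals $\mu_S$ by uniqueness, and Theorem~\ref{Prokh} forces $\{\mu_S:S\in I\}$ to satisfy~\eqref{epsilon-K-char}. The main obstacle — and really the only non-bookkeeping point — is the first step: verifying carefully that Marshall's finite-dimensional localization theorem \cite[Corollary~3.4]{M03} genuinely applies to each finitely generated subalgebra $S$ of $B$, i.e.\ choosing the right $p\in S$ (a product of the relevant $(1+X_i^2)$) and checking that $Q\cap S$, after localizing at $p$, meets the Archimedean-type hypothesis and that $L\!\restriction_S$ is non-negative on the localized module; once this is in place, everything else is an assembly of results already proved in the paper.
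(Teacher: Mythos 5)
Your proposal is correct and follows essentially the same route as the paper: reduce to Marshall's localization results \cite[Proposition~4.2, Corollary~3.4, Remark~3.5]{M03} on a cofinal family of finitely generated subalgebras to get unique $K_{Q\cap S}$-representing measures, pass to an exact projective system via Lemma~\ref{lem:uniq-S}, use surjectivity of $\pi_T$ for $T\in\Lambda$ to get thickness, and assemble via Corollary~\ref{main-KMP} and Remark~\ref{rem::cylinder-MP-cofinal}. The only precision point is that your claim that each $X_i\in F$ ``comes paired'' with $\tfrac{1}{1+X_i^2}$ fails for an arbitrary finite subset $F$ of the generating set; the paper makes this rigorous by working with the explicitly localized cofinal subfamily $\{S_F[1/p_F]:F\subseteq\Omega\text{ finite}\}$ with $p_F=\prod_{i\in F}(1+X_i^2)$, which is exactly the fix your closing paragraph anticipates.
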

\begin{proof}
For any finite $F\subseteq\Omega$ let $S_F:=\RR[X_i:i\in F]$ and set $p_F:=\prod_{i\in F}(1+X_i^2)$. Then $\subindexset:=\{S_F[1/p_F]:F\subseteq\Omega\text{ finite}\}$ is a cofinal subset of $\indexset$, where 
$
S_F[1/p_F]:=\left\{\frac{a}{p_F^k}:a\in S_F,k\in\NN_0\right\}.
$
\enlargethispage{0.8cm}
Let $S:=S_F[1/p_F]\in \subindexset$ for some finite $F\subseteq\Omega$. Then $p_F$ satisfies condition~(1) in \cite[Proposition~4.2]{M03} (see the discussion after the proof of this proposition), where $M=Q\cap S_F$ (and so $M[1/p_F^2]=Q\cap S_F[1/p_F]$). Then the assumptions of \cite[Corollary~3.4]{M03} are fulfilled. Thus, there exists a unique $K_{Q\cap S}-$representing Radon measure $\mu_S$ for $L\!\restriction_S$ (see \cite[Remark 3.5]{M03}) and $\{\mu_S: S\in J\}$ is an exact projective system by Lemma~\ref{lem:uniq-S}. Since the map $\pi_T$ is surjective for any $T\in\Lambda$ (cf.\ Remark \ref{doub-exact-char}) conclusion (i) follows from Corollary~\ref{main-KMP}-(i) and Remark~\ref{rem::cylinder-MP-cofinal}-(i), (ii) and~(iii). Conclusion (ii) follows from Corollary~\ref{main-KMP}-(ii) and Remark~\ref{rem::cylinder-MP-cofinal}-(ii) and~(iii) as ${\pi_S}_{\#}\mu=\mu_S$ for all $S\in J$.
\end{proof}

Applying Theorem \ref{thm::localization} to $Q= \sum B^2$ we retrieve \cite[Theorem 3.8]{GKM16}.

\vspace{-0.1cm}
\section*{Acknowledgments}
We would like to thank Victor Vinnikov for the interesting discussions which led us to Lemmas \ref{lem:uniq-S} and \ref{lem::ex-ex-proj-sys}.We  are also grateful to the anonymous referee for her/his valuable suggestions. We are indebted to the Baden-W\"urttemberg Stiftung for the financial support to this work by the Eliteprogramme for Postdocs. This work was also partially supported by the Ausschuss f\"ur Forschungsfragen (AFF) and Young Scholar Fund (YSF) 2018 awarded by the University of Konstanz. 
\vspace{-0.1cm}
\section*{Data availability statement}
Data sharing is not applicable to this article as no datasets were generated or analysed during the current study.
\enlargethispage{0.8cm}
\vspace{-0.1cm}
{
}

\enlargethispage{0.8cm}

\begin{thebibliography}{99}
\bibitem{AJK15} 
D. Alpay, P. E. T. Jorgensen and D. P. Kimsey,
{\it Moment problems in an infinite number of variables}, 
Infin. Dimens. Anal. Quantum Probab. Relat. Top.  {18} (2015), no. 4, 14 pp.
 
\bibitem{Bau} 
H. Bauer, 
{\it Measure and integration theory (Transl. from the German)}, 
De Gruyter Studies in Mathematics {26}, Walter de Gruyter \& Co., Berlin 2001.

\bibitem{BK}
Yu. M. Berezansky and Yu. G. Kondratiev,
{\it Spectral methods in infinite-dimensional analysis. Vol. II (Transl. from the 1988 Russian original)}, 
Mathematical Physics and Applied Mathematics {12/2}, Kluwer Academic Publishers, Dordrecht 1995. 

\bibitem{BS}
Yu. M. Berezansky and S. N. {\v{S}}ifrin,
{\it A generalized symmetric power moment problem (Russian)}, 
Ukrain. Mat. \v Z. {23} (1971), 291--306.

\bibitem{BCR} 
C. Berg, J. P. R. Christensen and P. Ressel,
{\it Positive definite functions on abelian semigroups},
Math. Ann. {223} (1976), no. 3, 253--274.
 
\bibitem{B99} 
P. Billingsley, 
{\it Convergence of probability measures}, 2nd ed., 
Wiley Series in Probability and Statistics, Wiley, Chichester 1999. 

\bibitem{B55} 
S. Bochner, 
{\it Harmonic analysis and the theory of probability}, 
University of California Press, Berkeley and Los Angeles 1955.  

\bibitem{Bor-Yng75}
H. J. Borchers and J. Yngvason,
{\it Integral representations for {S}chwinger functionals and the moment problem over nuclear spaces},
Comm. Math. Phys. {43} (1975), no. 3, 255--271.

\bibitem{BouGT} 
N. Bourbaki, 
{\it Elements of mathematics. General topology. Part 1}, 
Hermann, Paris; Addison-Wesley Publishing Co., Reading, MA etc.; 1966. 

\bibitem{BouInt}
N. Bourbaki, 
{\it Elements of Mathematics. Integration. II. Chapters 7--9 (Transl. from the 1963 and 1969 French originals)}, 
Springer, Berlin 2004. 

\bibitem{BouST} 
N. Bourbaki, 
{\it Elements of mathematics. Theory of sets}, 
Hermann, Paris; Addison-Wesley Publishing Co., Reading, MA etc.; 1968. 

\bibitem{Carl26}
T. Carleman,
{\it Les fonctions quasi-analytiques},
Collection de monographies sur la th{\'e}orie des fonctions publi{\'e}e {7}, Gauthier-Villars, Paris 1926.

\bibitem{Ch58} 
J. R. Choksi,
{\it On compact contents}, 
J. Lond. Math. Soc. {33} (1958), 387--398. 

\bibitem{Denj21}
{A. Denjoy,
{\it Sur les fonctions quasi-analytiques de variable r\' eelle},
C.R. Acad. Sci. Paris {173} (1921), 1329--1331.}

\bibitem{F72}
{Z. Frol\'ik, {\it Projective limits of measure spaces.} Proceedings of the Sixth Berkeley Symposium on Mathematical Statistics and Probability, Volume 2: Probability Theory, 67--80, University of California Press, Berkeley, Calif., 1972.}

\bibitem{GV64} 
I. M. Gel'fand and N. Ya. Vilenkin, 
{\it Generalized functions. Vol. 4: Applications of harmonic analysis}, 
Academic Press, New York and London, 1964. 

\bibitem{GIKM} 
M. Ghasemi, M. Infusino, S. Kuhlmann and M. Marshall,
{\it Moment problem for symmetric algebras of locally convex spaces},
Integr. Equat. Oper. Th. 90 (2018), no. 3, Art. 29, 19 pp. 

\bibitem{MGHAS} 
M. Ghasemi and S. Kuhlmann,
{\it Closure of the cone of sums of $2d-$powers in real topological algebras},
J. Funct. Anal. {264} (2013), no. 1, 413--427.

\bibitem{MGES} 
M. Ghasemi, S. Kuhlmann and E. Samei,
{\it The moment problem for continuous positive semidefinite linear functionals},
Arch. Math. (Basel) {100} (2013), no. 1, 43--53.

\bibitem{GKM13} 
M. Ghasemi, S. Kuhlmann and M. Marshall,
{\it Application of Jacobi's representation theorem to locally multiplicatively convex topological $\RR-$algebras},
J. Funct. Anal. {266} (2014), no. 2, 1041--1049. 

\bibitem{GKM16} 
M. Ghasemi, S. Kuhlmann and M. Marshall,
{\it Moment problem in infinitely many variables},
Israel J. Math. {212} (2016), no. 2, 989--1012.

\bibitem{GMW} 
M. Ghasemi, M. Marshall and S. Wagner,
{\it Closure of the cone of sums of $2d-$powers in certain weighted $\ell_1-$seminorm topologies},
Canad. Math. Bull. {57} (2014), no. 2, 289--302.

\bibitem{Hal}
P. Halmos, 
{\it Measure theory}, 
Grad. Texts in Math. {18}, Springer, Berlin 1974  

\bibitem{Hav} 
E. K. Haviland,
{\it On the momentum problem for distribution functions in more than one dimension. II},
Amer. J. Math. {58} (1936), no. 1, 164--168.

\bibitem{Heg75}
G. C. Hegerfeldt,
{\it Extremal decomposition of Wightman functions and of states on nuclear $^*-$algebras by Choquet theory}, 
Comm. Math. Phys. {45} (1975), no. 2, 133--135.

\bibitem{I} 
M. Infusino,
{\it Quasi-analyticity and determinacy of the full moment problem from finite to infinite dimensions},
Stochastic and Infinite Dimensional Analysis, {Chap.\!~9}: 161--194, Trends in Mathematics, Birkh\"auser, 2016. 

\bibitem{IK-probl}
M. Infusino and S. Kuhlmann,
{\it Infinite dimensional moment problem: open questions and applications},
Ordered Algebraic Structures and Related Topics, Contemporary Mathematics {697}, 187--201, Amer. Math. Soc., Providence, RI 2017. 

\bibitem{IKM} 
M. Infusino, S. Kuhlmann and M. Marshall,
{\it On the determinacy of the moment problem for symmetric algebras of a locally convex space}, 
Operator theory in different settings and related applications, 
Oper. Theory Adv. Appl. {262}, 243--250, Birkh\"auser/Springer, Cham 2018. 

\bibitem{IKKM} 
M. Infusino, S. Kuhlmann, T. Kuna and P. Michalski,
{\it Topological aspects of the infinite dimensional moment problem}, in preparation.

\bibitem{IK-new} M. Infusino and T. Kuna, {\it The full moment problem on subsets of probabilities and point configurations}, J. Math. Anal. Appl. 483 (2020), no. 1, 123551.

\bibitem{IKR} 
M. Infusino, T. Kuna and A. Rota,
{\it The full infinite dimensional moment problem on semi-algebraic sets of generalized functions},
J. Funct. Anal.  {267} (2014), no. 5, 1382--1418.

 
\bibitem{Jac} T. Jacobi, {\it A representation theorem for certain partially ordered commutative rings}. Math. Z. 237 (2001), 259--273.

\bibitem{K-33}
{A.N. Kolmogorov, {\it Grundbegriffe der der Wahrscheinlichkeitsrechnung}, Springer-Verlag Berlin Heidelberg, 1933.}

\bibitem{K-P07}
S. Kuhlmann and M. Putinar,
{\it Positive polynomials on fibre products},
 C. R. Math. Acad. Sci. Paris {344} (2007), no. 11, 681--684.

\bibitem{K-P09}
S. Kuhlmann and M. Putinar,
{\it Positive polynomials on projective limits of real algebraic varieties},
Bull. Sci. Math. {133} (2009), no. 1, 92--111. 

\bibitem{Lass} 
J. B. Lasserre, 
{\it The K-moment problem for continuous linear functionals}, 
Trans. Amer. Math. Soc. {365} (2013), no. 5, 2489--2504.

\bibitem{M03} 
M. Marshall,
{\it Approximating positive polynomials using sums of squares}, 
Canadian Mathematical Bulletin {46} (2003), no. 3, 400--418.

\bibitem{MarshBook} 
M. Marshall,
{\it Positive polynomials and sums of squares},
Mathematical Surveys and Monographs {146}, Amer. Math. Soc., Providence, RI 2008.

\bibitem{M} 
M. Marshall, 
{\it Application of localization to the multivariate moment problem}, 
Math. Scand. {115} (2014), no. 2, 269--286.

\bibitem{M-II} 
M. Marshall, 
{\it Application of localization to the multivariate moment problem II}, 
Math. Scand. {120} (2017), no. 1, 124--128.

\bibitem{M63}
{M. M\'etevier, {\it Limites projectives de mesures, martingales, applications.}, Annali di Mat. 4 (1963), 225--352.}

\bibitem{N65}
A. E. Nussbaum,
{\it Quasi-analytic vectors},
Ark. Mat. 6 (1965), 179--191.

\bibitem{Prokh} 
Yu. V. Prokhorov,
{\it Convergence of random processes and limit theorems in probability theory (Russian)}, 
Teor. Veroyatnost. i Primenen. {1} (1956), 177--238.

\bibitem{Riesz} 
M.\! Riesz,
{\it Sur le probl\`eme de moments.\! III},
Ark.\! f.\! Mat., Astr.\! och Fys.\! {17} (1923), no.\! 16, 52~pp. 

\bibitem{Schmu78} 
K. Schm\"{u}dgen,
{\it Positive cones in enveloping algebras},
Rep. Math. Phys. {14} (1978), no. 3, 385--404. 

\bibitem{Schmu90} 
K. Schm\"{u}dgen,
{\it Unbounded operator algebras and representation theory},
Operator Theory: Advances and Applications {37}, Birkh\"auser Verlag, Basel 1990. 

\bibitem{Schm17} 
K. Schm\"udgen, 
{\it The moment problem}, 
Grad. Texts in Math. {277}, Springer, Cham 2017.

\bibitem{Schmu-new} 
K. Schm\"udgen,
{\it On the infinite dimensional moment problem},
Ark. Mat. {56} (2018), no. 2, 441--459.

\bibitem{S73} 
L. Schwartz, 
{\it Radon measures on arbitrary topological spaces and cylindrical measures}, 
Tata Inst. Fund. Res. Stud. Math. {6}, Oxford University Press, London 1973. 
\enlargethispage{1cm}
\bibitem{Y85} 
Y. Yamasaki, 
{\it Measures on infinite-dimensional spaces},
Series in Pure Mathematics {5}, World Scientific Publishing Co., Singapore 1985.
\end{thebibliography}
\end{document}